\documentclass[a4paper, 10pt]{amsart}

\pdfoutput=1

\usepackage{amsmath,amssymb,amsthm,url}
\usepackage[utf8]{inputenc}
\usepackage[T1]{fontenc}
\usepackage{libertine}
\usepackage[libertine,cmintegrals,cmbraces]{newtxmath}
\usepackage[shortlabels]{enumitem}
\usepackage{mathtools}
\usepackage{xcolor}
\usepackage{tikz}  
\usepackage{tikz-cd}
\usepackage{nicefrac}
\usepackage{dsfont}
\usepackage{todonotes}
\usepackage{a4wide}
\usepackage{xcolor}
\usepackage{bbold}
\usepackage{spectralsequences}
\usepackage{quiver}
\usepackage{booktabs}
\usepackage{adjustbox}

\usepackage[
  style=alphabetic,
  backend=biber,
  sorting=nyt,
  isbn=false,
  maxbibnames=99
]{biblatex}

\AtEveryBibitem{\clearlist{language}}
\renewbibmacro{in:}{}

\AtBeginDocument{%
   \def\MR#1{}
} 
\usepackage{euscript}
\usepackage[all]{xy}

\usepackage{thmtools}
\usepackage{hyperref}
  
\hypersetup{%
  bookmarksnumbered=true,
  colorlinks=true,%
  linkcolor=blue,%
  citecolor=blue,%
  filecolor=blue,%
  menucolor=blue,%
  urlcolor=blue,%
  pdfnewwindow=true,%
  pdfstartview=FitBH}

\usepackage[capitalise]{cleveref}

\definecolor{indiguy}{RGB}{195,179,227}

\usepackage{xcolor}
\definecolor{seagreen}{RGB}{46,139,87}
\definecolor{maroon}{RGB}{128,0,0}
\definecolor{darkviolet}{RGB}{148,0,211}
\definecolor{twelve}{RGB}{100,100,170}
\definecolor{thirteen}{RGB}{100,150,50}
\definecolor{fourteen}{RGB}{200,0,0}
\definecolor{fifteen}{RGB}{0,200,0}
\definecolor{sixteen}{RGB}{0,0,200}
\definecolor{seventeen}{RGB}{200,0,200}
\definecolor{eighteen}{RGB}{0,200,200}

\newcommand{\bb}[1]{\mathbb{#1}}

\newcommand{\es}[1]{\EuScript{#1}}
\renewcommand{\sf}[1]{\mathsf{#1}}

\DeclareMathOperator{\coker}{\mathrm{coker}}

\DeclareMathOperator{\fibre}{\mathrm{fib}}

\DeclareMathOperator{\cofibre}{\mathrm{cofib}}

\newcommand{\fib}{\mathsf{fib}}

\newcommand{\s}{{\sf{Sp}}}
\DeclareMathOperator{\T}{\es{S}_\ast}

\newcommand{\vect}[1]{\mathsf{Vect}_{#1}}

\newcommand{\Aut}{\mathrm{Aut}}
\newcommand{\Fun}{\sf{Fun}}

\DeclareMathOperator{\Map}{\mathsf{Map}}

\DeclareMathOperator{\id}{\mathrm{Id}}

\DeclareMathOperator{\res}{\mathsf{res}}

\newcommand{\R}{\mbf{R}}

\newcommand{\del}{\partial}
\newcommand{\BU}{\mathsf{BU}}
\newcommand{\BO}{\mathsf{BO}}

\newcommand{\BAut}{\mathsf{BAut}}
\newcommand{\Bu}{\mathsf{Bu}}
\newcommand{\Bo}{\mathsf{Bo}}
\newcommand{\Ba}{\mathsf{Ba}}
\newcommand{\Bg}{\mathsf{Bg}}
\newcommand{\C}{\mathbb{C}}
\renewcommand{\R}{\mathbb{R}}

\renewcommand{\coker}{\mathsf{coker}}
\DeclareRobustCommand\longtwoheadrightarrow
     {\relbar\joinrel\twoheadrightarrow}

  \newcommand{\adjunction}[4]{
\xymatrix{
#1:#2 \ar@<.5ex>[r] &
\ar@<.5ex>[l] #3:#4
}}

\newtheorem{thm}{Theorem}[section]
\newtheorem{prop}[thm]{Proposition}
\newtheorem{lem}[thm]{Lemma}
\newtheorem{cor}[thm]{Corollary}

\newtheorem*{thm*}{Theorem}

\newtheorem{xxthm}{Theorem}

\theoremstyle{definition}
\newtheorem{definition}[thm]{Definition}

\newtheorem{rem}[thm]{Remark}

\begin{document}

\title{Realification of stably trivial vector bundles}
\author{Guy Boyde}
\address[Boyde]{Vrije Universiteit Amsterdam}
\email{g.boyde@vu.nl}
\author{Niall Taggart}
\address[Taggart]{Queen's University Belfast}
\email{n.taggart@qub.ac.uk}

\begin{abstract}
    The set of stably trivial complex vector bundles over complex projective spaces and spheres has a natural group structure when the corank is small enough. With respect to this group structure, the operations of taking the underlying real vector bundle (\emph{realification}) and of adding a trivial line bundle (\emph{stabilisation}), are group homomorphisms. Building on Hu's recent enumerations of stably trivial complex bundles, we compute these homomorphisms in a range by using Weiss calculus to translate the problem to stable homotopy theory.
\end{abstract}

\maketitle

\setcounter{tocdepth}{1}
{\hypersetup{linkcolor=black} \tableofcontents}

\section{Introduction}
Let $\vect{\bb{k}, d}(X)$ denote the set of isomorphism classes of topological $\bb{k}$-vector bundles of rank $d$ over a space $X$, where $\bb{k}$ denotes either the real or complex numbers. Whitney sum with a trivial line bundle gives a \emph{stabilisation} map:
\[
\vect{\bb{k}, d}(X) \xrightarrow{(-)\oplus~\underline{\bb{k}}} \vect{\bb{k}, d+1}(X)
\]
which is a bijection for $d$ large enough relative to the dimension on $X$, see~\cref{table:ranges} for precise conditions. In this \emph{stable range}, the set of vector bundles may be identified with topological $K$-theory:
\[
\vect{\bb{C}, d}(X) = \widetilde{KU}^0(X), \quad \text{and} \quad \vect{\bb{R}, d}(X) = \widetilde{KO}^0(X).
\]
In particular, to each vector bundle we can associate a $K$-theory class: its \emph{stabilisation}.
We say that a bundle is \emph{stably trivial} if its stabilisation is zero. When $\bb{k}=\bb{C}$ and $X = \bb{CP}^\ell$ is complex projective space, then stable triviality is equivalent to all Chern classes vanishing (see \cite[Theorem 2.1]{Hu}), but this is not true for all spaces $X$.

We write $\vect{\bb{k}, d}^0(X)$ for the set of \emph{stably trivial} rank $d$ $\bb{k}$-vector bundles over $X$. Said more geometrically:
\[
\vect{\bb{k}, d}^0(X) = \{ E \to X \ | \ E \oplus \underline{\bb{k}}^s \cong \underline{\bb{k}}^{d+s} \textrm{ for some } s \gg 0\}.
\]
In general $\vect{\bb{k}, d}^0(X)$ is just a set, however, it happens that in a so-called \emph{metastable} dimensional range, $\vect{\bb{k}, d}^0(X)$ has a natural abelian group structure, see \cref{thm: stably trivial stunted projective} and \cref{table:ranges} for precise conditions. This group structure comes from (stable) homotopy theory and does not seem to be geometrically meaningful in general, but for example if $X$ is a sphere then it coincides with the addition given by clutching. In particular, $\vect{\bb{k}, d}^0(X)$ is always non-empty, since it contains a zero element: the trivial bundle $\underline{\bb{k}}^d$.

In \cite{Hu}, Hu considers complex bundles over complex projective space $\bb{CP}^\ell$. The stable range is $d \geq \ell$, so the first two dimensions in which $\vect{\bb{k}, d}^0(\bb{CP}^\ell)$ can be non-trivial are $d=\ell -1$ and $d = \ell -2$. For the former, Hu~\cite[Theorem 1.1]{Hu} proves that for $\ell \geq 3$, the group $\vect{\bb{C}, \ell - 1}^0(\bb{CP}^\ell)$ is cyclic of order 2 if $\ell$ is even and trivial if $\ell$ is odd, while for the latter, Hu~\cite[Theorem 1.2]{Hu} proves that for $\ell \geq 4$, the group $\vect{\bb{C}, \ell - 2}^0(\bb{CP}^\ell)$  is cyclic of order $\psi(\ell)$ depending on the value of $\ell$ mod 24 as follows:
\[
\begin{array}{|c|*{12}{c}|}
\hline
\ell \bmod 24 
& 0 & 1 & 2 & 3 & 4 & 5 & 6 & 7 & 8 & 9 & 10 & 11 \\
\hline
\psi(\ell) 
& 1 & 1 & 12 & 2 & 1 & 3 & 2 & 4 & 3 & 1 & 4 & 6 \\
\hline
\hline
\ell \bmod 24 
& 12 & 13 & 14 & 15 & 16 & 17 & 18 & 19 & 20 & 21 & 22 & 23 \\
\hline
\psi(\ell) 
& 1 & 1 & 6 & 4 & 1 & 3 & 4 & 2 & 3 & 1 & 2 & 12 \\
\hline
\end{array}
\]

Noting that $\vect{\bb{k}, d}^0(S^{2 \ell})$ acts on $\vect{\bb{k}, d}^0(\mathbb{CP}^\ell)$, Hu also calculates this action on the above groups, as well as showing that pullback along the inclusion $\bb{CP}^{\ell - 1} \to \bb{CP}^\ell$ induces the zero homomorphism \cite[Propositions 1.3 and 1.4]{Hu}. Opie \cite{Opie} has since generalised the case $d = \ell - 1$ to bundles with any prescribed Chern classes, and this program has been pursued much further by Chatham--Hu--Opie \cite{ChathamHuOpie}.

In this paper we take an orthogonal approach. A complex vector bundle $E$ has an underlying \emph{realification} $rE$: the real vector bundle obtained by forgetting the complex structure on $E$. Our main result describes the effect of realification and stabilisation on vector bundles over complex projective space in small corank, i.e., just outside the stable range. We state it after localisation at 2: for the reader unfamiliar with localisation of abelian groups, localisation at 2 is intuitively given by taking the subgroup comprised of the free and 2-power torsion summands, i.e.~by `discarding odd-primary information'.

\begin{xxthm} \label{thm: CPell diagram}
After localisation at 2, the action of realification and stabilisation on stably trivial vector bundles on complex projective space $\bb{CP}^\ell$ for $\ell \geq 5$ are as shown in~\cref{fig:CPell diagram} for small corank.
\begin{figure}[ht]
    \begin{center}
\begin{tikzpicture}[commutative diagrams/every diagram]
\matrix[matrix of math nodes, name=m, ampersand replacement=\&,
        row sep=1.5em, column sep=0.5cm,
        commutative diagrams/every cell]{
\&[-0.3cm]
\scalebox{0.75}{$\begin{cases}\bb{Z}/4 & \ell\equiv 2,7\ (8)\\ \bb{Z}/2 & \ell\equiv 3,6\ (8)\\ 0 & \text{otherwise}\end{cases}$}
\& \&
\scalebox{0.75}{$\begin{cases}\bb{Z}/2 & \ell\text{ odd}\\ 0 & \ell\text{ even}\end{cases}$}
\& \&
0 \\
\& \vect{\bb{C},\,\ell-2}^0 \& \& \vect{\bb{C},\,\ell-1}^0 \& \& \vect{\bb{C},\,\ell}^0 \\
\& \& \& \& \& \\[1.8em]
\& \vect{\bb{R},\,2\ell-4}^0 \&[0.8cm] \vect{\bb{R},\,2\ell-3}^0 \&[0.7cm] \vect{\bb{R},\,2\ell-2}^0 \& \vect{\bb{R},\,2\ell-1}^0 \& \vect{\bb{R},\,2\ell}^0 \\
\& \scalebox{0.75}{$\begin{cases}\bb{Z} & \ell\equiv 0,1\ (4)\\ \bb{Z} \oplus\bb{Z}/2 & \ell\equiv 2,3\ (4)\end{cases}$}
\& \scalebox{0.75}{$\begin{cases}0 & \ell\equiv 0\ (4)\\ \bb{Z}/2 & \ell\equiv 1,2\ (4)\\ (\bb{Z}/2)^2 & \ell\equiv 3\ (4)\end{cases}$}
\& \scalebox{0.75}{$\begin{cases}\bb{Z} \oplus(\bb{Z}/2)^2 & \ell\text{ odd}\\ \bb{Z} & \ell\text{ even}\end{cases}$}
\& \scalebox{0.75}{$\begin{cases}\bb{Z}/2 & \ell\text{ odd}\\ 0 & \ell\text{ even}\end{cases}$}
\& \bb{Z} \\};
\path[commutative diagrams/.cd, every arrow, every label]
(m-2-2) edge node {$0$} (m-2-4)
(m-2-4) edge node {$0$} (m-2-6)
(m-4-2) edge node {\scalebox{0.8}{$\begin{cases}\begin{psmallmatrix}1\end{psmallmatrix} & \ell\equiv 1\ (4)\\ \begin{psmallmatrix}0&1\end{psmallmatrix} & \ell\equiv 2\ (4)\\ \begin{psmallmatrix}0&1\\1&0\end{psmallmatrix} & \ell\equiv 3\ (8) \\ \begin{psmallmatrix}0&1\\0&0\end{psmallmatrix} & \ell\equiv 7\ (8)\end{cases}$}} (m-4-3)
(m-4-3) edge node {\scalebox{0.8}{$\begin{cases}\begin{psmallmatrix}0\\0\\1\end{psmallmatrix} & \ell\equiv 1\ (4)\\ \begin{psmallmatrix}0&0\\0&0\\0&1\end{psmallmatrix} & \ell\equiv 3\ (4)\end{cases}$}} (m-4-4)
(m-4-4) edge node {\scalebox{0.8}{$\begin{psmallmatrix}0&1\end{psmallmatrix}\quad \ell\text{ odd}$}} (m-4-5)
(m-4-5) edge node {$0$} (m-4-6)
(m-2-2) edge node[swap] {\scalebox{0.8}{$\begin{psmallmatrix}0\\1\end{psmallmatrix}\quad \ell\equiv 2,3\ (4)$}} (m-4-2)
(m-2-4) edge[commutative diagrams/hook] node {\scalebox{0.8}{$\begin{psmallmatrix}0\\1\\0\end{psmallmatrix}\quad \ell\text{ odd}$}} (m-4-4)
(m-2-6) edge node {$0$} (m-4-6)
(m-1-2) edge[commutative diagrams/equal] (m-2-2)
(m-1-4) edge[commutative diagrams/equal] (m-2-4)
(m-1-6) edge[commutative diagrams/equal] (m-2-6)
(m-4-2) edge[commutative diagrams/equal] (m-5-2)
(m-4-3) edge[commutative diagrams/equal] (m-5-3)
(m-4-4) edge[commutative diagrams/equal] (m-5-4)
(m-4-5) edge[commutative diagrams/equal] (m-5-5)
(m-4-6) edge[commutative diagrams/equal] (m-5-6);
\end{tikzpicture}
\end{center}
    \caption{The action of realification and stabilisation on stably trivial bundles over complex projective space for small corank after localisation at 2. For compactness we write $\vect{\bb{k}, d}^0 : =\vect{\bb{k}, d}^0(\mathbb{CP}^\ell)_{(2)}$, and use the convention that any map that is not written is zero.}
    \label{fig:CPell diagram}
\end{figure}
\end{xxthm}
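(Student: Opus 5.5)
We translate the bundle problem into stable homotopy theory using Weiss calculus, in the metastable range where the groups $\vect{\bb{k},d}^0(X)$ acquire their abelian structure (\cref{thm: stably trivial stunted projective} and \cref{table:ranges}).

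\textbf{Identifying the groups.} A stably trivial bundle of rank $d$ is a lift of the constant map $X\to \mathsf{BU}$ (resp. $\mathsf{BO}$) through $\mathsf{BU}(d)$ (resp. $\mathsf{BO}(d)$). Such lifts are classified by maps into the homotopy fibre $\mathsf{U}/\mathsf{U}(d)$ (resp. $\mathsf{O}/\mathsf{O}(d)$). The first derivative in unitary (resp. orthogonal) calculus identifies this fibre, in a range, with the stunted projective space:
\[
\mathsf{U}/\mathsf{U}(d)\simeq_{\text{range}} \Omega^\infty\Sigma^\infty\Sigma\, \bb{CP}^\infty_d \simeq \Sigma \bb{CP}^\infty/\bb{CP}^{d-1},
\qquad
\mathsf{O}/\mathsf{O}(d)\simeq_{\text{range}} \Sigma \bb{RP}^\infty_d .
\]
Taking $\pi_0$ of mapping spaces from $\bb{CP}^\ell$ then identifies the groups with cohomotopy groups, in the metastable range:
\[
\vect{\bb{C},d}^0(\bb{CP}^\ell)\cong\{\bb{CP}^\ell,\Sigma\bb{CP}^\infty_d\},
\qquad
\vect{\bb{R},d}^0(\bb{CP}^\ell)\cong\{\bb{CP}^\ell,\Sigma \bb{RP}^\infty_d\}.
\]
For coranks $\le 2$ with $\ell\ge5$ this range holds, and only finitely many cells of each stunted space matter.

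\textbf{Identifying the maps.} Stabilisation is induced by the collapse/inclusion maps between stunted projective spaces, $\bb{CP}^\infty_d\to\bb{CP}^\infty_{d+1}$ and $\bb{RP}^\infty_d\to\bb{RP}^\infty_{d+1}$. Realification $\mathsf{U}/\mathsf{U}(d)\to \mathsf{O}/\mathsf{O}(2d)$ is induced on first derivatives by a map of spectra $\Sigma\bb{CP}^\infty_d\to \Sigma\bb{RP}^\infty_{2d}$. We identify this map with the stable map coming from the quotient $S(\bb{C}^n)/S^1 \leftarrow S(\bb{C}^n)\to S(\bb{C}^n)/\{\pm1\}$, i.e. the transfer for the $S^1/\{\pm1\}$-bundle $\bb{RP}^{2n-1}\to\bb{CP}^{n-1}$, suitably Thomified. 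Checking this via the natural transformation on derivatives, as a $\mathsf{U}(1)$-equivariant comparison of the spheres, is the key structural input.

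\textbf{Computation.} Localised at 2, each group is computed with the Atiyah--Hirzebruch spectral sequence in stable cohomotopy for the target complex, equivalently by cellular analysis of low-dimensional stable homotopy. The only nontrivial group data needed are $\pi^s_0,\pi^s_1,\pi^s_2,\pi^s_3=\bb{Z},\bb{Z}/2,\bb{Z}/2,\bb{Z}/8$. The ingredients are:
\begin{itemize}
\item the attaching maps of the top cells of $\bb{CP}^\ell$, namely $\eta$ according to the parity of $\ell$, and $\nu$/$2\nu$ data governed by $\ell$ mod 8 via $e$-invariants;
\item the corresponding attaching maps in the bottom of $\bb{RP}^\infty_{2d}$ (with $2$, $\eta$, $\nu$ determined by $d$ mod 4 and mod 8);
\item cross-checks against Hu's orders $\psi(\ell)$ localised at 2, which fixes the $\bb{Z}/4$ versus $\bb{Z}/2$ ambiguities.
\end{itemize}
Once generators are named, the matrices follow by composing with the cellular maps. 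The stabilisation maps are then read off from the bottom-cell inclusions. Realification is read off by recording its effect on bottom cells: $\bb{CP}^\infty_d$'s bottom cell $S^{2d}$ maps to the bottom of $\bb{RP}^\infty_{2d}$, $S^{2d}$, by degree 1 or 2. The vanishing of the complex stabilisation maps recovers Hu's results.

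\textbf{Main obstacle.} The hardest step is resolving the differentials and extension problems in the spectral sequences. The specific difficulties are the $(\bb{Z}/2)^2$ versus $\bb{Z}/4$ distinctions and pinning down the realification map on $\nu$-detected classes, which is what separates $\ell\equiv3$ from $\ell\equiv 7$ mod 8. First I would use $J$-homomorphism/$e$-invariant (Adams operation) computations in $KO$ to detect these classes, and the comparison with Hu's counts to rigidify the extensions.
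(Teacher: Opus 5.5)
Your outline has the same architecture as the paper: Weiss calculus reduces to stable maps into stunted projective spaces, which are then computed by the Atiyah--Hirzebruch spectral sequence over the cells of $\bb{CP}^\ell$, with $d_2=\eta^*$ according to parity and $d_4=(\lambda\nu)^*$ from Mosher. However, the set-up contains an error that changes the answers.

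\textbf{The real representing object is off by one suspension.} In the real case the first layer is $D_1\Bo(\R^d)\simeq\Omega^\infty\Sigma^\infty\bb{RP}^\infty_d$, with no suspension. Indeed $O/O(d)$ is $(d-1)$-connected, and the shift $\Sigma^{\dim_\R(\bb{k})-1}$ only appears for $\bb{k}=\C$. With your $\Sigma\bb{RP}^\infty_d$ you would get $\{\bb{CP}^\ell,\Sigma\bb{RP}^\infty_{2\ell}\}=0$ instead of $\bb{Z}$ in real corank $0$. In real corank $1$ you would get $\bb{Z}/2$ instead of $0$ for $\ell$ even.

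\textbf{Realification is misdescribed.} The map is $r\colon\Sigma\bb{CP}^\infty_d\to\bb{RP}^\infty_{2d}$. Its bottom cell $S^{2d+1}$ goes onto the $(2d+1)$-cell with degree a $2$-local unit, not bottom cell to bottom cell with degree $1$ or $2$. The input the paper actually uses throughout is that $r$ is a mod-$2$ cohomology surjection. This is proved by comparing the fibre sequences with fibres $\Sigma S^V$ and $S^{\R\oplus rV}$ and inducting over cells. Your identification of $r$ with a Thomified circle transfer is unproved and unnecessary. The digression on natural transformations shows there are many candidate maps $D_1\Bu\to D_1(r^\ast\Bo)$, so pinning down the right one would take real work.

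\textbf{A missing quotient.} Maps into $U/U(d)$ classify bundles \emph{with} a stable trivialisation. They agree with stably trivial bundles here only because the source of $\delta$ vanishes: $\widetilde{KU}^{-1}(\bb{CP}^\ell)=\widetilde{KO}^{-1}(\bb{CP}^\ell)=0$. You need to say this.

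\textbf{The computation itself is missing, and the plan for it would not work as stated.} The $E_2$-page is $H^p(\bb{CP}^\ell;\pi_{-q}(\bb{RP}^\infty_{2\ell-i}))$. So you need the $2$-local groups $\pi_m(\bb{RP}^\infty_d)$ and $\pi_m(\Sigma\bb{CP}^\infty_d)$ for $m-d\le 4$, their $\eta$- and $\nu$-multiplications, and the truncation and realification maps between them. This is the even-sphere part of \cref{thm: sphere diagrams 2}, obtained from Adams spectral sequences.
\begin{itemize}
\item These groups are not determined by $\pi^s_{\le 3}$ and ``$2,\eta,\nu$'' attaching maps. For $\ell$ even, $\pi_{2\ell-1}(\bb{RP}^\infty_{2\ell-4})$ is $\bb{Z}/16\oplus\bb{Z}/4$ or $(\bb{Z}/8)^2$ according as $\ell\equiv 0$ or $2\ (4)$. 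This depends on an attaching map with a Moore-spectrum component $v_1$ (cf.\ \cref{lem: 4m+4-cell attaching map}).
\item Whether the diagonal torsion survives is decided by the $\eta$-action on such groups. Hu's orders constrain only the complex groups, which the paper simply quotes.
\end{itemize}

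\textbf{Several maps jump filtration and cannot be read off cell by cell.}
\begin{itemize}
\item For $\ell\equiv 1\ (4)$, $\vect{\bb{R},2\ell-4}^0\cong\bb{Z}$ lives in column $2\ell-4$ while $\vect{\bb{R},2\ell-3}^0\cong\bb{Z}/2$ lives in column $2\ell$. So stabilisation is zero on the associated graded, yet it is nonzero. The paper proves this by writing $\bb{CP}^\ell_{\ell-2}\simeq\mathrm{cofib}\big(S^{2\ell-3}\oplus S^{2\ell-1}\xrightarrow{(\eta,\nu)}S^{2\ell-4}\big)$. It then extends $4$ times the bottom cell of $\bb{RP}^\infty_{2\ell-4}$ over this complex and uses that $\nu$ has order $8$.
\item The $\ell\equiv 3$ versus $7\ (8)$ difference in the real row sits in this same stabilisation map for $\ell\equiv 3\ (4)$, not in realification on $\nu$-detected classes. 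It comes from Mosher's $\lambda=2$ versus $0$, and the paper treats it by a Mayer--Vietoris reduction to $\Sigma^{2\ell-4}C_{2\nu}$. By contrast, $\vect{\bb{C},\ell-2}^0\to\vect{\bb{R},2\ell-4}^0$ is the same surjection onto the torsion $\bb{Z}/2$ in both cases, seen on $\pi_{2\ell}$.
\end{itemize}

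\textbf{Complex stabilisation.} The vanishing of $\vect{\bb{C},\ell-2}^0\to\vect{\bb{C},\ell-1}^0$ does not come from Hu. It follows from a diagram chase: the composite through the real row is zero, and $\vect{\bb{C},\ell-1}^0\to\vect{\bb{R},2\ell-2}^0$ is injective.
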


Some words on interpreting the figure are in order: the top row consists of the groups computed by Hu (as \cite[Theorems 1.1 and 1.2]{Hu}), and the bottom row consists of the corresponding groups of real bundles. Maps are written as matrices, with stabilisation horizontally and realification vertically. Groups `to the right' of the figure are in the stable range, and hence automatically zero.

The main reason for stating the result after localisation at 2 is for simplicity and compactness (e.g.~in light of Hu's theorem, the group $\vect{\bb{C},  \ell - 2}^0$ is 24-periodic integrally, and only 8-periodic 2-locally). The odd-primary information is much sparser, and the determined reader can use the odd-primary results of \cref{section: odd primes} to recover integral statements.

This result has a number of down-to-earth consequences about vector bundles. An example of the sort of information which can be deduced from \cref{fig:CPell diagram} is as follows.
\begin{itemize}
    \item If $\ell \geq 5$ and $\ell \equiv 2,3 \pmod{4}$ then there is a unique non-trivial real rank $2 \ell - 4$ vector bundle over $\bb{CP}^\ell$ which admits a stably trivial complex structure. This real vector bundle is represented by the non-trivial element of the image of left-hand realification map. The horizontal maps tell us that $E \oplus \underline{\bb{R}}^2$ is trivial, but $E \oplus \underline{\bb{R}}$ is not. Note that the stable range begins in rank $2 \ell + 1$, so a priori we only knew that $E \oplus \underline{\bb{R}}^5$ is trivial. 
    \item In the remaining cases, i.e., if $\ell \geq 5$ and $\ell \equiv 0, 1 \pmod{4}$, then no such real bundle admits a stably trivial complex structure since the left-hand realification map is zero in these cases. This is not enough to conclude that no such real bundle admits a stably \emph{non}-trivial complex structure, however, because the realification map
    \[\widetilde{KU}^0(\bb{CP}^\ell) \xrightarrow{\ r \ }\widetilde{KO}^0(\bb{CP}^\ell)\] is not injective, see \cite[Theorem 3.9]{Sanderson}.
\end{itemize}

In order to prove \cref{thm: CPell diagram}, we (essentially) first do the corresponding calculations for bundles over spheres. These computations depend on the parity of the dimension and are as follows.

\begin{xxthm} \label{thm:sphere diagrams}
After localisation at 2, the action of realification and stabilisation on stably trivial vector bundles on spheres of dimension at least 16 are as shown in~\cref{fig:even sphere diagram} ($S^{2n}$, $n \geq 8$) and~\cref{fig:odd sphere diagram} ($S^{2n+1}$, $n \geq 8$), respectively. 
\begin{figure}[ht]
     \begin{center}
\begin{tikzpicture}[commutative diagrams/every diagram]
\matrix[matrix of math nodes, name=m, ampersand replacement=\&,
        row sep=1.5em, column sep=0.6cm,
        commutative diagrams/every cell]{
\&[-0.3cm]
\scalebox{0.75}{$\begin{cases}
  0        & n\equiv 1,5\ (8)\\
  \bb{Z}/2 & n\equiv 0,3,4\ (8)\\
  \bb{Z}/4 & n\equiv 6,7\ (8)\\
  \bb{Z}/8 & n\equiv 2\ (8)
\end{cases}$}
\& \&
\scalebox{0.75}{$\begin{cases}\bb{Z}/2 & n\text{ odd}\\ 0 & n\text{ even}\end{cases}$}
\& \&
0 \\
\& \vect{\bb{C},\,n-2}^0 \& \& \vect{\bb{C},\,n-1}^0 \& \& \vect{\bb{C},\,n}^0 \\
\& \& \& \& \& \\[1.8em]
\& \vect{\bb{R},\,2n-4}^0 \&[0.8cm] \vect{\bb{R},\,2n-3}^0 \&[1.4cm] \vect{\bb{R},\,2n-2}^0 \&[1.2cm] \vect{\bb{R},\,2n-1}^0 \& \vect{\bb{R},\,2n}^0 \\
\& \scalebox{0.75}{$\begin{cases}\bb{Z}/2 & n\not\equiv 1\ (4)\\ 0 & n\equiv 1\ (4)\end{cases}$}
\& \scalebox{0.75}{$\begin{cases}(\bb{Z}/2)^2 & n\equiv 3\ (4)\\ \bb{Z}/2 & \text{otherwise}\end{cases}$}
\& \scalebox{0.75}{$\begin{cases}(\bb{Z}/2)^2 & n\text{ odd}\\ 0 & n\text{ even}\end{cases}$}
\& \scalebox{0.75}{$\begin{cases}\bb{Z}/2 & n\text{ odd}\\ 0 & n\text{ even}\end{cases}$}
\& \bb{Z} \\};
\path[commutative diagrams/.cd, every arrow, every label]
(m-2-2) edge node {$0$} (m-2-4)
(m-2-4) edge node {$0$} (m-2-6)
(m-4-2) edge node {\scalebox{0.8}{$\begin{cases}
  \begin{psmallmatrix}1\\0\end{psmallmatrix} & n\equiv 3\ (4)\\
  \begin{psmallmatrix}1\end{psmallmatrix} & n\text{ even}
\end{cases}$}} (m-4-3)
(m-4-3) edge node {\scalebox{0.8}{$\begin{cases}
  \begin{psmallmatrix}0&0\\0&1\end{psmallmatrix} & n\equiv 3\ (4)\\
  \begin{psmallmatrix}0\\1\end{psmallmatrix} & n\equiv 1\ (4)
\end{cases}$}} (m-4-4)
(m-4-4) edge node {\scalebox{0.8}{$\substack{\begin{psmallmatrix}1&0\end{psmallmatrix}\\[2pt]n\text{ odd}}$}} (m-4-5)
(m-4-5) edge node {$0$} (m-4-6)
(m-2-2) edge[commutative diagrams/two heads] (m-4-2)
(m-2-4) edge[commutative diagrams/hook] node {\scalebox{0.8}{$\begin{psmallmatrix}1\\0\end{psmallmatrix}\quad n\text{ odd}$}} (m-4-4)
(m-2-6) edge node {$0$} (m-4-6)
(m-1-2) edge[commutative diagrams/equal] (m-2-2)
(m-1-4) edge[commutative diagrams/equal] (m-2-4)
(m-1-6) edge[commutative diagrams/equal] (m-2-6)
(m-4-2) edge[commutative diagrams/equal] (m-5-2)
(m-4-3) edge[commutative diagrams/equal] (m-5-3)
(m-4-4) edge[commutative diagrams/equal] (m-5-4)
(m-4-5) edge[commutative diagrams/equal] (m-5-5)
(m-4-6) edge[commutative diagrams/equal] (m-5-6);
\end{tikzpicture}
\end{center}
    \caption{The action of realification and stabilisation on stably trivial bundles over even spheres $S^{2n}$ for $n \geq 8$ after localisation at 2. For compactness we write $\vect{\bb{k}, d}^0 : =\vect{\bb{k}, d}^0(S^{2n})_{(2)}$, and use the convention that any map that is not written is zero.}
    \label{fig:even sphere diagram}
\end{figure}
\begin{figure}[ht]
    \begin{center}
\begin{tikzpicture}[commutative diagrams/every diagram]
\matrix[matrix of math nodes, name=m, ampersand replacement=\&,
        row sep=1.5em, column sep=0.6cm,
        commutative diagrams/every cell]{
\&[-0.3cm]
\scalebox{0.75}{$\begin{cases}\bb{Z}/n!\oplus\bb{Z}/2 & n\text{ odd}\\ \bb{Z}/\tfrac{1}{2}n! & n\text{ even}\end{cases}$}
\& \&
\scalebox{0.75}{$\bb{Z}/n!$}
\& \&
\scalebox{0.75}{$0$} \\
\& \vect{\bb{C},\,n-1}^0 \& \& \vect{\bb{C},\,n}^0 \& \& \vect{\bb{C},\,n+1}^0 \\
\& \& \& \& \& \\[1.8em]
\& \vect{\bb{R},\,2n-2}^0 \&[1.7cm] \vect{\bb{R},\,2n-1}^0 \&[1.3cm] \vect{\bb{R},\,2n}^0 \&[1.3cm] \vect{\bb{R},\,2n+1}^0 \&[0.8cm] \vect{\bb{R},\,2n+2}^0 \\[2.2em]
\& \scalebox{0.75}{$\begin{cases}(\bb{Z}/8)^2 & n\equiv 1\ (4)\\ \bb{Z}/4 & n\equiv 0,2\ (4)\\ \bb{Z}/16\oplus\bb{Z}/4 & n\equiv 3\ (4)\end{cases}$}
\& \scalebox{0.75}{$\begin{cases}\bb{Z}/8 & n\text{ odd}\\ \bb{Z}/2 & n\text{ even}\end{cases}$}
\& \scalebox{0.75}{$\begin{cases}\bb{Z}/4 & n\text{ odd}\\ (\bb{Z}/2)^2 & n\text{ even}\end{cases}$}
\& \scalebox{0.75}{$\bb{Z}/2$}
\& \scalebox{0.75}{$0$} \\};
\path[commutative diagrams/.cd, every arrow, every label]
(m-2-2) edge node {\scalebox{0.8}{$\begin{cases}\begin{psmallmatrix}1&0\end{psmallmatrix} & n\text{ odd}\\ (2) & n\text{ even}\end{cases}$}} (m-2-4)
(m-2-4) edge node {$0$} (m-2-6)
(m-4-2) edge node[swap, yshift=-2.5mm] {\scalebox{0.8}{$\begin{cases}\begin{psmallmatrix}1&0\end{psmallmatrix} & n\equiv 1\ (4)\\ \begin{psmallmatrix}1&2\end{psmallmatrix} & n\equiv 3\ (4)\end{cases}$}} (m-4-3)
(m-4-3) edge node[swap, yshift=-2.5mm] {\scalebox{0.8}{$\begin{cases}(1) & n\text{ odd}\\ \begin{psmallmatrix}0\\1\end{psmallmatrix} & n\text{ even}\end{cases}$}} (m-4-4)
(m-4-4) edge node[swap, yshift=-2.5mm] {\scalebox{0.8}{$\begin{cases}(1) & n\text{ odd}\\ \begin{psmallmatrix}1&0\end{psmallmatrix} & n\text{ even}\end{cases}$}} (m-4-5)
(m-4-5) edge node[swap] {$0$} (m-4-6)
(m-2-2) edge
  node[swap] {\scalebox{0.8}{$\begin{cases}\begin{psmallmatrix}1&4\\0&0\end{psmallmatrix} & n\equiv 1\ (4)\\ \begin{psmallmatrix}1&0\\0&2\end{psmallmatrix} & n\equiv 3\ (4)\\ (1) & n\equiv 0\ (4)\\ (2) & n\equiv 6\ (8)\end{cases}$}} (m-4-2)
(m-2-4) edge node {\scalebox{0.8}{$\begin{cases}(1) & n\text{ odd}\\ \begin{psmallmatrix}1\\0\end{psmallmatrix} & n\text{ even}\end{cases}$}} (m-4-4)
(m-2-6) edge node {$0$} (m-4-6)
(m-1-2) edge[commutative diagrams/equal] (m-2-2)
(m-1-4) edge[commutative diagrams/equal] (m-2-4)
(m-1-6) edge[commutative diagrams/equal] (m-2-6)
(m-4-2) edge[commutative diagrams/equal] (m-5-2)
(m-4-3) edge[commutative diagrams/equal] (m-5-3)
(m-4-4) edge[commutative diagrams/equal] (m-5-4)
(m-4-5) edge[commutative diagrams/equal] (m-5-5)
(m-4-6) edge[commutative diagrams/equal] (m-5-6);
\end{tikzpicture}
\end{center}
    \caption{The action of realification and stabilisation on stably trivial bundles over odd spheres $S^{2n+1}$ for $n \geq 8$ after localisation at 2. For compactness we write $\vect{\bb{k}, d}^0 : =\vect{\bb{k}, d}^0(S^{2n+1})_{(2)}$, and use the convention that any map that is not written is zero.}
    \label{fig:odd sphere diagram}
\end{figure}
\end{xxthm}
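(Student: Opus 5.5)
The plan is to translate everything into unstable homotopy groups of the fibres of stabilisation, and then, in the metastable range, into stable homotopy via Weiss calculus. For a sphere $S^m$ we have $\vect{\bb{k},d}(S^m)=\pi_{m-1}(\mathrm{U}(d))$ or $\pi_{m-1}(\mathrm{O}(d))$ (free homotopy classes, but the groups are abelian here, so basing does not matter). The stably trivial bundles are the kernel of $\pi_{m-1}(G(d))\to\pi_{m-1}(G)$, and by the long exact sequence of $G(d)\to G\to G/G(d)$ this kernel is the image of the boundary map
\[
\pi_m(G/G(d))\to\pi_{m-1}(G(d)).
\]
The quotients are Stiefel manifolds $\mathrm{U}/\mathrm{U}(d)$ and $\mathrm{O}/\mathrm{O}(d)$. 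In the metastable range their first Weiss derivative is a Thom spectrum over $\bb{CP}^\infty$ or $\bb{RP}^\infty$; concretely $\mathrm{O}/\mathrm{O}(d)\simeq\Omega^\infty\Sigma^{-1}$ of a stunted projective space $\bb{RP}^\infty_d$ in a range (James), and similarly $\mathrm{U}/\mathrm{U}(d)$ is modelled by $\Sigma^{-1}\Sigma^\infty\bb{CP}^\infty_d$ suitably suspended. So the stably trivial groups are cokernels
\[
\coker\bigl(\pi_m(G)\to\pi_m^s(\Sigma^{-1}P_d)\bigr),
\]
which is presumably the content of \cref{thm: stably trivial stunted projective}. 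Stabilisation $d\to d+1$ is induced by the quotient map $P_d\to P_{d+1}$ collapsing the bottom cell. Realification is induced by the map of derivatives $\Sigma^{?}\bb{CP}^\infty_d\to\bb{RP}^\infty_{2d}$, which is the standard Thom-spectrum map covering $\bb{RP}^\infty\to\bb{CP}^\infty$, made compatible with $r\colon \mathrm{U}\to\mathrm{O}$.

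With this in place, each group in \cref{fig:even sphere diagram} and \cref{fig:odd sphere diagram} only involves the bottom two or three cells of the stunted projective spectrum. I will therefore compute $\pi_*^s$ of 2-, 3- and 4-cell complexes in stems $\le 3$, using their attaching maps. For $\bb{RP}^\infty_d$ these are determined by $d$ mod $8$ through $Sq^i$ on $H^*$ and the $e$-invariant/$J$-homomorphism; for $\bb{CP}$ they are determined by $\ell$ mod small powers of $2$. The resulting groups then have to be reduced modulo the image of $\pi_m(G)=\pi_m(\mathrm{U})$ or $\pi_m(\mathrm{O})$, which is given by Bott periodicity. The image is controlled by the composite $\pi_m(G)\to\pi_m^s(\Sigma^{-1}P_d)$, which factors through the $J$-homomorphism onto the bottom cell. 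This explains the appearance of $n!$ and of $\bb{Z}/8,\bb{Z}/16$ in the odd case: the boundary maps hit $\pi_{2n}(\mathrm{U}(n))=\bb{Z}/n!$ type groups.

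Once each group is presented as a cokernel of an explicit map between explicit finite abelian groups, the arrows in the figures are obtained by tracking generators. Horizontal arrows come from the cofibre sequences of cells. Vertical arrows come from the cellular description of the realification map on bottom cells: $S^{2k}\to S^{2k}$ of degree $1$ on the bottom cell, with the next cell mapping by a cell-structure-determined element, typically $\eta$ or a multiple of $2$. I would verify compatibility with Hu's groups in the top row as a check. The hypothesis $n\ge 8$ is what guarantees that all groups lie in the metastable range, so the Weiss tower is truncated at the first derivative for the relevant stems.

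The main obstacle is the extension problems and the precise realification map on the second and third cells. For example, deciding whether a group is $\bb{Z}/16\oplus\bb{Z}/4$ or some other extension, or whether a matrix entry is $1$ or $0$ modulo $2$, requires knowing hidden $\eta$ and $2$ extensions in the Atiyah–Hirzebruch spectral sequence. I would attack these first with $KO$- and $KU$-based $e$-invariants (Adams operations on $\widetilde{K}(\bb{RP}^\infty_d)$), and then with Toda brackets where that fails. These case distinctions by $n$ mod $8$ are where I expect most of the work, and most of the risk of error, to lie.
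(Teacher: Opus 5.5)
Your framework matches the paper's \cref{main theorem: calc}: stably trivial bundles as $\coker\bigl(\pi_m(G)\to\pi_m(G/G(d))\bigr)$, with $G/G(d)$ replaced metastably by the first Weiss layer, stabilisation as truncation, and realification as a map $\Sigma\bb{CP}^\infty_d\to\bb{RP}^\infty_{2d}$. There are two indexing slips. First, $\mathrm{O}/\mathrm{O}(d)$ is modelled by $\Omega^\infty\Sigma^\infty\bb{RP}^\infty_d$, with no desuspension. Second, realification does not send a bottom cell $S^{2k}$ to $S^{2k}$ by degree one: the bottom cell $S^{2d+1}$ of $\Sigma\bb{CP}^\infty_d$ goes, with unit degree, to the \emph{second} cell of $\bb{RP}^\infty_{2d}$.

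The genuine gap is the step where you quotient by the image of $\delta_{\bb{k}}$. Your claim that this image factors through the $J$-homomorphism onto the bottom cell cannot be right. $J$ on $\pi_m(\mathrm{O})$ lands in stem $m\geq 16$, whereas a class on the bottom cell in $\pi^s_m(\bb{RP}^\infty_d)$ lies in stem $m-d\leq 4$. What is true, and what the real row needs, is that $\delta_{\R}=0$ throughout. Equivalently, $\pi_m(SO(d))\to\pi_m(SO)$ is onto for $d\geq\frac{m+3}{2}$; this is the Davis--Mahowald ``$SO(d)$ of origin'' theorem (\cref{thm:SOnOfOrigin}), which needs $m>15$. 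That, not metastability (which only needs $n\geq 5$), is where the hypothesis $n\geq 8$ comes from. So the real groups, including $(\bb{Z}/8)^2$ and $\bb{Z}/16\oplus\bb{Z}/4$, are just $\pi^s_m(\bb{RP}^\infty_d)_{(2)}$ and are not produced by any $J$-quotient. The $\bb{Z}/16$ comes from the attaching map of the $(4m+4)$-cell of $\bb{RP}^{4m+4}_{4m}$, which has coefficient $2u$ on the top summand and a unit coefficient on $jv_1$ (\cref{lem: 4m+4-cell attaching map}).

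For complex bundles over $S^{2n+1}$, $\delta_{\C}$ is genuinely non-zero and must be known exactly, not just as a ``$\bb{Z}/n!$ type'' group. In the rank-$n$ case you could quote Bott's $\pi_{2n}(\mathrm{U}(n))\cong\bb{Z}/n!$; the paper instead combines Yokota's map $\Sigma\bb{CP}^\infty\to SU$, surjective on homotopy, with Mosher's Hurewicz image $n!\cdot\bb{Z}$. In rank $n-1$ with $n$ odd, you must show that $\delta_{\C}(1)=(\pm n!,x)\in\bb{Z}\oplus\bb{Z}/2$ has $x=0$. This is what gives the group $\bb{Z}/n!\oplus\bb{Z}/2$ and the stabilisation matrix $(1\;0)$. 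The paper obtains it from naturality of realification together with $\delta_{\R}=0$:
\begin{itemize}
\item $r_*(\pm n!,x)$ must vanish in $\pi^s_{2n+1}(\bb{RP}^\infty_{2n-2})$;
\item $r_*(1,0)$ has order $8$ or $16$, which divides $n!$;
\item $r_*(0,1)\neq 0$.
\end{itemize}
For $n$ even it uses that truncation $\pi_{2n+1}(\Sigma\bb{CP}^\infty_{n-1})\to\pi_{2n+1}(\Sigma\bb{CP}^\infty_{n})$ is multiplication by $2$. Your plan contains no step producing either fact.

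Finally, you defer the hard stable inputs to ``$e$-invariants and Toda brackets'' with no concrete mechanism. Examples are that the leftmost realification over odd spheres is $\begin{psmallmatrix}1&4\\0&0\end{psmallmatrix}$ or $\begin{psmallmatrix}1&0\\0&2\end{psmallmatrix}$, and that $\pi_{4m+1}(\Sigma\bb{CP}^\infty_{2m-1})\to\pi_{4m+1}(\bb{RP}^\infty_{4m-2})$ depends on $m$ mod $4$ via an Adams differential controlled by Mosher's $\lambda$. The paper settles these with Adams charts, compatible splittings of $\Sigma\bb{CP}^{2m+1}_{2m}$ and $\bb{RP}^{4m+3}_{4m}$, and Mosher's cell-structure and Hurewicz results.
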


As with above, we may make several deductions about vector bundles over spheres from \cref{fig:even sphere diagram,fig:odd sphere diagram}. Note first that (since we know for $m \geq 2d$ that $\pi_m(U(d))$ is torsion, and $\pi_m(U)$ is torsion-free) all complex bundles over spheres in our dimensional range are stably trivial. In particular, if a bundle admits a complex structure then that complex bundle is stably trivial. One can then see in \cref{fig:odd sphere diagram} that if $n$ is congruent to 3 modulo 4, and large enough, then $S^{2n+1}$ has 64 stably trivial real bundles of corank $3$, 32 of which admit complex structures. Adding copies of $\underline{\bb{R}}$ cuts this group successively down to a $\bb{Z}/8$, then a $\bb{Z}/4$, then a $\bb{Z}/2$. If additionally $n$ is congruent to 3 mod 8 then Adams \cite{AdamsVF} tells us that the tangent bundle $TS^{2n+1}$ admits 7, but not 8, linearly independent vector fields. In particular, $TS^{2n+1}$ must be the generator of this last $\bb{Z}/2$, and there are (a different) 32 possibilities for the bundle which remains after removing three of these linearly independent directions.

\subsection*{Methods}
This paper has two main parts. The first (\cref{part: calculus}) is concerned with producing representing objects for stably trivial vector bundles and phrasing realification in terms of these representing objects. In the complex case this was done by Hu~\cite{Hu}, and the two main ideas are as follows.

\subsubsection*{Idea 1: translating the problem to homotopy theory.} 
The data of an isomorphism class of a complex vector bundle on a space $X$ is equivalent to an element of the set $[X,\BU(d)]$ of homotopy classes of maps from $X$ to $\BU(d)$. Now consider the fibre sequence 
\[
U/U(d) \longrightarrow \BU(d) \longrightarrow \BU
\]
obtained by delooping the fibre sequence associated to the homogeneous space $U/U(d)$. Under the above identification, the the induced map $[X,\BU(d)] \to [X,\BU]$ corresponds to stabilisation and hence $[X,U/U(d)]$ corresponds to the set of rank $d$ complex bundles $E$ \emph{equipped with a stable trivialisation}. Forgetting the choice of trivialisation gives a surjection $[X, U/U(d)] \twoheadrightarrow \vect{\bb{C}, d}^0(X)$ with kernel the image of the connecting map
\[
[X,U] \xrightarrow{(\delta_{\bb{C}})_*} [X,U/U(d)].
\]
Analogous reasoning applies in the real case, and this translates the problem of calculating $\vect{\bb{k}, d}^0(X)$ to (unstable) homotopy theory.

\subsubsection*{Idea 2: making the homotopy theory problem computable.} The infinite-dimensional Stiefel manifolds $O/O(d)$ and $U/U(d)$ have rather complicated homotopy theory, so we attempt to approximate $O/O(d)$ by a simpler space. Thinking of the projective space $\bb{RP}^{d-1}$ as a subspace of $O(d)$ - by sending a line $L$ to the automorphism flipping $L$ and fixing the orthogonal complement - gives a map
\[
\bb{RP}^\infty_d :=  \bb{RP}^\infty/\bb{RP}^{d-1} \longrightarrow O/O(d)
\]
which is known to be a good approximation. For example, Whitehead \cite{WhiteheadSphere} showed that $\bb{RP}^{\infty}_d$ is the $(2d-1)$-skeleton of $O/O(d)$ (c.f.~\cref{table:ranges}), and Miller \cite{Miller} showed that this inclusion gives the first summand of a stable splitting of $O/O(d)$ (proving also an analogous splitting of $U/U(d)$).

\emph{Weiss calculus} \cite{Weiss} gives an alternative way to approximate these infinite-dimensional Stiefel manifolds. Taking the real case as an example, the space $\BO(d)$ may be seen as the value at $\R^d$ of the functor $\Bo: \vect{\bb{R}} \to \T$ sending an inner product space $V$ to the classifying space $\Bo(V) = \BAut(V)$ of its automorphism group. In this framework, the map $\BO(d) \to \BO$ is (the value at $\R^d$) of the $0$-th polynomial approximation of $\Bo$, and the functor 
\[
O/\sf{o} : \vect{\bb{R}} \longrightarrow \T, \quad V \longmapsto O/O(V)
\]
may be through of as throwing away the $0$-polynomial part of $\Bo$. The linear approximation
\[
O/\sf{o} \longrightarrow P_1(O/\sf{o})
\]
of $O/\sf{o}$ is a good approximation of $O/O(d)$ and it follows from routine computations of Weiss that this linear approximation map is given by
\[
O/O(d) \longrightarrow \Omega^\infty\Sigma^\infty \bb{RP}^\infty_d.
\]
The direction of the approximation has changed, but the spirit remains the same, and it is almost certain that these approximations are related, a hint of which appears in~\cite[Example 10.2]{Weiss} and the role of the Miller splitting in Arone's~\cite{Arone} computation of the calculus of $\Bo$. Roughly speaking, the linear approximation supersedes the classical one, where crucially it comes for free as the infinite loop space of a spectrum with a simple cell structure. 
There is an analogous story in the complex case, which allows for the following description of stably trivial vector bundles and the behaviour of realification (see \cref{thm: realification vbs calc}) and stabilisation (see~\cref{thm: truncation is stabilisation}) in terms of stable homotopy theory.

\begin{xxthm}\label{main theorem: calc}
Let $X$ be a finite cell complex of dimension at least $8=2\cdot4$. In the metastable range
\begin{enumerate}
    \item stabilisation $\underline{\bb{k}} \oplus (-):{\vect{\bb{k},d}^0(X)} \to {\vect{\bb{k},d+1}^0(X)}$ is controlled by a commutative diagram
\[\begin{tikzcd}[ampersand replacement=\&,cramped]
	{[\Sigma X, \BAut] } \& {[\Sigma^\infty X,\Sigma^{\infty + \dim_\R(\bb{k})-1}\bb{kP}^\infty_{d}]} \& {\vect{\bb{k},d}^0(X)} \& 0 \\
	{[\Sigma X, \BAut] } \& {[\Sigma^\infty X,\Sigma^{\infty + \dim_\R(\bb{k})-1}\bb{kP}^\infty_{d+1}]} \& {\vect{\bb{k},d+1}^0(X)} \& 0
	\arrow["{\delta_\bb{k}}", from=1-1, to=1-2]
	\arrow[equals, from=1-1, to=2-1]
	\arrow[from=1-2, to=1-3]
	\arrow["{s_\ast}", from=1-2, to=2-2]
	\arrow["{\underline{\bb{k}} \oplus (-)}", from=1-3, to=2-3]
	\arrow["{\delta_\bb{k}}"', from=2-1, to=2-2]
	\arrow[from=2-2, to=2-3]
    \arrow[from=1-3, to=1-4]
    \arrow[from=2-3, to=2-4]
\end{tikzcd}\]
with exact rows, in which the maps $\delta_\bb{k}$ arise as connecting maps from the long exact sequence associated to the first layer of the corresponding Weiss tower, the middle vertical map is given by post-composition with the truncation map $t: \bb{kP}^\infty_{d} \to \bb{kP}^\infty_{d+1}$ which on $\bb{F}_2$-cohomology
    \[t^*:
    H^\ast(\bb{kP}^\infty_{d+1}; \bb{F}_2) \hookrightarrow H^\ast(\bb{kP}^\infty_{d}; \bb{F}_2)
    \]
    is an injection.
    \item realification $r: {\vect{\C,d}^0(X)} \to {\vect{\R,2d}^0(X)}$ is controlled by a commutative diagram
\[\begin{tikzcd}[ampersand replacement=\&,cramped]
	{[\Sigma X, \BU]} \& {[\Sigma^\infty X, \Sigma^{\infty +1}\bb{CP}^\infty_d]} \& {\vect{\C,d}^0(X)} \& 0\\
	{[\Sigma X, \BO]} \& {[\Sigma^\infty X, \Sigma^{\infty}\bb{RP}^\infty_{2d}]} \& {\vect{\R,2d}^0(X)} \& 0
	\arrow["{\delta_\C}", from=1-1, to=1-2]
	\arrow["{r_\ast}"', from=1-1, to=2-1]
	\arrow[from=1-2, to=1-3]
	\arrow["{r_\ast}"', from=1-2, to=2-2]
	\arrow["r"', from=1-3, to=2-3]
	\arrow["{\delta_\R}"', from=2-1, to=2-2]
	\arrow[from=2-2, to=2-3]
    \arrow[from=2-3, to=2-4]
    \arrow[from=1-3, to=1-4]
\end{tikzcd}\]
with exact rows, in which the middle vertical map is induced by post-composition with a specific map $r: \Sigma^{\infty +1}\bb{CP}^\infty_d \to \Sigma^{\infty}\bb{RP}^\infty_{2d}$ which on $\bb{F}_2$-cohomology
\[
r^* : H^\ast(\bb{RP}^\infty_{2d}; \bb{F}_2) \longtwoheadrightarrow H^\ast(\Sigma\bb{CP}^\infty_{d}; \bb{F}_2)
\]
is a surjection, and the leftmost vertical map is the usual $K$-theory realification (see e.g.~\cite{AdamsVF}).
\end{enumerate}
\end{xxthm}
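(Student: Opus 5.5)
The plan is to assemble the statement from three ingredients: the fibre sequences of Idea 1, the identification of the first Weiss layer, and the metastable-range comparison. Throughout, $\Bg$ denotes $\Bo$ or $\Bu$, and $\bb{k}/\sf{g}$ denotes the functor $V\mapsto O/O(V)$ or $U/U(V)$.

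\emph{Step 1: exact rows at the level of Stiefel manifolds.} The fibre sequence $\bb{k}/\sf{g}(\bb{k}^d)\to \Bg(\bb{k}^d)\to \BAut$ is principal: it is pulled back from the path fibration over $\BAut$, with $\Omega\BAut$ acting on the fibre. Apply $[X,-]$ to it. This gives an exact sequence of pointed sets
\[
[X,\Omega\BAut]\xrightarrow{(\delta_{\bb{k}})_*}[X,\bb{k}/\sf{g}(\bb{k}^d)]\to[X,\Bg(\bb{k}^d)]\to[X,\BAut].
\]
The stably trivial bundles are exactly the kernel of the last map. Principality identifies $\vect{\bb{k},d}^0(X)$ with the orbit set of $[\Sigma X,\BAut]$ acting on $[X,\bb{k}/\sf{g}(\bb{k}^d)]$. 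This orbit set becomes a cokernel once we know the action is through a group homomorphism into an abelian group. That is the next step.

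\emph{Step 2: replace Stiefel manifolds by the linear approximation.} For $V=\bb{k}^d$ we use Weiss's computation of the first derivative of $\Bg$: the first layer is $\Omega^\infty$ of $\Sigma^{\dim_\R\bb{k}-1}\Sigma^\infty \bb{kP}^\infty_d$. This is the same kind of identification as the Thom spectrum of $\dim_\R\bb{k}\cdot\gamma$ over $\bb{kP}^\infty$ used in the introduction. The linear approximation map
\[
\bb{k}/\sf{g}(\bb{k}^d)\to\Omega^{\infty}\Sigma^{\infty+\dim_\R\bb{k}-1}\bb{kP}^\infty_d
\]
has connectivity roughly twice the connectivity of $\bb{k}/\sf{g}(\bb{k}^d)$. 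In the metastable range, with $\dim X$ below that bound, it therefore induces a bijection on $[X,-]$. This is the main obstacle: we must check that Weiss's convergence estimates for $P_1$ apply to $\bb{k}/\sf{g}$ and give exactly the metastable range of \cref{table:ranges}. I would first try to deduce this from the known connectivity of $\bb{k}/\sf{g}(V)$, namely $(\dim_\R\bb{k})(\dim V+1)-2$, together with the fact that the second layer is built from $\bb{kP}^\infty_d\wedge\bb{kP}^\infty_d$-type spectra. Alternatively I would compare with Miller's stable splitting, using the Freudenthal suspension theorem. Once the map is a bijection, the target is an infinite loop space, so the middle terms are abelian groups and $\delta_{\bb{k}}$ is a homomorphism. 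The rows are then exact as stated.

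\emph{Step 3: naturality for stabilisation and realification.} Both operations are natural transformations of functors. Stabilisation is $V\mapsto V\oplus\bb{k}$. Realification is the functor $\vect{\C}\to\vect{\R}$ followed by $\Bo$, giving a map $\Bu\to \Bo\circ r$. Taking $P_0$ and $P_1$ and the fibres yields maps of towers, and hence the commutative ladders. The left-hand maps are the identity in the stabilisation case and $K$-theory realification in the realification case. We then identify the induced maps on first layers. The structure map of the derivative spectrum, restricted along $V\subset V\oplus\bb{k}$, is the collapse $t:\bb{kP}^\infty_d\to\bb{kP}^\infty_{d+1}$. Its cohomology map is visibly injective: $H^*(\bb{kP}^\infty_{d+1})$ is the evident subcomplex of classes in degree above $\dim_\R\bb{k}\cdot d$. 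For realification, a complex line $L$ gives the real reflections in $rL$, and these assemble into an $S^1$-family. This produces a map $\Sigma\bb{CP}^\infty_d\to \bb{RP}^\infty_{2d}$ as the adjoint of $S^1_+\wedge\bb{CP}^\infty\to\bb{RP}^\infty$, sending $(z,L)$ to the real line $zL$ in the sense of Miller. We check surjectivity on $\bb{F}_2$-cohomology by noting that the pullback of $w_1$-powers, restricted to each $S^1\times\bb{CP}^k$, hits the products $\sigma\cdot x^k$ ($\sigma$ the suspension class). This is a standard cell-by-cell computation, since the $(4k+1)$- and $(4k+2)$-cells match up.
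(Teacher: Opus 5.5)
Your skeleton matches the paper's. Comparing the Stiefel fibre sequence with $D_1\Ba\to P_1\Ba\to\BAut$ over the common base $\BAut=P_0\Ba$ is equivalent to the paper's use of the bijection $[X,\Ba(\bb{k}^d)]\to[X,P_1\Ba(\bb{k}^d)]$. Your identification of the stabilisation map is also fine: by Weiss's classification, $\mathbb{D}_1\Ba(V)\simeq(\partial_1\Ba\wedge S^V)_{h\mathrm{Aut}(1)}$ naturally in $V$, so $V\subset V\oplus\bb{k}$ induces the Thom-spectrum collapse. But both steps carrying the real content are missing.

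\emph{The connectivity estimate.} You need that $\Ba(V)\to P_1\Ba(V)$ is $(2(\dim_\R V+\dim_\R\bb{k}-2)+1)$-connected, and you only flag this as the main obstacle. Neither suggested route closes it. The connectivity of $\bb{k}/\sf{g}(\bb{k}^d)$ together with the shape of the second layer says nothing about the fibre of $F\to P_1F$, unless you also know the tower converges and have bounds on every layer $D_n$, $n\geq 2$. Miller's splitting plus Freudenthal does give a $(2d-1)$-connected map $O/O(d)\to\Omega^\infty\Sigma^\infty\bb{RP}^\infty_d$. However, that map is not known to be the Weiss approximation (the paper only says they are `almost certainly' related), and it is precisely the naturality of the Weiss map that produces your ladders. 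The paper proves the estimate in \cref{lem: tower of BAut}. It uses the fibre sequence $\Sigma^{\dim_\R\bb{k}-1}S^V\to\Ba(V)\to\Ba(V\oplus\bb{k})$ to reduce to $S^{(-)}$. There, $P_n(S^{(-)})\simeq P_n(\mathrm{Id})\circ S^{(-)}$ (Barnes--Eldred) and $1$-analyticity of the identity give $((n+1)(\dim_\R V-1)+1)$-connectivity; the complex case follows because realification commutes with $P_n$. A smaller point: to turn your orbit set into a cokernel, the $[\Sigma X,\BAut]$-action on $[X,D_1\Ba(\bb{k}^d)]$ must be translation by $\delta_{\bb{k}}$. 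That comes from $P_1\Ba\to P_0\Ba$ being a principal fibration, not merely from $D_1$ being an infinite loop space.

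\emph{The realification map.} The middle map in (2) must be the map on first layers induced by $\Bu\to r^\ast\Bo$, and the cohomology claim is about that specific map. For the target even to be $\Omega^\infty\Sigma^\infty\bb{RP}^\infty_{2d}$ you need $P_1(r^\ast\Bo)\simeq r^\ast P_1\Bo$ (\cref{thm: realification of towers}), which Step~3 passes over. Your replacement geometric map is not well defined. For a complex line $L$ we have $zL=L$, and there is no continuous choice of a real line inside $L$ over $\bb{CP}^\infty$, since the circle bundle $\bb{RP}^\infty\to\bb{CP}^\infty$ has no section. Even a corrected model would have to be shown homotopic to the first-layer map, and this cannot be done formally: \cref{digression: honat} shows $[D_1\Bu,D_1(r^\ast\Bo)]\cong\mathbb{Z}\oplus\mathbb{Z}_2$. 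Your cell count is also off: $\Sigma\bb{CP}^\infty_d$ has cells only in dimensions $2j+1$ with $j\geq d$, and each must hit the $(2j+1)$-cell of $\bb{RP}^\infty_{2d}$. The paper (\cref{thm: realification vbs calc}) avoids any explicit model. It applies $\mathbb{D}_1$ to the diagram comparing two fibres: $\Sigma S^{(-)}$, the fibre of $\Bu\to\Bu(\C\oplus-)$, and $S^{\R\oplus r(-)}$, the fibre of $r^\ast\Bo(\R\oplus-)\to r^\ast\Bo(\R^2\oplus-)$. These are identified by realification followed by truncation. This shows that composite restricts to an equivalence $\Sigma^{\infty+1}\bb{CP}^d_d\to\Sigma^\infty\bb{RP}^{2d+1}_{2d+1}$, and by induction the same holds on every odd cell. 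Mod-$2$ surjectivity then follows because all mod-$2$ cellular boundaries vanish.
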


The proof of \cref{main theorem: calc} involves a rather intricate analysis of the Weiss towers of $\Bo$ and $\Bu$ together with how these towers interact under realification. In particular, we have to improve on the current literature in several ways. Firstly, it is not enough to know that these Weiss towers converge, but critical to know the speed and radius of convergence so that our metastable range is as sharp as possible. Secondly, our methods differ from~\cite{Hu} in that we treat both variants of Weiss calculus simultaneously. Using the sharp bounds on convergence, the statement for stabilisation follows from the spherical fibrations for the unitary and orthogonal groups.

Realification is more subtle since $\Bo$ and $\Bu$ live in different flavours of Weiss calculus. Realification on the level of vector spaces lands in the even-dimensional vector spaces, thus we may phrase realification as a natural transformation $\Bu \to r^\ast\Bo$ internal to unitary Weiss calculus, where $r: \vect{\bb{C}} \to \vect{\bb{R}}$ is the realification functor on the level of vector spaces. Analysis of the interactions of realification and Weiss towers, extending that of the second author~\cite{TaggartOCUC} provide all but the cohomological surjection in the second part of \cref{main theorem: calc}. For the cohomological surjection, a slightly more careful argument using the spherical fibrations is then required.

In \cref{part: calculus} we also include a number of \emph{digressions} which are of independent interest and exhibit the utility of Weiss calculus in solving related geometric problems. Our first digression (\cref{digression: spherical}) is a discussion of stably trivial spherical bundles and the Weiss calculus incarnation of the $J$-homomorphism, which the general machinery developed in \cref{part: calculus} allows us to do without having to compute the derivatives of the functor $\Bg$ which sends a real vector space $V$ to the classifying space of the topological monoid of homotopy automorphisms of the unit sphere in $V$. Our second digression (\cref{digression: honat}) is a calculation of the space of natural transformations between the linear approximations of $\Bu$ and $r^\ast\Bu$. We show by way of the Segal Conjecture that realification is not a unique such transformation. The third digression (\cref{dig: fib of real}) is a computation of the derivatives of the functor $\sf{o/u}: \C^d \mapsto = O(2d)/U(d)$ using the computations of Arone~\cite{Arone} for the functors $\Bo$ and $\Bu$ (and our sharp convergence bounds). This result can be used to study almost complex structures \cite{Harris,Massey}. Enumerations of almost complex structures when $X$ is a homotopy complex projective space have been done in small dimensions~\cite{Mills}. We are aware of work-in-progress of Frankland and Hu on extending these enumerations to higher dimensions. Our methods also apply to complexification, but for largely trivial reasons do not say anything interesting here. We include a summary for completeness, as our final digression (\cref{digression: complexification}).

In the second part of the paper (\cref{part:computations}), we use \cref{main theorem: calc} to make calculations, proving \cref{thm:sphere diagrams} (where $X = S^m$) and then \cref{thm: CPell diagram} (where $X = \bb{CP}^\ell$). The sets of bundles over a sphere equipped with a stable trivialisation may be identified
\begin{align*}
[\Sigma^\infty S^m, \Sigma^{\infty +1}\bb{CP}^\infty_d] &\cong \pi_m^s(\Sigma\bb{CP}^\infty_d) \\
[\Sigma^\infty S^m, \Sigma^{\infty}\bb{RP}^\infty_d] &\cong \pi_m^s(\bb{RP}^\infty_d)
\end{align*}
with the stable homotopy groups of stunted projective spaces. To compute these, we use the Adams spectral sequence: the Steenrod algebra structure on the cohomology of stunted projective spaces are well understood so Bruner's Ext software \cite{Bruner} gives us the relevant $E_2$-pages, and we make ad-hoc arguments to determine the possible differentials. In the complex case, our methods differ slightly from those of Hu~\cite{Hu} in that we are more dogmatic about using only stable information, in particular avoiding the use of unstable homotopy groups of unitary groups as input. 

Combining these calculations with knowledge of the effect of realification and truncation on cohomology (\cref{main theorem: calc}) allows us to deduce the effect of those maps on these stable homotopy groups, or equivalently on vector bundles equipped with a stable trivialisation. The upshot is an analogue of \cref{thm:sphere diagrams} for bundles with stable trivialisation, which is \cref{thm: sphere diagrams 2}. In \cref{section: B' implies B} we compute the connecting maps coming from $K$-theory in \cref{main theorem: calc} to deduce \cref{thm:sphere diagrams}. Here the calculation of the complex groups was not done in~\cite{Hu}, because these groups are not needed in the later calculations for complex projective spaces. This calculation can be deduced from what we have already done with the help of some classical ingredients, and it answers our main question for spheres, so we do it.

Given a space $X$ and a spectrum $E$, the Atiyah--Hirzebruch spectral sequence allows one to compute $[\Sigma^\infty X,E]$ from the stable homotopy groups of $E$, with differentials controlled by the cell structure of $X$. Taking $E$ to be the suspension spectrum of the stunted projective spaces in \cref{main theorem: calc} we can theoretically use our calculations for spheres to compute bundles equipped with a stable trivialisation over an arbitrary base. Complex projective spaces are especially viable for this due to their sparse, well-understood cell structure which has been extensively studied by Mosher~\cite{Mosher}. We make these calculations in \cref{section: B' implies A}, tracing the effect of the maps through the relevant spectral sequences.

\subsection*{Assumed knowledge} \cref{part: calculus} and \cref{part:computations} use quite different methods, and can be read more or less independently. \cref{part: calculus} consists of theoretical results about the Weiss Calculus (particularly of $\Bo$ and $\Bu$) which could not be found in the literature. We assume some familiarity with the general ideas of functor calculus but introduce the key aspects as and when they are needed. \cref{part:computations} consists of calculations in stable homotopy theory, using the Adams and Atiyah--Hirzebruch spectral sequence, and so we assume only a basic familiarity with stable homotopy theory.

\subsection*{Conventions}
\begin{itemize}
    \item In~\cref{part: calculus} we distinguish between stable and unstable information through the standard notation. 
    \item In~\cref{part:computations} we abuse notation and will write $X$ for $\Sigma^\infty X$ and $\pi_\ast(X)$ for $\pi_\ast(\Sigma^\infty X) = \pi_\ast^s(X)$.
    \item For us, a map is \emph{$k$-connected} if its homotopy fibre is $(k-1)$-connected.
    \item We write $[X,Y]=\pi_0(\Map_*(X,Y))$ for based homotopy classes of maps.
    \item It will at times be convenient to shorten $a \equiv b \pmod{m}$ to $a \equiv b (m)$.
    \item We will denote by $\bb{k}$ either of the fields $\R$ or $\C$ and by $V$ a finite-dimensional $\bb{k}$-vector space with an inner product. It should be clear from context whether we are consider $V$ to be real or complex.
\end{itemize}

\subsection*{Acknowledgements}
This paper benefited from the generosity of many people. GB would especially like to thank Christian Carrick for helping him to get started with stable homotopy theory and Bruner's Ext, and Thomas Rot for providing a really useful geometric point of view on lots of the phenomena encountered along the way. Lauran Toussaint kindly read a draft of the introduction, caught a couple of misprints, and suggested many tweaks which improved the clarity. The authors are very grateful to Robert Bruner for making his software \cite{Bruner} available, and especially for including the utilities \verb|makeP| and \verb|makeCP|, which for this project especially represented a near-infinite quality of life improvement. This work also greatly benefited from conversations with Greg Arone, Miguel Barata, Gil Cavalcanti, Emanuele Dotto, Gijs Heuts, John Jones, Marco Nervo, Sven van Nigtevecht, Álvaro del Pino Gómez, Oscar Randal-Williams, and Baylee Schutte. 

Both authors were supported by European Research Council (ERC) through the grant “Chromatic homotopy theory of spaces”, grant number 950048, and thank the Isaac Newton Institute, Cambridge, for support and hospitality during the programme Equivariant homotopy theory in context, where work on this paper was undertaken (EPSRC grant EP/Z000580/1). GB was supported by the NWO (Dutch Research Council), grant number VI.Veni.242.230. NT was supported by the NWO under Vidi grant number VI.Vidi.203.004 and by the EPSRC under grant number EP/Z534705/1.

\part{The Weiss calculus of vector bundles}\label{part: calculus}
In this part we use Weiss calculus to provide homotopical models for stabilisation and realification of stably trivial vector bundles. Metastably, we get stable representing objects and describe these operations on the level of stable representing objects. This part contains a number of `digressions' which do not impact the proofs of our main theorems but are closely related and are likely to be of independent interest.

\section{Stabilisation of vector bundles and Weiss calculus}\label{section: general calc}

There is a well-known isomorphism
\[
\vect{\R,d}(X)\cong [X, \BO(d)]
\]
between the set of rank $d$ real bundles over $X$ (up to isomorphism) and the set of homotopy classes of maps from $X$ to the classifying space $\BO(d)$ of the orthogonal group $O(d)$. Under this isomorphism, the map induced on classifying spaces by the standard inclusion $O(d) \hookrightarrow O(d+1)$ represents adding a trivial line bundle, i.e., stabilisation.

The space $\BO(d)$ may be viewed as the value at $\R^d$ of the continuous functor
\[
\Bo: \vect{\R} \longrightarrow \T, \ V \longmapsto \BO(V) = \BAut(V),
\]
from the category $\vect{\R}$ of finite-dimensional real inner products spaces to the category $\T$ of pointed spaces and the stabilisation map is the image of the standard inclusion $\bb{R}^d \hookrightarrow \bb{R}^{d+1}$ under the functor $\Bo$. Functors of this form are called \emph{orthogonal} and are the input to (orthogonal) Weiss calculus~\cite{Weiss}.

To an orthogonal functor $F: \vect{\bb{R}} \to \T$, Weiss calculus associates a tower 
\[\begin{tikzcd}
	&& \vdots \\
	&& {P_2F(V)} & {D_2F(V)} \\
	&& {P_1F(V)} & {D_1F(V)} \\
	{F(V)} && {P_0F(V)} & {F(\R^\infty)}
	\arrow[from=1-3, to=2-3]
	\arrow[from=2-3, to=3-3]
	\arrow[from=2-4, to=2-3]
	\arrow[from=3-3, to=4-3]
	\arrow[from=3-4, to=3-3]
	\arrow[curve={height=-12pt}, from=4-1, to=1-3]
	\arrow[curve={height=-12pt}, from=4-1, to=2-3]
	\arrow[curve={height=-12pt}, from=4-1, to=3-3]
	\arrow[from=4-1, to=4-3]
	\arrow[equals, from=4-3, to=4-4]
\end{tikzcd}\]
of orthogonal functors, in which the $n$-th \emph{layer} $D_nF$ is the fibre of the map $P_nF \to P_{n-1}F$ from the $n$-th \emph{stage} $P_{n}F$ to the $(n-1)$-st stage $P_{n-1}F$. Weiss showed that the layers of this tower are infinite loop spaces \cite[Theorem 7.3]{Weiss}, describing them as
\[
D_nF(V) \simeq \Omega^\infty \bb{D}_nF(V) ,
\]
for a functor $\bb{D}_nF: \vect{\bb{k}} \to \s$ which is given (up to equivalence) by 
\begin{equation}\label{eq: class. of homog}
 \bb{D}_nF(V) \simeq (\partial_nF \otimes S^{n \cdot V})_{hO(n)}   ,
\end{equation}
where $\partial_nF \in \s^{\BO(n)}$ is a (Borel) $O(n)$-spectrum, $S^{n \cdot V}$ is the one-point compactification of $n \cdot V = \R^n \otimes V$, and $O(n)$ acts on $\partial_nF \otimes S^{n \cdot V}$ diagonally. The Borel $O(n)$-spectrum $\partial_n F$ is called the \emph{$n$-th derivative} of $F$.

There is an analogous theory of \emph{unitary} Weiss calculus in which the role of the real line $\bb{R}$ is replaced by the complex plane $\bb{C}$ as considered by the second author~\cite{TaggartUC}. We will wish to treat both calculi in a uniform matter and so will refer to functors 
\[
F: \vect{\bb{k}} \longrightarrow \T,
\]
from the category $\vect{\bb{k}}$ of finite-dimensional inner product $\bb{k}$-vector spaces and linear isometries (for $\bb{k} = \bb{R}$ or $\bb{C}$) to the category of pointed spaces as \emph{Weiss functors}. For the same reason, we write
\[
\Ba : \vect{\bb{k}} \longrightarrow \T, V \longrightarrow \BAut(V),
\]
for the Weiss functor sending a vector space $V$ to the classifying space of the automorphisms of $V$, i.e.,
\[
\Ba = \begin{cases}
 \Bo &\text{if $\bb{k}=\bb{R}$} \\   
  \Bu &\text{if $\bb{k}=\bb{C}$.} \\  
\end{cases}
\]
We summarise the relevant information about the Weiss calculus of $\Ba$ in the following lemma. The description of the first derivative and first layer already appeared in~\cite{Weiss} while convergence was first stated in~\cite{Arone} but first proved in~\cite{BarnesEldred}. Much more is known about this functor, e.g., Arone~\cite{Arone} has computed the derivatives through an analogy with the Goodwillie tower of the identity. The main thread of our paper does not require the full computation of the derivatives but they will play a role in the digression of~\cref{dig: fib of real}. We reprove convergence since explicit connectivity estimates are vital to our work, but the literature is rather muddy on these.

\begin{lem}\label{lem: tower of BAut}
The Weiss tower of $\Ba$ is given by
\[\begin{tikzcd}
	&& \vdots \\
	&& {P_2\Ba(V)} & {D_2\Ba(V)} \\
	&& {P_1\Ba(V)} & {\Omega^\infty \Sigma^{\infty+\dim_\bb{R}(\bb{k})-1}\bb{kP}^\infty_{\dim_\bb{k}(V)}} \\
	{\Ba(V)} && \BAut
	\arrow[from=1-3, to=2-3]
	\arrow[from=2-3, to=3-3]
	\arrow[from=2-4, to=2-3]
	\arrow[from=3-3, to=4-3]
	\arrow[from=3-4, to=3-3]
	\arrow[curve={height=-12pt}, from=4-1, to=1-3]
	\arrow[curve={height=-12pt}, from=4-1, to=2-3]
	\arrow[curve={height=-12pt}, from=4-1, to=3-3]
	\arrow[from=4-1, to=4-3]
\end{tikzcd}\]
in low degrees, with the connectivity of the layers growing linearly in $n$, i.e.,
\[
\sf{Conn}(D_n\Ba(V)) = n(\dim_\bb{R}(V) + \dim_{\R}(\bb{k}) -2) = 
\begin{cases}
    n(\dim_\R(V)-1) & \bb{k} = \R \\
    2n\dim_\C(V) & \bb{k} = \C,
\end{cases}
\]
whenever $\dim_\bb{R}(V) \geq 2$. Also, the first derivative of $\Ba$ is the Borel $\Aut(1)$-spectrum $\partial_1\Ba \simeq \Sigma^{\dim_\bb{R}(\bb{k})-1} \bb{S}$ with trivial $\Aut(1)$-action.
\end{lem}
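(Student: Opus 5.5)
The plan has four steps: identify $P_0\Ba$, identify the first layer and derivative, bound the connectivity of the higher layers, and then assemble the tower.

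\emph{The zeroth stage.} For any Weiss functor, $P_0F(V)\simeq F(\bb{k}^\infty)=\operatorname{hocolim}_n F(\bb{k}^n)$. For $F=\Ba$ this gives $P_0\Ba(V)\simeq\BAut$, that is, $\BO$ or $\BU$. The fibre of $\Ba(V)\to P_0\Ba(V)$ is then the infinite Stiefel manifold $\Aut/\Aut(V)$, which is $O/O(V)$ or $U/U(V)$.

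\emph{The first layer and derivative.} I would use Weiss's description of the first derivative as a stabilisation of the first derivative functor,
\[
\partial_1F\simeq \operatorname{hocolim}_n\Sigma^{-n\dim_\R\bb{k}}F^{(1)}(\bb{k}^n),
\qquad
F^{(1)}(V)=\fibre\bigl(F(V)\to F(V\oplus\bb{k})\bigr).
\]
For $\Ba$ the fibre $F^{(1)}(V)$ is $\Aut(V\oplus\bb{k})/\Aut(V)$, which is the unit sphere $S(V\oplus\bb{k})$ of dimension $\dim_\R V+\dim_\R\bb{k}-1$. Desuspending and passing to the colimit gives
\[
\partial_1\Ba\simeq\Sigma^{\dim_\R\bb{k}-1}\bb{S}.
\]
The action of $\Aut(1)$, which is $O(1)$ or $U(1)$, is trivial. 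To see this, note that it is induced by $\Aut(1)$ acting on the new summand $\bb{k}$, and on $S(V\oplus\bb{k})$ this action extends to the connected group $\Aut(V\oplus\bb{k})$. For $O(1)$ I would check more carefully, via Weiss's Example, that the sign action is homotopically trivial after stabilisation. I expect the answer is yes, which matches the statement.

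Plugging into \eqref{eq: class. of homog} then gives
\[
\bb{D}_1\Ba(V)\simeq\bigl(\Sigma^{\dim_\R\bb{k}-1}S^{V}\bigr)_{h\Aut(1)}\simeq \Sigma^{\dim_\R\bb{k}-1}\,\Sigma^{-\dim_\R\bb{k}}\,\mathrm{Th}(\gamma^{\oplus\dim V}\to B\Aut(1))\simeq\Sigma^{\dim_\R\bb{k}-1}\Sigma^\infty\bb{kP}^\infty_{\dim_\bb{k}V}.
\]
The middle step uses that the Thom spectrum of $\dim V$ copies of the tautological line bundle over $\bb{kP}^\infty$ is $\Sigma^{\infty}\bb{kP}^\infty/\bb{kP}^{\dim V-1}$, suitably suspended. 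This is the classical identification of stunted projective spaces as Thom spaces, and tracking the suspension shifts here is routine.

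\emph{Connectivity of the higher layers.} This is the main obstacle. The naive route would use Arone's computation of $\partial_n\Ba$, but I would rather argue directly so as to get sharp bounds.

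First I bound $\partial_n\Ba$ from below. By the description of $\partial_n$ through iterated derivative functors, one computes fibres of $\Ba(V)\to\Ba(V\oplus\bb{k}^{n})$ and cross-effects. These are built out of Stiefel manifolds $\Aut(V\oplus\bb{k}^n)/\Aut(V)$, whose connectivity is $\dim_\R V+\dim_\R\bb{k}-2$ in the appropriate sense. An inductive Blakers--Massey-type estimate then gives that the stabilised $n$-th derivative is $(n(\dim_\R\bb{k}-1)-1)$-connected, or more loosely that $\partial_n\Ba$ is at least $(-1)$-connected after the shift.

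Next, smashing with $S^{nV}$, whose connectivity is $n\dim_\R V$, and taking homotopy orbits preserves connectivity. This yields
\[
\sf{Conn}\,D_n\Ba(V)\geq n(\dim_\R V+\dim_\R\bb{k}-2).
\]

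An alternative route I would try first is to cite Arone's identification of $\partial_n\Ba$ with a suspension of the partition complex, Lie-type spectra induced up. Its bottom cell sits in a known degree, and this gives the stated connectivity formula exactly.

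\emph{Assembly.} Since the layer connectivities grow linearly in $n$ as long as $\dim_\R V\geq 2$, the maps $P_n\Ba(V)\to P_{n-1}\Ba(V)$ become increasingly connected, so the tower converges. Here I would invoke the Barnes--Eldred convergence, or reprove it from Weiss's analyticity criterion using the same Stiefel connectivity estimates. Combining this with the identifications of $P_0$ and $D_1$ above gives the displayed tower in low degrees.
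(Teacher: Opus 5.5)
Your identification of $P_0\Ba$, $\partial_1\Ba$ and $D_1\Ba$ matches the paper, which simply cites Weiss's examples and Atiyah's Thom-space description of stunted projective spaces.

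\textbf{The gap: connectivity of the higher layers.} This is the one part of the lemma the paper actually proves, and your proposal does not establish it. Since $\bb{D}_n\Ba(V)\simeq(\partial_n\Ba\wedge S^{nV})_{h\Aut(n)}$, your route needs exactly $\mathsf{Conn}(\partial_n\Ba)\geq n(\dim_\R\bb{k}-2)$, that is, $\geq -n$ for $\Bo$ and $\geq 0$ for $\Bu$. The bound you state instead, $(n(\dim_\R\bb{k}-1)-1)$-connected, is false. Arone gives $\partial_n\Ba\simeq\Map_\ast(L_n^{\bb{k}},\bb{S}^{\mathsf{Ad}_{\Aut(n)}})$, and $L_2^\R\simeq S^2$, $L_2^\C\simeq\Sigma\bb{RP}^2$. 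So the underlying spectra of $\partial_2\Bo$ and $\partial_2\Bu$ are $\Sigma^{-1}\bb{S}$ and $\Sigma S^0/2$: the first is not $(-1)$-connected and the second is not $1$-connected. The required bound is therefore already sharp at $n=2$. Your looser fallback, $(-1)$-connected, would not even suffice for $\Bu$. The Blakers--Massey sketch gives no mechanism that lands on the correct constant, so as written the step is unproved.

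\textbf{How the Arone route can be completed.} Arone's formula is in terms of the complexes $L_n^{\bb{k}}$ of direct-sum decompositions, not partition complexes, and you need a dimension count on top of it. A strict chain of proper decompositions has length at most $n-2$. It is determined, up to discrete choices, by its finest member, which ranges over a space of dimension at most $\dim\Aut(n)-n\dim\Aut(1)$. Hence $\dim L_n^{\bb{k}}\leq(n-1)+\dim\Aut(n)-n\dim\Aut(1)$. Since $\dim\mathsf{Ad}_{\Aut(n)}=\dim\Aut(n)$, the bottom cell of $\partial_n\Ba$ lies in degree at least $1-n+n\dim\Aut(1)$, which is exactly what is needed.

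\textbf{The paper's route and convergence.} The paper avoids higher derivatives altogether. It uses the fibre sequence $\Sigma^{\dim_\R\bb{k}-1}S^V\to\Ba(V)\to\Ba(V\oplus\bb{k})$, and the fact that $P_n$ preserves fibre sequences, to reduce to the approximation maps of the sphere functor. It controls those via Goodwillie's $1$-analyticity of the identity on simply connected spaces, Barnes--Eldred's $P_n(S^{(-)})\simeq P_n(\mathrm{id})\circ S^{(-)}$, and, when $\bb{k}=\C$, realification commuting with $P_n$. This bounds the connectivity of $\Ba(V)\to P_n\Ba(V)$ itself, so the layer estimate and convergence come out together. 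In your plan, layer connectivity only shows that the tower $\{P_n\Ba(V)\}$ stabilises, not that its limit is $\Ba(V)$. So your Barnes--Eldred citation is doing essential work, and the later comparison of $\pi_0\Map_\ast(X,\Ba(\bb{k}^d))$ with $\pi_0\Map_\ast(X,P_1\Ba(\bb{k}^d))$ depends on it.

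\textbf{Two smaller points.} Your argument that the $\Aut(1)$-action on $\partial_1\Ba$ is trivial does not work. $O(V\oplus\R)$ is disconnected, and an action that extends to a connected group need not be Borel-trivial; $U(1)$ acting on $S^{\C}$ by scalars is a counterexample. The relevant fact is that $\Aut(1)$ acts on $\Aut(V\oplus\bb{k})/\Aut(V)$ by conjugation, hence through the $V$-coordinates, and this is exactly the twist absorbed by the structure maps of the derivative spectrum. Also, $(S^V)_{h\Aut(1)}$ is already the Thom spectrum $\Sigma^\infty\bb{kP}^\infty_{\dim_{\bb{k}}V}$; there is no extra desuspension.
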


In the proof we will need the following lemma, which is immediate from the definitions see e.g., \cite[Theorem 6.3]{Weiss} or \cite[Definition 3.4]{TaggartUC} (caution: these references write $T_n$ rather than $P_n$).

\begin{lem}\label{lem: shift commutes with tower}
For $F: \vect{\bb{k}} \to \T$, there are natural equivalences 
\begin{align*}
    P_n(F(V \oplus -)) &\simeq P_nF(V \oplus -) \\
    D_n(F(V \oplus -)) &\simeq D_nF(V\oplus -) 
\end{align*}
In particular, the functor $V \oplus (-): \vect{\bb{k}} \to \vect{\bb{k}}$ commutes with Weiss towers. \qed
\end{lem}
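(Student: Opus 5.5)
The proof is a direct unwinding of the construction of the Weiss tower; I would follow Weiss's definition \cite[Theorem 6.3]{Weiss} and its unitary analogue \cite[Definition 3.4]{TaggartUC}. Write $\mathrm{sh}_V F := F(V \oplus -)$ for the shifted functor. Recall that $P_nF$ is built as follows. First set
\[
\tau_nF(W) := \operatorname{holim}_{0 \neq U \subseteq \bb{k}^{n+1}} F(W \oplus U),
\]
where the homotopy limit runs over the topologised poset of nonzero subspaces of $\bb{k}^{n+1}$. There is a natural map $\rho \colon F \to \tau_n F$, and
\[
P_nF := \operatorname{hocolim}\bigl(F \xrightarrow{\rho} \tau_nF \xrightarrow{\rho} \tau_n^2F \to \cdots\bigr).
\]
The key step is a natural isomorphism $\tau_n(\mathrm{sh}_V F) \cong \mathrm{sh}_V(\tau_n F)$, compatible with $\rho$.

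\emph{Step 1.} Evaluated at $W$, the left side is $\operatorname{holim}_{U} F(V \oplus W \oplus U)$ and the right side is $\operatorname{holim}_{U} F(V \oplus (W \oplus U))$. The indexing category is the same and does not involve $V$ or $W$, and the diagrams agree up to the canonical associativity isometry of direct sums. The two maps $\rho$ are both induced by the inclusions $V \oplus W \to V \oplus W \oplus U$, so they also agree. Naturality in $W$ (and in isometries $V \to V'$) follows because everything is assembled functorially from $F$ and $\oplus$.

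\emph{Step 2.} Iterate Step 1 to get $\tau_n^k(\mathrm{sh}_V F) \cong \mathrm{sh}_V(\tau_n^k F)$ compatibly with the maps $\rho$. Homotopy colimits of functors are computed pointwise, so they commute with precomposition by $V \oplus (-)$. Taking $\operatorname{hocolim}_k$ therefore gives $P_n(\mathrm{sh}_V F) \simeq \mathrm{sh}_V(P_nF)$. These equivalences are compatible with the maps $r_n \colon P_n \to P_{n-1}$, since those are also induced pointwise from the inclusions $\bb{k}^{n} \subseteq \bb{k}^{n+1}$ in the indexing posets.

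\emph{Step 3.} By definition $D_n$ is the pointwise homotopy fibre of $P_n \to P_{n-1}$. Homotopy fibres are computed pointwise, so they commute with $\mathrm{sh}_V$, which gives the statement for $D_n$. The final sentence of the lemma simply repackages Steps 2 and 3: applying $V \oplus (-)$ to the whole tower of $F$ yields the tower of $\mathrm{sh}_V F$.

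No step is a genuine obstacle. The only point needing care is to check that the \emph{model} of $P_n$ being used really is this pointwise $\operatorname{hocolim}$ of $\tau_n^k$. If one instead defines $P_n$ as a fibrant replacement in the $n$-polynomial model structure, I would first compare it with Weiss's explicit construction, which is standard, and then argue as above. The unitary case is word-for-word the same with $\bb{k} = \C$.
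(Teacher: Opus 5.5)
Your proof is correct and follows the paper's approach: the paper records this lemma as immediate from the explicit construction $P_nF = \operatorname{hocolim}_k \tau_n^k F$ in \cite[Theorem 6.3]{Weiss} and \cite[Definition 3.4]{TaggartUC}. Your Steps 1--3 make that precise by checking that $\tau_n$, the sequential homotopy colimit and the pointwise homotopy fibre each commute with precomposition by $V \oplus (-)$.
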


\begin{proof}[Proof of~\cref{lem: tower of BAut}]
The computation of the first derivative is a routine: for $\bb{k} = \bb{R}$ it follows from~\cite[Example 2.7]{Weiss} and for $\bb{k}=\bb{C}$ from \cite[Example 10.2]{Weiss} but see also \cite[Example 4.6]{TaggartUC} for more detail. The calculation of the first layer is almost immediate from the classification \eqref{eq: class. of homog} and \cite[Proposition 4.3]{AtiyahThomComplexes}. It may be extracted from \cite[Example 10.1]{Weiss} for $\bb{k}=\R$ and \cite[Example 10.2]{Weiss} for $\bb{k}=\C$ after observing that the functors
\begin{align*}
O/\sf{o} : \vect{\bb{R}} &\longrightarrow \T, \quad V \longmapsto O/O(V) \\
U/\sf{u} : \vect{\bb{C}} &\longrightarrow \T, \quad V \longmapsto U/U(V)
\end{align*}
are reduced versions of the functors $\Bo$ and $\Bu$ so have the same homogeneous layers.

For the connectivity of the layers it suffices to show that  when $\dim_\R(V) \geq 3 - \dim_\R(\bb{k})$, the approximation map $\Ba(V) \to P_n\Ba(V)$ is $((n+1)(\dim_\bb{R}(V) + \dim_{\R}(\bb{k}) -2)+1)$-connected, because this implies that the map $P_n\Ba(V) \longrightarrow P_{n-1}\Ba(V)$ is $(n(\dim_\bb{R}(V) + \dim_{\R}(\bb{k}) -2)+1)$-connected, and hence that
\[
\sf{Conn}(D_n\Ba(V)) = n(\dim_\bb{R}(V) + \dim_{\R}(\bb{k}) -2). 
\]

There is a natural fibre sequence 
\[
\Sigma^{\dim_{\R}(\bb{k}) -1}S^{V} \longrightarrow \Ba(V) \longrightarrow \Ba(V \oplus \bb{k}),
\]
so by~\cite[Theorem 9.14]{TaggartUC} (or by an analogous direct argument using the Five Lemma and the fact that $P_n$ preserves fibre sequences of functors) it further suffices to obtain the same connectivity estimate for the map $\Sigma^{\dim_\R(\bb{k})-1}S^{V} \to P_n(\Sigma^{\dim_\R(\bb{k})-1}S^{(-)})(V)$.

For this, we take a detour through Goodwillie calculus. The identity functor $\id: \T \to \T$ is $1$-analytic \cite[Example 4.3]{GoodCalcIII} in the sense of \cite[Definition 4.2]{GoodCalcIII}, i.e., the identify functor converges in Goodwillie calculus on simply connected spaces. In particular, this means that the map
\[
X=\id(X) \longrightarrow P_n\id(X)
\]
is $((n+1)\sf{Conn}(X)+1)$-connected for all pointed spaces $X$ which have connectivity $\sf{Conn}(X) \geq 1$\footnote{Note that~\cite[Definition 1.2]{GoodCalcIII} is the relative version of this statement: Goodwillie works in the slice category $\es{S}_{/Y}$ for some space $Y$, and says that a natural transformation $F \to G$ satisfies $O_n(c,\kappa)$ is for all $k \geq \kappa$ and for all $X \to Y$ $k$-connected, the induced map $F(X) \to G(X)$ is $((n+1)k -c)$-connected. In our example, $Y = \ast$, so the connectivity of the map $X \to \ast$ is $\sf{Conn}(X)+1$, and the condition $k \geq \kappa$ translates to $\sf{Conn}(X) +1 \geq \rho +1$. The literature is, on occasion, \emph{lax} about this $+1$.}. Now consider the one-point compactification functor $S^{(-)} : \vect{\bb{k}} \to \T$. By \cite[Proposition 3.4]{BarnesEldred}, the natural map $P_n(S^{(-)}) \to P_n(\id) \circ S^{(-)}$ is an equivalence, so, setting $X=S^V$ above, the universal map $S^{V} \longrightarrow P_n(S^{(-)})(V)$ is $((n+1)(\dim_\R(V)-1) +1)$-connected whenever $\dim_\bb{R}(V) = \sf{Conn}(S^{V}) + 1 \geq 2$. Specialising to $\bb{k}=\R$ resolves the real case.

In the case that $\bb{k}=\C$, the fibre $\Sigma^{\dim_{\R}(\bb{k}) -1}S^{(-)} = \Sigma S^{(-)}$ is the composite
\[
\vect{\C} \xrightarrow{\ r\ } \vect{\R} \xrightarrow{S^{(-) \oplus \bb{R}}} \T.
\]
The second functor $S^{(-) \oplus \bb{R}}$ has $n$-th stage given by $P_n(S^{(-) \oplus \bb{R}})(V) \simeq P_n(S^{(-)})(V \oplus \bb{R})$ by \cref{lem: shift commutes with tower}, so by our earlier analysis of the functor $S^{(-)}$, the (orthogonal calculus) approximation map
\[
S^{V \oplus \bb{R}} \longrightarrow P_n(S^{(-)\oplus \bb{R}})(V)
\]
is $((n+1)\dim_\bb{R}(V)+1)$-connected whenever $\dim_\R(V) \geq 1$. By \cite[Proposition 5.4]{TaggartOCUC} realification commutes with polynomial approximation in the sense that $P_n(r^\ast F) \simeq r^\ast P_n F$ for a reduced weakly polynomial orthogonal functor $F$, of which $S^{(-)}$ is an example, so it follows immediately that the $n$-th (unitary) polynomial approximation to the whole composite: 
\[
\Sigma S^{(-)} \longrightarrow P_n(\Sigma S^{(-)})
\]
is $((n+1)\dim_\bb{R}(V)+1)$-connected at $V \in \vect{\bb{C}}$ whenever $\dim_\R(V) \geq 2$.
\end{proof}

For a Weiss functor $F$ and a pointed space $X$, there is an induced Weiss functor
\[
F^X : \vect{\bb{k}} \longrightarrow \T, \ V \longmapsto F^X(V) = \Map_\ast(X, F(V)).
\]
The functor $\Ba^X = \Map_\ast(X, \Ba)$ governs the behaviour of vector bundles over $X$: evaluating at $V$ and taking connected components yields the set of rank $d$ bundles over $X$ up to isomorphism. Under suitable hypothesis on $X$, the the first layer of $\Ba^X$ governs stably trivial bundles over $X$.

\begin{lem}\label{lem: tower of BAutX}
Let $X$ be a finite cell complex. The Weiss tower of $\Ba^X$ is given by
\[\begin{tikzcd}[ampersand replacement=\&,cramped]
	\&\& \vdots \& \\
	\&\& {\Map_\ast(X, P_2\Ba(V))} \& {\Map_\ast(X, D_2\Ba(V))} \\
	\&\& {\Map_\ast(X, P_1\Ba(V))} \& {\Map_\ast(X, D_1\Ba(V))} \\
	{\Ba^X(V)} \&\& {\Map_\ast(X, \BAut)}
	\arrow[from=1-3, to=2-3]
	\arrow[from=2-3, to=3-3]
	\arrow[from=2-4, to=2-3]
	\arrow[from=3-3, to=4-3]
	\arrow[from=3-4, to=3-3]
	\arrow[curve={height=-12pt}, from=4-1, to=1-3]
	\arrow[curve={height=-12pt}, from=4-1, to=2-3]
	\arrow[curve={height=-12pt}, from=4-1, to=3-3]
	\arrow[from=4-1, to=4-3]
\end{tikzcd}\]
with the connectivity of the layers growing linearly in $n$, i.e.,
\[
\sf{Conn}(D_n\Ba^X(V)) = n(\dim_\bb{R}(V) + \dim_{\R}(\bb{k}) -2) - \dim(X) = 
\begin{cases}
    n(\dim_\R(V)-1) -\dim(X) & \bb{k} = \R \\
    2n\dim_\C(V) - \dim(X)& \bb{k} = \C
\end{cases}
\]
whenever $\dim_\bb{R}(V) \geq 2$.
\end{lem}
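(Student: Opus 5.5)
The approach is to show that post-composition with a finite complex, $\Map_\ast(X,-)$, commutes with the whole Weiss tower, and then to read off the connectivity of the layers from \cref{lem: tower of BAut}. The proof has three steps.

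\emph{Step 1: $\Map_\ast(X,-)$ preserves the tower.} Recall the construction of $P_n$. It is the sequential colimit $P_nF = \mathrm{colim}_k \tau_n^k F$, where
\[
\tau_nF(V)=\mathrm{holim}_{0\neq U\subseteq \bb{k}^{n+1}} F(V\oplus U).
\]
This homotopy limit is taken over a compact topological poset. Since $\Map_\ast(X,-)$ commutes with homotopy limits, we get $\tau_n(F^X)\simeq(\tau_nF)^X$.

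The colimit needs more care, because $\Map_\ast(X,-)$ commutes with sequential homotopy colimits only when $X$ is compact. This holds because $X$ is a finite cell complex: maps out of a compact space into a sequential (mapping telescope) colimit of $T_1$ spaces factor through a finite stage. Hence
\[
P_n(F^X)\simeq (P_nF)^X.
\]
One also has to check that $(P_nF)^X$ is still $n$-polynomial and that the map $F^X\to (P_nF)^X$ has the universal property. The polynomial condition is a homotopy-limit condition, so it is preserved by $\Map_\ast(X,-)$. Universality follows from the identification with the $\tau_n$-colimit.

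Fibres are homotopy limits, so they are preserved as well, giving $D_n(F^X)\simeq \Map_\ast(X,D_nF)$. Applying this to $F=\Ba$ gives the displayed tower. The bottom stage is $\Map_\ast(X,\BAut)$, since $P_0\Ba$ is the constant functor at $\Ba(\bb{k}^\infty)=\BAut$.

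\emph{Step 2: connectivity of the layers.} By \cref{lem: tower of BAut}, $D_n\Ba(V)$ is $c$-connected with
\[
c=n(\dim_\R V+\dim_\R\bb{k}-2)
\]
whenever $\dim_\R V\ge 2$. For a $c$-connected space $Y$ and a pointed finite complex $X$ of dimension $m$, the mapping space $\Map_\ast(X,Y)$ is $(c-m)$-connected. I would prove this by induction over the cells of $X$. Each cell attachment gives a fibration
\[
\Map_\ast(X_{k},Y)\to \Map_\ast(X_{k-1},Y)
\]
whose fibre is $\Omega^{j}Y$, which is $(c-j)$-connected. The long exact sequence of homotopy groups then propagates the bound $c-m$.

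An alternative is to use that $D_n\Ba(V)=\Omega^\infty$ of a spectrum: $\Map_\ast(X,\Omega^\infty E)=\Omega^\infty F(\Sigma^\infty X,E)$, and the function spectrum has connectivity at least $\mathrm{conn}(E)-\dim X$.

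\emph{Expected obstacle.} The main subtlety is Step 1, namely justifying that $P_n$ commutes with $\Map_\ast(X,-)$. The delicate point is the interchange with the sequential colimit, which is exactly where finiteness of $X$ is used. Step 2 is routine. Note that the statement only claims a lower bound on connectivity, which is all this argument gives.
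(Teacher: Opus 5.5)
Your proposal is correct and follows essentially the same route as the paper. The paper likewise argues that $\Map_\ast(X,-)$ commutes with the compact homotopy limits and, by finiteness of $X$, with the filtered colimit that build $P_n$, so that $P_n(F^X)\simeq\Map_\ast(X,P_nF)$ and the layers are preserved as fibres, and then deduces the connectivity bound from \cref{lem: tower of BAut}. Your Step 2 simply spells out the standard estimate $\mathsf{Conn}(\Map_\ast(X,Y))\geq\mathsf{Conn}(Y)-\dim(X)$, which the paper leaves implicit.
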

\begin{proof}
The functor 
\[
\Map_\ast(X, -) : \T \longrightarrow \T,
\]
commutes with limits. Since $X$ it a finite cell complex it also commutes with filtered colimits. It follows that for any Weiss functor $F$, we have
\[
P_n (F^X)(V) \simeq \Map_\ast(X, P_nF(V)),
\]
since $P_nF$ may be constructed as a filtered colimit of compact limits, see e.g., \cite[Theorem 6.3]{Weiss} or \cite[Definition 3.4]{TaggartUC}. The description of the layers follows readily since $\Map_\ast(X, -)$ preserves fibre sequences. Taking $F = \Ba$ yields the description of the Weiss tower for $\Ba^X$. The connectivity estimate follows from the connectivity estimates of~\cref{lem: tower of BAut}.
\end{proof}

The stabilisation map
\[
\Ba(\bb{k}^d) \longrightarrow \Ba(\bb{k}^\infty) = \BAut,
\]
is $((d+1)\dim_\bb{R}(\bb{k})-1)$-connected, so if $\dim(X)<(d+1)\dim_\bb{R}(\bb{k})-1$, then rank $d$ vector bundles over $X$ are stable, i.e., the induced map
\[
[X, \BAut(d)] \longrightarrow [X, \BAut],
\]
is a bijection, and bundles in this \emph{stable range} are detected in (real or complex) topological $K$-theory. We will examine bundles that live just outside of this stable range. We call such bundles \emph{metastable}.

\begin{definition}
A $\bb{k}$-vector bundle of rank $d$ over a finite cell complex $X$ is  said to be \emph{in the metastable range} if 
\[
\frac{\dim(X)+2(2-\dim_\R(\bb{k}))}{2\dim_\bb{R}(\bb{k})} \leq d \leq \frac{\dim(X)-(\dim_\bb{R}(\bb{k}) -1)}{\dim_\bb{R}(\bb{k})}
\]
\end{definition}

The lower bound comes from the connectivity of the layers of the Weiss tower for $\Ba^X$ as given in \cref{lem: tower of BAutX}. This will become clear in the proof of~\cref{thm: stably trivial stunted projective}. In particular, we have the ranges shown in \cref{table:ranges}. With $\bb{k}=\bb{C}$ and $X = \bb{CP}^\ell$ (so $\dim(X)=2 \ell$), this recovers the range given in~\cite[Theorem 2.1]{Hu}.
\begin{table}[ht]
    \setlength{\tabcolsep}{6pt} 
    \renewcommand{\arraystretch}{2.2} 
     \centering
    \begin{tabular}{|c|c|c|}
    \hline
         $\bb{k}$ & $\bb{R}$ & $\bb{C}$  \\
         \hline 
        Stable & $ d\geq \dim(X)+1 $ & $\displaystyle d \geq \frac{\dim(X)}{2}$ \\
        \hline
            Metastable & $\displaystyle \frac{\dim(X)}{2} +1 \leq d < {\dim(X)}+1$ &  $\displaystyle \frac{\dim(X)}{4} \leq d < \frac{\dim(X)}{2}$ \\
        \hline
    \end{tabular}
    
    \caption{(Meta)Stable ranges for rank $d$ vector bundles over a finite cell complex $X$.}
    \label{table:ranges}
\end{table}

We now make precise that within the metastable range, Weiss calculus (namely, the first layer of $\Ba^X$) determines stably trivial bundles.
\begin{thm}\label{thm: stably trivial stunted projective}
Let $X$ be a finite cell complex of dimension at least $2\dim_\R(\bb{k})$. In the metastable range, there is an exact sequence
\[
 [\Sigma X, \BAut] \xrightarrow{ \delta_\bb{k}} [\Sigma^\infty X, \Sigma^\infty  \Sigma^{\dim_\bb{R}(\bb{k})-1}\bb{kP}^\infty_d] \to \vect{\bb{k}, d}^0(X) \to 0,
\]
expressing the set of stably trivial rank $d$ $\bb{k}$-vector bundles over $X$ as a quotient of the group of bundles equipped with a stable trivialisation. The map $\delta_{\bb{k}}$ is the connecting map in the defining fibre sequence for the first layer of $\Ba^X$. 
\end{thm}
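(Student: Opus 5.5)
We work with the Weiss tower of $\Ba^X$ from \cref{lem: tower of BAutX}, evaluated at $V = \bb{k}^d$. There are three steps: show that the map to the first stage is bijective on $\pi_0$ in the metastable range; identify stable trivialisations with the first layer; and read off the exact sequence from the fibre sequence defining that layer.

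\textbf{Step 1: reduce to the first stage.} The layers $D_n\Ba^X(\bb{k}^d)$ for $n\geq 2$ have connectivity $n(d\dim_\R(\bb{k}) + \dim_\R(\bb{k}) - 2) - \dim(X)$. For $n = 2$ this is nonnegative exactly when $2d\dim_\R(\bb{k}) \geq \dim(X) + 2(2-\dim_\R(\bb{k}))$, which is the lower bound of the metastable range. The connectivities only grow with $n$, and the tower converges by \cref{lem: tower of BAut} (convergence passes through $\Map_\ast(X,-)$ since $X$ is finite). Hence the map $\Ba^X(\bb{k}^d) \to P_1\Ba^X(\bb{k}^d)$ is $1$-connected, so it is a bijection on $\pi_0$. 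Here $1$-connected means the homotopy fibre is connected, which gives surjectivity on $\pi_0$. For injectivity we also need surjectivity on $\pi_1$ at every basepoint. That requires the higher layers to have connectivity at least $1$, or a small extra argument: I would use the fibre sequence of the $n\ge2$ tail $\mathrm{holim}_n \fibre(P_n \to P_1)$ and check the connectivity bookkeeping carefully. This off-by-one is the main obstacle. The hypothesis $\dim(X) \geq 2\dim_\R(\bb{k})$ and the conventions in the footnote of \cref{lem: tower of BAut} should be exactly what makes it work. If $\pi_0$-injectivity fails, I would note that we only need a bijection after restricting to the fibre over the basepoint of $\Map_\ast(X,\BAut)$.

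\textbf{Step 2: identify stable trivialisations with the first layer.} The first-stage fibre sequence $D_1\Ba^X \to P_1\Ba^X \to P_0\Ba^X = \Map_\ast(X,\BAut)$ is compatible with $\Ba^X(\bb{k}^d) \to \Map_\ast(X,\BAut)$, which is stabilisation. By \cref{lem: tower of BAut}, $\Map_\ast(X, D_1\Ba(\bb{k}^d)) \simeq \Map_\ast(X,\Omega^\infty\Sigma^{\infty+\dim_\R(\bb{k})-1}\bb{kP}^\infty_d)$. Its $\pi_0$ is therefore the group $[\Sigma^\infty X, \Sigma^\infty\Sigma^{\dim_\R(\bb{k})-1}\bb{kP}^\infty_d]$.

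\textbf{Step 3: the exact sequence.} The long exact sequence of the fibration $\Map_\ast(X,D_1\Ba(V)) \to \Map_\ast(X,P_1\Ba(V)) \to \Map_\ast(X,\BAut)$ gives
\[
[\Sigma X,\BAut] \xrightarrow{\delta_\bb{k}} \pi_0\Map_\ast(X,D_1\Ba(V)) \to \pi_0\Map_\ast(X,P_1\Ba(V)) \to [X,\BAut].
\]
This sequence is exact as pointed sets. Because $D_1$ is an infinite loop space and the fibration is principal, it is in fact an orbit description: elements of $\pi_0 P_1$ lying over the basepoint correspond to $\pi_0 D_1$ modulo the action of $\pi_1\Map_\ast(X,\BAut) = [\Sigma X,\BAut]$ through $\delta_\bb{k}$.

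The fibre over the basepoint corresponds, via Step 1, to bundles whose stabilisation is trivial, i.e.\ to $\vect{\bb{k},d}^0(X)$. The action is by translation through the homomorphism $\delta_\bb{k}$, since this is the connecting map of a fibration of infinite loop spaces over a loop space. Hence $\vect{\bb{k},d}^0(X)$ is the cokernel of $\delta_\bb{k}$, which is the claimed exact sequence.
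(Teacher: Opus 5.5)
Your argument is the paper's: identify $\vect{\bb{k},d}^0(X)$ with the kernel of the $0$-th Weiss approximation of $\Ba^X$. Then show that $\Ba^X(\bb{k}^d)\to P_1\Ba^X(\bb{k}^d)$ is a $\pi_0$-bijection because the layers $D_n\Ba^X(\bb{k}^d)$ for $n\geq 2$ are connected in the metastable range, and read off the long exact sequence of $D_1\to P_1\to P_0$. Your remark that $[\Sigma X,\BAut]$ acts by translation through $\delta_{\bb{k}}$ spells out what the paper compresses into ``by exactness''.

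The off-by-one you flag as the main obstacle is not there. A map all of whose homotopy fibres are connected is already injective on $\pi_0$: if $f(a)$ and $f(a')$ are joined by a path $\gamma$, then $(a,\mathrm{const})$ and $(a',\gamma)$ lie in one homotopy fibre, and a path between them there projects to a path from $a$ to $a'$. No surjectivity on $\pi_1$ is needed. Concretely, the proof of \cref{lem: tower of BAut} shows that $\Ba(V)\to P_1\Ba(V)$ is $(2(\dim_{\R}V+\dim_{\R}\bb{k}-2)+1)$-connected. So after applying $\Map_\ast(X,-)$ it is at least $1$-connected exactly under the metastable lower bound.

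The hypothesis $\dim(X)\geq 2\dim_{\R}(\bb{k})$ plays a different role from the one you guess. Combined with that lower bound it forces $\dim_{\R}(\bb{k}^d)\geq 2$, which is the hypothesis under which the connectivity formula of \cref{lem: tower of BAutX} holds, so you should check it explicitly.
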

\begin{proof}In terms of classifying spaces, to say that a bundle is stably trivial is to say that it is in the kernel
$$\vect{\bb{k}, d}^0(X) \cong \ker\left([X,\Ba(\bb{k}^d)] \longrightarrow [X,\BAut]\right)$$
and by \cref{lem: tower of BAutX}, this map is in fact the $0$-th Weiss approximation to the functor $\Ba^X$.

Consider the long exact sequence
\[
\cdots \longrightarrow [\Sigma X, \BAut] \xrightarrow{\delta_\bb{k}} [X, D_1\Ba(\bb{k}^d)] \longrightarrow [X, P_1\Ba(\bb{k}^d)] \longrightarrow [X, \BAut]
\]
associated to the fibration
\[
\Map_\ast(X,D_1\Ba(\bb{k}^d)) \longrightarrow \Map_\ast(X,P_1\Ba(\bb{k}^d)) \longrightarrow \Map_\ast(X,\BAut)
\]
defining the first layer of the Weiss tower of $\Ba^X$, see~\cref{lem: tower of BAutX}. By exactness, the kernel of 
\[
[X, P_1\Ba(\bb{k}^d)] \longrightarrow [X, \BAut]
\]
is the cokernel of $\delta_\bb{k}$. It therefore suffices to show that the map 
\[
\Map_\ast(X, \Ba(\bb{k}^d)) \longrightarrow \Map_\ast(X, P_1\Ba(\bb{k}^d)),
\]
induces an isomorphism on $\pi_0$, or equivalently, that the layers $D_n\Ba(\bb{k}^d)$ are connected for $n \geq 2$.

Since $X$ had dimension at least $2\dim_\R(\bb{k})$ and we are in the metastable range, we have that 
\[
2 \leq \frac{\dim(X)+2(2-\dim_\R(\bb{k}))}{2} \leq d \dim_\R(\bb{k}) = \dim_\R(\bb{k}^d),
\]
so by~\cref{lem: tower of BAutX} and our assumption that we are in the metastable range, for $n \geq 2$ we have
\begin{align*}
\sf{Conn}(D_n\Ba^X(\bb{k}^d))
&= n(\dim_\bb{R}(\bb{k}^d) + \dim_{\R}(\bb{k}) -2) - \dim(X) \\
&\geq 2(d\dim_\bb{R}(\bb{k}) + \dim_{\R}(\bb{k}) -2) - \dim(X) \geq 0. \qedhere
\end{align*}
\end{proof}

In~\cref{section:realification} we will see that realification of a (stably trivial) complex bundle may be described in calculus terms. We observe here that the same is true for stabilisation by a trivial line bundle.

\begin{thm}\label{thm: truncation is stabilisation}
Let $X$ be a finite cell complex of dimension at least $2\dim_\R(\bb{k})$. 
\begin{enumerate}
    \item In the metastable range, there is a commutative diagram with exact rows
\[\begin{tikzcd}[ampersand replacement=\&,cramped]
	{[\Sigma X, \BAut] } \& {[\Sigma^\infty X,\Sigma^{\infty + \dim_\R(\bb{k})-1}\bb{kP}^\infty_{d}]} \& {\vect{\bb{k},d}^0(X)} \& 0 \\
	{[\Sigma X, \BAut] } \& {[\Sigma^\infty X,\Sigma^{\infty + \dim_\R(\bb{k})-1}\bb{kP}^\infty_{d+1}]} \& {\vect{\bb{k},d+1}^0(X)} \& 0 
	\arrow["{\delta_\bb{k}}", from=1-1, to=1-2]
	\arrow[equals, from=1-1, to=2-1]
	\arrow[from=1-2, to=1-3]
	\arrow["{s_\ast}", from=1-2, to=2-2]
	\arrow["{\underline{\bb{k}} \oplus (-)}", from=1-3, to=2-3]
	\arrow["{\delta_\bb{k}}"', from=2-1, to=2-2]
	\arrow[from=2-2, to=2-3]
    \arrow[from=2-3, to=2-4]
    \arrow[from=1-3, to=1-4]
\end{tikzcd}\]
    where the right vertical map is given by Whitney sum with a trivial bundle and the middle vertical map is post-composition with the truncation map $s: \bb{kP}^\infty_{d} \to \bb{kP}^\infty_{d+1}$.
    \item The map $H^\ast(\Sigma^{\infty+\dim_\R(\bb{k})-1}\bb{kP}^\infty_{d+1}; \bb{F}_2) \to H^\ast(\Sigma^{\infty+\dim_\R(\bb{k})-1}\bb{kP}^\infty_{d}; \bb{F}_2)$ is a degree-wise injection.
\end{enumerate}
\end{thm}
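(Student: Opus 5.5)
The plan is to obtain part (1) from naturality of the Weiss tower of $\Ba^X$ with respect to the standard inclusion $\bb{k}^d \hookrightarrow \bb{k}^{d+1}$.

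\textbf{Part (1).} Since the tower is functorial in $V$, the inclusion induces a map from the first-layer fibre sequence at $\bb{k}^d$ to the one at $\bb{k}^{d+1}$:
\[
\Map_\ast(X,D_1\Ba(V)) \longrightarrow \Map_\ast(X,P_1\Ba(V)) \longrightarrow \Map_\ast(X,\BAut).
\]
On the base $P_0\Ba = \BAut$ this map is the identity, because $P_0F$ is constant with value $F(\bb{k}^\infty)$. Naturality of the long exact sequences then gives a commutative ladder:
\begin{itemize}
\item the connecting maps $\delta_\bb{k}$ are compatible, with the identity on $[\Sigma X,\BAut]$;
\item the maps $[X,\Ba(\bb{k}^d)] \to [X,P_1\Ba(\bb{k}^d)]$, which are bijections in the metastable range by the proof of \cref{thm: stably trivial stunted projective}, commute with the map induced by $\Ba(\bb{k}^d)\to\Ba(\bb{k}^{d+1})$, and that map is Whitney sum with $\underline{\bb{k}}$.
\end{itemize}
Exactness of the rows is \cref{thm: stably trivial stunted projective} applied at $d$ and $d+1$. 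For the second row I need that $d+1$ is still in the metastable range, or at least that the surjectivity argument still applies there. The layers $D_n$ for $n\geq2$ only become more connected as $d$ grows. If $d+1$ reaches the stable range, the right-hand group is $0$ and the diagram holds trivially.

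It remains to identify the middle vertical map. Under $D_1\Ba(V)\simeq \Omega^\infty\Sigma^{\infty+\dim_\R\bb{k}-1}\bb{kP}^\infty_{\dim V}$, the map $D_1\Ba(\bb{k}^d)\to D_1\Ba(\bb{k}^{d+1})$ should be induced by the truncation $s$. I would argue this via the classification \eqref{eq: class. of homog}. The map is the functoriality of $V\mapsto (\partial_1\Ba\otimes S^{V})_{h\Aut(1)}$ in $V$, with $\partial_1\Ba \simeq \Sigma^{\dim_\R\bb{k}-1}\bb{S}$ trivial, so it is the Thom spectrum map
\[
(S^{\bb{k}^d})_{h\Aut(1)} \longrightarrow (S^{\bb{k}^{d+1}})_{h\Aut(1)}
\]
induced by the fibrewise inclusion $d\gamma \hookrightarrow (d+1)\gamma$ over $B\Aut(1)=\bb{kP}^\infty$. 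Atiyah's identification \cite[Proposition 4.3]{AtiyahThomComplexes} of Thom spectra of multiples of the tautological bundle with stunted projective spaces is natural in such inclusions, and makes this map the collapse $\bb{kP}^\infty/\bb{kP}^{d-1}\to\bb{kP}^\infty/\bb{kP}^{d}$. I expect this naturality check to be the main obstacle, since the equivalences of \cref{lem: tower of BAut} were only asserted objectwise. If it gets delicate, I would work with the models $O/\sf{o}$ and $U/\sf{u}$, which have the same layers, and use Weiss's explicit description of the first derivative.

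\textbf{Part (2).} On mod 2 cohomology, $H^\ast(\bb{kP}^\infty_{d};\bb{F}_2)$ has a basis $x^i$ for $i\geq d$. The cofibre sequence
\[
S^{\dim_\R(\bb{k})d}\to \bb{kP}^\infty_d \xrightarrow{s} \bb{kP}^\infty_{d+1}
\]
shows that $s^\ast$ sends $x^i$ to $x^i$ for $i\geq d+1$. Hence $s^\ast$ is injective, with cokernel the bottom class $x^d$, and suspension preserves this.
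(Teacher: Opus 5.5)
Your proposal is correct. Its skeleton matches the paper's: naturality of the first-layer fibre sequence of $\Ba^X$ under $\bb{k}^d\hookrightarrow\bb{k}^{d+1}$, exactness from \cref{thm: stably trivial stunted projective}, and a direct cohomology computation for (2). The two arguments differ in how the map $D_1\Ba(\bb{k}^d)\to D_1\Ba(\bb{k}^{d+1})$ is identified.

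The paper does not appeal to naturality of the classification of homogeneous functors. It passes to the reduced part $D_{[1,\infty]}\Ba$ and takes the fibre sequence $\Sigma^{\dim_\R(\bb{k})-1}S^{(-)}\to D_{[1,\infty]}\Ba(-)\to D_{[1,\infty]}\Ba((-)\oplus\bb{k})$ from the proof of \cref{lem: tower of BAut}. It then applies $D_1$, which preserves fibre sequences and sends $\Sigma^{\dim_\R(\bb{k})-1}S^{(-)}$ to $\Omega^\infty\Sigma^{\infty+\dim_\R(\bb{k})-1}S^{(-)}$. So the map of stunted projective spectra has fibre the bottom-cell sphere, and the paper concludes it is truncation. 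Strictly, this pins the map down only up to a self-equivalence of $\Sigma^{\infty+\dim_\R(\bb{k})-1}\bb{kP}^\infty_{d+1}$. That is all the later computations use, and it already suffices for (2). Its advantage is that the same fibre-sequence device is reused in \cref{thm: realification vbs calc}. There, an argument like yours would have to single out realification among the many natural transformations described in \cref{digression: honat}.

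Your route identifies the map on the nose. You write $\bb{D}_1\Ba(V)\simeq(\partial_1\Ba\wedge S^{V})_{h\Aut(1)}$ naturally in $V$, with $\partial_1\Ba$ carrying the trivial action. The map is then the Thom-spectrum map of $d\gamma\subset(d+1)\gamma$ over $\bb{kP}^\infty$, and Atiyah's model turns this into the collapse. The obstacle you anticipate is mild. Weiss's classification is an equivalence of functors, so once $\partial_1\Ba$ is known as a Borel spectrum the identification is automatically natural in $V$. Naturality of Atiyah's model reduces to the fact that the isometric inclusion inserting a zero coordinate is isotopic to the standard one.

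One small correction. If $d+1$ lies in the stable range, exactness of the bottom row at the middle term still requires $\delta_{\bb{k}}$ to be surjective. So the work is done by your connectivity remark about the layers $D_n\Ba^X(\bb{k}^{d+1})$ for $n\geq 2$, not by the vanishing of $\vect{\bb{k},d+1}^0(X)$. The paper itself passes over this point.
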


\begin{proof} 
The rows are exact by \cref{thm: stably trivial stunted projective}. Adding a trivial line bundle is represented by the map $\Ba(\bb{k}^d) \to \Ba(\bb{k}^{d+1})$ induced by the standard inclusion $\bb{k}^d \subset \bb{k}^{d+1}$, and so we need only describe the effect of calculus on this map.  

The functor $D_{[1,\infty]}\Ba(-)= \fib(\Ba(-) \to \BAut)$ is the reduced approximation of $\Ba$, i.e., the derivatives (and hence the layers) of $D_{[1,\infty]}\Ba(-)$ agree with those of $\Ba$, but the $0$-th stage of the Weiss tower of $D_{[1,\infty]}\Ba(-)$ is trivial. As such, it suffices to show that the first derivative of this reduced functor is given by the desired truncation map. 

The functor
\[
(D_{[1,\infty]}\Ba(-))^{(1)}  = \fib(D_{[1,\infty]}\Ba(-) \longrightarrow D_{[1,\infty]}\Ba(-)((-)\oplus \bb{k}))
\]
may be, as in the proof of \cref{lem: tower of BAut}, naturally identified with $\Sigma^{\dim_{\bb{R}}(\bb{k})-1}S^{(-)}$. The first layer of the Weiss tower of $\Sigma^{\dim_{\bb{R}}(\bb{k})-1}S^{(-)}$ is given by $D_1\Sigma^{\dim_{\bb{R}}(\bb{k})-1}S^{(-)} \simeq \Omega^{\infty} \Sigma^{\infty + \dim_{\bb{R}}(\bb{k})-1}S^{(-)}$. One can see this by combining the proof of~\cref{lem: tower of BAut} with \cite[Theorem 3.5]{BarnesEldred} and the observation that $D_1(\id)(X) \simeq \Omega^\infty\Sigma^\infty(X)$ on any simply connected space $X$, see e.g., \cite[Proposition 2.1]{Johnson}.

Since $D_{[1,\infty]}\Ba(-)$ is reduced, there is an equivalence $P_1(D_{[1,\infty]}\Ba(-)) \simeq D_1 (D_{[1,\infty]}\Ba(-))$ and on account of the layers of the Weiss tower preserving fibre sequences we obtain a commutative diagram
\[\begin{tikzcd}[ampersand replacement=\&,cramped]
	{\Sigma^{\dim_{\bb{R}}(\bb{k})-1}S^{(-)}} \& {D_{[1,\infty]}\Ba(-)} \& {D_{[1,\infty]}\Ba(-)((-)\oplus \bb{k})} \\
	{\Omega^{\infty} \Sigma^{\infty + \dim_{\bb{R}}(\bb{k})-1}S^{(-)}} \& {D_1(D_{[1,\infty]}\Ba(-))} \& {D_1(D_{[1,\infty]}\Ba(-))((-) \oplus \bb{k})}
	\arrow[from=1-1, to=1-2]
	\arrow[from=1-1, to=2-1]
	\arrow[from=1-2, to=1-3]
	\arrow[from=1-2, to=2-2]
	\arrow[from=1-3, to=2-3]
	\arrow[from=2-1, to=2-2]
	\arrow[from=2-2, to=2-3]
\end{tikzcd}\]
with exact rows, and vertical maps linear approximations. After evaluation at $\bb{k}^d$, we may identify the domain and codomain of the bottom right horizontal map. It is $\Omega^\infty$ of \emph{some} map of spectra
$$
 \Sigma^{\infty + \dim_\R(\bb{k})-1}\bb{kP}^\infty_{d} \simeq \bb{D}_1(D_{[1,\infty]}\Ba(-))(\bb{k}^d)  \to
\bb{D}_1(D_{[1,\infty]}\Ba(-))(\bb{k}^{d+1}) \simeq  \Sigma^{\infty + \dim_\R(\bb{k})-1}\bb{kP}^\infty_{d+1}.
$$
The commutative diagram above identifies the fibre of this latter map as a shift of the sphere spectrum, so the map must indeed be truncation. This establishes (1) and (2) now follows immediately.
\end{proof}

\subsection{Digression: stably trivial spherical fibrations}\label{digression: spherical}
Denote by $\mathsf{G}(d)$ the topological monoid of homotopy automorphisms of the sphere $S^{d-1}$. The classifying space $\sf{BG}(d)$ classifies $(d-1)$-spherical fibrations, i.e., fibrations with fibre homotopy equivalent to $S^{d-1}$, see e.g.,~\cite{Stasheff}. The space $\sf{BG}(d)$ may be viewed as the value at $\R^d$ of an orthogonal functor
\[
\Bg : \vect{\bb{k}} \longrightarrow \T, \ V \longmapsto \sf{BG}(V),
\]
for instance by identifying $S^{\dim(V)-1}$ with the unit sphere $S(V)$ in $V$, or by identifying $\sf{G}(V)$ (up to homotopy) with the space of proper homotopy equivalences of $V$, see e.g.,~\cite[\S9.1.2]{KrannichRandal-Williams}.

\begin{prop}\label{prop: tower of BG}
The Weiss tower of $\Bg$ is given by
\[\begin{tikzcd}
	&& \vdots \\
	&& {P_2\Bg(V)} & {D_2\Bg(V)} \\
	&& {P_1\Bg(V)} & {\Omega^\infty \Sigma^\infty\bb{RP}^\infty_{\dim_\R(V)}} \\
	{\Bg(V)} && {\sf{BG}}
	\arrow[from=1-3, to=2-3]
	\arrow[from=2-3, to=3-3]
	\arrow[from=2-4, to=2-3]
	\arrow[from=3-3, to=4-3]
	\arrow[from=3-4, to=3-3]
	\arrow[curve={height=-12pt}, from=4-1, to=1-3]
	\arrow[curve={height=-12pt}, from=4-1, to=2-3]
	\arrow[curve={height=-12pt}, from=4-1, to=3-3]
	\arrow[from=4-1, to=4-3]
\end{tikzcd}\]
in low degrees, with the connectivity of the layers growing linearly in $n$, i.e.,
\[
\sf{Conn}(D_n\Bg(V)) = n(\dim_\R(V)-2)
\]
whenever $\dim_\R(V) \geq 3$. In particular, the first derivative of $\Bg$ is the Borel $O(1)$-spectrum $\partial_1\Bg \simeq \bb{S}$ with trivial $O(1)$-action.
\end{prop}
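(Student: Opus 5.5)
We mirror the proof of \cref{lem: tower of BAut}, with the role of the fibre sequence $\Sigma^{\dim_\R(\bb{k})-1}S^V \to \Ba(V) \to \Ba(V\oplus\bb{k})$ played by the spherical fibration sequence
\[
\Omega S^{V} \longrightarrow \Bg(V) \longrightarrow \Bg(V\oplus\R) \quad\text{or rather}\quad \mathsf{F}(V) \longrightarrow \Bg(V) \longrightarrow \Bg(V \oplus \R).
\]
Here the fibre of $\sf{BG}(V)\to\sf{BG}(V\oplus\R)$ is identified with $\sf{G}(V\oplus\R)/\sf{G}(V)$. Evaluation at a point identifies $\sf{G}(V\oplus \R)/\sf{F}(V)$ with $S^{V}$, and $\sf{F}(V)\simeq \sf{G}(V)$ by suspension in the stable-up-to-connectivity sense. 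Accordingly the first step is to establish a natural fibre sequence whose fibre agrees with $S^{V}$ through a range of dimension roughly $2\dim_\R(V)-2$. Concretely, $\sf{G}(V\oplus\R)\to S^V$ is a fibration with fibre the based self-maps $\sf{F}(V)$, and the comparison $\sf{G}(V)\to \sf{F}(V)$ is $(\dim_\R(V)-2)$-connected by Freudenthal. The content needed is the derivative functor $\Bg^{(1)}(V)=\fib(\Bg(V)\to \Bg(V\oplus\R))$. I will aim to show $\Bg^{(1)}(V)$ receives a natural map from $S^{V}$ (or is naturally $S^{V}$-like) that is highly connected, so that its Weiss tower agrees with that of $S^{(-)}$ in low degrees.

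Given this, the bottom of the tower goes as follows. $P_0\Bg(V)=\Bg(\R^\infty)=\sf{BG}$. The first derivative is read off from the first layer of $\Bg^{(1)}$, which by \cite[Proposition 3.4]{BarnesEldred} and $D_1(\id)\simeq\Omega^\infty\Sigma^\infty$ is $\Omega^\infty\Sigma^\infty S^V$. This gives $\partial_1\Bg\simeq\bb{S}$, with trivial $O(1)$-action because the action on $\partial_1$ is detected by the $O(1)$-action on the sphere, which agrees with that for $\Bo$ (compare $\partial_1\Bo\simeq\bb{S}$ in \cref{lem: tower of BAut}). The first layer then follows from the classification \eqref{eq: class. of homog}: $(\bb{S}\otimes S^{V})_{hO(1)}\simeq\Sigma^\infty\bb{RP}^\infty_{\dim V}$ by \cite[Proposition 4.3]{AtiyahThomComplexes}. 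A cleaner route I would try first avoids any new computation: use the $J$-map $\Bo\to\Bg$. This induces a map of fibre sequences covering $S^V\to\Bg^{(1)}(V)$, and the latter is an equivalence onto the relevant fibre in the metastable range. Hence the first derivatives agree and the first layers coincide with those in \cref{lem: tower of BAut} for $\bb{k}=\R$.

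For the connectivity, I follow the proof of \cref{lem: tower of BAut} and the inductive argument of \cite[Theorem 9.14]{TaggartUC}. It suffices to show that $\Bg^{(1)}(V)\to P_n\Bg^{(1)}(V)$ is $((n+1)(\dim_\R V-2)+1)$-connected. Since $\Bg^{(1)}$ agrees with $S^{(-)}$ only up to an error of connectivity about $2\dim V-3$, rather than exactly, the bound loses one relative to the vector bundle case. This is why the estimate is $n(\dim V-2)$ rather than $n(\dim V-1)$, and why we need $\dim V\geq 3$.

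The main obstacle is this step: identifying $\Bg^{(1)}$ precisely enough, i.e. as a functor with Weiss tower known to converge at the stated rate. A mere objectwise connectivity estimate does not pass through $P_n$. The approach I would take is to express $\Bg^{(1)}(V)\simeq \Omega^\infty$-type data, or as $\sf{G}(V\oplus\R)/\sf{G}(V)$ fibred over $S^V$ with fibre $\sf{F}(V)/\sf{G}(V)$. I would then apply the Goodwillie-analyticity of the identity, together with Barnes--Eldred, to each piece, and combine the pieces using the fact that $P_n$ preserves fibre sequences.
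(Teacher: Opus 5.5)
\paragraph{First layer.} Your treatment of the first layer follows the paper's route. The paper compares along $\Bo\to\Bg$, cites \cite[Proposition 3.5]{ReisWeiss} for $\partial_1\Bo\simeq\partial_1\Bg$, and reads off $D_1\Bg\simeq D_1\Bo$ from \cref{lem: tower of BAut}.

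If you want to derive that equivalence yourself from $S^V=\Bo^{(1)}(V)\to\Bg^{(1)}(V)$, the estimate you quote is too weak. A $(\dim_\R V-2)$-connected map $\sf{G}(V)\to\sf{F}(V)$ only identifies $\Bg^{(1)}(V)$ with $S^V$ in degrees below about $\dim_\R V$. That says nothing about the derivative spectrum, whose homotopy groups are $\mathrm{colim}_m\,\pi_{k+m}\Bg^{(1)}(\R^m)$. You need connectivity of order $2\dim_\R V$. That comes from the metastable EHP sequence, not from Freudenthal alone: the boundary map of the evaluation fibration is the Whitehead product with the fundamental class, i.e.\ the $P$-map. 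Your `first route', which applies Barnes--Eldred to $\Bg^{(1)}$ as if it were $S^{(-)}$, is only justified once this order-one agreement is in hand. Alternatively, just cite Reis--Weiss as the paper does.

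\paragraph{Connectivity estimate.} The genuine gap is that your proposal never proves $\sf{Conn}(D_n\Bg(V))=n(\dim_\R V-2)$. You rightly note that agreement of $\Bg^{(1)}$ with $S^{(-)}$ in a range of about $2\dim_\R V$ is only an order-one agreement and controls nothing beyond $P_1$. Your explanation of the ``$-2$'' via this error term is therefore not an argument: the objectwise connectivity of the error term carries no information about how fast its own Weiss tower converges.

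Your proposed fix also does not work as stated. You fibre $\Bg^{(1)}(V)$ over $S^V$ with fibre $\sf{F}(V)/\sf{G}(V)$ and apply analyticity of the identity plus \cite[Proposition 3.4]{BarnesEldred} to each piece. But $\sf{F}(V)/\sf{G}(V)$, like $\sf{F}(V)$, is not of the form $\Phi(S^V)$ for a homotopy functor $\Phi$ of spaces, so Barnes--Eldred says nothing about its tower.

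The paper avoids the derivative functor entirely for this part. It runs the five-lemma argument of \cref{lem: tower of BAut}, using that $P_n$ preserves fibre sequences, on the evaluation fibration $\Omega^VS^V\to\sf{G}(V)\to S(V)$ for $\sf{G}(V)$ itself. The base is the unit sphere $S(V)\simeq S^{\dim_\R V-1}$, which is $(\dim_\R V-2)$-connected. Analyticity of the identity then gives exactly $((n+1)(\dim_\R V-2)+1)$-connectivity. This is the actual source of the shift by $2$, and of the hypothesis $\dim_\R V\geq 3$, which makes $S(V)$ simply connected. The estimate then transfers to $\Bg$ via $\sf{G}(V)\simeq\Omega\Bg(V)$, since $P_n$ commutes with loops.

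The paper's route still needs a matching convergence statement for $V\mapsto\Omega^VS^V$, which it treats as analogous. But the decomposition is of $\sf{G}(V)$, not of $\Bg^{(1)}$, and the agreement of $\Bg^{(1)}$ with $S^{(-)}$ plays no role there.
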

\begin{proof}
There is a natural transformation of orthogonal functors
\[
\Bo \longrightarrow \Bg
\]
induced by the inclusion $O(d) \subset \sf{G}(d)$. This map induces a map of first derivative Borel $O(1)$-spectra
\[
\partial_1(\Bo) \longrightarrow \partial_1(\Bg)
\]
which is an equivalence by~\cite[Proposition 3.5]{ReisWeiss}, hence our identification of the first derivative of $\Bg$ as the sphere spectrum with trivial $O(1)$-action. It follows that the natural transformation on first layers
\[
D_1\Bo \longrightarrow D_1\Bg
\]
is an equivalence thus providing the identification of the first layer. 

The connectivity estimate for the layers is slightly more involved that in the case of $\Ba$. There is a fibre sequence 
\[
\Omega^{V}S^{V} \longrightarrow \sf{G}(V) \longrightarrow S(V)
\]
where $S(V)$ denotes the unit sphere in $V$. Arguing analogously to the case of $\Ba$ in~\cref{lem: tower of BAut}, it follows from this fibre sequence that the map 
\[
\sf{G}(V) \longrightarrow P_n\sf{G}(V)
\]
is $((n+1)(\dim_\R(V)-2)+1)$-connected whenever $\dim_\R(V) \geq 3$. The equivalence $\sf{G}(V) \simeq \Omega\Bg(V)$, yields the same estimate for the map $\Bg(V) \to P_n\Bg(V)$, so
\[
\sf{Conn}(D_n\Bg(V)) = n(\dim_\R(V)-2),
\]
whenever $\dim_\R(V) \geq 3$.
\end{proof}

It follows from \cref{prop: tower of BG} that for any finite complex $X$, there is an equivalence $D_1\Bo^X \simeq D_1\Bg^X$. Since the map
\[
\Bg(\R^d) \longrightarrow \sf{BG}\simeq \sf{BGL}_1(\bb{S})
\]
is $d$-connected~\cite[Lemma 6.30]{LuckMacko}, this gives the following result classifying stably trivial spherical fibrations and their relationship to real vector bundles.

\begin{prop} Let $X$ be a finite cell complex of dimension at least $2$. For $\frac{\dim(X)+4}{2} 
    \leq d < \dim(X)+1$:
\begin{enumerate}
    \item There is an exact sequence
    \[
    [\Sigma X, \sf{BG}] \xrightarrow{\delta_\sf{G}} [\Sigma^\infty X, \Sigma^\infty \bb{RP}^\infty_d]  \longrightarrow \sf{Sph}_{d-1}^0(X) \longrightarrow 0
    \]
    expressing the set $\sf{Sph}_{d-1}^0(X)$ of stably trivial $(d-1)$-spherical fibrations over $X$ as a quotient of the group of spherical fibrations equipped with a stable trivialisation.
    \item In particular, taking sphere bundles is a surjection in this range:
    \[
    \vect{\R, d}^0(X) \twoheadrightarrow  \sf{Sph}_{d-1}^0(X)
    \]
    i.e.~every stably trivial spherical bundle comes from a stably trivial real bundle.
\end{enumerate}
\end{prop}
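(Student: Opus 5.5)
We run the argument of \cref{thm: stably trivial stunted projective} verbatim, with $\Bg$ in place of $\Ba$, and then compare with the real case via the natural transformation $\Bo \to \Bg$.

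\textbf{Step 1: identify stably trivial fibrations.} By the same reasoning as for vector bundles,
\[
\sf{Sph}^0_{d-1}(X) \cong \ker\bigl([X,\Bg(\R^d)] \to [X,\sf{BG}]\bigr).
\]
Finiteness of $X$ and the argument of \cref{lem: tower of BAutX} give
\[
P_n(\Bg^X)(V) \simeq \Map_\ast(X, P_n\Bg(V)).
\]
Hence the map $[X,\Bg(\R^d)]\to[X,\sf{BG}]$ is the $0$-th Weiss approximation of $\Bg^X$, and \cref{prop: tower of BG} gives
\[
\sf{Conn}(D_n\Bg^X(\R^d)) = n(d-2)-\dim(X)
\]
for $d\geq 3$.

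\textbf{Step 2: the long exact sequence.} The fibration
\[
\Map_\ast(X,D_1\Bg(\R^d)) \to \Map_\ast(X,P_1\Bg(\R^d)) \to \Map_\ast(X,\sf{BG})
\]
has fibre $\Map_\ast(X,\Omega^\infty\Sigma^\infty\bb{RP}^\infty_d)$, whose $\pi_0$ is $[\Sigma^\infty X,\Sigma^\infty\bb{RP}^\infty_d]$. Its long exact sequence identifies $\ker([X,P_1\Bg(\R^d)]\to[X,\sf{BG}])$ with $\coker(\delta_\sf{G})$. It then remains to show that
\[
\Map_\ast(X,\Bg(\R^d))\to\Map_\ast(X,P_1\Bg(\R^d))
\]
is a bijection on $\pi_0$.

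\textbf{Step 3: the connectivity check.} The bijection on $\pi_0$ requires the layers $D_n\Bg^X(\R^d)$ for $n\geq 2$ to be connected, i.e.
\[
n(d-2)-\dim(X)\geq 0 \quad\text{for } n\geq 2.
\]
It suffices that $2(d-2)\geq \dim(X)$, i.e.\ $d\geq \tfrac{\dim(X)+4}{2}$, which is exactly the lower bound in the hypothesis. This also gives $d\geq 3$ since $\dim X\geq 2$, so \cref{prop: tower of BG} applies.

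The upper bound $d<\dim(X)+1$ just places us outside the stable range, where $\Bg(\R^d)\to\sf{BG}$, being $d$-connected, would make everything trivial. I expect this step to be the main obstacle. The off-by-one care needed is about connectivity of the tower maps versus connectivity of the layers (the $+1$ discussed in the footnote), together with checking that $\pi_0$ of the mapping spaces, and not merely homotopy groups, behave. The route I would take is to use that $\Bg(\R^d)\to P_1\Bg(\R^d)$ is $(2(d-2)+1)$-connected, which follows from the proof of \cref{prop: tower of BG}, and to apply obstruction theory on the $\dim(X)$-dimensional complex $X$.

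\textbf{Step 4: part (2).} Apply $P_1$ to the natural transformation $\Bo\to\Bg$ and map $X$ into the result. This gives a map of ladders from the exact sequence of \cref{thm: stably trivial stunted projective} (with $\bb{k}=\R$) to the sequence of part (1). On the middle terms the map is induced by $D_1\Bo\simeq D_1\Bg$, so it is the identity of $[\Sigma^\infty X,\Sigma^\infty\bb{RP}^\infty_d]$. The metastable range for real bundles, $\tfrac{\dim X}{2}+1\leq d<\dim X+1$, contains the range here, so both rows are exact with surjections on the right.

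Commutativity of the right-hand square is naturality of the sphere-bundle construction, which is induced by $O(d)\subset\sf{G}(d)$. Since the middle map is surjective and the right horizontal map of the $\Bg$-row is surjective, the composite
\[
\vect{\R,d}^0(X)\to\sf{Sph}^0_{d-1}(X)
\]
is surjective.
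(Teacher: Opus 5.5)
Your proposal is correct and follows the paper's proof. For part (1), you run the argument of \cref{thm: stably trivial stunted projective} with $\Bg$ and the connectivity estimates of \cref{prop: tower of BG}; your obstruction-theory step using the $(2(d-2)+1)$-connectivity of $\Bg(\R^d)\to P_1\Bg(\R^d)$ is the same check the paper phrases as connectivity of the layers $D_n\Bg^X(\R^d)$ for $n\geq 2$, and part (2) is exactly the ladder comparison through $D_1\Bo^X\simeq D_1\Bg^X$ that the paper invokes.
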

\begin{proof}
Part (1) follows analogously to~\cref{thm: stably trivial stunted projective}. Part (2) is the combination of (1), the equivalence 
\[
D_1\Bo^X \xrightarrow{ \ \simeq \ } D_1\Bg^X
\]
obtained from~\cref{prop: tower of BG}, and the classification of~\cref{thm: stably trivial stunted projective}.
\end{proof}

We are sure this result must be well-known to the experts, although we were unable to locate a reference. Using this result, some information about stably trivial spherical bundles may be deduced from our results.

\section{Realification of vector bundles and Weiss calculus}\label{section:realification}

Every complex vector space has an underlying real vector space obtained by forgetting the complex multiplication. This process of \emph{realification} assembles into a functor $r : \mathsf{Vect}_\bb{C} \to \mathsf{Vect}_\bb{R}$, from the category of finite-dimensional complex inner product spaces and linear isometries to the category of finite-dimensional real inner product spaces and linear isometries. By precomposition, there is an induced functor
\[
r^\ast : \Fun(\mathsf{Vect}_\bb{R}, \T) \longrightarrow \Fun(\mathsf{Vect}_\bb{C}, \T),
\]
on the level of functor categories. The second author~\cite{TaggartOCUC} proved that this realification functor provides a synergy between the orthogonal and unitary Weiss calculi. In this section we extend on the aspects of~\cite{TaggartOCUC} that we will need to relate the unitary calculus of $\Bu$ to the orthogonal calculus of $\Bo$. 

For this comparison to work correctly we need to assume that the functors involved are \emph{weakly polynomial} in the sense of~\cite[Definition 9.11]{TaggartUC}. This condition essentially amounts to having a convergent Weiss tower, and the reader should feel free to think of weakly polynomial functors as the Weiss calculus version of `analytic' functors from Goodwillie calculus. 

\begin{thm}\label{thm: realification of towers}
Let $F: \vect{\bb{R}} \to \T$ be an orthogonal functor. If $F$ is weakly polynomial, then there are natural equivalences
\begin{align*}
    P_n(r^\ast F) &\simeq r^\ast(P_n F) \\
    D_n(r^\ast F) &\simeq r^\ast(D_n F)
\end{align*}
of unitary functors. In words, the unitary tower of $r^\ast F$ is the realification of the orthogonal tower of $F$.
\end{thm}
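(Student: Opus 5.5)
We prove the two equivalences by comparing the unitary Weiss tower of $r^\ast F$ with the realification of the orthogonal tower of $F$. By the universal property of $P_n$, it suffices to check two things: that $r^\ast(P_nF)$ is unitary $n$-polynomial, and that the map $r^\ast F \to r^\ast(P_nF)$ becomes an equivalence after applying unitary $P_n$. The statement for the layers then follows formally. Indeed, $D_n$ is the fibre of $P_n \to P_{n-1}$, the functor $r^\ast$ preserves fibre sequences since fibres are computed objectwise, and the equivalences for $P_n$ and $P_{n-1}$ are compatible with the restriction maps by naturality.

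\textbf{Step 1: realification preserves polynomiality.} Let $G$ be an orthogonal $n$-polynomial functor. We need $r^\ast G(V) \to \operatorname{holim}_{0\neq U\subseteq \bb{C}^{n+1}} r^\ast G(V\oplus U)$ to be an equivalence. The plan is to compare the unitary poset of nonzero complex subspaces $U \subseteq \bb{C}^{n+1}$ with the orthogonal poset of nonzero real subspaces of $\R^{2n+2}$. Being orthogonal $n$-polynomial implies being orthogonal $(2n+1)$-polynomial, and the complex subspaces sit inside the real ones. So we will show that the sphere-bundle description of $\tau_n$ (via $S(n\cdot V)$ and its complex analogue $S(\bb{C}^{n+1}\otimes V)$) matches after realification: the real unit sphere of $r(\bb{C}^{n+1}\otimes_\bb{C} V)$ is the same space as the unitary one. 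Concretely, we will use Weiss's alternative formula $\tau_nG(V)=\Map(S(\bb{R}^{n+1}\otimes V)_+, G(V\oplus -))$ and its unitary analogue. Then we observe that $n$-polynomial orthogonal functors are $m$-polynomial for every $m \geq n$, which gives the comparison.

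\textbf{Step 2: the approximation map is a $P_n$-equivalence.} This is where weak polynomiality enters, and it is the main obstacle. Write $P_nF = \operatorname{hocolim}_k \tau_n^k F$. Since $F$ is weakly polynomial, the maps $F \to \tau_n F$ are highly connected for large $\dim V$, with connectivity tending to infinity. We then compare $r^\ast\tau_n^k F$ with the unitary $\tau_n^k(r^\ast F)$ using the maps constructed in Step 1. The connectivity estimates coming from weak polynomiality (as in \cite[Definition 9.11]{TaggartUC}) show that the induced map on unitary $P_n$ is an equivalence on high-dimensional $V$. Because polynomial functors are determined by their values on large $V$ (the hocolim over $V\oplus\bb{C}^k$), the equivalence holds for all $V$.

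The hardest point is the precise compatibility between the unitary $\tau_n$ applied to $r^\ast F$ and the orthogonal $\tau_n$ (or $\tau_{2n+1}$) applied to $F$. If this direct route fails, we will instead reduce to \cite[Proposition 5.4]{TaggartOCUC}. That result already proves the statement for reduced weakly polynomial functors. To use it, we split off $P_0F = F(\R^\infty)$: note $r^\ast$ of a constant functor is constant, and the fibre of $F \to F(\R^\infty)$ is reduced and still weakly polynomial. The five lemma applied to the tower fibre sequences then gives the general case.
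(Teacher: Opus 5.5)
Your fallback paragraph is exactly the paper's proof. The paper quotes the reduced case from \cite{TaggartOCUC}, then compares the fibre sequences coming from $D_{[1,\infty]}F \to F \to P_0F$, using that $r^\ast P_0F$ is homotopically constant and hence $n$-polynomial. Your assertion that $D_{[1,\infty]}F$ is still weakly polynomial, which the paper uses tacitly, is fine: $D_{[1,\infty]}F(V)\to P_nD_{[1,\infty]}F(V)$ is the induced map of homotopy fibres over the fixed base $F(\R^\infty)$, so it is as connected as $F(V)\to P_nF(V)$. So the proposal does reach the theorem, but only through that paragraph. The direct route you lead with does not work as written.

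In Step~1 the claim is true but your argument for it is not valid. Being orthogonal $(2n+1)$-polynomial gives $G(W)\simeq\operatorname{holim}_{0\neq U'\subseteq\R^{2n+2}}G(W\oplus U')$. Restricting that homotopy limit to the subposet of complex subspaces of $\C^{n+1}$ has no reason to preserve the equivalence.

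The sphere identification does not help either. First, $S(r(\C^{n+1}\otimes_\C V))=S(\R^{n+1}\otimes_\R rV)$ is the orthogonal degree-$n$ sphere, not the degree-$(2n+1)$ one. Second, Weiss's description is $\tau_nG(V)\simeq\operatorname{nat}(S\gamma_{n+1}(V,-)_+,G)$, not a mapping space out of $S(\R^{n+1}\otimes V)_+$. There the spheres are fibres of bundles over $\mathrm{mor}_\C(V,-)$, respectively $\mathrm{mor}_\R(rV,-)$, with natural transformations over different indexing categories, so matching fibres identifies nothing.

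A correct proof goes up the tower. By Weiss's delooping, $P_kG\simeq\operatorname{hofib}(P_{k-1}G\to R_kG)$ with $R_kG$ homogeneous of degree $k$. So it suffices that $r^\ast$ preserves constants and homogeneous functors, since polynomial functors are closed under homotopy limits. Homogeneous functors are preserved because $\R^k\otimes_\R rV=r(\C^k\otimes_\C V)$ compatibly with $O(k)\subseteq U(k)$. Hence $r^\ast$ sends $\Omega^\infty(\Theta\wedge S^{\R^k\otimes -})_{hO(k)}$ to $\Omega^\infty(\mathsf{Ind}_{O(k)}^{U(k)}\Theta\wedge S^{\C^k\otimes -})_{hU(k)}$.

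In Step~2 you flag the comparison of $r^\ast\tau_n^kF$ with $\tau_n^k(r^\ast F)$ as the obstacle and never resolve it, but it is not needed. Weak polynomiality says $F(W)\to P_nF(W)$ is $((n+1)\dim_\R W-b)$-connected for large $W$. At $W=rV$ this reads $(2(n+1)\dim_\C V-b)$, which is precisely a unitary agreement of order $n$. The unitary agreement lemma (the mechanism the paper uses, orthogonally, for complexification) then makes $r^\ast F\to r^\ast P_nF$ a unitary $P_n$-equivalence for large $V$. Your remark that polynomial functors are determined by their values on large $V$ finishes the argument.

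Repaired this way, your direct route is a genuinely different, self-contained proof. It needs neither the reduced case nor the splitting off of $P_0F$. It also isolates the actual inputs: the behaviour of $r^\ast$ on homogeneous layers and the coincidence $(n+1)\dim_\R(rV)=2(n+1)\dim_\C V$. The paper's argument is a two-line reduction because all of this is already contained in the cited reduced result.
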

\begin{proof}
    The special case where $F$ is reduced (i.e., when $F(\R^\infty) \simeq *$) was proven as~\cite[Theorem 5.5]{TaggartOCUC}.

    To extend to non-reduced functors, consider the fibre sequence 
    \[
    D_{[1,\infty]}F \longrightarrow F \longrightarrow P_0F,
    \]
    which defines $D_{[1,\infty]}F $ (the \emph{reduced part} of $F$). We have that $P_0( D_{[1,\infty]}F) \simeq *$, and the derivatives of $D_{[1,\infty]}F$ agree with those of $F$.  There is a diagram of fibre sequences 
\[\begin{tikzcd}
	{r^\ast P_n( D_{[1,\infty]}F)} & {r^\ast P_nF} & {r^\ast P_nP_0F} \\
	{P_n(r^\ast P_n( D_{[1,\infty]}F))} & {P_n(r^\ast P_nF)} & {P_n(r^\ast P_nP_0F)}
	\arrow[from=1-1, to=1-2]
	\arrow["{\eta_{r^\ast P_n( D_{[1,\infty]}F)}}"', from=1-1, to=2-1]
	\arrow[from=1-2, to=1-3]
	\arrow["{\eta_{r^\ast P_nF}}"', from=1-2, to=2-2]
	\arrow["{\eta_{r^\ast P_nP_0F}}"', from=1-3, to=2-3]
	\arrow[from=2-1, to=2-2]
	\arrow[from=2-2, to=2-3]
\end{tikzcd}\]
in which the top row is obtained from the fibre sequence above by applying the $n$-th polynomial approximation and precomposing with the realification functor, and the vertical maps are the universal maps in the \emph{unitary} Weiss tower. The left vertical map is an equivalence by~\cite[Theorem 5.5]{TaggartOCUC} so it suffices to show that the right vertical map is an equivalence. To see this, observe that there are equivalences
\[
r^\ast P_nP_0F(V) \simeq r^\ast P_0F(V) = P_0F(rV) \simeq F(\R^\infty)
\]
since a $0$-polynomial functor is $n$-polynomial for all $n \geq 0$. It follows that the functor $r^\ast P_nP_0F$ is $0$-polynomial (i.e., homotopically constant) and hence $n$-polynomial for all $n \geq 0$. In particular, the right vertical map is an equivalence, as required.
\end{proof}

The weakly polynomial assumption of~\cref{thm: realification of towers} is essentially a convergence criterion: weakly polynomial functors are those for which the maps $F(V) \to P_nF(V)$ are connected in a range linearly dependent on $n$ and $\dim_\R(V)$ (up to some finite error). Functors with this property have convergent Weiss tower, see e.g.,~\cite[Lemma 9.10]{TaggartUC}. The functor $\Ba$ is weakly polynomial by~\cref{lem: tower of BAut}.

We use these comparisons to provide a relationship between real and complex vector bundles. Realification of vector bundles is represented by the subgroup inclusion $U(d) \subseteq  O(2d)$. This map induces a natural transformation
\[
\Bu \longrightarrow r^\ast\Bo,
\]
of unitary functors, and hence a natural transformation between the unitary Weiss towers. By~\cref{thm: realification of towers}, we get a commutative diagram, which summarises the situation on $n$-th stages:
\[\begin{tikzcd}
	{P_n\Bu(V)} && {P_n\Bo(rV)} \\
	& {P_n(r^\ast\Bo)(V).}
	\arrow[from=1-1, to=1-3]
	\arrow[from=1-1, to=2-2]
	\arrow["\simeq", from=2-2, to=1-3]
\end{tikzcd}\]
This natural transformation between unitary Weiss towers provides a calculus level interpretation of the realification map at the level of stably trivial bundles. We introduce the notation $r: \Sigma^{\infty+1}\bb{CP}^\infty_d \to \Sigma^\infty\bb{RP}^\infty_{2d}$ for the \emph{realification map} on first layers $D_1\Bu(\C^d) \to D_1\Bo(\R^{2d})$.

\begin{thm}\label{thm: realification vbs calc}
Let $X$ be a finite cell complex of dimension at least $4 = 2\dim_{\bb{R}}(\C)$.
\begin{enumerate}
    \item In the metastable range, there is a commutative diagram with exact rows
\[\begin{tikzcd}[ampersand replacement=\&,cramped]
	{[\Sigma X, \BU]} \& {[\Sigma^\infty X, \Sigma^{\infty +1}\bb{CP}^\infty_d]} \& {\vect{\C,d}^0(X)} \& 0\\
	{[\Sigma X, \BO]} \& {[\Sigma^\infty X, \Sigma^{\infty}\bb{RP}^\infty_{2d}]} \& {\vect{\R,2d}^0(X)} \& 0
	\arrow["{\delta_\C}", from=1-1, to=1-2]
	\arrow["{r_\ast}"', from=1-1, to=2-1]
	\arrow[from=1-2, to=1-3]
	\arrow["{r_\ast}"', from=1-2, to=2-2]
	\arrow["r"', from=1-3, to=2-3]
	\arrow["{\delta_\R}"', from=2-1, to=2-2]
	\arrow[from=2-2, to=2-3]
    \arrow[from=2-3, to=2-4]
    \arrow[from=1-3, to=1-4]
\end{tikzcd}\]
    in which the right vertical map is realification of vector bundles, the middle vertical map is by post-composition with the above map $r: \Sigma^{\infty+1}\bb{CP}^\infty_d \to \Sigma^\infty\bb{RP}^\infty_{2d}$, and the left vertical map is induced by the inclusion $U \subseteq O$.
    \item The map $r^\ast: H^\ast(\Sigma^\infty\bb{RP}^\infty_{2d}; \bb{F}_2) \to H^\ast(\Sigma^{\infty+1}\bb{CP}^\infty_d; \bb{F}_2)$ is a degree-wise surjection. Furthermore, the restriction to bottom cells
    \[
    \Sigma^{\infty+1}\bb{CP}^{d}_d \xrightarrow{\ r \ } \Sigma^\infty\bb{RP}^{2d+1}_{2d} \xrightarrow{\ t \ } \Sigma^\infty\bb{RP}^{2d+1}_{2d+1}
    \]
    of the composite of realification and truncation is an integral homotopy equivalence.
\end{enumerate}
\end{thm}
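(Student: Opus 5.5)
For part (1) I will follow the pattern of the proofs of \cref{thm: stably trivial stunted projective} and \cref{thm: truncation is stabilisation}. The inclusion $U(d)\subseteq O(2d)$ gives a natural transformation $\Bu\to r^\ast\Bo$. It therefore induces a map of unitary Weiss towers, and by \cref{thm: realification of towers} (applicable because $\Bo$ is weakly polynomial by \cref{lem: tower of BAut}) the target tower is the realification of the orthogonal tower of $\Bo$. Applying $\Map_\ast(X,-)$ and using \cref{lem: tower of BAutX}, I get a map of fibre sequences
\[
\Map_\ast(X,D_1\Bu(\C^d))\to\Map_\ast(X,P_1\Bu(\C^d))\to\Map_\ast(X,\BU)
\]
into the corresponding sequence for $\Bo$ evaluated at $\R^{2d}$. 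The right-hand map is induced by $\BU\to\BO$, and the left-hand map is $\Omega^\infty$ of $r$. The long exact sequences in homotopy then form a commutative ladder. The $\pi_0$ comparison $[X,\Ba(\bb{k}^d)]\to[X,P_1\Ba(\bb{k}^d)]$ is compatible with realification by naturality of the tower. This identifies the induced map on cokernels of $\delta$ with realification of bundles. Here I need both $(X,d)$ in the complex metastable range and $(X,2d)$ in the real one. This holds: $\dim X/4\le d$ gives $2d\ge \dim X/2$, and the real lower bound $\dim X/2+1$ needs a small check. If it fails at the boundary, I would instead argue directly that $D_n\Bo^X(\R^{2d})$ is connected for $n\ge2$ using the estimate $2(2d-1)-\dim X\ge 0$.

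For part (2), I first identify $r$ on the fibres of the stabilisation maps. Restricting $\Bu\to r^\ast\Bo$ along $V\mapsto V\oplus\C$ and comparing with $V\oplus\R^2$, the complex fibre $\Sigma S^{V}$ maps to the two-stage real fibre. The real fibre is built from $S^{rV}$ and $S^{rV\oplus\R}$, coming from $O(2d)\subset O(2d+1)\subset O(2d+2)$. On the spherical fibration level, $U(d+1)/U(d)=S^{2d+1}\to O(2d+2)/O(2d)=V_2(\R^{2d+2})$, followed by projection to $O(2d+2)/O(2d+1)=S^{2d+1}$, is the identity of $S^{2d+1}$. This is the unit-sphere map, since the first column of a unitary frame is the real first column.

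Delooping and passing to linear approximations as in the proof of \cref{thm: truncation is stabilisation}, the bottom cell of $\Sigma^{\infty+1}\bb{CP}^\infty_d$, namely $\Sigma^{\infty}S^{2d+1}$, maps by an equivalence onto the bottom cell of $\Sigma^\infty\bb{RP}^\infty_{2d+1}$. This gives the integral equivalence $t\circ r$ restricted to $\Sigma^{\infty+1}\bb{CP}^d_d\to\Sigma^\infty\bb{RP}^{2d+1}_{2d+1}$.

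For the cohomological surjection I will use naturality in $d$. The map $r$ commutes with truncations $\bb{CP}^\infty_d\to\bb{CP}^\infty_{d+1}$ and $\bb{RP}^\infty_{2d}\to\bb{RP}^\infty_{2d+2}$, because the transformation $\Bu\to r^\ast\Bo$ is compatible with $V\mapsto V\oplus\C$. Hence $r$ on the cofibre of the $k$-th truncation is the bottom-cell map at level $d+k$. By the previous step, this map is nonzero on the degree-$(2(d+k)+1)$ class, so every generator of $H^\ast(\Sigma\bb{CP}^\infty_d;\bb{F}_2)$ is hit. Inductively, with cohomology of truncations injective (\cref{thm: truncation is stabilisation}), $r^\ast$ is surjective in every degree.

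The main obstacle is the bottom-cell identification: making rigorous that the linearisation of the unit-sphere comparison of fibre sequences really induces the stated map of first layers. I expect this to be handled like the final paragraph of the proof of \cref{thm: truncation is stabilisation}, using that $D_1$ of $\Sigma^{j}S^{(-)}$ is $\Omega^\infty\Sigma^{\infty+j}S^{(-)}$.
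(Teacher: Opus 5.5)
Your proposal follows the paper's proof. Part (1) comes from the map of unitary Weiss towers induced by $\Bu\to r^\ast\Bo$ together with \cref{thm: realification of towers}. Part (2) uses the same comparison of stabilisation fibre sequences $U(\C\oplus V)/U(V)\to O(\R^2\oplus rV)/O(rV)\to O(\R^2\oplus rV)/O(\R\oplus rV)$, passed to first layers, with the bottom-cell equivalence then propagated to every cell. Your version of that last step is a more explicit form of the paper's ``induction on $n\geq d$'': you use naturality of $r$ under truncation and pull back the bottom class of $\Sigma\bb{CP}^\infty_{d+k}$ along $\Sigma\bb{CP}^\infty_d\to\Sigma\bb{CP}^\infty_{d+k}$.

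The one step that does not work is the range check in part (1). By \cref{lem: tower of BAutX}, your fallback inequality $2(2d-1)-\dim(X)\geq 0$ is just the real metastable lower bound $2d\geq \dim(X)/2+1$ rewritten. So it cannot rescue the cases where that bound fails, and those cases do occur. The complex range allows $4d\in\{\dim(X),\dim(X)+1\}$. There the map $\Map_\ast(X,\Bo(\R^{2d}))\to\Map_\ast(X,P_1\Bo(\R^{2d}))$ is only $(4d-1-\dim(X))$-connected, so $\pi_0$ is not controlled and exactness of the bottom row at $\vect{\bb{R},2d}^0(X)$ is not available. Your assertion that the real range ``holds'' is therefore false at the boundary.

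The fix is to read ``in the metastable range'' as also requiring $(X,2d)$ to lie in the real metastable range. The paper tacitly assumes this when it cites \cref{thm: stably trivial stunted projective} for the bottom row, and it holds in every application ($\ell\geq 5$, $n\geq 8$).
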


\begin{proof}
For (1), the diagram exists by our above definition of the middle vertical map $r$. The rows are as usual exact by \cref{thm: stably trivial stunted projective}. That the diagram commutes follows by the definition of our map $r$.

For (2) observe that the realification and truncation maps fit into a diagram
\[\begin{tikzcd}
	{\Sigma S^{(-)}} & {U(\C \oplus -)/U(-)} & \Bu & {\Bu(\C\oplus -)} \\
	& {O(\R^2 \oplus r(-))/O(r(-))} & {r^\ast \Bo} & {(r^\ast \Bo)(\C \oplus -)} \\
	{S^{\R \oplus r(-)}} & {O(\R^2 \oplus r(-))/O(\R \oplus r(-))} & {r^\ast (\Bo(\R \oplus -))} & {r^\ast (\Bo(\R^2 \oplus -))}
	\arrow["\cong"', from=1-1, to=1-2]
	\arrow["\simeq"', from=1-1, to=3-1]
	\arrow[from=1-2, to=1-3]
	\arrow["r"', from=1-2, to=2-2]
	\arrow[from=1-3, to=1-4]
	\arrow["r"', from=1-3, to=2-3]
	\arrow["r"', from=1-4, to=2-4]
	\arrow[from=2-2, to=2-3]
	\arrow[from=2-2, to=3-2]
	\arrow[from=2-3, to=2-4]
	\arrow[from=2-3, to=3-3]
	\arrow["\cong", from=2-4, to=3-4]
	\arrow["\cong"', from=3-1, to=3-2]
	\arrow[from=3-2, to=3-3]
	\arrow[from=3-3, to=3-4]
\end{tikzcd}\]
in which the horizontal directions are fibre sequences. The vertical composite $\Sigma S^{(-)} \longrightarrow S^{\bb{R} \oplus r(-)}$ is induced by realification and is clearly seen to be an equivalence of unitary functors. Since $\bb{D}_1$ preserves fibre sequences, passage to first homogeneous layer produces an analogous diagram of fibre sequences,
\[\begin{tikzcd}
	{\mathbb{D}_1\Sigma S^{(-)}} & {\mathbb{D}_1U(\C \oplus -)/U(-)} & {\mathbb{D}_1\Bu} & {\mathbb{D}_1\Bu(\C\oplus -)} \\
	& {\mathbb{D}_1O(\R^2 \oplus r(-))/O(r(-))} & {\mathbb{D}_1r^\ast \Bo} & {(r^\ast \Bo)(\C \oplus -)} \\
	{\mathbb{D}_1S^{\R \oplus r(-)}} & {\mathbb{D}_1O(\R^2 \oplus r(-))/O(\R \oplus r(-))} & {\mathbb{D}_1r^\ast (\Bo(\R \oplus -))} & {\mathbb{D}_1r^\ast (\Bo(\R^2 \oplus -))}
	\arrow["\cong"', from=1-1, to=1-2]
	\arrow["\simeq"', from=1-1, to=3-1]
	\arrow[from=1-2, to=1-3]
	\arrow["r"', from=1-2, to=2-2]
	\arrow[from=1-3, to=1-4]
	\arrow["r"', from=1-3, to=2-3]
	\arrow["r"', from=1-4, to=2-4]
	\arrow[from=2-2, to=2-3]
	\arrow[from=2-2, to=3-2]
	\arrow[from=2-3, to=2-4]
	\arrow[from=2-3, to=3-3]
	\arrow["\cong", from=2-4, to=3-4]
	\arrow["\cong"', from=3-1, to=3-2]
	\arrow[from=3-2, to=3-3]
	\arrow[from=3-3, to=3-4]
\end{tikzcd}\]
where $\mathbb{D}_1\Sigma S^{(-)} \to \mathbb{D}_1 S^{\bb{R} \oplus r(-)}$ is an equivalence. By~\cref{lem: tower of BAut}, when evaluated at $\C^d$, the right-hand part of this diagram becomes
\[\begin{tikzcd}[ampersand replacement=\&,cramped]
	{\Sigma^{\infty+1}\bb{CP}^d_d} \& {\Sigma^{\infty+1}\bb{CP}^\infty_d} \& {\Sigma^{\infty+1}\bb{CP}^\infty_{d+1}} \\
	{\Sigma^{\infty}\bb{RP}^{2d+1}_{2d}} \& {\Sigma^\infty\bb{RP}^\infty_{2d}} \& {\Sigma^\infty\bb{RP}^\infty_{2d+2}} \\
	{\Sigma^{\infty}\bb{RP}^{2d+1}_{2d+1}} \& {\Sigma^\infty\bb{RP}^\infty_{2d+1}} \& {\Sigma^\infty\bb{RP}^\infty_{2d+2}}
	\arrow[from=1-1, to=1-2]
	\arrow["r"', from=1-1, to=2-1]
	\arrow[from=1-2, to=1-3]
	\arrow["r"', from=1-2, to=2-2]
	\arrow["r"', from=1-3, to=2-3]
	\arrow[from=2-1, to=2-2]
	\arrow[from=2-1, to=3-1]
	\arrow[from=2-2, to=2-3]
	\arrow[from=2-2, to=3-2]
	\arrow["\cong", from=2-3, to=3-3]
	\arrow[from=3-1, to=3-2]
	\arrow[from=3-2, to=3-3]
\end{tikzcd}\]
where we may identify the terms of the leftmost column as stunted projective spaces by~\cref{thm: truncation is stabilisation} and in which the left-most vertical composite is an equivalence. It follows by induction on $n \geq d$ that the restriction of our map to any cell,
$$\Sigma^{\infty+1}\bb{CP}^{n}_n \longrightarrow \Sigma^{\infty}\bb{RP}^{2n+1}_{2n} \longrightarrow \Sigma^{\infty}\bb{RP}^{2n+1}_{2n+1}$$
is an integral homotopy equivalence, so in particular the first part of this composite is a mod-2 cohomology surjection. Knowing the mod-2-cohomology of the spaces $\bb{CP}^\infty_d$ and $\bb{RP}^\infty_{2d}$ (in particular, that all boundary maps in their cellular chain complexes are zero) now implies that the original map
\[
r: \Sigma^{\infty+1}\bb{CP}^\infty_d \longrightarrow \Sigma^\infty\bb{RP}^\infty_{2d}
\]
must also be a mod-2 cohomology surjection.
\end{proof}

\subsection{Digression: Natural transformations}\label{digression: honat}

As we have just discussed, realification induces a natural transformation $D_1\Bu \to D_1(r^\ast\Bo)$.
It turns out that the group of (homotopy classes of) such natural transformations is surprisingly rich. We digress here to give a description of this group via a detour into Borel equivariant stable homotopy theory and the Segal conjecture. The result is the following, where we write $\bb{Z}_2$ for the 2-adic integers. 

\begin{prop}
There is an isomorphism
\[
[D_1\Bu, D_1(r^\ast\Bo)] \cong \bb{Z} \oplus \bb{Z}_2
\]
between homotopy classes of natural transformations from $D_1\Bu$ to $D_1\Bo(r^\ast)$ and the group $\bb{Z} \oplus \bb{Z}_2$.
\end{prop}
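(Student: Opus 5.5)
The plan is to reduce the computation to Borel equivariant stable homotopy theory via the classification of homogeneous functors, and then finish with the Segal conjecture for $C_2$.

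\textbf{Step 1: pass to derivatives.} Both $D_1\Bu$ and $D_1(r^\ast\Bo)$ are homogeneous of degree one as unitary functors. For $D_1\Bu$ this is by definition. For $D_1(r^\ast\Bo)$ it follows from \cref{thm: realification of towers}, since $D_1(r^\ast\Bo)\simeq r^\ast D_1\Bo$. By the unitary analogue of Weiss's classification \eqref{eq: class. of homog}, homotopy classes of natural transformations between such functors agree with homotopy classes of maps of Borel $U(1)$-spectra between their first derivatives. So I will compute
\[
[\partial_1\Bu, \partial_1(r^\ast\Bo)]^{\mathrm{Borel}\,U(1)}.
\]
By \cref{lem: tower of BAut}, the source is $\Sigma\bb{S}$ with trivial $U(1)$-action.

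\textbf{Step 2: identify $\partial_1(r^\ast \Bo)$.} This is the step I expect to be the main obstacle. The orthogonal first derivative of $\Bo$ is $\bb{S}$ with trivial $O(1)$-action, so
\[
r^\ast D_1\Bo(V)\simeq \Omega^\infty(\bb{S}\wedge S^{rV})_{hO(1)}.
\]
Here $O(1)=C_2\subset U(1)$ acts on $S^{rV}$ by the restriction of the complex scalar action. Writing
\[
(\bb{S}\wedge S^{V})_{hC_2}\simeq \big((U(1)_+\wedge_{C_2}\bb{S})\wedge S^V\big)_{hU(1)}
\]
shows that $\partial_1(r^\ast\Bo)\simeq \mathrm{Ind}_{C_2}^{U(1)}\bb{S}$. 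The points needing care are:
\begin{enumerate}
\item checking that this identification is natural in $V$, so that it really computes the derivative (this is essentially in \cite{TaggartOCUC});
\item applying the Wirthmüller isomorphism for $C_2\subset U(1)$. The tangent representation of $U(1)/C_2$ is one-dimensional with trivial action, so it gives $\mathrm{Ind}_{C_2}^{U(1)}\bb{S}\simeq \Sigma\,\mathrm{Coind}_{C_2}^{U(1)}\bb{S}$.
\end{enumerate}
The suspension produced by Wirthmüller should exactly cancel the $\Sigma$ in the source. As a sanity check, the resulting layer $\Sigma^\infty\bb{RP}^\infty_{2d}$ must match the bottom-cell equivalence of \cref{thm: realification vbs calc}(2).

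\textbf{Step 3: adjunction and the Segal conjecture.} By the coinduction adjunction,
\[
[\Sigma\bb{S},\Sigma\,\mathrm{Coind}_{C_2}^{U(1)}\bb{S}]^{U(1)}\cong[\bb{S},\bb{S}]^{\mathrm{Borel}\,C_2}=\pi_0\,F(BC_{2+},\bb{S})=\pi^0_s(BC_{2+}),
\]
where both copies of $\bb{S}$ carry the trivial action. By the Segal conjecture for $C_2$ (Lin's theorem), $\pi^0_s(BC_{2+})$ is the completion of the Burnside ring $A(C_2)=\bb{Z}\{1,[C_2]\}$ at its augmentation ideal $I$. The ideal $I$ is generated by $x=[C_2]-2$, which satisfies $x^2=-2x$, so the powers of $I$ are $I^k=2^{k-1}I$. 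The completion is therefore $\bb{Z}\oplus\bb{Z}_2$, with the $\bb{Z}$ summand detected by the underlying degree. Tracing the identifications back gives the claimed isomorphism.

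Finally, I would locate realification inside this group to justify the remark that it is not the unique such transformation. Its restriction to the bottom cell is an equivalence, so I expect it to have degree one in the $\bb{Z}$-summand; the $\bb{Z}_2$-summand then supplies uncountably many other transformations.
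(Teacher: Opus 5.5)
Your proposal is correct and follows essentially the same route as the paper: use the classification of homogeneous functors to reduce to maps of Borel $U(1)$-spectra $\Sigma\bb{S}\to\mathrm{Ind}_{O(1)}^{U(1)}\bb{S}$, apply the Wirthm\"uller isomorphism for $U(1)/O(1)$ (one-dimensional trivial tangent representation) to rewrite the target as $\Sigma\,\mathrm{Coind}_{O(1)}^{U(1)}\bb{S}$, adjoin down to $\pi^0_s(BC_{2+})$, and finish with Lin's theorem and $A(C_2)^\wedge_I\cong\bb{Z}\oplus\bb{Z}_2$. The only cosmetic difference is that you derive $\partial_1(r^\ast\Bo)\simeq\mathrm{Ind}_{C_2}^{U(1)}\bb{S}$ directly from the homotopy-orbit formula, whereas the paper cites the corresponding corollary of the realification comparison, with the reducedness hypothesis removed as in the realification-of-towers theorem.
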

\begin{proof}
The classification of homogeneous functors~\cite[Theorem 7.3]{Weiss} implies that there is an equivalence between the category of homogeneous functors of degree $n$ and the category of Borel $\Aut(n)$-spectra. Fully faithfulness of this equivalence implies that there is an isomorphism
\[
[D_1\Bu, r^\ast D_1\Bo] \cong [\del_1\Bu, \del_1(r^\ast \Bo)]_{\BU(1)} \cong [\del_1 \Bu, \sf{Ind}_{O(1)}^{U(1)}\del_1\Bo]_{\BU(1)},
\]
where $[-,-]_{\BU(1)}$ denotes homotopy classes of maps of Borel $U(1)$-spectra and where the last isomorphism follows from~\cite[Corollary 5.6]{TaggartOCUC}, where the reduced assumption may be removed from \emph{loc. cit.} analogously to~\cref{thm: realification of towers}.

By~\cref{lem: tower of BAut}, we may identify 
\[
 [\del_1 \Bu, \sf{Ind}_{O(1)}^{U(1)}\del_1\Bo]_{\BU(1)} \cong [\bb{S}^1, \sf{Ind}_{O(1)}^{U(1)}\bb{S}]_{\BU(1)}
\]
in which $\bb{S}^1$ has trivial $U(1)$-action and $\bb{S}$ has trivial $O(1)$-action. We will now use the Wirthm\"uller isomorphism for Borel equivariant spectra to identify these homotopy classes.

Consider the composition
\[\begin{tikzcd}
	{BO(1)} & {BU(1)} & \ast
	\arrow["i"', from=1-1, to=1-2]
	\arrow["q", curve={height=-24pt}, from=1-1, to=1-3]
	\arrow["p"', from=1-2, to=1-3]
\end{tikzcd}\]
where $i$ is the canonical map induced by the inclusion $O(1) \subset U(1)$. These map induce adjoint triples
\[\begin{tikzcd}
	{\s^{\BO(1)}} & {\s^{\BU(1)}} & \s
	\arrow["{i_!}", shift left=3, from=1-1, to=1-2]
	\arrow["{i_\ast}"', shift right=3, from=1-1, to=1-2]
	\arrow["{i^\ast}"{description}, from=1-2, to=1-1]
	\arrow["{p_!}", shift left=3, from=1-2, to=1-3]
	\arrow["{p^\ast}"{description}, from=1-3, to=1-2]
	\arrow["{p_\ast}", shift left=3, tail reversed, no head, from=1-3, to=1-2]
\end{tikzcd}\]
on the level of categories of Borel spectra (a nice survey of these ideas is given in~\cite[\S2.5]{NaefStoll} but see also~\cite{Cnossen}). Viewing $\bb{S}$ and $\bb{S}^1$ as non-equivariant spectra, then the homotopy class of maps in question has a `shrieks and stars' interpretation as
\[
[\bb{S}^1, \sf{Ind}_{O(1)}^{U(1)} \bb{S}]_{\BU(1)} = [p^\ast(\bb{S}^1), i_!q^\ast(\bb{S})]_{\BU(1)} \cong [\bb{S}^1, p_\ast i_!q^\ast(\bb{S})]
\]
since the respective actions on $\bb{S}^1$ and $\bb{S}$ are trivial, and $q^\ast$ is left adjoint to $q_\ast$. 

The map $i: \BO(1) \to \BU(1)$ has fibre a compact manifold $U(1)/O(1)$, hence by a fiberwise form of Poincaré duality (or a ``Borel'' form of the Wirthm\"uller isomorphism), there is a natural equivalence
\[
i_!(D_i \wedge -) \simeq i_\ast
\]
as functors of the form $\s^{\BO(1)} \to \s^{\BU(1)}$, where $D_i$ is the dualizing spectrum of $i$. Said differently, these categories of Borel spectra form a Wirthm\"uller context, see e.g.,~\cite[Lemma 2.5.4]{NaefStoll}. In this particular case, the dualizing spectrum is given by $D_i \simeq S^{-L}$, where $L = T_{O(1)/O(1)} U(1)/O(1)$ is the tangent representation of the identity coset of $U(1)/O(1)$ with the canonical $O(1)$-action. The representation $L$ may be computed by examining the difference between the Lie algebras for $U(1)$ and $O(1)$. Computing this representation, see e.g.,~\cite[Remark 3.2.26]{SchwedeGlobal}, we see that $L=\R$ is the trivial representation of $O(1)$ and hence $D_i = S^{-1}$. It follows that
\[
p_\ast i_! q^\ast (\bb{S}) \simeq p_\ast i_\ast(D_i^\vee \wedge (q^\ast(\bb{S}))) \simeq q_\ast (\bb{S}^1) \simeq (\bb{S}^1)^{hC_2}.
\]
Putting this all together we see that
\[
[\bb{S}^1, \sf{Ind}_{O(1)}^{U(1)}\bb{S}]_{\BU(1)} \cong [\bb{S}^1, \Map_\ast((BC_2)_+, \bb{S}^1)]  \cong [\bb{S}^1, (\bb{S}^1)^{hC_2}] \cong [(BC_2)_+, \bb{S}] \cong \bb{Z} \oplus \bb{Z}_2
\]
where $\bb{Z}_2$ denotes the $2$-adic integers. The last isomorphism is the Segal conjecture~\cite{Lin} for $C_2$: the group $[(BC_2)_+, \bb{S}]$ is the stable cohomotopy of $BC_2$ which is identified with the completion $\bb{Z} \oplus \bb{Z}_2$ of the Burnside ring $A(C_2)$ at its augmentation ideal.
\end{proof}

\subsection{Digression: The fibre of realification}\label{dig: fib of real}
The space $O(2d)/U(d)$ of orthogonal complex structures on $\bb{R}^{2d}$ is the value at $\C^d$ of a unitary functor 
\[
\sf{o/u}: \vect{\bb{C}} \to \T, \quad \bb{C}^d \longmapsto O(\bb{R}^{2d})/U(\bb{C}^d).
\]
Much like how the functors $\Bo$ and $\Bu$ have been used to classify vector bundles, the functor $\sf{o/u}$ may be used to classify almost complex structures. We will not pursue this idea here. From our point of view, the utility of this functor comes from (its defining) fibre sequence
\[
\sf{o/u} \longrightarrow \Bu \longrightarrow r^\ast\Bo,
\]
through which $\sf{o/u}$ measures the homotopical difference between complex bundles and their realification. We give a brief account of the unitary Weiss tower of $\sf{o/u}$ since it may be of independent interest. 

Let $\es{L}_n^\bb{k}$ denote the topological poset of proper direct sum decompositions of $\bb{k}^n$. The objects are unordered collections of mutually orthogonal subspaces of $\bb{k}^n$ whose direct sum is $\bb{k}^n$. The space of objects may be topologised as a subspace
\[
\sf{Ob}(\es{L}_n^\bb{k}) \subseteq \coprod_{m >1}\left[ \left( \coprod_{k \geq 1} \sf{Gr}_k(\bb{k}^n)\right)^m/\Sigma_m \right].
\]
Given two direct sum decompositions $\Lambda_1$ and $\Lambda_2$, there exists a unique morphism $\Lambda_1 \to \Lambda_2$ if every component of $\Lambda_1$ is a subspace of a component of $\Lambda_2$. Denote by $L_n^\bb{k} \coloneq |\mathcal{L}_n^\bb{k}|^\diamond$ the unreduced suspension of the classifying space of $\es{L}_n^\bb{k}$. These spaces were used by Arone~\cite{Arone} to give a complete description of the derivatives and layers of $\Bo$ and $\Bu$. 

The $\Aut(n)$-action on $\bb{k}^n$ induces an $\Aut(n)$-action on $L_n^\bb{k}$. There is an  $O(n)$-equivariant map $L_n^\bb{R} \to L_n^\bb{C}$ induced by complexification of vector spaces, where $L_n^\C$ is given an $O(n)$-action by restricting the $U(n)$-action. This $O(n)$-equivariant map is adjoint to a $U(n)$-equivariant map 
\begin{equation}\label{eq:poset map}
\sf{Ind}_{O(n)}^{U(n)} L_n^\R \longrightarrow L_n^\C.    
\end{equation}
We will denote the homotopy cofibre of this $U(n)$-equivariant map by $L_n^{\C/\R}$. In this digression we prove the following.

\begin{prop}\label{prop: tower of o/u}
The Weiss tower of $\sf{o/u}$ is given by
\[\begin{tikzcd}
	&& \vdots \\
	&& {P_2(\sf{o/u})(V)} & {\Omega^\infty \Map_\ast( L_2^{\C/\R}, S^{\bb{C}^2 \otimes V} \wedge \bb{S}^{\sf{Ad}_{U(2)}})_{hU(2)}} \\
	&& {P_1(\sf{o/u})(V)} & {\Omega^\infty \Map_\ast(U(1)/O(1)^\diamond, \Sigma\bb{S}^{V})_{hU(1)}} \\
	{\sf{o}/\sf{u}(V)} && {O/U.}
	\arrow[from=1-3, to=2-3]
	\arrow[from=2-3, to=3-3]
	\arrow[from=2-4, to=2-3]
	\arrow[from=3-3, to=4-3]
	\arrow[from=3-4, to=3-3]
	\arrow[curve={height=-12pt}, from=4-1, to=1-3]
	\arrow[curve={height=-12pt}, from=4-1, to=2-3]
	\arrow[curve={height=-12pt}, from=4-1, to=3-3]
	\arrow[from=4-1, to=4-3]
\end{tikzcd}\]
with $n$-th derivative Borel $U(n)$-spectra given by
\[
\partial_n(\sf{o/u}) \simeq \Map_\ast( L_n^{\bb{C}/\bb{R}}, \bb{S}^{\sf{Ad}_{U(n)}}).
\]
In particular, $\partial_1(\sf{o/u}) \simeq \Map_\ast(U(1)/O(1)^\diamond, \bb{S}^1)$, where $(-)^\diamond: \es{S} \to \T$ denotes unreduced suspension. The Weiss tower of $\sf{o/u}$ is convergent, with 
\[
\sf{Conn}(D_n(\sf{o/u})(V)) = n(\dim_\R(V)-1)-1
\]
whenever $\dim_\R(V) \geq 2$.
\end{prop}

We now need a lemma from (genuine) equivariant stable homotopy theory based on incomplete universes, which we could not find in the literature. Let $G$ be a compact Lie group, with $\es{U}_G$ a $G$-universe: a countably infinite-dimensional $G$-representation. The $\infty$-category $\s^G_{\es{U}_G}$ of $\es{U}_G$-spectra is the initial stable $\infty$-category in which the representation sphere $S^V$ is invertible for all irreducible sub-representations $V$ of $\es{U}_G$, i.e., for any $F: \es{S}_\ast^G \to \mathcal{D}$ such that $F(S^V)$ is invertible for every $V \subset \es{U}_G$ , there exists (see e.g.,~\cite[Corollary C.7]{GepnerMeier}) an essentially unique lift
\[\begin{tikzcd}
	{\es{S}_\ast^G} & {\es{D}} \\
	{\s^G_{\es{U}_G}}
	\arrow["F", from=1-1, to=1-2]
	\arrow["{\Sigma^\infty_{G}}"', from=1-1, to=2-1]
	\arrow["{\exists!\bar{F}}"', dashed, from=2-1, to=1-2]
\end{tikzcd}\]
For a concrete definition of the $\infty$-category $\s^G_{\es{U}_G}$, see e.g.,~\cite[Definition C.1]{GepnerMeier}. The lemma we will need to describe the derivatives of $\sf{o/u}$ is the following, which informally says that the stabilisation functor commutes with restriction to subgroups, and is immediate from the universal property of $G$-spectra. 

\begin{lem}\label{lem: restriction of universes}
Let $G$ be a compact Lie group with $G$-universe $\es{U}_G$ and let $H$ be a subgroup of $G$ with $H$-universe $\es{U}_H$. If the restriction $\res_H^G \es{U}_G$ of $\es{U}_G$ to a $H$-universe embeds $H$-equivariantly in $\es{U}_H$, then there exists an essentially unique filler
\pushQED{\qed}
\[\begin{tikzcd}
	{\es{S}_\ast^{G}} & {\es{S}_\ast^{H}} \\
	{\s^{G}_{\es{U}_G}} & {\s^{H}_{\es{U}_H}.}
	\arrow["{\res_{H}^{G}}", from=1-1, to=1-2]
	\arrow["{\Sigma^\infty_{G}}"', from=1-1, to=2-1]
	\arrow["{\Sigma^\infty_H}", from=1-2, to=2-2]
	\arrow["{\res_{H}^{G}}"', dashed, from=2-1, to=2-2]
\end{tikzcd} \qedhere\]
\popQED
\end{lem}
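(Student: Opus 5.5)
The plan is to use the universal property of $\s^G_{\es{U}_G}$ from \cite[Corollary C.7]{GepnerMeier}, applied with the target category $\es{D} = \s^H_{\es{U}_H}$. Consider the composite functor
\[
F \colon \es{S}_\ast^G \xrightarrow{\res_H^G} \es{S}_\ast^H \xrightarrow{\Sigma^\infty_H} \s^H_{\es{U}_H}.
\]
Once we check that $F$ inverts every representation sphere $S^V$ with $V$ an irreducible sub-representation of $\es{U}_G$, the universal property gives an essentially unique $\bar{F}$ with $\bar{F} \circ \Sigma^\infty_G \simeq F$. This $\bar F$ is the desired dashed filler, and we name it $\res_H^G$.

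The key step is therefore invertibility. Let $V \subset \es{U}_G$ be a finite-dimensional $G$-subrepresentation. Its restriction $\res_H^G V$ is a finite-dimensional $H$-subrepresentation of $\res_H^G \es{U}_G$. By hypothesis this embeds $H$-equivariantly in $\es{U}_H$, so $\res_H^G V$ is isomorphic to a sub-representation of $\es{U}_H$.

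Now decompose $\res_H^G V$ into $H$-irreducibles, $\res_H^G V \cong W_1 \oplus \cdots \oplus W_k$. This is possible since $H$ is compact, being a closed subgroup of the compact Lie group $G$; if $H$ is not assumed closed, we replace it by its closure, or simply assume closedness. Each $W_i$ is then a sub-representation of $\es{U}_H$. Restriction of pointed $G$-spaces to $H$ sends $S^V$ to $S^{\res_H^G V}$, and $\Sigma^\infty_H$ is symmetric monoidal. Hence
\[
F(S^V) \simeq \Sigma^\infty_H S^{W_1} \otimes \cdots \otimes \Sigma^\infty_H S^{W_k}.
\]
Each factor on the right is invertible in $\s^H_{\es{U}_H}$ by the defining property of that category, so the tensor product is invertible as well.

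The only point needing care is the precise form of the universal property. If \cite{GepnerMeier} formulates it for presentably symmetric monoidal $\infty$-categories and symmetric monoidal colimit-preserving functors, we must also check that $F$ is of that form. This holds: restriction $\es{S}_\ast^G \to \es{S}_\ast^H$ preserves colimits and smash products, and $\Sigma^\infty_H$ is symmetric monoidal and colimit-preserving. Uniqueness of the filler, up to contractible choice, is then part of the universal property. I expect no real obstacle beyond matching these hypotheses.
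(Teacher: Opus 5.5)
Your proposal is correct and takes the same route as the paper, which states the lemma without proof as immediate from the universal property of $\s^G_{\es{U}_G}$ applied to $\Sigma^\infty_H \circ \res^G_H$. You have spelled out the one check that remark leaves implicit: decompose $\res^G_H V$ into $H$-irreducibles lying in $\es{U}_H$ and use monoidality of $\Sigma^\infty_H$ to see that $S^V$ is sent to an invertible object.
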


We may now prove \cref{prop: tower of o/u}.

\begin{proof}[Proof of \cref{prop: tower of o/u}]
The layers of the Weiss tower preserve fibre sequences so there is a fibre sequence 
\[
\bb{D}_n\sf{o/u} \longrightarrow \bb{D}_n\Bu \longrightarrow \bb{D}_n(r^\ast \BO)(-) \simeq \bb{D}_n\BO(r(-)),
\]
where the equivalence follows from~\cref{lem: tower of BAut}. 

It is a theorem of Arone \cite[Theorem 2, \S5]{Arone} that 
\begin{align*}
\bb{D}_n \BU(V) &\simeq \Map_\ast(L_n^\bb{C}, Q_{U(n)}(S^{\bb{C}^n \otimes V} \wedge EU(n)_+))^{U(n)} \\
\bb{D}_n \BO(V) &\simeq \Map_\ast(L_n^\bb{R}, Q_{O(n)}(S^{\bb{R}^n \otimes V} \wedge EO(n)_+))^{O(n)}
\end{align*}
where $Q_{\Aut(n)} \coloneq \Omega^\infty_{\Aut(n)}\Sigma^\infty_{\Aut(n)}$ denotes the $\Aut(n)$-equivariant stable homotopy functor associated to the incomplete $\Aut(n)$-universe $\es{U}_\bb{k} = \bigcup_{k} \bb{k}^{nk}$ which is a countable union of copies of the standard representation of $\Aut(n)$, see \cite[p. 480]{Arone}. It follows that 
\[
\bb{D}_n(r^\ast\Bo)(V) \simeq \bb{D}_n \BO(rV) \simeq \Map_\ast(L_n^\bb{R}, Q_{O(n)}(S^{\bb{R}^n \otimes rV} \wedge EO(n)_+))^{O(n)}.
\]
We now take our scheduled detour through the incomplete universes. Consider the diagram
\[\begin{tikzcd}
	{\es{S}_\ast^{U(n)}} & {\es{S}_\ast^{O(n)}} & {\s^{O(n)}_{\es{U}_\R}} \\
	{\s^{U(n)}_{\es{U}_\C}} & {\s^{U(n)}_{r\es{U}_\C}}
	\arrow["{\res_{O(n)}^{U(n)}}", from=1-1, to=1-2]
	\arrow["{\Sigma^\infty_{U(n)}}"', from=1-1, to=2-1]
	\arrow["{\Sigma^\infty_{O(n)}}", from=1-2, to=1-3]
	\arrow["\simeq", from=2-1, to=2-2]
	\arrow["{\res_{O(n)}^{U(n)}}"', dashed, from=2-2, to=1-3]
\end{tikzcd}\]
where the equivalence between $U(n)$-spectra on the complex universe $\es{U}_\C$ and $U(n)$-spectra on the real universe $r\es{U}_\C$ follows analogously to~\cite[\S8.2]{TaggartOCUC}, and the dotted arrow exists by~\cref{lem: restriction of universes}.

It follows that restriction and stabilisation commute, so we may identify the space $Q_{O(n)}(S^{\bb{R}^n \otimes rV} \wedge EO(n)_+)$ with the space $\res^{U(n)}_{O(n)} Q_{U(n)}(S^{\bb{C}^n \otimes V} \wedge EU(n)_+)$ and hence, using the $(\sf{Ind}_{O(n)}^{U(n)}, \res^{U(n)}_{O(n)})$-adjunction, we get an equivalence 
\begin{align*}
\bb{D}_n(r^\ast\BO)(V) 
&\simeq \bb{D}_n \BO(rV) \\
&\simeq \Map_\ast(L_n^\bb{R}, \res^{U(n)}_{O(n)} Q_{U(n)}(S^{\bb{C}^n \otimes V} \wedge EU(n)_+) )^{O(n)} \\
&\simeq \Map_\ast(\sf{Ind}_{O(n)}^{U(n)} L_n^\bb{R}, Q_{U(n)}(S^{\bb{C}^n \otimes V} \wedge EU(n)_+) )^{U(n)}.
\end{align*}
Unravelling these equivalences, one observes that the map $\bb{D}_n\BU(V) \to \bb{D}_n\BO(rV)$ is equivalent to the map 
\[
\Map_\ast(L_n^\bb{C}, Q_{U(n)}(S^{\bb{C}^n \otimes V} \wedge EU(n)_+))^{U(n)}  \longrightarrow \Map_\ast(\sf{Ind}^{U(n)}_{O(n)} L_n^\bb{R}, Q_{U(n)}(S^{\bb{C}^n \otimes V} \wedge EU(n)_+) )^{U(n)},
\]
induced by the canonical $U(n)$-equivariant map \eqref{eq:poset map}. In particular, the fibre is determined by mapping out of the cofibre sequence
\[
\sf{Ind}_{O(n)}^{U(n)} L_n^\bb{R} \longrightarrow L_n^\bb{C} \longrightarrow L_n^{\bb{C}/\bb{R}}
\]
(which defines $L_n^{\bb{C}/\bb{R}}$) i.e.,
\[
\bb{D}_n\sf{o/u}(V) \simeq \Map_\ast( L_n^{\bb{C}/\R}, Q_{U(n)}(S^{\bb{C}^n \otimes V} \wedge EU(n)_+))^{U(n)},
\]
Following the argument \cite[p. 480]{Arone} of Arone computing the derivatives of $\Ba$, the result follows by applying the Adams isomorphism for incomplete universes~\cite[Theorem 12.1]{Lewis}.

The more specific computation of the first derivative now follows from the fact that $L_1^\C \cong S^0 \cong L_1^\R$, while convergence follows from a crude connectivity argument using the estimates of~\cref{lem: tower of BAut}.
\end{proof}

\begin{rem}
A na\"ive computation using the \emph{Weiss cross-effects} of~\cite[\S2]{Weiss} would suggest that the first derivative of $\sf{o/u}$ is given by $\Sigma^{-1}\bb{S}$ with trivial $U(1)$-action. This cannot be the case, as it would imply that the map $\partial_1(\sf{o/u}) \to \partial_1\Bu$ is null. One can see that this map cannot be null by homology considerations for the map $D_1 \Bu(\C^d) \to D_1\Bo(\R^{2d})$.
\end{rem}

\begin{rem}
It would be conceptually much cleaner to have a poset representative for the cofibre $L_n^{\bb{C}/\R}$ i.e., a topological poset $\es{L}_n^{\C/\R}$ with an action of $U(n)$ such that there is a $U(n)$-equivariant equivalence $|\es{L}_n^{\C/\R}|^\diamond \simeq L_n^{\bb{C}/\R}$.
\end{rem}

\subsection{Digression: Complexification} \label{digression: complexification}
Every real vector space has an associated complex vector space obtained by tensoring over $\R$ with $\C$. Post-composition with complexification induces a functor 
\[
c^\ast : \Fun(\mathsf{Vect}_\bb{C}, \T) \longrightarrow \Fun(\mathsf{Vect}_\bb{R}, \T)
\]
on the level of functor categories. We again slightly extend work of the second author~\cite{TaggartOCUC}.

\begin{prop}\label{prop: complexification of towers}
Let $F: \vect{\bb{C}} \to \T$ be a unitary functor. If $F$ is weakly polynomial, then there are natural equivalences
\begin{align*}
P_n(c^\ast F) & \simeq c^\ast P_{\lfloor\frac{n}{2}\rfloor} F, \\
D_n (c^\ast F) &\simeq 
\begin{cases}
c^\ast D_{\frac{n}{2}} (F) & \text{if $2 \vert n$} \\
\ast & \text{otherwise}
\end{cases}
\end{align*}
of orthogonal functors. 
\end{prop}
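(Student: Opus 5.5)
The proof will mirror that of \cref{thm: realification of towers}, reducing to the reduced case handled in \cite{TaggartOCUC}. There the second author proved, for reduced weakly polynomial unitary $F$, that $c^\ast$ carries the unitary tower of $F$ to the orthogonal tower of $c^\ast F$ with indices doubled. The mechanism is that $c(V\oplus\R) = cV\oplus\C$, so a unitary $n$-polynomial functor becomes, after complexification, $2n$-polynomial (and in fact $(2n+1)$-polynomial) in the orthogonal sense. Concretely, the derivative-like spectrum $F^{(n+1)}$ computed along $\C$ pulls back to a $(2n+2)$-fold iterated orthogonal derivative, which is contractible whenever $F^{(n+1)}$ is. I will take this reduced statement, \cite[\S6]{TaggartOCUC}, as the input.

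To treat general $F$, I will use the fibre sequence $D_{[1,\infty]}F \to F \to P_0F$, exactly as in \cref{thm: realification of towers}. Applying $c^\ast P_{\lfloor n/2\rfloor}$ and the orthogonal $P_n$ gives a map of fibre sequences
\[
c^\ast P_{\lfloor n/2\rfloor}(D_{[1,\infty]}F) \to c^\ast P_{\lfloor n/2\rfloor}F \to c^\ast P_0F,
\]
compared by the universal maps $\eta$ to their orthogonal $P_n$. The left map is an equivalence by the reduced case. The right term is homotopically constant, with value $F(\C^\infty)$, so it is $0$-polynomial and hence $n$-polynomial, and its $\eta$ is an equivalence. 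The middle comparison then follows because $P_n$ preserves fibre sequences.

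I also have to check that $c^\ast P_{\lfloor n/2\rfloor}F$ really is $P_n(c^\ast F)$. It is $n$-polynomial by the observation above. The map $c^\ast F \to c^\ast P_{\lfloor n/2\rfloor}F$ is a $P_n$-equivalence, which I will argue using weak polynomiality: the connectivity of $F(V)\to P_kF(V)$ grows linearly in $k$ and $\dim V$, so passing to the colimit identifies the two towers.

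The statement about layers then follows by taking fibres of consecutive stages. Since $P_n(c^\ast F) \simeq c^\ast P_{\lfloor n/2\rfloor}F$, for odd $n$ the map $P_n\to P_{n-1}$ is $c^\ast$ of the identity on $P_{(n-1)/2}F$, so $D_n(c^\ast F)\simeq\ast$. For even $n$ it is $c^\ast$ of $P_{n/2}F\to P_{n/2-1}F$, whose fibre is $c^\ast D_{n/2}F$, because $c^\ast$ is precomposition and so preserves fibres.

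The main obstacle I expect is the claim that $c^\ast$ of a unitary $k$-polynomial functor is orthogonal $(2k+1)$-polynomial, together with identifying the universal property, especially at odd stages. If I cannot cite this cleanly from \cite{TaggartOCUC}, I will prove it by induction on $k$ using the fibre sequence $F^{(1)}(V)\to F(V)\to F(V\oplus\C)$ and the identity $\C=\R\oplus i\R$. This splits each unitary derivative into two orthogonal derivative steps.
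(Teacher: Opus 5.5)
This is essentially the paper's argument. Like you, the paper reduces to reduced $F$ and takes the even stages $P_{2k}(c^\ast F)\simeq c^\ast P_kF$ from \cite[Proposition 5.4]{TaggartOCUC}; this gives that $c^\ast P_kF$ is $2k$-polynomial, which is the statement you need, rather than the weaker $(2k+1)$-polynomiality in your parenthetical and your final paragraph. It then handles the odd stages by exactly your $P_n$-equivalence step, made precise as follows: $F(V)\to P_kF(V)$ is $((k+1)\dim_\R V-b)$-connected, and since $\dim_\R(cW)=2\dim_\R W$, the map $c^\ast F\to c^\ast P_kF$ is an order-$(2k+2)$ orthogonal agreement, hence a $P_{2k+1}$-equivalence by \cite[Lemma 2.22]{TaggartOCUC}. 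This doubling is what your ``passing to the colimit'' step must spell out, and it is the only place the odd case is actually proved.

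Reading the layers off the stages is a fine substitute for the paper's appeal to \cite[Lemma 4.2 and Theorem 4.4]{TaggartOCUC}. Your fallback via $\C=\R\oplus i\R$ is unnecessary, and that splitting is really the realification mechanism; for $c^\ast$ one has $c(W\oplus\R)=cW\oplus\C$.
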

\begin{proof}
As with \cref{thm: realification of towers}, we may assume that $F$ is reduced. The second equivalence follows from the first equivalence since~\cite[Lemma 4.2 and Theorem 4.4]{TaggartOCUC} imply that odd degree homogeneous layers of $c^\ast F$ vanish.

For $n$ even, the first equivalence follows from~\cite[Proposition 5.4]{TaggartOCUC}. For $n$ odd, say $n = 2k-1$, there is a commutative diagram
\[\begin{tikzcd}
	{P_{2k-1}(c^\ast F)} & {P_{2k-1}(c^\ast P_{k-1}F)} \\
	{P_{2k-2}(c^\ast F)} & {P_{2k-2}(c^\ast P_{k-1}F)}
	\arrow[from=1-1, to=1-2]
	\arrow[from=1-1, to=2-1]
	\arrow[from=1-2, to=2-2]
	\arrow[from=2-1, to=2-2]
\end{tikzcd}\]
induced the map $c^\ast F \to c^\ast P_{k-1}F$, in which the bottom map and right-hand vertical maps are equivalences by~\cite[Proposition 5.4]{TaggartOCUC}. By the case of $n$ even, it suffices to show that the left-most vertical map is an equivalence and hence it suffices to show that the top map is an equivalence. For this, note that since $F$ is weakly polynomial, for each complex vector space $V$ within the radius of convergence of $F$, the map $F(V) \to P_{k-1}F(V)$ is  $(k\dim_\R(V)-b)$-connected for some constant $b$. It follows that for each real vector space $W$ within the radius of convergence of $c^\ast F$, the map $c^\ast F(W) \to c^\ast P_{k-1}F(W)$ is $(2k\dim_\R(W)-b)$-connected, see e.g.,~\cite[Lemma 5.2]{TaggartOCUC}. In the language of \emph{op. cit.} the map $c^\ast F \to c^\ast P_{k-1}F$ is an order $2k$ orthogonal agreement, and hence~\cite[Lemma 2.22]{TaggartOCUC} yields the claim upon noting that if a natural transformation between polynomial functors is an equivalence when evaluated on vector spaces of large enough dimension, it is weak equivalence, see e.g.,~\cite[Proposition 2.31]{BarnesEldred}. \end{proof}

There is a natural transformation $\Bo \longrightarrow c^\ast\Bu$ of orthogonal functors induced by the subgroup inclusion $O(d) \subset U(d)$. As for realification, there is an induced natural transformation between Weiss towers but since $D_1(c^\ast\Bu)(\R^d)$ is contractible by~\cref{prop: complexification of towers}, we obtain the following.

\begin{prop}
In the metastable range, every stably trivial vector bundle has trivial complexification. \qed
\end{prop}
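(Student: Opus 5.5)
The plan is to run the argument of \cref{thm: stably trivial stunted projective} for the natural transformation $\Bo \to c^\ast\Bu$ induced by $O(d)\subset U(d)$, and compare the two resulting descriptions via complexification. In the metastable range for real bundles, the map $[X,\Bo(\R^d)] \to [X,P_1\Bo(\R^d)]$ is a bijection on $\pi_0$ of mapping spaces. This follows from the connectivity estimate of \cref{lem: tower of BAutX}, exactly as in the proof of \cref{thm: stably trivial stunted projective}. So a stably trivial real bundle $E$ is represented by a class in $[X,P_1\Bo(\R^d)]$ lying over the basepoint of $[X,\BO]$, i.e.\ in the image of $[X,D_1\Bo(\R^d)]$.

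Apply naturality of the Weiss tower to $\Bo\to c^\ast\Bu$. This gives a commutative ladder from the first-layer fibre sequence of $\Bo^X$ to that of $(c^\ast\Bu)^X$, with $P_n(c^\ast\Bu)\simeq c^\ast P_{\lfloor n/2\rfloor}\Bu$ by \cref{prop: complexification of towers}; we may use that proposition because $\Bu$ is weakly polynomial by \cref{lem: tower of BAut}. In particular $P_1(c^\ast\Bu)\simeq c^\ast P_0\Bu$ is the constant functor $\BU$, and $D_1(c^\ast\Bu)\simeq\ast$. Consider the composite
\[
\Bo(\R^d) \to c^\ast\Bu(\R^d)=\BU(d) \to P_1(c^\ast\Bu)(\R^d)\simeq \BU .
\]
It factors as $\Bo(\R^d)\to P_1\Bo(\R^d)\to P_1(c^\ast\Bu)(\R^d)$. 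The map $P_1\Bo(\R^d)\to P_1(c^\ast\Bu)(\R^d)$ restricted to the fibre $D_1\Bo(\R^d)$ lands in $D_1(c^\ast\Bu)(\R^d)\simeq\ast$. Hence every class in the image of $[X,D_1\Bo(\R^d)]$, in particular the class of $E$, maps to the trivial class in $[X,\BU]$.

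Finally, translate back to bundles. We must check that $[X,\BU(d)]\to[X,P_1(c^\ast\Bu)(\R^d)]=[X,\BU]$ is injective on the relevant classes. This map is just stabilisation of complex rank-$d$ bundles, and a real rank-$d$ bundle in the real metastable range satisfies $d\ge \dim(X)/2+1$, which is in the complex stable range $d\ge\dim(X)/2$ of \cref{table:ranges}. So stabilisation is a bijection and $E\otimes\C$ is trivial. The only delicate step is the identification of $P_1(c^\ast\Bu)$ with the constant functor $\BU$ compatibly with the map from $c^\ast\Bu$. This is the content of \cref{prop: complexification of towers} with $n=1$, so there is no real obstacle; the statement is essentially formal. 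Indeed the range argument alone already shows that $E\otimes\C$, being stably trivial and in the stable range, is trivial.
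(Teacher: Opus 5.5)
Your argument is correct and is essentially the paper's. The paper's one-line justification is that $\Bo\to c^\ast\Bu$ induces a map of Weiss towers in which $D_1(c^\ast\Bu)\simeq\ast$ by \cref{prop: complexification of towers}. You also make explicit the step the paper leaves implicit: a real rank $d\geq \dim(X)/2+1$ lies in the complex stable range, so triviality in $[X,\BU]$ forces triviality in $[X,\BU(d)]$. As you note yourself, that range observation alone already proves the statement, since the vanishing of $D_1(c^\ast\Bu)$ only recovers the obvious fact that the stable complexification of a stably trivial bundle is trivial. This matches the paper's remark that calculus says nothing interesting here for largely trivial reasons.
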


\begin{rem}
There exist real vector bundles which are not stably trivial but which have trivial complexification. To see this, it suffices to exhibit a non-trivial stable real bundle with trivial complexification. For example, take $\xi$ to be the real vector bundle over the circle corresponding to the non-trivial element in $\pi_1\sf{KO} \cong \bb{Z}/2$, i.e., $\xi$ is the Möbius bundle over $S^1$. Since $\pi_1\sf{KU} \cong 0$, the complexification of $\xi$ is trivial.
\end{rem}

\part{Computations}\label{part:computations}
At this juncture the main theoretical inputs are in place (\cref{main theorem: calc}). What remains is to prove~\cref{thm: CPell diagram} and~\cref{thm:sphere diagrams}. The starting point will be to compute the first few stable homotopy groups of stunted projective spaces, which we do via the Adams spectral sequence in~\cref{section: stunted projective}. In essence, this section is case-by-case resolution of differentials, in various Adams spectral sequences, and bouncing these different Adams spectral sequences computations off one another. 

Geometrically, the homotopy groups of stunted projective space correspond to bundles over spheres which are equipped with a stable trivialisation. In~\cref{section: B' implies B} we take this point of view seriously and deduce~\cref{thm:sphere diagrams}. The computations of~\cref{section: stunted projective} give the $E_2$-page of the Atiyah--Hirzebruch spectral sequences required to prove~\cref{thm: CPell diagram}. We carry out these spectral sequence computations in~\cref{section: B' implies A}. Again, this section becomes a case-by-case exercise in resolving differentials.

Since everything in this part is stable homotopy theory, we drop notation for suspension spectra and stable homotopy groups, i.e., $\pi_\ast(X)$ means $\pi_\ast^s(\Sigma^\infty X)$ from here on out. 

\section{Stable homotopy groups of stunted projective spaces}\label{section: stunted projective}

In this section we will prove the following, which is the analogue of \cref{thm:sphere diagrams} for bundles equipped with a stable trivialisation. It will be used in the proof of \cref{thm: CPell diagram} in \cref{section: B' implies A}, and in \cref{section: B' implies B}, we will deduce \cref{thm:sphere diagrams} from it.

\begin{thm} \label{thm: sphere diagrams 2} In small codimensions, the 2-local stable homotopy groups $\pi_m(\Sigma \bb{CP}^\infty_d)_{(2)}$ and $\pi_m(\bb{RP}^\infty_d)_{(2)}$ together with the realification and truncation maps between them, are as shown in \cref{fig:even sphere diagram 2} for $m=2n$ even and in \cref{fig:odd sphere diagram 2} for $m=2n+1$ odd.
\begin{figure}
\begin{center}
\begin{tikzpicture}[commutative diagrams/every diagram]
\matrix[matrix of math nodes, name=m, ampersand replacement=\&,
        row sep=1.5em, column sep=0.6cm,
        commutative diagrams/every cell]{
\&[-0.3cm]
\scalebox{0.75}{$\begin{cases}
  0        & n\equiv 1,5\ (8)\\
  \bb{Z}/2 & n\equiv 0,3,4\ (8)\\
  \bb{Z}/4 & n\equiv 6,7\ (8)\\
  \bb{Z}/8 & n\equiv 2\ (8)
\end{cases}$}
\& \&
\scalebox{0.75}{$\begin{cases}\bb{Z}/2 & n\text{ odd}\\ 0 & n\text{ even}\end{cases}$}
\& \&
\scalebox{0.75}{$0$} \\
\& \pi_{2n}(\Sigma\bb{CP}^\infty_{n-2}) \& \& \pi_{2n}(\Sigma\bb{CP}^\infty_{n-1}) \& \& \pi_{2n}(\Sigma\bb{CP}^\infty_{n}) \\
\& \& \& \& \& \\[1.8em]
\& \pi_{2n}(\bb{RP}^\infty_{2n-4}) \&[0.8cm] \pi_{2n}(\bb{RP}^\infty_{2n-3}) \&[3.2cm] \pi_{2n}(\bb{RP}^\infty_{2n-2}) \&[1.2cm] \pi_{2n}(\bb{RP}^\infty_{2n-1}) \& \pi_{2n}(\bb{RP}^\infty_{2n}) \\
\& \scalebox{0.75}{$\begin{cases}\bb{Z}/2 & n\not\equiv 1\ (4)\\ 0 & n\equiv 1\ (4)\end{cases}$}
\& \scalebox{0.75}{$\begin{cases}(\bb{Z}/2)^2 & n\equiv 3\ (4)\\ \bb{Z}/2 & \text{otherwise}\end{cases}$}
\& \scalebox{0.75}{$\begin{cases}(\bb{Z}/2)^2 & n\text{ odd}\\ 0 & n\text{ even}\end{cases}$}
\& \scalebox{0.75}{$\begin{cases}\bb{Z}/2 & n\text{ odd}\\ 0 & n\text{ even}\end{cases}$}
\& \scalebox{0.75}{$\bb{Z}$} \\};
\path[commutative diagrams/.cd, every arrow, every label]
(m-2-2) edge node {$0$} (m-2-4)
(m-2-4) edge node {$0$} (m-2-6)
(m-4-2) edge node {\scalebox{0.8}{$\begin{cases}
  \begin{psmallmatrix}1\\0\end{psmallmatrix} & n\equiv 3\ (4)\\
  \begin{psmallmatrix}1\end{psmallmatrix} & n\text{ even}
\end{cases}$}} (m-4-3)
(m-4-3) edge node[xshift=5mm] {\scalebox{0.8}{$\begin{cases}
  \begin{psmallmatrix}0&0\\0&1\end{psmallmatrix} & n\equiv 3\ (4)\\
  \begin{psmallmatrix}0\\1\end{psmallmatrix} & n\equiv 1\ (4)
\end{cases}$}} (m-4-4)
(m-4-4) edge node {\scalebox{0.8}{$\substack{\begin{psmallmatrix}1&0\end{psmallmatrix}\\[2pt]n\text{ odd}}$}} (m-4-5)
(m-4-5) edge node {$0$} (m-4-6)
(m-2-2) edge[commutative diagrams/two heads] (m-4-2)
(m-2-4) edge[commutative diagrams/hook] node {\scalebox{0.8}{$\begin{psmallmatrix}1\\0\end{psmallmatrix}\quad n\text{ odd}$}} (m-4-4)
(m-2-6) edge node {$0$} (m-4-6)
(m-1-2) edge[commutative diagrams/equal] (m-2-2)
(m-1-4) edge[commutative diagrams/equal] (m-2-4)
(m-1-6) edge[commutative diagrams/equal] (m-2-6)
(m-4-2) edge[commutative diagrams/equal] (m-5-2)
(m-4-3) edge[commutative diagrams/equal] (m-5-3)
(m-4-4) edge[commutative diagrams/equal] (m-5-4)
(m-4-5) edge[commutative diagrams/equal] (m-5-5)
(m-4-6) edge[commutative diagrams/equal] (m-5-6);
\end{tikzpicture}
\end{center}
    \caption{The realification and truncation maps on even spheres in low codimensions after localisation at 2 (proven as \cref{lem: sphere diagrams 2 even}). For compactness we omit zero maps, and abuse notation by writing $\pi_i(X) = \pi_i(X)_{(2)}$.}
    \label{fig:even sphere diagram 2}
\end{figure}

\begin{figure}
\begin{center}
\noindent\resizebox{\linewidth}{!}{%
\begin{tikzpicture}[commutative diagrams/every diagram]
\matrix[matrix of math nodes, name=m, ampersand replacement=\&,
        row sep=1.5em, column sep=0.6cm,
        commutative diagrams/every cell]{
\&[-0.3cm]
\scalebox{0.75}{$\begin{cases}\bb{Z}_{(2)}\oplus\bb{Z}/2 & n\text{ odd}\\ \bb{Z}_{(2)} & n\text{ even}\end{cases}$}
\& \& \scalebox{0.75}{$\bb{Z}_{(2)}$} \& \& \scalebox{0.75}{$0$} \\
\& \pi_{2n+1}(\Sigma\bb{CP}^\infty_{n-1}) \& \& \pi_{2n+1}(\Sigma\bb{CP}^\infty_{n}) \& \& \pi_{2n+1}(\Sigma\bb{CP}^\infty_{n+1}) \\
\& \& \& \& \& \\[1.8em]
\& \pi_{2n+1}(\bb{RP}^\infty_{2n-2}) \&[2.0cm] \pi_{2n+1}(\bb{RP}^\infty_{2n-1}) \&[2.8cm] \pi_{2n+1}(\bb{RP}^\infty_{2n}) \&[2.2cm] \pi_{2n+1}(\bb{RP}^\infty_{2n+1}) \&[1.2cm] \pi_{2n+1}(\bb{RP}^\infty_{2n+2}) \\[2.4em]
\& \scalebox{0.75}{$\begin{cases}(\bb{Z}/8)^2 & n\equiv 1\ (4)\\ \bb{Z}/4 & n\equiv 0,2\ (4)\\ \bb{Z}/16\oplus\bb{Z}/4 & n\equiv 3\ (4)\end{cases}$}
\& \scalebox{0.75}{$\begin{cases}\bb{Z}/8 & n\text{ odd}\\ \bb{Z}/2 & n\text{ even}\end{cases}$}
\& \scalebox{0.75}{$\begin{cases}\bb{Z}/4 & n\text{ odd}\\ (\bb{Z}/2)^2 & n\text{ even}\end{cases}$}
\& \scalebox{0.75}{$\bb{Z}/2$} \& \scalebox{0.75}{$0$} \\};
\path[commutative diagrams/.cd, every arrow, every label]
(m-2-2) edge node {\scalebox{0.8}{$\begin{cases}\begin{psmallmatrix}1&0\end{psmallmatrix} & n\text{ odd}\\ (2) & n\text{ even}\end{cases}$}} (m-2-4)
(m-2-4) edge node {$0$} (m-2-6)
(m-4-2) edge node[swap, yshift=-2.5mm, xshift=4mm] {\scalebox{0.8}{$\begin{cases}\begin{psmallmatrix}1&0\end{psmallmatrix} & n\equiv 1\ (4)\\ \begin{psmallmatrix}1&2\end{psmallmatrix} & n\equiv 3\ (4)\end{cases}$}} (m-4-3)
(m-4-3) edge node[swap, yshift=-2.5mm, xshift=2mm] {\scalebox{0.8}{$\begin{cases}(1) & n\text{ odd}\\ \begin{psmallmatrix}0\\1\end{psmallmatrix} & n\text{ even}\end{cases}$}} (m-4-4)
(m-4-4) edge node[swap, yshift=-2.5mm, xshift=2mm] {\scalebox{0.8}{$\begin{cases}(1) & n\text{ odd}\\ \begin{psmallmatrix}1&0\end{psmallmatrix} & n\text{ even}\end{cases}$}} (m-4-5)
(m-4-5) edge node[swap] {$0$} (m-4-6)
(m-2-2) edge node[swap] {\scalebox{0.8}{$\begin{cases}\begin{psmallmatrix}1&4\\0&0\end{psmallmatrix} & n\equiv 1\ (4)\\ \begin{psmallmatrix}1&0\\0&2\end{psmallmatrix} & n\equiv 3\ (4)\\ (1) & n\equiv 0\ (4)\\ (2) & n\equiv 6\ (8)\end{cases}$}} (m-4-2)
(m-2-4) edge node {\scalebox{0.8}{$\begin{cases}(1) & n\text{ odd}\\ \begin{psmallmatrix}1\\0\end{psmallmatrix} & n\text{ even}\end{cases}$}} (m-4-4)
(m-2-6) edge node {$0$} (m-4-6)
(m-1-2) edge[commutative diagrams/equal] (m-2-2)
(m-1-4) edge[commutative diagrams/equal] (m-2-4)
(m-1-6) edge[commutative diagrams/equal] (m-2-6)
(m-4-2) edge[commutative diagrams/equal] (m-5-2)
(m-4-3) edge[commutative diagrams/equal] (m-5-3)
(m-4-4) edge[commutative diagrams/equal] (m-5-4)
(m-4-5) edge[commutative diagrams/equal] (m-5-5)
(m-4-6) edge[commutative diagrams/equal] (m-5-6);
\end{tikzpicture}%
}
\end{center}
    \caption{The realification and truncation maps on odd spheres in low codimensions after localisation at 2 (proven as \cref{lem: sphere diagrams 2 odd}). For compactness we omit zero maps, and abuse notation by writing $\pi_i(X) = \pi_i(X)_{(2)}$.}
    \label{fig:odd sphere diagram 2}
\end{figure}
\end{thm}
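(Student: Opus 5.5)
The plan is to compute all the groups in the figures with the mod 2 Adams spectral sequence, and then read off the maps from their effect on cohomology. The coefficient range is small: $\pi_m(\bb{RP}^\infty_{m-k})$ for $k\le 4$ and $\pi_m(\Sigma\bb{CP}^\infty_{d})$ with $2d+1\ge m-4$. In this range only the bottom few cells matter, by cellular approximation. So first I replace every spectrum by a finite skeleton: $\bb{RP}^{m+1}_{m-k}$, and $\Sigma\bb{CP}^{d+2}_d$ or similar. Their cohomology as modules over the Steenrod algebra $\mathcal{A}$ is given by the usual formulas
\[ Sq^i x^j=\binom{j}{i}x^{i+j}, \qquad Sq^{2i}y^j=\binom{j}{i}y^{i+j}. \]
These depend only on the bottom cell modulo a power of $2$: mod $8$ (or $16$) for $\bb{RP}$, and mod $4$ or $8$ in $n$ for $\bb{CP}$. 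This is the source of the periodic case splits. For each residue class I would use Bruner's software (\verb|makeP|, \verb|makeCP|) to get the $E_2$-pages $\mathrm{Ext}_{\mathcal{A}}(H^\ast,\bb{F}_2)$ in the stem $m$ and in the neighbouring stems $m\pm 1$.

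Next I settle the differentials and the extension problems. In this low-cell range the $E_2$-pages are sparse and most differentials are ruled out by degree. For the ones that remain, I would compare with known input. At the top end, $\pi_m(\bb{RP}^\infty_m)=\bb{Z}$ and $\pi_{2n+1}(\bb{RP}^\infty_{2n+1})=\bb{Z}/2$ are immediate. The two-cell complexes $\bb{RP}^{m+1}_m$ are a sphere or a Moore spectrum according to parity. Next I would use the cofibre sequences
\[ S^{j}\to\bb{RP}^\infty_j\to\bb{RP}^\infty_{j+1} \]
and their long exact sequences, climbing one cell at a time. Here the boundary map is detected by attaching maps computed by $Sq^1$, $Sq^2$, $Sq^4$ together with Adams' vector field / James periodicity results on the top cell of $\bb{RP}^{j}_{j-k}$.

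The Adams filtration jumps ($h_0$-towers truncated by $h_0$-multiplications) determine the extensions. For example, the $\bb{Z}/16$ for $n\equiv 3\ (4)$ should come from an $h_0$-tower of length four whose extensions are forced by these exact sequences. For $\Sigma\bb{CP}$ I would argue the same way, climbing via $S^{2j+1}\to\Sigma\bb{CP}^\infty_j\to\Sigma\bb{CP}^\infty_{j+1}$. The stem $2n+1$ contains the $\bb{Z}_{(2)}$ coming from the bottom cell. The $\bb{Z}/2$ summand for $n$ odd is $\eta$ on the cell below, surviving because $Sq^2$ vanishes there.

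For the maps, the truncation $t$ and the realification $r$ induce maps of Adams spectral sequences. \cref{thm: truncation is stabilisation} gives that $t^\ast$ is injective and \cref{thm: realification vbs calc} gives that $r^\ast$ is surjective; in fact $r$ hits the cells $x^{2j}, x^{2j+1}$ in the pattern fixed by the integral equivalence on bottom cells. From these I compute the induced maps on $\mathrm{Ext}$, again by Bruner's programs, using the chain map on minimal resolutions induced by the module map. This gives the maps on $E_\infty$ modulo filtration. Where a class maps to something of higher filtration, for instance the $(2)$ and $\begin{psmallmatrix}1&4\\0&0\end{psmallmatrix}$ entries, I would resolve it using commutativity of the diagrams and the composite $t\circ r$. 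For example, $r$ followed by truncation to $\bb{RP}^\infty_{2d+1}$ is an equivalence on the bottom cell, which pins down the $\bb{Z}_{(2)}$ components.

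I expect the main obstacle to be the hidden extensions and the filtration-jumping values of $r_\ast$ in the odd case for $n\equiv 1,3\ (4)$. There, $\mathrm{Ext}$ alone is ambiguous, and I would have to bounce the several spectral sequences and exact sequences off one another case by case.
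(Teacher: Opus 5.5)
Your framework matches the paper's: Bruner $E_2$-pages for finite skeleta, naturality along $r$ and $t$ using the cohomological injectivity and surjectivity, and separate treatment of filtration jumps. The genuine gap is that your plan has no tool that can decide the two complex Adams $d_2$-differentials, for $\Sigma\bb{CP}^{4k+2}_{4k}$ and for $\Sigma\bb{CP}^{4k+3}_{4k+1}$ (\cref{lem: complex ASS diff 1,lem: complex ASS diff 2}).

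These differentials depend on the parity of $k$, i.e.\ on $n$ modulo $8$. This is why:
\begin{itemize}
\item $\pi_{2n}(\Sigma\bb{CP}^\infty_{n-2})_{(2)}$ is $\bb{Z}/8$ for $n\equiv 2$ but $\bb{Z}/4$ for $n\equiv 6 \pmod 8$;
\item it is $\bb{Z}/2$ versus $\bb{Z}/4$ for $n\equiv 3$ versus $7 \pmod 8$;
\item the codimension-$3$ realification for $n$ even is $0$ or $(2)$ according as $n\equiv 2$ or $6 \pmod 8$.
\end{itemize}
Everything your plan feeds in is only $4$-periodic in $n$. This includes the $\mathcal{A}$-modules $H^*(\Sigma\bb{CP}^{n}_{n-2};\bb{F}_2)$ and $H^*(\bb{RP}^{2n+1}_{2n-4};\bb{F}_2)$, hence all the $E_2$-pages and the $E_2$-level maps induced by $r$ and $t$. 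It also includes the real homotopy groups, which have no differentials, and the primary operations $\mathrm{Sq}^1,\mathrm{Sq}^2,\mathrm{Sq}^4$ from which you propose to read attaching maps.

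Concretely, for $n\equiv 0,1 \pmod 4$ the top cell of $\bb{CP}^{n+2}_n$ is attached to the bottom cell by $\lambda\nu$ with $\lambda\in\{0,2\}$. Since $2\nu$ has Adams filtration $2$, the two possible complexes have isomorphic cohomology as $\mathcal{A}$-modules. So no degree argument, no comparison with the real spectral sequences, and no James periodicity for $\bb{RP}$ can tell these cases apart.

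You need a genuinely finer invariant of $\bb{CP}^{n+2}_n$. The paper uses Mosher's determination of $\lambda$ modulo $8$ (\cref{lem:Mosher}). It transports this into the Adams spectral sequence via the maps $\Sigma\bb{CP}^{4k+2}_{4k}\to\Sigma^{8k+1}C_{\lambda\nu}$ and $\Sigma^{8k+3}C_{\lambda\nu}\to\Sigma\bb{CP}^{4k+3}_{4k+1}$ of \cref{lem:MosherImplementation}. It then applies \cref{lem: AdamsFiltrationControl}, which uses that the top-cell projection of $\bb{CP}^2$ has degree $2$ to show the top-cell class has nonzero image with no filtration jump. Alternatives are an $e$-invariant computation in $K$-theory, or importing Hu's computation of the complex groups, which rests on Matsunaga's unstable $\pi_{2n+3}(U(n))$ and fixes the differentials. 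You must name one of these.

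On the real side your listed tools do suffice, but the entry $\begin{psmallmatrix}1&4\\0&0\end{psmallmatrix}$ needs a specific argument rather than ``bouncing''. Split $\Sigma\bb{CP}^{2m+1}_{2m}$ and $\bb{RP}^{4m+3}_{4m}\simeq S^{4m}\oplus S^{4m+1}/2\oplus S^{4m+3}$ compatibly with $r$. Write the attaching map of the $(4m+4)$-cell as $a\,i\nu+b\,jv_1+c\,k$. Use $\mathrm{Sq}^1,\mathrm{Sq}^2,\mathrm{Sq}^4$ and James periodicity to get $c=2u$, $b$ a unit, and $a=0$ exactly when $m$ is even. Since $r_*(j'\eta^2)=2jv_1$ with $2v_1=\iota\eta^2$, this forces $r_*(j'\eta^2)=4r_*(k')$ up to units.

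A small correction: the $\bb{Z}/2$ in $\pi_{2n+1}(\Sigma\bb{CP}^\infty_{n-1})$ for $n$ odd is $\eta^2$ on the bottom cell, not $\eta$.
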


This result will be proven using the Adams Spectral Sequence. The $E_2$-pages may be computed in the relevant range by Bruner's Ext software~\cite{Bruner} - they are shown in \cref{fig:SCPASS1,fig:SCPASS2,fig:RPASS1,fig:RPASS2,fig:RPASS3,fig:RPASS4}. With the Adams charts in hand we will determine the homotopy groups by determining the possible differentials. The groups in the top rows have already been computed by Hu as~\cite[Lemma 4.1]{Hu}. The case $m$ even will be proven as \cref{lem: sphere diagrams 2 even}, and the case $m$ odd (which is substantially more irritating) will be proven as \cref{lem: sphere diagrams 2 odd}. 

The potential differentials for the real projective spaces will turn out to all be forced by the maps. The differentials in the complex cases (\cref{fig:SCPASS1}) are harder. These were resolved by Hu \cite[p. 7807]{Hu}, using Matsunaga's~\cite{Matsunaga} calculation of the \emph{unstable} homotopy group $\pi_{2n+3}(U(n))$. It is desirable to avoid using detailed information about finite unitary groups, so we provide an alternative calculation of these differentials, using a result of Mosher~\cite{Mosher} on the cell structure of $\bb{CP}_{n}^{n+2}$. In fact, Hu uses this result in a different part of his argument. In~\cref{section: B' implies A} we will use Mosher's result again analogously to how it is implemented by Hu.

\subsection{The Adams charts} We have the following Adams charts for certain finite skeleta of the stable representing object $\Sigma^{\dim_\R(\bb{k})-1}\bb{kP}^\infty_{d}$ \cref{fig:SCPASS1,fig:SCPASS2,fig:RPASS1,fig:RPASS2,fig:RPASS3,fig:RPASS4}. In the complex case these charts were computed by hand in~\cite[p.7806-7807]{Hu}, and ours were generated using Bruner's Ext software~\cite{Bruner}. These charts coincide with those for $\Sigma^{\dim_\R(\bb{k})-1}\bb{kP}^\infty_{d}$ (for the relevant values of $d$) in the unshaded region, surjectz onto them in the light-shaded region, and are in principle unrelated to them in the dark-shaded region. In what follows we will use these to compute the homotopy groups in~\cref{thm: sphere diagrams 2}.

The possible differentials affecting the unshaded range (all $d_2$'s) are shown in blue in \cref{fig:SCPASS1,fig:SCPASS2,fig:RPASS1,fig:RPASS2,fig:RPASS3,fig:RPASS4}. At this juncture, ripping out the pages with the figures is strongly advised (or opening a second version of the pdf for eco-conscious readers).

\begin{figure}[ht]



\caption{Adams $E_2$-pages for $\Sigma \bb{CP}_{4k}^{4k+2}$ (left) and $\Sigma \bb{CP}_{4k+1}^{4k+3}$ (right). The possible differentials which may affect the unshaded region are indicated by a dotted blue arrow.}
\label{fig:SCPASS1}
\end{figure}

\begin{figure}[ht]
%
    \caption{Adams $E_2$-pages for $\Sigma \bb{CP}_{4k+2}^{4k+4}$ (left) and $\Sigma \bb{CP}_{4k+3}^{4k+5}$ (right).}
    \label{fig:SCPASS2}
\end{figure}

\begin{figure}[ht]
    %
    \caption{Adams $E_2$-pages for $\bb{RP}^{8k+5}_{8k}$ (left) and $\bb{RP}^{8k+5}_{8k+1}$ (right). The possible differentials which may affect the unshaded region are indicated by a dotted blue arrow.}
    \label{fig:RPASS1}
\end{figure}

\begin{figure}[ht]
%

    \caption{Adams $E_2$-pages for $\bb{RP}^{8k+7}_{8k+2}$ (left) and $\bb{RP}^{8k+7}_{8k+3}$ (right). The possible differentials which may affect the unshaded region are indicated by a dotted blue arrow.}
    \label{fig:RPASS2}
\end{figure}

\begin{figure}[ht]
%
\caption{Adams $E_2$-pages for $\bb{RP}^{8k+9}_{8k+4}$ (left) and $\bb{RP}^{8k+9}_{8k+5}$ (right). The possible differentials which may affect the unshaded region are indicated by a dotted blue arrow.}
    \label{fig:RPASS3}
\end{figure}

\begin{figure}[ht]
%
    \caption{Adams $E_2$-pages for $\bb{RP}^{8k+11}_{8k+6}$ (left) and $\bb{RP}^{8k+11}_{8k+7}$ (right).}
    \label{fig:RPASS4}
\end{figure}

Some explanation on interpreting these charts is in order. First, the mod-2 cohomology of $\bb{RP}^\infty$ is polynomial on a single generator $x$ in degree 1. There is only one non-zero class in each degree, so the Steenrod square $\mathrm{Sq}^i$ must act by $\mathrm{Sq}^i(x^d) = x^{d+i}$ or $\mathrm{Sq}^i(x^d) = 0$. Which of these possibilities arises depends on the congruence class of $d$ modulo the smallest power of 2 exceeding $i$ \cite[Example 4L.3]{HatcherAT}. It follows that up to a degree shift, the cohomology of $\bb{RP}_{d}^{d+4}$ or $\bb{RP}_{d}^{d+5}$ (as a module over the Steenrod algebra) depends only on the value of $d$ modulo 8. In \cref{fig:RPASS1,fig:RPASS2,fig:RPASS3,fig:RPASS4}, we have given the eight possible Adams charts, writing $d = 8k+\varepsilon$ for $0 \leq \varepsilon \leq 7$. Similar considerations in the complex case produce \cref{fig:SCPASS1,fig:SCPASS2}. Many more complicated periodicity phenomena are known for projective spaces, c.f.~\cite{MahowaldOnJames, HopkinsNotes}.

\subsection{Real differentials} We begin by resolving the real differentials, i.e., those in~\cref{fig:RPASS1,fig:RPASS2,fig:RPASS3,fig:RPASS4}. We will need a small amount of input in terms of the cell structure of $\bb{RP}^{n+1}_n$.

\subsubsection{Cell structure of $\bb{RP}_{n}^{n+1}$}
Consider the attaching map which forms $\bb{RP}_{n}^{n+1}$ as a cofibre:
$$S^{n} \xrightarrow{\ f \ } S^{n} \xrightarrow{\ i\ } \bb{RP}_{n}^{n+1}.$$
By knowledge of the cellular chain complex of $\bb{RP}^\infty$ \cite[Example 2.42]{HatcherAT}, $f$ must be as follows.

\begin{lem} \label{lem:firstRealCellAttachment} The attaching map $f$ for $\bb{RP}_{n}^{n+1}$ is of degree $2$ if $n$ is odd and is null if $n$ is even. \qed
\end{lem}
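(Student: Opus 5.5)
The plan is to read off $f$ from the cellular chain complex of $\bb{RP}^\infty$. The subquotient $\bb{RP}^{n+1}_n = \bb{RP}^{n+1}/\bb{RP}^{n-1}$ has exactly two cells, in dimensions $n$ and $n+1$. It is therefore the cofibre of a map $S^n \to S^n$, once one passes to the stable setting of this part, or desuspends unstably for $n\geq 2$. Since $[S^n,S^n]\cong\bb{Z}$ via degree, $f$ is determined up to homotopy by its degree. That degree is exactly the cellular boundary $\partial_{n+1}\colon C_{n+1}\to C_n$ in the cellular chain complex of $\bb{RP}^{n+1}_n$.

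The key steps, in order, are as follows.

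First, the cellular chain complex of $\bb{RP}^{n+1}_n$ is the quotient of that of $\bb{RP}^\infty$, truncated to degrees $n$ and $n+1$. This holds because the quotient $\bb{RP}^{n+1}\to\bb{RP}^{n+1}/\bb{RP}^{n-1}$ is cellular and collapses exactly the lower cells.

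Second, by \cite[Example 2.42]{HatcherAT}, the boundary $C_{k}(\bb{RP}^\infty)\to C_{k-1}(\bb{RP}^\infty)$ is multiplication by $1+(-1)^{k}$. So $\partial_{n+1}$ is $0$ when $n+1$ is odd (that is, $n$ even) and is $2$ when $n+1$ is even (that is, $n$ odd).

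Third, for a two-cell complex $S^n\cup_g e^{n+1}$, the cellular boundary equals $\deg g$. This is the cellular boundary formula, since the relevant composite $S^n\to S^n\to S^n/\ast$ is simply $g$.

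Combining these steps, $\deg f=2$ for $n$ odd and $\deg f=0$ for $n$ even. In the latter case $f$ is null, as degree detects homotopy classes of self-maps of spheres.

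The only step needing care is the identification of the attaching map with the cellular boundary up to sign. Signs are harmless here, since $\deg f=\pm2$ gives the same cofibre up to a self-equivalence of $S^n$. The result is stated up to this ambiguity, so I expect no real obstacle.
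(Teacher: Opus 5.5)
Your proposal is correct and follows essentially the same route as the paper, which simply reads the degree of $f$ off the cellular boundary $d_{n+1}=1+(-1)^{n+1}$ in the cellular chain complex of $\bb{RP}^\infty$ from \cite[Example 2.42]{HatcherAT}. One small remark: no stabilisation or desuspension is needed, because the attaching map of the $(n+1)$-cell of $\bb{RP}^{n+1}/\bb{RP}^{n-1}$ is already an unstable map $S^n \to S^n$ onto the $n$-skeleton.
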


\begin{lem} \label{lem: real truncation hits eta}
Let $i : S^n \to \bb{RP}^{\infty}_n$ denote the inclusion of the bottom cell, and let $\eta$ denote the generator of $\pi_1(S^0)\cong \bb{Z}/2$. If $n$ is even, then $i \circ \eta \in \pi_{n+1}(\bb{RP}^{\infty}_n)$ is in the image of the truncation map $\bb{RP}^{\infty}_{n-1} \to \bb{RP}^{\infty}_n$.
\end{lem}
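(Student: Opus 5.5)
The idea is to use the long exact sequence in stable homotopy of the cofibre sequence that removes the bottom cell of $\bb{RP}^\infty_{n-1}$. We will check that $i\circ\eta$ is killed by the connecting map, so it lifts along truncation.

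Since $\bb{RP}^\infty_{n-1}/\bb{RP}^{n-1}_{n-1} = \bb{RP}^\infty_n$, there is a cofibre sequence of spectra
\[
S^{n-1} \longrightarrow \bb{RP}^\infty_{n-1} \xrightarrow{\ t\ } \bb{RP}^\infty_n \xrightarrow{\ \partial\ } S^{n}.
\]
Here the first map is the inclusion of the bottom cell, $t$ is the truncation map, and $\partial$ is the connecting map. Applying $\pi_{n+1}$ gives an exact sequence
\[
\pi_{n+1}(\bb{RP}^\infty_{n-1}) \xrightarrow{\ t_*\ } \pi_{n+1}(\bb{RP}^\infty_n) \xrightarrow{\ \partial_*\ } \pi_{n+1}(S^n)\cong \bb{Z}/2\{\eta\}.
\]
So it suffices to show that $\partial_*(i\circ\eta) = (\partial\circ i)\circ\eta$ vanishes.

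The key step is to identify the composite $\partial\circ i : S^n \to S^n$. The map $i$ factors through $\bb{RP}^{n}_{n}\subset \bb{RP}^\infty_n$, and $\partial$ restricted there comes from the cofibre sequence for the two-cell complex $\bb{RP}^{n}_{n-1}$:
\[
S^{n-1}\to \bb{RP}^n_{n-1}\to S^n \to S^{n}.
\]
By naturality of cofibre sequences under the inclusion $\bb{RP}^n_{n-1}\subset\bb{RP}^\infty_{n-1}$, the composite $\partial\circ i$ is the suspension $\Sigma f$ of the attaching map $f: S^{n-1}\to S^{n-1}$ of the top cell of $\bb{RP}^{n}_{n-1}$. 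Since $n$ is even, $n-1$ is odd, so \cref{lem:firstRealCellAttachment} says $f$ has degree $2$. Hence $\partial\circ i = 2\iota$ is multiplication by $2$ on $S^n$.

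It follows that
\[
\partial_*(i\circ\eta) = 2\eta = 0
\]
in $\pi_{n+1}(S^n)\cong\bb{Z}/2$, because $\eta$ has order $2$. By exactness, $i\circ\eta = t_*(y)$ for some $y\in\pi_{n+1}(\bb{RP}^\infty_{n-1})$, which is the claim.

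The only point needing care is the naturality identification of $\partial\circ i$ with $\Sigma f$, including the degree-$2$ sign/orientation bookkeeping. This is harmless here, since only the parity of the degree matters.
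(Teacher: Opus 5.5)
Your proof is correct and is essentially the paper's argument. Both rest on a cofibration whose connecting map on the bottom cell is multiplication by $2$ (by \cref{lem:firstRealCellAttachment}, since $n-1$ is odd), together with $2\eta = 0$. The paper first restricts to $n$-skeleta, where truncation becomes the top-cell projection $S^{n-1}/2 \to S^n$ and $\eta$ lifts along it. You instead work directly with $S^{n-1}\to\bb{RP}^\infty_{n-1}\to\bb{RP}^\infty_n$ and identify $\partial\circ i$ by naturality, which is the same computation.
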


\begin{proof} Since $n$ is even, the $n$-skeleton of $\bb{RP}^{\infty}_{n-1}$ is a mod-2 Moore space $S^{n-1}/2$ by \cref{lem:firstRealCellAttachment}. Therefore, the restriction of the truncation map to $n$-skeleta is the top cell projection $p: S^{n-1}/2 \to S^n$, and it suffices to verify that $\eta \in \pi_{n+1}(S^n)$ is in the image of this latter map. This follows from the long exact sequence on homotopy coming from the cofibration
\[
S^{n-1} \longrightarrow S^{n-1}/2 \xrightarrow{\ p\ } S^n
\]
using that the connecting map is multiplication by $2$ and that $\eta$ is $2$-torsion.
\end{proof}

\subsubsection{The differentials} For the real cases, all (five) possible differentials are zero. Most of these differentials are easily resolved because of the action of realification.

\begin{lem} \label{lem: real ASS diffs} The five possible differentials shown in \cref{fig:RPASS1,fig:RPASS2,fig:RPASS3} are all zero.
\end{lem}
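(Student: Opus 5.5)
We cannot see the charts, so the plan is structural. Each of the five possible $d_2$'s in \cref{fig:RPASS1,fig:RPASS2,fig:RPASS3} runs from a class in stem $t-s = m$ to one in stem $m-1$ in a chart for $\bb{RP}^{d+5}_d$ (or $\bb{RP}^{d+4}_d$), with $d=8k+\varepsilon$. To kill such a differential it is enough to show that the source class survives, i.e.\ is detected by a genuine homotopy class, or that the target class is a permanent cycle that cannot be hit. Both will be obtained by mapping in from, or out to, a spectral sequence we already control. The tools are the maps of \cref{main theorem: calc}: the realification $r\colon \Sigma\bb{CP}^\infty_d\to\bb{RP}^\infty_{2d}$, which is surjective on $\bb{F}_2$-cohomology and an equivalence on bottom cells after truncation (\cref{thm: realification vbs calc}(2)), and the truncation maps $t\colon\bb{RP}^\infty_{d}\to\bb{RP}^\infty_{d+1}$, which are injective on cohomology (\cref{thm: truncation is stabilisation}(2)). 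Since these maps are injective or surjective on cohomology, they induce computable maps of Adams $E_2$-pages, and differentials are natural.

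\textbf{Sources.} For a source class in the even-start charts ($\varepsilon$ even, so $\bb{RP}^\infty_{2d'}$ is the target of realification), I will check that it lies in the image of $r_*$ on $E_2$ from the complex chart in \cref{fig:SCPASS1,fig:SCPASS2}. The complex chart has no possible differential in the corresponding stem, or one that is already known to vanish, since the complex groups $\pi_m(\Sigma\bb{CP}^\infty_{d'})$ are known from Hu \cite[Lemma 4.1]{Hu}. Comparing orders of the complex groups with their $E_2$-pages shows which complex classes are permanent cycles, so their images under $r_*$ are permanent cycles and the differential vanishes.

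\textbf{Remaining cases.} Here I will use truncation and \cref{lem: real truncation hits eta}. For $n$ even, $i\circ\eta$ on the bottom cell of $\bb{RP}^\infty_n$ is a homotopy class lying in the image of $\bb{RP}^\infty_{n-1}$. So the $h_1$-multiple on the bottom cell is a permanent cycle, and so is any class mapping to it along $t_*$ on $E_2$. This settles differentials whose source is an $h_1$-tower element near the bottom cell in the odd-start charts. By naturality, the same argument also forces the targets in the even-start charts to be non-boundaries. \Cref{lem:firstRealCellAttachment} handles the $h_0$-extensions between the two bottom cells.

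\textbf{Obstacle.} The hardest step is likely the one differential that is not visibly in the image of any of these maps. This will probably be in the $\varepsilon=3$ or $\varepsilon=7$ chart, where an $h_0$-tower source could hit an $h_1$ class. For this case I would first try a counting argument: compare group orders across the exact sequence of the cofibration $S^{d}\to\bb{RP}^\infty_d\to\bb{RP}^\infty_{d+1}$, using the already-resolved neighbouring charts. A nonzero $d_2$ would make $\pi_m(\bb{RP}^\infty_d)$ too small for exactness. If that fails, I would fall back on $h_0$- and $h_1$-linearity of $d_2$ to transport the question to a class already shown to be permanent.
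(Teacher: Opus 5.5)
\textbf{Verdict: there is a genuine gap, in the fifth differential.} Your realification step is the paper's argument for the four differentials in the left charts of \cref{fig:RPASS1,fig:RPASS3}. Cohomology surjectivity of $r$ makes $r_*$ injective on $0$-lines, so the complex $0$-line generators hit the relevant real $0$-line classes. The $h_2$-multiple source is then reached by $h_2$-linearity, and the complex classes support no $d_2$ for degree reasons. That part is fine.

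The fifth differential sits in $\bb{RP}^{8k+7}_{8k+3}$ (\cref{fig:RPASS2}, right). Its source is the $0$-line class in stem $8k+7$, and its target is $h_1$ times the unique class in bidegree $(8k+5,1)$.

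Your ``Remaining cases'' paragraph rests on a false principle. A class whose image under $t_*$ is a permanent cycle need not be one, because naturality only gives $t_*(d_2x)=d_2(t_*x)$. To pull vanishing back you need $t_*$ to be injective on the \emph{target} bidegree. That is exactly what the paper uses \cref{lem: real truncation hits eta} for:
\begin{itemize}
\item Under truncation $\bb{RP}^{8k+7}_{8k+3}\to\bb{RP}^{8k+9}_{8k+4}$, the unique class in bidegree $(8k+5,1)$ must map to $y=h_1\iota_{8k+4}$, which detects $i\circ\eta$. So the target maps to $h_1y\neq 0$.
\item The source maps (via the cohomology isomorphism on the $0$-line) to the source of the left-hand differential of the $\varepsilon=4$ chart, already shown to vanish.
\end{itemize}
Hence $t_*(d_2x)=0$ with $t_*$ injective in that bidegree, so $d_2x=0$. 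You apply the lemma to sources, which is the wrong end, and never establish this injectivity.

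Your fallbacks do not close the gap either:
\begin{itemize}
\item $h_0$/$h_1$-linearity cannot help, since the source lies on the $0$-line.
\item Order-counting along $S^{8k+3}\to\bb{RP}^\infty_{8k+3}\to\bb{RP}^\infty_{8k+4}$, as you wrote it, is blind to a $d_2$. That sequence begins at $\pi_4^s=0$ and then contains both $\pi_{8k+7}(\bb{RP}^\infty_{8k+3})$ and $\pi_{8k+6}(\bb{RP}^\infty_{8k+3})$, in positions of opposite parity. A nonzero $d_2$ halves both orders, so the Euler-characteristic count is unchanged.
\end{itemize}

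Counting can be rescued by putting $\bb{RP}^\infty_{8k+3}$ in the cofibre position, using $S^{8k+2}\to\bb{RP}^\infty_{8k+2}\to\bb{RP}^\infty_{8k+3}$. There $\pi_4^s=0$ makes $\pi_{8k+6}(\bb{RP}^\infty_{8k+3})$ the only affected term. The orders from the differential-free $\varepsilon=2$ chart and the lower stems of the $\varepsilon=3$ chart then force it to have order $4$, the full size of $E_2$ in that stem. That is a different argument from the one you proposed.
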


\begin{proof} First consider the possible differentials for $\bb{RP}^{8k+5}_{8k}$ and $\bb{RP}^{8k+9}_{8k+4}$ (shown left in \cref{fig:RPASS1,fig:RPASS3} respectively). We will give the argument in the first case, the second being identical. The realification map from $\Sigma \bb{CP}^{4k+2}_{4k}$ induces a map of Adams $E_2$-pages, `from \cref{fig:SCPASS1} to \cref{fig:RPASS1}'. By \cref{main theorem: calc}, this realification map is a surjection on mod-2 cohomology, hence an injection on the 0-lines (i.e.~the bottom row) of the Adams $E_2$-pages. In particular, the classes labelled 0 and 1 in the 0-line of \cref{fig:SCPASS1}-left are mapped respectively to the classes labelled 1 and 2 in \cref{fig:RPASS1}-left. The sources of both possible differentials therefore lie in the image of the realification map (for the right-hand differential this is because the map commutes with the action of $h_2$). The differentials therefore do not exist, because if they did then they would also have to exist in \cref{fig:SCPASS1}-left, and we see there that there can be no differentials for degree reasons. 

It remains to treat the differential in \cref{fig:RPASS2}. It suffices to prove that the truncation map $\bb{RP}^{8k+7}_{8k+3} \to \bb{RP}^{8k+9}_{8k+4}$ sends the source and target of this proposed differential to the source and target of the left hand differential in \cref{fig:RPASS3}, which we have already shown is zero. For the source, this follows because again the map on 0-lines is given by the map on cohomology, which is an isomorphism in this degree by~\cref{main theorem: calc}. For the target, \cref{lem: real truncation hits eta} shows that truncation hits the class $y$ in bidegree $(8k+5,1)$, since $y$ detects $\eta$ on the bottom cell. In the chart (\cref{fig:RPASS2}) for $\bb{RP}^{8k+7}_{8k+3}$, there is only one class in the correct bidegree, which must therefore map to $y$. Multiplying by $h_1$ gives the result.
\end{proof}

\subsection{Complex differentials} As with the real case, in order to resolve the complex differentials (i.e., those in~\cref{fig:SCPASS1}) we need some preliminary information on the cell structure of various stunted projective spaces. Understanding these cell structures will also be necessary to resolve the differentials in the Atiyah--Hirzebruch spectral sequences of~\cref{section: B' implies A}.

\subsubsection{Cell structure of $\bb{CP}_{n}^{n+1}$}\label{subsection: CPn+1n} Consider the (stable) attaching map which forms $\bb{CP}_{n}^{n+1}$ as a cofibre:
$$S^{2n+1} \xrightarrow{\ f\ } S^{2n} \xrightarrow{\ i\ } \bb{CP}_{n}^{n+1}.$$
The map $f$ lies in the first stable stem, and $\pi_1(S^0)=\bb{Z}/2\{\eta\}$, so $f$ is either $\eta$ or null. It is $\eta$ if and only if $\bb{CP}_{n}^{n+1}$ carries a non-zero $\mathrm{Sq}^2$, so the Steenrod algebra structure of $H^*(\bb{CP}_{n}^{n+1};\bb{Z}/2)$ (which follows from that of $H^*(\bb{CP}^{\infty};\bb{Z}/2)$) immediately gives the following.

\begin{lem} \label{lem:firstCellAttachment} The attaching map $f$ for $\bb{CP}_{n}^{n+1}$ is $\eta$ if $n$ is odd and is null if $n$ is even. \qed
\end{lem}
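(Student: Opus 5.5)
The plan is to read off the attaching map from the action of $\mathrm{Sq}^2$ on the two cells, exactly as the paragraph preceding the statement suggests. Since $\pi_1(S^0)\cong\bb{Z}/2\{\eta\}$, the stable attaching map $f\colon S^{2n+1}\to S^{2n}$ is either null or $\eta$, so there are only two cases to separate.

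First, I will record the standard detection fact. Consider the cofibre $C_f = S^{2n}\cup_f e^{2n+2}$. Then $\mathrm{Sq}^2\colon H^{2n}(C_f;\bb{F}_2)\to H^{2n+2}(C_f;\bb{F}_2)$ is non-zero precisely when $f=\eta$. For $f$ null, $C_f\simeq S^{2n}\vee S^{2n+2}$ stably, and $\mathrm{Sq}^2$ vanishes by naturality along the wedge inclusions and collapses. For $f=\eta$, $C_f$ is a suspension of $\bb{CP}^2$, where $\mathrm{Sq}^2$ is the cup square $x\mapsto x^2\neq0$; stability of Steenrod operations transports this across suspensions.

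Second, I will compute $\mathrm{Sq}^2$ on $H^*(\bb{CP}^{n+1}_n;\bb{F}_2)$. The quotient map $\bb{CP}^{n+1}\to\bb{CP}^{n+1}_n$ induces an injection on mod-2 cohomology, with image spanned by $x^n$ and $x^{n+1}$, where $x\in H^2(\bb{CP}^\infty;\bb{F}_2)$. By the Cartan formula, using $\mathrm{Sq}^2x=x^2$ and $\mathrm{Sq}^1x=0$, we get $\mathrm{Sq}^2(x^n)=n\,x^{n+1}$. This is non-zero if and only if $n$ is odd. Since the quotient map is injective on cohomology and commutes with $\mathrm{Sq}^2$, the same holds in $\bb{CP}^{n+1}_n$.

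Combining the two steps gives that $f=\eta$ exactly when $n$ is odd, and $f$ is null when $n$ is even. No step is a genuine obstacle. The only point needing care is the detection step: one should note that we are working stably, so $\pi_1$ of the stable sphere is $\bb{Z}/2$ and the identification of $\mathrm{Sq}^2$ on $C_\eta$ is valid after any number of suspensions.
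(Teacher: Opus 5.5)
Your proposal is correct and is essentially the paper's argument. Since $f\in\pi_1(S^0)\cong\bb{Z}/2\{\eta\}$, it is detected by $\mathrm{Sq}^2$ on $H^*(\bb{CP}^{n+1}_n;\bb{F}_2)$, and this operation is read off from $\mathrm{Sq}^2(x^n)=n\,x^{n+1}$ in $H^*(\bb{CP}^\infty;\bb{F}_2)$; you spell out the detection step and the Cartan-formula computation that the paper leaves implicit.
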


In much of what follows it will be vital to fix (compatible) bases for the groups we know. The following will be useful momentarily and follows because $\pi_2(S^0)=\bb{Z}/2\{\eta^2\}$, $\pi_3(S^0) \cong \bb{Z}/8\{\nu\}$ and $\eta^3=4\nu$.

\begin{lem} \label{lem:pi3Cnu} 
For $n$ odd, we have $$\pi_{2n+3}(\bb{CP}^{n+1}_n) \cong \bb{Z}/4\{i \circ \nu\},$$ where $i$ is the inclusion of the bottom cell. \qed
\end{lem}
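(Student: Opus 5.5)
We prove \cref{lem:pi3Cnu} from the long exact sequence in stable homotopy of the cofibre sequence
\[
S^{2n+1} \xrightarrow{\ \eta\ } S^{2n} \xrightarrow{\ i\ } \bb{CP}^{n+1}_n \xrightarrow{\ p\ } S^{2n+2},
\]
where the attaching map is $\eta$ because $n$ is odd (\cref{lem:firstCellAttachment}). In degree $2n+3$ this gives an exact sequence
\[
\pi_2(S^0) \xrightarrow{\ \eta\cdot\ } \pi_3(S^0) \xrightarrow{\ i_*\ } \pi_{2n+3}(\bb{CP}^{n+1}_n) \xrightarrow{\ p_*\ } \pi_1(S^0) \xrightarrow{\ \eta\cdot\ } \pi_2(S^0).
\]
The four outer terms are standard: $\pi_1(S^0)=\bb{Z}/2\{\eta\}$, $\pi_2(S^0)=\bb{Z}/2\{\eta^2\}$ and $\pi_3(S^0)=\bb{Z}/8\{\nu\}$. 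The plan is to evaluate the two outer maps, which are multiplication by $\eta$.

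On the right, the map $\pi_1(S^0)\to\pi_2(S^0)$ sends $\eta\mapsto\eta^2\neq 0$, so it is injective and hence $p_*=0$. On the left, the map $\pi_2(S^0)\to\pi_3(S^0)$ sends $\eta^2\mapsto \eta^3=4\nu$, so its image is the subgroup $\{0,4\nu\}$ of order $2$. Exactness then gives
\[
\pi_{2n+3}(\bb{CP}^{n+1}_n)\cong \coker\bigl(\pi_2(S^0)\to\pi_3(S^0)\bigr)\cong \bb{Z}/8/\langle 4\nu\rangle\cong\bb{Z}/4,
\]
generated by the image $i_*\nu = i\circ\nu$ of the generator $\nu$.

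The argument has no real obstacle. The one point to verify is that the boundary maps in the long exact sequence are (up to sign) precomposition with the attaching map $\eta$, and so are given by multiplication by $\eta$ in the stable stems. The only nontrivial input is the relation $\eta^3=4\nu$, which the paper already takes as known.
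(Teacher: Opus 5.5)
Your argument is correct and is the same as the paper's. The paper justifies the lemma by citing $\pi_2(S^0)=\bb{Z}/2\{\eta^2\}$, $\pi_3(S^0)\cong\bb{Z}/8\{\nu\}$ and $\eta^3=4\nu$, which is exactly your long exact sequence for the cofibre of $\eta$.

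As in the paper, the statement is implicitly $2$-local. Integrally $\pi_3(S^0)\cong\bb{Z}/24\{\nu\}$ with $\eta^3=12\nu$, so the same computation gives $\bb{Z}/12\{i\circ\nu\}$, whose $2$-primary part is the claimed $\bb{Z}/4$.
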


\subsubsection{Cell structure of $\bb{CP}_{n}^{n+2}$}\label{subsection: CPn+2n}

In \cite{Mosher}, Mosher computes the stable homotopy groups of $\bb{CP}^{\infty}$ using the spectral sequence arising from the cellular filtration. Roughly speaking, the columns of their $E_0$-page are shifted copies of $\pi_*(S^0)$ (since $\bb{CP}^{\infty}$ has one cell in each even dimension), the $d^1$-differential `corresponds to' the attaching map for $\bb{CP}_{n}^{n+1}$ described in \cref{lem:firstCellAttachment} (c.f. \cite[Proposition 5.1]{Mosher}), and the $d^2$-differential `corresponds to' the attaching map which forms $\bb{CP}_{n}^{n+2}$ as the cofibre
$$S^{2n+3} \xrightarrow{\ g \ } \bb{CP}_{n}^{n+1} \longrightarrow \bb{CP}_{n}^{n+2}.$$

\cref{lem:firstCellAttachment} now describes both $\bb{CP}_{n}^{n+1}$ and $\bb{CP}_{n+1}^{n+2}$, and we can deduce:
\begin{itemize}
    \item When $n$ is odd, $\bb{CP}_{n}^{n+1}$ is a suspension $\Sigma^{2n}C_\eta$ of the cofibre of $\eta$, alias $\Sigma^{-2}\bb{CP}^2$, and by \cref{lem:pi3Cnu} the class $g$ is equal to $\lambda' (i \circ \nu)$ for some $\lambda' \in \bb{Z}/4$, and therefore lifts along $i:S^{2n} \to \bb{CP}_{n}^{n+1}$ to a class $\widetilde{g} = \lambda \nu$ for any $\lambda \in \bb{Z}/8$ reducing to $\lambda '$ modulo $4$.
    \item When $n$ is even, $\bb{CP}_{n}^{n+1} \simeq S^{2n} \oplus S^{2n+2}$, and $g$ takes the form $(\lambda \nu, \eta)$ for some $\lambda \in \bb{Z}/8$.
\end{itemize}

In particular, one of $\bb{CP}_{n}^{n+1}$ and $\bb{CP}_{n+1}^{n+2}$ always splits as a wedge of spheres. We therefore have the following, which we record for later use.

\begin{lem} \label{lem:MosherImplementation}
Let $\lambda$ be as above. When $n$ is odd, there is a map $\Sigma^{2n} C_{\lambda \nu} \to \bb{CP}^{n+2}_n$ which is surjective on mod-2 cohomology. When $n$ is even, there is a map $ \bb{CP}^{n+2}_n \to \Sigma^{2n} C_{\lambda \nu}$ which is injective on mod-2 cohomology. \qed
\end{lem}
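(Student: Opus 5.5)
The plan is to compare two cofibre sequences in each case and read off the effect on mod-$2$ cohomology cell by cell. The source of both comparisons is the cofibre sequence $S^{2n+3} \xrightarrow{g} \bb{CP}^{n+1}_n \to \bb{CP}^{n+2}_n$ and the description of $g$ given just before the statement. Throughout, $\Sigma^{2n}C_{\lambda\nu}$ is the cofibre of $\lambda\nu : S^{2n+3} \to S^{2n}$. It has mod-$2$ cohomology $\bb{F}_2$ in degrees $2n$ and $2n+4$ and zero elsewhere. The space $\bb{CP}^{n+2}_n$ has $\bb{F}_2$ in degrees $2n$, $2n+2$ and $2n+4$.

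\emph{Case $n$ odd.} By the discussion above, $g = i \circ \widetilde{g}$ with $\widetilde{g} = \lambda\nu$, where $i : S^{2n} \to \bb{CP}^{n+1}_n$ is the bottom cell. I will form the commutative square whose top row is $S^{2n+3} \xrightarrow{\lambda\nu} S^{2n}$ and whose bottom row is $S^{2n+3} \xrightarrow{g} \bb{CP}^{n+1}_n$. The left vertical map is the identity and the right vertical map is $i$. Taking cofibres gives a map of cofibre sequences, hence a map $\phi : \Sigma^{2n}C_{\lambda\nu} \to \bb{CP}^{n+2}_n$. It restricts to $i$ on the bottom cells and induces the identity on the top cofibres $S^{2n+4}$.
\begin{itemize}
\item In degree $2n$, $\phi^*$ is identified with $i^*$, which is an isomorphism since $i$ includes the bottom cell.
\item In degree $2n+4$, $\phi^*$ is identified with the map induced on the quotient $S^{2n+4}$, which is the identity.
\end{itemize}
This is just naturality of the long exact cohomology sequences of the two cofibre sequences. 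These are the only nonzero degrees of $H^*(\Sigma^{2n}C_{\lambda\nu};\bb{F}_2)$, so $\phi^*$ is surjective.

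\emph{Case $n$ even.} Here $\bb{CP}^{n+1}_n \simeq S^{2n} \vee S^{2n+2}$ and $g = (\lambda\nu, \eta)$. Let $q : \bb{CP}^{n+1}_n \to S^{2n}$ be the projection onto the first wedge summand, so that $q \circ g = \lambda\nu$. I will use the commutative square with rows $S^{2n+3} \xrightarrow{g} \bb{CP}^{n+1}_n$ and $S^{2n+3} \xrightarrow{\lambda\nu} S^{2n}$, identity on the left and $q$ on the right. It induces $\psi : \bb{CP}^{n+2}_n \to \Sigma^{2n}C_{\lambda\nu}$ compatible with the identity on the top cells $S^{2n+4}$.
\begin{itemize}
\item In degree $2n$, $q^*$ is an isomorphism, since $q$ restricted to the bottom cell is the identity.
\item In degree $2n+4$, $\psi^*$ is again the identity on the top-cell class.
\end{itemize}
These are the only degrees where the source $H^*(\Sigma^{2n}C_{\lambda\nu};\bb{F}_2)$ is nonzero, so $\psi^*$ is injective.

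There is no real obstacle here. The only point needing care is the identification of $\phi^*$ and $\psi^*$ in the top degree, which I will justify by the five lemma applied to the ladder of long exact sequences. A related subtlety is that, for the wedge decomposition in the even case, the chosen splitting must be the one in which $g$ has the stated components $(\lambda\nu, \eta)$, so that $q \circ g = \lambda\nu$ holds on the nose.
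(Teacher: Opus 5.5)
Your proposal is correct and is the argument the paper leaves implicit; the lemma carries no written proof and is treated as immediate from the preceding discussion. As you do, one takes cofibres of the square induced by the lift $\widetilde{g}=\lambda\nu$ along the bottom-cell inclusion $i$ when $n$ is odd, and by the projection onto the first summand (with $q\circ g=\lambda\nu$) when $n$ is even, then compares cohomology in degrees $2n$ and $2n+4$. Your caveat about using the splitting in which $g=(\lambda\nu,\eta)$ is well placed: changing the retraction by $\eta^2$ composed with the top-cell projection shifts the first component of $g$ by $\eta^3=4\nu$.
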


Mosher (motivated by determining the $d^2$ differentials in their spectra sequence) determines the values of $\lambda$ as follows.

\begin{lem}[{\cite[Proposition 5.2]{Mosher}}] \label{lem:Mosher}
    The value of $\lambda$ mod 8 is as follows.  \begin{center}
\begin{tabular}{|c | c c c c c c c c  |} 
 \hline
 $n$ \textrm{ mod 8} & 0 & 1 & 2 & 3 & 4 & 5 & 6 & 7   \\ 
 \hline
$\lambda$ & 0 & 2 & 1 & 1 & 2 & 0 & 1 & 1  \\
 \hline
\end{tabular}
\end{center}
\end{lem}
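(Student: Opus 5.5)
The approach is to determine $\lambda$ in two stages: first its parity from a primary Steenrod operation, then its remaining $2$-adic information from the Adams $e$-invariant, computed with the Chern character in complex $K$-theory of $\bb{CP}^{n+2}_n$.

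\textbf{Parity of $\lambda$.} By \cref{lem:MosherImplementation}, $\bb{CP}^{n+2}_n$ maps to or from $\Sigma^{2n}C_{\lambda\nu}$ by a map which is injective or surjective on mod-$2$ cohomology. Hence $\mathrm{Sq}^4$ from the bottom class to the top class of $\bb{CP}^{n+2}_n$ agrees with $\mathrm{Sq}^4$ on $C_{\lambda\nu}$. The latter is non-zero exactly when $\lambda$ is odd, since $\nu$ is detected by $h_2$.

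In $H^*(\bb{CP}^\infty;\bb{F}_2)$ the Cartan formula gives
\[
\mathrm{Sq}^4(x^n)=\binom{n}{2}x^{n+2}.
\]
This is non-zero iff $n\equiv 2,3 \pmod 4$, so $\lambda$ is odd exactly for $n\equiv 2,3,6,7 \pmod 8$, as in the table. For $n$ odd we may then take $\lambda=1$: only $\lambda$ mod $4$ is meaningful there, and replacing $\lambda$ by $\lambda+4$ does not change the lift.

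\textbf{The even values of $\lambda$.} It remains to distinguish $\lambda\equiv 0$ from $\lambda\equiv 2 \pmod 4$ when $n\equiv 0,1 \pmod 4$. Mod-$2$ primary operations cannot see $2\nu$, so here I would use the complex $e$-invariant. Take a $3$-cell complex $Y$ with cells in dimensions $2n, 2n+2, 2n+4$. Write $L$ for the tautological line bundle and $y=L-1$. Then $\widetilde{K}(\bb{CP}^{n+2}_n)$ is free on $y^n, y^{n+1}, y^{n+2}$, with Chern character
\[
\mathrm{ch}(y^{n+j}) = (e^t-1)^{n+j} = t^{n+j}\Bigl(1+\tfrac{n+j}{2}t+\tfrac{(n+j)(3n+3j+1)}{24}t^2+\cdots\Bigr).
\]
Integrality of $\mathrm{ch}$ on this basis encodes the attaching maps. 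After killing the middle cell appropriately — projecting to $\Sigma^{2n}C_{\lambda\nu}$ when $n$ is even, or pulling back from it when $n$ is odd, using the maps of \cref{lem:MosherImplementation} — the $e$-invariant of $\lambda\nu$ becomes a rational number mod $\bb{Z}$. This number is read off from the $t^{n+2}$-coefficient after subtracting the appropriate multiple of $\mathrm{ch}$ of the element lifting $y^n$.

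Since $e_{\bb{C}}(\nu)$ generates $\bb{Z}/12$, and hence $\bb{Z}/4$ after $2$-localisation, this determines $\lambda$ mod $4$. The computation is then a direct evaluation of $\tfrac{n(3n+1)}{24}$ corrected by $\tfrac{n}{2}\cdot\tfrac{n+1}{2}$-type terms, run for each residue of $n$ mod $8$. I expect it to yield $\lambda=0,2$ for $n\equiv 0,4$ and $\lambda=2,0$ for $n\equiv 1,5$. An alternative check is $\pi_{2n+3}$ of $\bb{CP}^{n+2}_n$ compared against known $J$-homomorphism values.

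\textbf{Main obstacle.} The hardest step is the middle cell in this second stage. For $n$ odd it is attached by $\eta$, so $\lambda$ is only defined mod $4$ and the $e$-invariant must be taken with the correct indeterminacy. For $n$ even the cell splits off, but one must choose the splitting so that the $\eta$-component does not contaminate the $\nu$-coefficient.

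I would handle this by working throughout with the explicit maps of \cref{lem:MosherImplementation}, and by checking the normalisation against one known case. The natural test case is $n=0$: $\bb{CP}^2_0$ is just $\bb{CP}^2$ with a disjoint basepoint, so $\lambda=0$ there.
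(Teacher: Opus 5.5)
The paper gives no argument for this lemma; it imports the table from Mosher's Proposition 5.2. Your route is therefore genuinely independent of the paper: $\mathrm{Sq}^4$ for the parity of $\lambda$, then the complex $e$-invariant computed from $\mathrm{ch}$ on $\widetilde{K}(\bb{CP}^{n+2}_n)$. It works. The $\mathrm{Sq}^4$ step is complete as written.

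The $e$-invariant step, which you only sketch, closes in a few lines.

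For $n$ even, the pullback of the bottom generator of $\widetilde{K}(\Sigma^{2n}C_{\lambda\nu})$ must be $y^n-\tfrac n2 y^{n+1}+\beta y^{n+2}$, because the $t^{n+1}$-term of its Chern character has to vanish. This gives $e\equiv\tfrac{n(3n+1)}{24}-\tfrac{n(n+1)}{4}=-\tfrac{n(3n+5)}{24}$.

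For $n$ odd, pulling $y^n$ back along $\Sigma^{2n}C_{\lambda\nu}\to\bb{CP}^{n+2}_n$ gives $e\equiv\tfrac{n(3n+1)}{24}$ directly.

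Since $e_{\bb{C}}(\nu)=\pm\tfrac{1}{12}$ (read off from $\mathrm{ch}$ on $\bb{HP}^2$), you get
\[
\lambda\equiv\mp\tfrac{n(3n+5)}{2}\quad(n\text{ even}),\qquad \lambda\equiv\pm\tfrac{n(3n+1)}{2}\quad(n\text{ odd})\qquad\pmod{12}.
\]
Reducing mod $4$ for each residue of $n$ mod $8$ reproduces the table, with the odd entries determined up to sign. Compared with the citation, this buys a closed formula, the $3$-primary information, and a clear view of what is actually invariant.

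That last point is where your proposal needs correcting. Two-locally, $e_{\bb{C}}$ kills exactly $\eta^3=4\nu$. So your method cannot distinguish $\lambda=0$ from $\lambda=4$ at $n\equiv 0\pmod 8$, which is what the table literally asserts.

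No method can make that distinction, because it is not an invariant. For $n$ even, the section $s\colon S^{2n+2}\to\bb{CP}^{n+1}_n$ that splits off the middle cell may be replaced by $s+i\eta^2$. Then
\[
g=i(\lambda\nu)+s\eta=i((\lambda-4)\nu)+(s+i\eta^2)\eta .
\]
So $\lambda$ is only defined mod $4$ for $n$ even too. For $n$ odd the lift $\widetilde g$ is likewise ambiguous exactly up to $\eta\circ\eta^2=4\nu$. The table's mod-$8$ entries should therefore be read as ``for a suitable splitting''.

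Your `main obstacle' consequently dissolves rather than needing to be managed: both indeterminacies lie in $\ker e_{\bb{C}}$. If you want $\lambda=0$ on the nose for $n\equiv 0\pmod 8$, use $2$-local James periodicity. Since $\psi^3$ acts by $9$ on $\widetilde{KO}(\bb{CP}^2)\cong\bb{Z}$, the bundle $nL$ is $2$-locally $J$-trivial when $8\mid n$. Hence $\bb{CP}^{n+2}_n\simeq\Sigma^{2n}\bb{CP}^2_+$, and the bottom cell splits off.

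Finally, normalising an odd $\lambda$ to $1$ needs a change of orientation of a cell (or a $2$-local unit) as well as the shift by $4$. The substitution $\lambda\mapsto\lambda+4$ alone does not turn $3$ into $1$ mod $4$.
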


A similar discussion is given by Hu~\cite[p. 7798]{Hu} but we encourage the reader to proceed with caution: our indexing coincides with Mosher, but not with Hu, who considers $\bb{CP}^\ell_{\ell-2}$ rather than $\bb{CP}^{n+2}_n$, see e.g.,~\cite[Lemma 4.4]{Hu}; this is natural in light of the fact that they use this lemma only later, as discussed above. Caution also that in light of the above discussion, when $n$ is odd, $\lambda$ is really only determined modulo $4$.

Mosher also shows the following about the stable Hurewicz image in $\bb{CP}^\infty$, which recasts earlier work of Toda \cite{Toda}. We will use it to resolve some of the complex differentials, hence identify the relevant homotopy groups. The integral homology $H_\ast(\bb{CP}^{\infty};\bb{Z})$ of $\bb{CP}^\infty$ is a divided power algebra on a generator in degree 2, which in particular is a torsion free graded group. It follows that, the stable Hurewicz map $\pi_*(\bb{CP}^\infty) \to H_*(\bb{CP}^{\infty};\bb{Z})$ descends to the torsion-free quotient $\pi_*(\bb{CP}^\infty)/T$.

\begin{thm} \label{thm: MosherHurewicz}
    Taken modulo torsion, the stable Hurewicz map $$\pi_*(\bb{CP}^\infty)/T \longrightarrow H_*(\bb{CP}^{\infty};\bb{Z})$$
    is injective, and its image in $H_{2n}(\bb{CP}^{\infty};\bb{Z}) \cong \bb{Z}$ is the subgroup $n! \cdot \bb{Z}$.
\end{thm}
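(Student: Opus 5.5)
The plan splits the statement into a rational part (injectivity modulo torsion), a $K$-theoretic divisibility bound (the image lies in $n!\cdot\bb{Z}$), and a realisation statement (a spherical class of Hurewicz degree exactly $n!$ exists). Throughout, we work with a finite skeleton $\bb{CP}^n$, which suffices since a map $S^{2n} \to \bb{CP}^\infty$ factors stably through $\bb{CP}^n$ up to homotopy.

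\emph{Injectivity.} By Serre's theorem the rational stable Hurewicz map $\pi_*(\bb{CP}^\infty)\otimes\bb{Q} \to H_*(\bb{CP}^\infty;\bb{Q})$ is an isomorphism. Hence the kernel of the integral Hurewicz map is torsion. Conversely, the torsion subgroup $T$ maps to zero because $H_*(\bb{CP}^\infty;\bb{Z})$ is torsion-free, as noted before the statement. So the map is injective on $\pi_*/T$, and its image in $H_{2n}$ is a nonzero (finite-index) subgroup $h_n\bb{Z}$. It remains to show $h_n = n!$.

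\emph{Divisibility $n! \mid h_n$.} Let $f: S^{2n}\to \bb{CP}^n$ be a stable map whose Hurewicz image is $h$ times the generator, so that $f^*: H^{2n}(\bb{CP}^n;\bb{Z}) \to H^{2n}(S^{2n};\bb{Z})$ is multiplication by $h$ on $x^n$. Apply complex $K$-theory and the Chern character, which is natural for stable maps. For $L$ the tautological line bundle we have $\mathrm{ch}(L-1) = e^x - 1$, whose degree-$2n$ component is $x^n/n!$. Pulling back, $\mathrm{ch}(f^*(L-1)) = (h/n!)\,\sigma$, where $\sigma$ is the generator of $H^{2n}(S^{2n};\bb{Z})$. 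Since the Chern character is integral on spheres (it identifies $\widetilde{KU}^0(S^{2n})$ with $H^{2n}(S^{2n};\bb{Z})$), we get $n! \mid h$.

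\emph{Realising $n!$.} This is the step I expect to be the main obstacle. A class of degree $m$ on the top cell lifts to a stable map $S^{2n}\to\bb{CP}^n$ exactly when $m$ times the attaching map $\partial \in \pi_{2n-1}(\bb{CP}^{n-1})$ vanishes. This group is finite, since $\bb{CP}^{n-1}$ has no rational homology in odd degrees, so some multiple works. To show that $n!$ itself works, I would first try to detect $\partial$, and its order, by a $KU$-based invariant: the Adams $e$-invariant or $d$/$e$-invariants in the Adams--Novikov spectral sequence. The order should be read off from the denominators of $e^x-1$, i.e.\ of $\log(1+y)$ in the coordinate $y = L-1$, which is essentially the computation in the divisibility step read backwards. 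Concretely, I would induct on $n$ up the cellular filtration, comparing with the Atiyah--Hirzebruch/cellular spectral sequence of \cite{Mosher}. The $KU$-detection would show that the obstruction has order dividing $n!/h_{n-1}$-compatible factors. In the worst case, since this is a classical result of Toda \cite{Toda} recast by Mosher \cite{Mosher}, I would cite it for the realisation half and give the Chern character argument for the sharp bound.
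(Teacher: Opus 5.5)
Your injectivity argument (the rational Serre isomorphism, plus torsion-freeness of $H_*(\mathbb{CP}^\infty;\mathbb{Z})$) is correct. So is your divisibility argument via $\mathrm{ch}(L-1)=e^x-1$ and integrality of the Chern character on $S^{2n}$. Together these prove more than the paper does: its proof is purely a citation of \cite[Theorem 2.1]{Mosher} for injectivity and of \cite[p.182]{Mosher} for the image. Your reduction of the remaining half to the attaching map is also right. The image is exactly $\mathrm{ord}(\partial)\cdot\mathbb{Z}$ for $\partial\in\pi^s_{2n-1}(\mathbb{CP}^{n-1})$, and your Chern character step shows $n!\mid\mathrm{ord}(\partial)$.

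The gap is the other inequality, $n!\,\partial=0$, and your proposed $KU$-based method cannot deliver it. An $e$-invariant or Adams--Novikov $1$-line argument can only certify that multiples of $\partial$ are \emph{non-zero}, i.e.\ give lower bounds on $\mathrm{ord}(\partial)$, which is exactly what your Chern character computation already gives. It cannot show $n!\,\partial$ vanishes. The group $\pi^s_{2n-1}(\mathbb{CP}^{n-1})$ is assembled from the stems $\pi^s_1,\pi^s_3,\dots,\pi^s_{2n-3}$ and in general contains elements of higher filtration that $KU$ does not detect. Nothing in a detection argument rules out $n!\,\partial$ being such an element.

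What is missing is an \emph{existence} construction of a spherical class. One route uses only inputs the paper already records:
\begin{enumerate}
\item By Toda (\cref{thm: YokotaToda}), Yokota's map $f\colon\Sigma\mathbb{CP}^\infty\to SU$ is surjective on $\pi_{2n+1}$.
\item By Bott periodicity, the generator of $\pi_{2n+1}(SU)\cong\pi_{2n+2}(BU)$ has $\mathrm{ch}_{n+1}$ equal to a generator. Newton's identity on a sphere then gives $c_{n+1}=\pm n!$ times a generator.
\item Hence this generator pairs to $\pm n!$ with the exterior generator $x_{2n+1}=\sigma^*c_{n+1}$ of $H^{2n+1}(SU)$.
\item Since $f^*$ is onto in cohomology and cup products vanish on a suspension, $f^*x_{2n+1}$ generates $H^{2n+1}(\Sigma\mathbb{CP}^\infty)$.
\item Therefore any preimage of the generator in $\pi_{2n+1}(\Sigma\mathbb{CP}^\infty)$ has Hurewicz degree $\pm n!$, and its stabilisation in $\pi^s_{2n}(\mathbb{CP}^\infty)$ realises $n!$.
\end{enumerate}
Without this, or your fallback citation of \cite{Toda,Mosher} (which is in effect what the paper does for the whole statement), your argument establishes only $n!\mid h_n$.
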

\begin{proof}
Injectivity is the statement of \cite[Theorem 2.1]{Mosher}, and the identification of the image is given in the proof, see~\cite[p.182]{Mosher}.
\end{proof}

What we will need is (the $n=2$ case of) the following corollary.

\begin{cor} \label{cor: degreeOfTopCellProj}
    The map $\pi_{2n}(\bb{CP}^n) \to \pi_{2n}(S^{2n})$ induced by the projection $\bb{CP}^n \to S^{2n}$ to the top cell has image the subgroup $n! \cdot \bb{Z} \subset \bb{Z} \cong \pi_{2n}(S^{2n})$.
\end{cor}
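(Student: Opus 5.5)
Set $p\colon \bb{CP}^n \to S^{2n}$ for the collapse onto the top cell, i.e.\ the cofibre of the inclusion $\bb{CP}^{n-1}\to\bb{CP}^n$. The plan is to compare, via naturality of the stable Hurewicz map, the top-cell projection on homotopy with the projection on integral homology, and then to import the image computation of \cref{thm: MosherHurewicz} through the inclusion $j\colon \bb{CP}^n \to \bb{CP}^\infty$.

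\emph{Homology square.} In $H_{2n}$, the map $j_*\colon H_{2n}(\bb{CP}^n;\bb{Z})\to H_{2n}(\bb{CP}^\infty;\bb{Z})$ is an isomorphism, since $j$ is an inclusion of a skeleton containing the $2n$-cell and there are no cells of dimension $2n+1$. The map $p_*\colon H_{2n}(\bb{CP}^n;\bb{Z})\to H_{2n}(S^{2n};\bb{Z})$ is also an isomorphism, as it sends the top cell to the fundamental class; call the composite isomorphism $\bb{Z}\cong\bb{Z}$, which is $\pm 1$. On homotopy, $j_*\colon \pi_{2n}(\bb{CP}^n)\to\pi_{2n}(\bb{CP}^\infty)$ is surjective by cellular approximation, because the pair $(\bb{CP}^\infty,\bb{CP}^n)$ is $(2n+1)$-connected. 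Moreover, the Hurewicz map $\pi_{2n}(S^{2n})\to H_{2n}(S^{2n})$ is an isomorphism.

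\emph{Comparing images.} By naturality of the Hurewicz map $h$, for $x\in\pi_{2n}(\bb{CP}^n)$ we have $p_*(x) = h(p_*x) = p_*h(x)$, and this corresponds under the fixed identifications to $\pm j_*h(x) = \pm h(j_*x)$. Hence the image of $p_*$ on $\pi_{2n}$ equals, up to sign, the image of the Hurewicz map $\pi_{2n}(\bb{CP}^\infty)\to H_{2n}(\bb{CP}^\infty;\bb{Z})\cong\bb{Z}$, using surjectivity of $j_*$. Torsion classes map to zero in the torsion-free group $H_{2n}$, so this image is the image of the torsion-free quotient. By \cref{thm: MosherHurewicz}, that image is $n!\cdot\bb{Z}$.

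\emph{Main obstacle.} The only point needing care is justifying surjectivity of $j_*$ on stable $\pi_{2n}$ together with the identification of both projections on $H_{2n}$. Both follow from the cellular structure, with cells only in even dimensions, so I expect no real difficulty; the substantive input is Mosher's theorem.
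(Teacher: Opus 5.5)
Your proposal is correct and follows essentially the same route as the paper: the Hurewicz naturality square for the top-cell projection reduces the claim to computing the image of the (stable) Hurewicz map for $\bb{CP}^n$ in degree $2n$, and this image is read off from \cref{thm: MosherHurewicz} via the skeletal inclusion $\bb{CP}^n \subset \bb{CP}^\infty$. Your explicit check that $j_*$ is surjective on $\pi_{2n}$ and an isomorphism on $H_{2n}$ spells out the paper's one-line remark that $\bb{CP}^n$ is the $(2n+1)$-skeleton of $\bb{CP}^\infty$.
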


\begin{proof}
Consider the naturality square for the Hurewicz map
\[\begin{tikzcd}
	\pi_{2n}(\bb{CP}^n) & \pi_{2n}(S^{2n}) \\
	H_{2n}(\bb{CP}^n; \bb{Z}) & H_{2n}(S^{2n}; \bb{Z})
	\arrow[from=1-1, to=1-2]
	\arrow["h", from=1-1, to=2-1]
	\arrow["h", from=1-2, to=2-2]
	\arrow[from=2-1, to=2-2]
\end{tikzcd}\]
in which the bottom and right maps are isomorphisms. It suffices to show that the left-hand Hurewicz map $h: \pi_{2n}(\bb{CP}^n) \to H_{2n}(\bb{CP}^n; \bb{Z}) \cong \bb{Z}$ has image $n! \cdot \bb{Z}$. This follows from \cref{thm: MosherHurewicz} because $\bb{CP}^n$ is the $(2n+1)$-skeleton of $\bb{CP}^{\infty}$.
\end{proof}

We can use the $n=2$ case, and our Adams $E_2$-page for $\bb{CP}_{4k}^{4k+2}$ (\cref{fig:SCPASS1}), to deduce the following, the content of which is really that the map on stable homotopy groups is non-zero with no jump in Adams filtration. Unsurprisingly, we will use this to resolve the differentials in~\cref{fig:SCPASS1}.

\begin{lem} \label{lem: AdamsFiltrationControl} Consider the map induced on Adams $E_2$-pages by the projection to the top cell $$\Sigma \bb{CP}_{4k}^{4k+2} \longrightarrow S^{8k+5}.$$
The class $x$ in bidegree $(8k+5, 1)$ of~\cref{fig:SCPASS1} which corresponds to the top cell of $\Sigma \bb{CP}_{4k}^{4k+2}$ has non-zero image in the $E_2$-page for $S^{8k+5}$.
\end{lem}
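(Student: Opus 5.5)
The plan is to produce an actual homotopy class in $\pi_{8k+5}(\Sigma \bb{CP}_{4k}^{4k+2})$ whose image under the top cell projection has degree $2$ (up to an odd unit). I would then read off the conclusion from naturality of Adams filtration: the class $2 \in \pi_{8k+5}(S^{8k+5})$ is detected by $h_0$ in filtration exactly $1$.

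\textbf{Step 1 (a degree-two class).} I would obtain the degree-two class from the subcomplex $\bb{CP}^{4k+2}_{4k+1} \subset \bb{CP}^{4k+2}_{4k}$, whose top cell is the top cell of $\bb{CP}^{4k+2}_{4k}$. Since $4k+1$ is odd, \cref{lem:firstCellAttachment} gives
\[
\bb{CP}^{4k+2}_{4k+1} \simeq \Sigma^{8k+2} C_\eta \simeq \Sigma^{8k}\bb{CP}^2
\]
stably. The $n=2$ case of \cref{cor: degreeOfTopCellProj} says the top cell projection $\pi_4(\bb{CP}^2) \to \pi_4(S^4)\cong\bb{Z}$ has image $2\bb{Z}$. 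Suspending, I pick $a' \in \pi_{8k+5}(\Sigma\bb{CP}^{4k+2}_{4k+1})$ mapping to $2$ on the top cell. Pushing forward along the inclusion gives $a \in \pi_{8k+5}(\Sigma\bb{CP}^{4k+2}_{4k})$ with the same property, since the top cell projections are compatible with the inclusion.

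\textbf{Step 2 (Adams filtration of $a$).} The $0$-line of the Adams $E_2$-page in stem $8k+5$ is the primitives of $H^*$ in that degree. The top class is hit by $\mathrm{Sq}^2$ from $x^{4k+1}$, because $4k+1$ is odd. So this top class is not spherical mod $2$, and the $0$-line in that stem vanishes. This is visible in \cref{fig:SCPASS1}, where the top cell contributes starting in filtration $1$. Hence $a$ has filtration $\geq 1$. Since the image of $a$ in the sphere has filtration exactly $1$, and maps can only raise filtration, $a$ has filtration exactly $1$. By \cref{fig:SCPASS1}, the class $x$ is the only class in bidegree $(8k+5,1)$, so $a$ is detected by $x$; in particular $x$ is a permanent cycle and not a boundary.

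\textbf{Step 3 (conclusion).} Naturality of the Adams spectral sequence says the image of $x$ in $E_\infty^{1,*}$ of the sphere detects the image of $a$ modulo filtration $\geq 2$. That image is $2$ up to an odd unit, which is detected by $h_0 \neq 0$. Therefore the image of $x$ on $E_2$ is $h_0$ times the fundamental class, which is non-zero.

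\textbf{Expected obstacle.} The main care point is the filtration bookkeeping in Step 2. One must check from \cref{fig:SCPASS1} that no other class in bidegree $(8k+5,1)$, and no $0$-line class in that stem, could detect $a$. One also needs that an element of filtration exactly $s$ mapping to an element of filtration exactly $s$ has non-zero image on $E_2$; this follows since a zero image on $E_2^{s}$ would push the image into filtration $\geq s+1$.
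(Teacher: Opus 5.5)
The gap is in Step 1, and for $k$ odd it cannot be repaired: the class $a$ you want does not exist.

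With $\bb{CP}^m_n=\bb{CP}^m/\bb{CP}^{n-1}$, the complex $\bb{CP}^{4k+2}_{4k+1}$ is not a subcomplex of $\bb{CP}^{4k+2}_{4k}$. It is the \emph{quotient} of $\bb{CP}^{4k+2}_{4k}$ by its bottom cell. A stable map $\Sigma^{8k+2}C_\eta\to\bb{CP}^{4k+2}_{4k}$ compatible with the top cells exists only if that bottom cell splits off.

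For $k$ even the bottom cell does split off 2-locally, since Mosher's coefficient is $\lambda=0$, and your argument goes through. For $k$ odd it fails. Then $4k\equiv 4\pmod 8$, so $\lambda=2$ by \cref{lem:Mosher}, and the top cell is attached to $\bb{CP}^{4k+1}_{4k}\simeq S^{8k}\vee S^{8k+2}$ by $(2\nu,\eta)$. The exact sequence
\[
\pi_{8k+5}(\Sigma\bb{CP}^{4k+2}_{4k})\longrightarrow\pi_{8k+5}(S^{8k+5})\cong\bb{Z}_{(2)}\longrightarrow\pi_3^s\oplus\pi_1^s,\qquad 1\longmapsto(2\nu,\eta),
\]
shows that the top-cell projection has image $4\bb{Z}_{(2)}$. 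So no class of degree $2\cdot(\text{unit})$ exists.

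Consistently, for $k$ odd the class $x$ is \emph{not} a permanent cycle. It supports the non-zero $d_2$ of \cref{lem: complex ASS diff 1}, and the paper deduces that differential from the very lemma you are proving. So your Step 2 conclusion that $x$ is a permanent cycle is false in half the cases.

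The fix is to remember that the lemma is a statement about $E_2$-pages, so it should not need $x$ to survive. Factor the top-cell projection through the quotient:
\[
\Sigma\bb{CP}^{4k+2}_{4k}\xrightarrow{\ p\ }\Sigma\bb{CP}^{4k+2}_{4k+1}\simeq\Sigma^{8k+3}C_\eta\xrightarrow{\ q\ }S^{8k+5}.
\]

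1. The bottom class of $H^*(\Sigma\bb{CP}^{4k+2}_{4k};\bb{F}_2)$ supports no operations, since $4k$ and $\binom{4k}{2}$ are even. So this cohomology splits as $\Sigma^{8k+1}\bb{F}_2\oplus\Sigma^{8k+3}H^*(C_\eta;\bb{F}_2)$ over the Steenrod algebra, with $p^*$ the inclusion of the second summand.
2. Hence $p$ sends $x$ on $E_2$ to the generator of the filtration-one $h_0$-tower of $\Sigma^{8k+3}C_\eta$.
3. Now run your degree-two and filtration argument for $q$ instead. In $\Sigma^{8k+3}C_\eta\simeq\Sigma^{8k+1}\bb{CP}^2$ there are no differentials in this range. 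The generator of $\pi_{8k+5}$ has top-cell degree $2$ by \cref{cor: degreeOfTopCellProj}, and it is detected by $p_*(x)$.
4. This forces $q_*p_*(x)=h_0\iota\neq 0$.

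This is exactly the paper's proof.
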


\begin{proof} Since $4k$ is even, we may factor the projection to the top cell over the cofibre of $\eta$ (i.e.~by first crushing the bottom cell, then the middle one) writing it as
$$ \Sigma \bb{CP}_{4k}^{4k+2} \xrightarrow{\ p\ } \Sigma^{8k+3} C_{\eta} \xrightarrow{\ q\ } S^{8k+5}$$
where $p$ and $q$ are both mod-$2$ cohomology injections.

Compare the Adams $E_2$-page for $\bb{CP}_{4k}^{4k+2}$ (\cref{fig:SCPASS1}) with the $E_2$-page for the cofibre of $\eta$ (\cref{fig:mod-2-Moore and hook} (right)). Since $p$ is a cohomology injection, it is a surjection on 0-lines. The class $x$ generates the $h_0$-tower corresponding to the top cell of $\bb{CP}_{4k}^{4k+2}$, and chasing around $h_i$-multiplications shows that this $h_0$-tower surjects onto the $h_0$-tower coming from the top cell of $\Sigma^{8k+3} C_{\eta}$. In particular $x$ has non-zero image under $p$.

The spectrum $C_{\eta}$ is equivalently $\Sigma^{-2}\bb{CP}^2$ (e.g.,~by \cref{lem:firstCellAttachment}), so by \cref{cor: degreeOfTopCellProj} the map $q$ has degree 2 on homotopy groups. Since the $h_0$-tower for the top cell of $C_{\eta}$ begins in the 1-line, generated by $p_*(x)$, the only way that this can happen is if $q$ sends $p_*(x)$ to $h_0$ times the generator of the $h_0$-tower for the sphere. In particular, $q_*p_*(x)$ is still non-zero, and the result follows.
\end{proof}

\subsubsection{The differentials}For the complex cases (both in \cref{fig:SCPASS1}), the next two lemmas resolve the possible differentials.

\begin{lem} \label{lem: complex ASS diff 1}
    The Adams $d^2$-differential for $\Sigma \bb{CP}^{4k+2}_{4k}$ shown (left) in \cref{fig:SCPASS1} is zero if and only if $k$ is even.
\end{lem}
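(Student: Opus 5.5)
The plan is to compare $\Sigma\bb{CP}^{4k+2}_{4k}$ with the two-cell complex produced by Mosher's computation, and to read off the differential from naturality of the Adams spectral sequence. Set $n=4k$, which is even. By \cref{lem:MosherImplementation} there is a map
\[
\phi\colon \Sigma\bb{CP}^{n+2}_{n} \longrightarrow \Sigma^{2n+1} C_{\lambda\nu}
\]
which is injective on mod-$2$ cohomology. It crushes the middle cell, which splits off because $g=(\lambda\nu,\eta)$. By \cref{lem:Mosher}, $\lambda=0$ when $n\equiv 0 \pmod 8$, i.e.\ when $k$ is even, and $\lambda=2$ when $n\equiv 4\pmod 8$, i.e.\ when $k$ is odd. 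The dichotomy in the statement should therefore be exactly the dichotomy between $C_{0}=S^{0}\vee S^{4}$ and $C_{2\nu}$.

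First I will identify the source and target of the blue $d_2$ in \cref{fig:SCPASS1} (left). I expect the source to be the class $x$ in bidegree $(8k+5,1)$ coming from the top cell, as in \cref{lem: AdamsFiltrationControl}. I expect the target to be the class $h_0^2h_2$ on the bottom cell, which lies in stem $8k+4$ and filtration $3$. Since $\phi$ is a cohomology injection, it is a surjection on $0$-lines. Chasing $h_0$- and $h_2$-multiplications, exactly as in the proof of \cref{lem: AdamsFiltrationControl}, then shows that $\phi_*$ sends $x$ to $h_0$ times the top-cell generator of $\Sigma^{2n+1}C_{\lambda\nu}$. It also shows that $\phi_*$ sends the bottom-cell class $h_0^2h_2$ isomorphically to the corresponding class of $\Sigma^{2n+1}C_{\lambda\nu}$.

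Next I will treat the two cases using the two-cell complex.
\begin{itemize}
\item \emph{$k$ odd.} Here $\lambda=2$. In the Adams spectral sequence of $C_{2\nu}$, the attaching map $2\nu$ is detected by $h_0h_2$ in filtration $2$. So the top-cell generator supports $d_2(\iota_{\mathrm{top}})=h_0h_2\,\iota_{\mathrm{bot}}$, and by $h_0$-linearity $d_2(h_0\iota_{\mathrm{top}})=h_0^2h_2\,\iota_{\mathrm{bot}}$. This target is nonzero, since $4\nu=\eta^3\neq0$. Naturality gives $\phi_*d_2(x)=d_2(\phi_*x)\neq 0$, so $d_2(x)\neq0$.
\item \emph{$k$ even.} Here $\lambda=0$, so $C_{0}$ splits and has no differentials, and naturality alone gives nothing. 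Instead I will argue in homotopy. Suppose $d_2(x)$ were nonzero. Then the $h_0$-tower on the top cell would survive only from filtration $2$ upwards. The image of $\pi_{8k+5}$ under projection to the top cell would then be $4\bb{Z}$ or smaller, and not $2\bb{Z}$.

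To rule this out, I will show the projection has image exactly $2\bb{Z}$. Because $\lambda=0$, the top cell of $\bb{CP}^{n+2}_n$ is attached only to the middle cell, by $\eta$, so $\Sigma^{2n}C_\eta\simeq \bb{CP}^{n+2}_{n+1}$ splits off as a wedge summand. By \cref{cor: degreeOfTopCellProj} with $n=2$, the projection $\Sigma^{-2}\bb{CP}^2\to S^4$ has image $2\bb{Z}$ on $\pi_4$. Hence a class of degree $2$ exists, and it must be detected in filtration $1$ by $x$. This forces $d_2(x)=0$.
\end{itemize}

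I expect the main obstacle to be the bookkeeping in the first step: pinning down from the chart that $\phi_*$ is nonzero on both the source and the target of the differential, and not merely on $0$-lines. This is where the $h_i$-multiplication chase of \cref{lem: AdamsFiltrationControl} must be adapted to the bottom-cell class $h_0^2h_2$. If that chase is awkward, the fallback is to run the homotopy-theoretic degree argument in both cases. When $\lambda=2$ the relation $4\nu=\eta^3$ should obstruct a degree-$2$ lift, and I would compute that obstruction using \cref{lem:pi3Cnu}.
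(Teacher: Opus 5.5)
Your argument is correct. For $k$ odd it is the paper's argument: map to $\Sigma^{8k+1}C_{2\nu}$ along the cohomology injection $\phi$ of \cref{lem:MosherImplementation}, note that $x\mapsto h_0\iota_{\mathrm{top}}$, and use that $d_2(h_0\iota_{\mathrm{top}})=h_0^2h_2\iota_{\mathrm{bot}}\neq0$ there.

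You differ from the paper for $k$ even. The paper handles both parities at once by naturality along $\phi$. Since $\phi$ is the identity on the bottom cell, $\phi_*$ is injective on the bidegree of the target $h_0^2h_2\iota_{\mathrm{bot}}$; you observe this yourself in your first step. Hence $\phi_*d_2(x)=d_2(h_0\iota_{\mathrm{top}})=0$ in the split complex $C_0$ already forces $d_2(x)=0$, so your remark that ``naturality alone gives nothing'' is mistaken. Your replacement argument is still valid. With $\lambda=0$ one has $\Sigma\bb{CP}^{4k+2}_{4k}\simeq S^{8k+1}\vee\Sigma^{8k+3}C_\eta$, so a class of top-cell degree $2$ exists, which is incompatible with all of $\pi_{8k+5}$ having Adams filtration at least $2$. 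However, once you have that wedge splitting, the Adams spectral sequence itself splits as a direct sum. No differential can then run from the $C_\eta$ summand to the sphere summand, so the detour through \cref{cor: degreeOfTopCellProj} and filtration counting is unnecessary. Two indexing slips: $\bb{CP}^{n+2}_{n+1}\simeq\Sigma^{2n+2}C_\eta$, and the $n=2$ case of \cref{cor: degreeOfTopCellProj} concerns $\bb{CP}^2\to S^4$ on $\pi_4$.

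The claim $\phi_*(x)=h_0\iota_{\mathrm{top}}$ needs a firmer justification, since your $k$ odd case rests on it. It cannot come from an $h_i$-chase out of the $0$-line, because $x$ is not an $h_i$-multiple of a $0$-line class in the $C_\eta$ summand. The paper's route is the clean one: the top-cell projection of $\Sigma\bb{CP}^{4k+2}_{4k}$ factors through $\phi$. So \cref{lem: AdamsFiltrationControl} gives $\phi_*(x)\neq0$, and $h_0\iota_{\mathrm{top}}$ is the only class in that bidegree.
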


\begin{proof} \cref{lem:MosherImplementation} gives a mod-2 cohomology injection $\Sigma \bb{CP}^{4k+2}_{4k} \to \Sigma^{8k+1} C_{\lambda \nu}$. The top cell projection for $\Sigma \bb{CP}^{4k+2}_{4k}$ factors over this map, so by \cref{lem: AdamsFiltrationControl}, the class $x$ generating the $h_0$-tower corresponding to the top cell must have non-zero image in the $E_2$-page for $\Sigma^{8k+1} C_{\lambda \nu}$.
    
\cref{lem:Mosher} tells us that if $k$ is odd (so $n=4k \equiv 4 \pmod{8}$) then $\lambda=2$, and if $k$ is even ($n \equiv 0 \pmod{8}$) then $\lambda = 0$. Either way, the Steenrod algebra action on $H^*(\Sigma^{8k+3} C_{\lambda \nu})$ is trivial, so (ignoring the differentials) the $E_2$-page of its Adams spectral sequence is a direct sum of two shifted copies of the $E_2$-page for the sphere. In particular, the image of $x$ must be $h_0$ times the second generator, because there is no other class in correct bidegree. It follows that the differential exists if and only if $2 \nu$ times the bottom cell of $\Sigma^{8k+1} C_{\lambda \nu}$ is null, which (rather tautologically) happens when $\lambda = 2$, and doesn't when $\lambda = 0$.
\end{proof}

\begin{lem} \label{lem: complex ASS diff 2}
    The Adams $d^2$-differential for $\Sigma \bb{CP}^{4k+3}_{4k+1}$ shown (right) in \cref{fig:SCPASS1} is zero if and only if $k$ is odd.
\end{lem}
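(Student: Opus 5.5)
We will mirror the proof of \cref{lem: complex ASS diff 1}, now with $n = 4k+1$ odd. By \cref{lem:MosherImplementation}, for $n$ odd there is a map $\Sigma^{8k+3} C_{\lambda\nu} \to \Sigma\bb{CP}^{4k+3}_{4k+1}$ which is surjective on mod-2 cohomology. It is therefore injective on Adams $0$-lines. The bottom cell of $\Sigma^{8k+3}C_{\lambda\nu}$ maps to the bottom cell of $\Sigma\bb{CP}^{4k+3}_{4k+1}$, and the top cell maps to the top cell. Note that the map lands on the top cell; the middle cell is not hit, since $\bb{CP}^{n+1}_n = \Sigma^{2n}C_\eta$ here and the map is built from the lift $\widetilde g = \lambda\nu$ along $i$.

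The key steps are as follows.
\begin{enumerate}
\item Read off from \cref{fig:SCPASS1} (right) that the source of the possible $d^2$ lies in (or is $h_i$-generated from) the $0$-line class of the top cell, and that its target lies in the $h_0$-tower or $h_1$-family on the bottom cell, in stem $8k+6$ (i.e.\ $\nu$ on the bottom cell).
\item Use \cref{lem:Mosher}: $n = 4k+1 \equiv 1 \pmod 8$ gives $\lambda = 2$ when $k$ is even, and $n \equiv 5 \pmod 8$ gives $\lambda = 0$ when $k$ is odd. Since $n$ is odd, $\lambda$ is only determined mod $4$, but this is enough to distinguish $2\nu$ from $0$ up to multiples of $4\nu$.
\item In either case the Steenrod action on $H^*(C_{\lambda\nu})$ is trivial, so its $E_2$-page is two shifted copies of the $E_2$-page of the sphere. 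Its only possible $d^2$ sends the top-cell generator to $h_0 h_2$ on the bottom cell, i.e.\ detects $2\nu$. This differential is nonzero exactly when $\lambda = 2$, because the attaching map is then detected in filtration $1$ as $h_0h_2$, while for $\lambda = 0$ the spectrum splits and there are no differentials.
\item Transport along the map of spectral sequences. Under the $0$-line injection the top-cell generator maps to the source of the differential in \cref{fig:SCPASS1} (right). The target class $h_0h_2$ on the bottom cell maps to the target of the blue arrow, which we check is nonzero in $E_2$ for $\Sigma\bb{CP}^{4k+3}_{4k+1}$ by inspecting the chart.
\item Conclude by naturality. If $\lambda = 2$ ($k$ even), the differential in the source forces the image differential to be nonzero, since the target's image is nonzero and the differential commutes with the map. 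If $\lambda = 0$ ($k$ odd), the source class is a permanent cycle in $C_{0}$, so its image is a permanent cycle and the differential in \cref{fig:SCPASS1} vanishes.
\end{enumerate}

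The main obstacle is step 4, together with ambiguities of the following kind in step 5. If a source class maps to a cycle, that does not immediately mean the target differential vanishes unless the source in \cref{fig:SCPASS1} is exactly the image class. We must therefore verify on the chart that the source bidegree in $\Sigma\bb{CP}^{4k+3}_{4k+1}$ is one-dimensional and hit. We must also verify that the image of $h_0h_2$ on the bottom cell is not killed in $E_2$ by the middle cell, i.e.\ that it survives the $\eta$-attachment; this holds because $h_0h_2$ is not $h_1$-divisible in the relevant way.

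If this chart-chasing proves awkward, a fallback for the nonvanishing case is the approach of \cref{lem: AdamsFiltrationControl}. Use \cref{cor: degreeOfTopCellProj} to bound the image of top-cell projection on $\pi_{8k+7}$, and argue that a surviving top-cell class would contradict Mosher's Hurewicz divisibility.
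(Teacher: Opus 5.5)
Your proposal is correct and takes essentially the same route as the paper. Both use the mod-$2$ cohomology surjection $\Sigma^{8k+3}C_{\lambda\nu}\to\Sigma\bb{CP}^{4k+3}_{4k+1}$, Mosher's values $\lambda=2$ ($k$ even) and $\lambda=0$ ($k$ odd), and the split $E_2$-page of $C_{\lambda\nu}$. Both then conclude by naturality once the source (the top-cell class on the $0$-line) and the target ($h_0h_2$ on the bottom cell) are seen to lie in the image.

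One small slip: $h_0h_2$ sits in Adams filtration $2$, not $1$. That is exactly why the attaching map $2\nu$ produces a $d_2$ instead of appearing as a Steenrod operation.
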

\begin{proof}
\cref{lem:MosherImplementation} gives a mod-2 cohomology surjection $\Sigma^{8k+3} C_{\lambda \nu} \to \Sigma \bb{CP}^{4k+3}_{4k+1}$. \cref{lem:Mosher} tells us that if $k$ is odd (so $n=4k+1 \equiv 5 \pmod{8}$) then $\lambda=0$, and if $k$ is even ($n \equiv 1 \pmod{8}$) then $\lambda = 2$. As above, in each case, the $E_2$-page for $\Sigma^{8k+3} C_{\lambda \nu}$ is a direct sum of two shifted copies of the $E_2$-page for the sphere. Since our map is a mod-2 cohomology surjection, it is an isomorphism on the 0-line on $E_2$-pages, and chasing the action of the $h_i$'s shows that both source and target of the proposed differential lie in its image. The conclusion now follows as in the previous lemma.
\end{proof}

This completes the resolution of the differentials, and hence implicitly calculates the relevant stable homotopy groups. We will deduce the groups explicitly along with the maps in the following sections.

\subsection{Truncation and realification over even spheres} We now identify the action of the truncation and realification maps for bundles over spheres equipped with a stable trivialisation. We begin with the easier case of spheres of even dimension.

\begin{lem} \label{lem: sphere diagrams 2 even} In small codimensions, the $2$-local stable homotopy groups $\pi_{2n}(\Sigma \bb{CP}^\infty_d)_{(2)}$ and $\pi_{2n}(\bb{RP}^\infty_d)_{(2)}$, together with the realification and truncation maps between them, are as shown in \cref{fig:even sphere diagram 2}.
\end{lem}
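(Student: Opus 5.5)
We read off the groups and then identify the maps by naturality of the Adams spectral sequence. First, by cellular approximation, each stable group $\pi_{2n}(\Sigma\bb{CP}^\infty_d)$ and $\pi_{2n}(\bb{RP}^\infty_d)$ in the figure agrees with that of the finite skeleton charted in \cref{fig:SCPASS1,fig:SCPASS2,fig:RPASS1,fig:RPASS2,fig:RPASS3,fig:RPASS4}. In each case the relevant stem $t-s=2n$ lies in the unshaded region. Reindexing, we write $n-2=4k+\varepsilon$ in the complex case and $2n-4=8k+\varepsilon$ in the real case, so that the column $2n$ sits at a fixed offset from the bottom cell.

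Next we use \cref{lem: real ASS diffs}: all real differentials vanish, so in the real charts the groups are read directly from the $E_\infty=E_2$ column at $t-s=2n$. The extensions are resolved by the $h_0$-multiplications visible in the chart. Since we only need groups of the form $\bb{Z}/2$, $(\bb{Z}/2)^2$, or $\bb{Z}$, any hidden extension would have to be a $2$-extension. Such a $2$-extension would be ruled out by the chart (no class in filtration one higher), or, for $\pi_{2n}(\bb{RP}^\infty_{2n})\cong\bb{Z}$, it is just the $h_0$-tower on the bottom cell. For the complex column $\pi_{2n}(\Sigma\bb{CP}^\infty_{n-2})$, the cases $n\equiv 2,6 \pmod 8$ give groups like $\bb{Z}/8$ or $\bb{Z}/4$. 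There we use \cref{lem: complex ASS diff 1,lem: complex ASS diff 2}, which say exactly when the $d_2$ truncates an $h_0$-tower, to get the groups in the top row. These agree with \cite[Lemma 4.1]{Hu}, which we may cite as a check.

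For the maps, we use the cohomological statements of \cref{main theorem: calc}. Truncation $t$ is injective on $H^*(-;\bb{F}_2)$, so it induces a surjection on $0$-lines of $E_2$-pages. Realification $r$ is surjective on cohomology, so it induces an injection on $0$-lines. Both commute with $h_i$-multiplication. Each nonzero class in the relevant column is an $h_i$-multiple (typically $h_1$, $h_1^2$ or $h_0^j h_2$) of a $0$-line class, so we can chase generators through these $h_i$-multiplications. In the real-to-real truncation column we also use \cref{lem: real truncation hits eta} to see that truncation hits $\eta$ on the bottom cell when the bottom cell dimension is even. This produces the matrix entries $(1)$, $\begin{psmallmatrix}1\\0\end{psmallmatrix}$, and so on.

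The zero maps will mostly be forced because the targets vanish (for instance $\pi_{2n}(\Sigma\bb{CP}^\infty_n)=0$ for degree reasons) or because the relevant source class has higher filtration than anything in the image. The complex truncation maps are zero because, for example, $\pi_{2n}(\Sigma\bb{CP}^\infty_{n-1})$ is $\bb{Z}/2$ generated by $\eta^2$ on the bottom cell for $n$ odd, while the class in $\pi_{2n}(\Sigma\bb{CP}^\infty_{n-2})$ maps into the killed bottom cell.

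The main obstacle is the realification map $\pi_{2n}(\Sigma\bb{CP}^\infty_{n-2})\to\pi_{2n}(\bb{RP}^\infty_{2n-4})$, which is claimed to be surjective onto $\bb{Z}/2$ whenever $n\not\equiv1 \pmod 4$. In the case $\bb{Z}/8\to\bb{Z}/2$ the target generator may sit in a different filtration than the generator of the source. Here we argue that the source generator reduces to the $0$-line or $h_1$-line class which maps injectively under $r$. If there is a filtration jump, we instead compose with truncation and use the bottom-cell equivalence of \cref{thm: realification vbs calc}(2) to detect nontriviality via $\eta$ on a single cell. Commutativity of the square then pins down the remaining entries.
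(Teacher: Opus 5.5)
Your outline is the paper's in almost every step. You read the groups off the charts via \cref{lem: real ASS diffs}, \cref{lem: complex ASS diff 1} and \cref{lem: complex ASS diff 2}, pin maps down on $0$-lines via the cohomological statements of \cref{main theorem: calc}, propagate them by $h_i$-multiplications, and use \cref{lem: real truncation hits eta} for the real truncation out of codimension $3$.

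The one genuinely different step is the complex truncation $\pi_{2n}(\Sigma\bb{CP}^\infty_{n-2})\to\pi_{2n}(\Sigma\bb{CP}^\infty_{n-1})$. The paper does it last, by a chase around the left-hand square: the composite of the two real truncations out of $\pi_{2n}(\bb{RP}^\infty_{2n-4})$ vanishes, and the codimension-one realification is injective. You instead argue directly that the source is generated by $\nu$ on the bottom cell $S^{2n-3}$, which truncation kills. This is correct. For $n$ odd (the only case with non-zero target), the $(2n-1)$-skeleton is $\Sigma^{2n-3}C_\eta$, and $\pi_{2n}(\Sigma^{2n-3}C_\eta)\cong\bb{Z}/4\{i\nu\}$ surjects onto the group. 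Exactness of $\pi_{2n}(S^{2n-3})\to\pi_{2n}(\Sigma\bb{CP}^\infty_{n-2})\to\pi_{2n}(\Sigma\bb{CP}^\infty_{n-1})$ then finishes. Your route makes the top row independent of the realification computations, so the square becomes a consistency check. The paper's route needs no knowledge of generators in the top row. One small slip: the target is generated by $\eta$, not $\eta^2$, on its bottom cell $S^{2n-1}$.

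Some cautions.

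\textbf{Codimension-$4$ realification.} This is not the obstacle you fear. Stem $2n$ of $\Sigma\bb{CP}^\infty_{n-2}$ has no $0$-line class, but both generators are $h_2$-multiples of $0$-line classes in stem $2n-3$, and both sit in filtration $1$. So $0$-line injectivity plus $h_2$-linearity already gives surjectivity; that is the paper's ``analogously''. Your fallback also works, but the detecting element is $\nu$, not $\eta$: by \cref{thm: realification vbs calc}(2), $t\circ r$ sends $i\nu$ to $\pm\nu$ on the bottom cell of $\bb{RP}^\infty_{2n-3}$, which is non-zero.

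\textbf{Truncation on $0$-lines.} Injectivity of $t^*$ does not give surjectivity on $0$-lines in general. For $m$ even, the bottom class of $\bb{RP}^\infty_m$ becomes $\mathrm{Sq}^1$-decomposable in $\bb{RP}^\infty_{m-1}$, which is exactly why \cref{lem: real truncation hits eta} is needed.

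\textbf{The $\nu$-multiple in codimension $3$.} For $n\equiv 3\pmod 4$, filtration alone cannot show that $x\nu\in\pi_{2n}(\bb{RP}^\infty_{2n-3})$ dies in $\pi_{2n}(\bb{RP}^\infty_{2n-2})$, because the target has a class on the $2$-line. Use your bottom-cell argument here too, as the paper does: $x$ is the bottom cell, so $t\circ x\simeq *$. Without this, neither the vanishing of the composite out of codimension $4$ nor the matrix $\begin{psmallmatrix}0&0\\0&1\end{psmallmatrix}$ is determined.
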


\begin{proof} We work $2$-locally throughout the proof. The groups are as claimed by the charts (\cref{fig:SCPASS1,fig:SCPASS2,fig:RPASS1,fig:RPASS2,fig:RPASS3,fig:RPASS4}), and the calculation of possible differentials (Lemmas \ref{lem: real ASS diffs}, \ref{lem: complex ASS diff 1}, and \ref{lem: complex ASS diff 2}). Our task is to determine the maps and to do so we will work roughly right-to-left in \cref{fig:even sphere diagram 2}. 

\textbf{Truncation} $\pi_{2n}(\bb{RP}^{\infty}_{2n-1}) \to \pi_{2n}(\bb{RP}^{\infty}_{2n})$ is a map from a torsion group to a free group, so must be zero.

\textbf{Truncation} $\pi_{2n}(\bb{RP}^{\infty}_{2n-2}) \to \pi_{2n}(\bb{RP}^{\infty}_{2n-1})$. When $n$ is even both domain and codomain are zero. When $n$ is odd, we are dealing with a map $(\bb{Z}/2)^2 \to \bb{Z}/2$. Since $2n-2$ is of the form $8k$ or $8k+4$ in the mod-8 indexing for the figures, we must consider the left-to-right maps in \cref{fig:RPASS1,fig:RPASS3}. The basis element of the domain detected in the 2-line must map to zero because there is nothing in high enough Adams filtration for it to hit. By \cref{main theorem: calc}, truncation is a mod-2 cohomology injection, so it hits the classes labelled 0 and 1 in the 0-line of the target. Chasing $h_1$ multiplications now shows that either choice of the other basis element has non-zero image (we will fix one choice in the next step). Ordering the basis `by Adams filtration' we get $\begin{psmallmatrix}
    1 & 0
\end{psmallmatrix}$.

\textbf{Realification} $\pi_{2n}(\Sigma \bb{CP}^{\infty}_{n-1}) \to \pi_{2n}(\bb{RP}^{\infty}_{2n-2})$. The codomain is non-zero when ($n$ is odd so) $2n-2$ is of the form $8k$ or $8k+4$, hence we are looking at the Adams charts of~\cref{fig:SCPASS1,fig:SCPASS2} (left in both) for the complex groups and ~\cref{fig:RPASS1,fig:RPASS3} (left in both) for the real groups. In both cases we are dealing with a map $\bb{Z}/2 \to (\bb{Z}/2)^2$. Since realification is a mod-$2$ cohomology surjection (\cref{main theorem: calc}), chasing $h_1$ multiplications shows that the map is injective. The domain is generated by an $\eta$-multiple of any class detected in the 0-line. We take the basis of the target, $\pi_{2n}(\bb{RP}^{\infty}_{2n-2})$ to be the image of this class and the unique class detected on the 2-line. We get $\begin{psmallmatrix}
    1 \\ 0
\end{psmallmatrix}$.

\textbf{Truncation} $\pi_{2n}(\bb{RP}^{\infty}_{2n-3}) \to \pi_{2n}(\bb{RP}^{\infty}_{2n-2})$. To find the right place in the charts, it is helpful to note that we are now mapping \emph{into} the groups we were mapping \emph{out of} in the previous truncation step. When $n$ is even, the codomain is zero. When $n \equiv 3 \pmod{4}$, $2n-3$ is of the form $8k+3$ and the map is $(\bb{Z}/2)^2 \to (\bb{Z}/2)^2$ (\cref{fig:RPASS2}-right to \cref{fig:RPASS3}-left). The $h_1$ and $h_2$ multiplications in the domain show that $\pi_{2n}(\bb{RP}^{\infty}_{2n-3}) \cong (\bb{Z}/2)^2$ has a unique basis where one class is a $\nu$-multiple $x \nu$ and the other class is a $\eta$-multiple $y \eta$. Then $x$ maps to zero for degree reasons, so $x \nu$ also maps to zero, and by \cref{lem: real truncation hits eta} $y$ maps to the unique non-zero class detected in the 1-line, so $y \eta$ maps to the unique non-zero class detected in the 2-line. In total, we get $\begin{psmallmatrix}
    0 & 0 \\ 0 & 1
\end{psmallmatrix}$. In the case $n \equiv 1 \pmod{4}$, the domain is of the form $\bb{Z}/2\{y \eta\}$, and so (again using \cref{lem: real truncation hits eta}) the map is $\begin{psmallmatrix}
    0 \\  1
\end{psmallmatrix}$.

\textbf{Truncation} $\pi_{2n}(\bb{RP}^{\infty}_{2n-4}) \to \pi_{2n}(\bb{RP}^{\infty}_{2n-3})$. It suffices to notice that in the three cases where the domain is non-zero, its generator is a $\nu$-multiple. 

\textbf{Realification} $\pi_{2n}(\Sigma \bb{CP}^{\infty}_{n-2}) \to \pi_{2n}(\bb{RP}^{\infty}_{2n-4})$. Analogously to the previous point, is a surjection.

\textbf{Truncation} $\pi_{2n}(\Sigma \bb{CP}^{\infty}_{n-2}) \to \pi_{2n}(\Sigma \bb{CP}^{\infty}_{n-1})$. We can chase round the left-hand square in \cref{fig:even sphere diagram 2}: we have deduced all of the maps apart from the top horizontal, which we wish to show is zero. The result follows because the composite through the bottom left is zero, and the right-hand vertical map is injective.
\end{proof}

\subsection{Truncation and realification over odd spheres} We repeat the above discussion now for odd-dimensional spheres. The main result is the following lemma. Once it is proven, \cref{thm: sphere diagrams 2} will follow immediately, since it is just the combination of the statements of \cref{lem: sphere diagrams 2 odd} and \cref{lem: sphere diagrams 2 even}.

\begin{lem} \label{lem: sphere diagrams 2 odd} In small codimension, the $2$-local stable homotopy groups $\pi_{2n+1}(\Sigma \bb{CP}^\infty_d)_{(2)}$ and $\pi_{2n+1}(\bb{RP}^\infty_d)_{(2)}$ together with the realification and truncation maps between them, are as shown in \cref{fig:odd sphere diagram 2}.
\end{lem}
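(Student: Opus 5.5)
We follow the template of the proof of \cref{lem: sphere diagrams 2 even} and work $2$-locally throughout. The groups are read off from the Adams charts (\cref{fig:SCPASS1,fig:SCPASS2,fig:RPASS1,fig:RPASS2,fig:RPASS3,fig:RPASS4}) in stem $2n+1$. The possible differentials have already been resolved in \cref{lem: real ASS diffs,lem: complex ASS diff 1,lem: complex ASS diff 2}. The only new feature is that in odd stems the charts carry $h_0$-towers, so the groups have non-trivial extensions ($\bb{Z}/4$, $\bb{Z}/8$, $\bb{Z}/16$) and the complex groups have free summands $\bb{Z}_{(2)}$. Hidden extensions should not arise, because every $2$-extension visible in the charts is already an $h_0$-multiplication in the relevant bidegrees. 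We then determine the maps working right to left along \cref{fig:odd sphere diagram 2}.

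\textbf{Easy maps.} The map $\pi_{2n+1}(\bb{RP}^\infty_{2n+1})\to\pi_{2n+1}(\bb{RP}^\infty_{2n+2})$ is zero since its target vanishes. For $\pi_{2n+1}(\bb{RP}^\infty_{2n})\to\pi_{2n+1}(\bb{RP}^\infty_{2n+1})$, we use that truncation is a mod-$2$ cohomology injection (\cref{main theorem: calc}), so it is surjective onto the $0$-line. This gives $(1)$ when $n$ is odd (a surjection $\bb{Z}/4\to\bb{Z}/2$) and $\begin{psmallmatrix}1&0\end{psmallmatrix}$ when $n$ is even, with the basis chosen by Adams filtration. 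For the realification $\pi_{2n+1}(\Sigma\bb{CP}^\infty_n)\cong\bb{Z}_{(2)}\to\pi_{2n+1}(\bb{RP}^\infty_{2n})$, we use the second part of \cref{thm: realification vbs calc}: on the bottom cells, realification followed by truncation is an equivalence. Hence the generator of the bottom-cell tower maps to a generator detected on the $0$-line, which gives $(1)$ or $\begin{psmallmatrix}1\\0\end{psmallmatrix}$. For the next truncation, $\pi_{2n+1}(\bb{RP}^\infty_{2n-1})\to\pi_{2n+1}(\bb{RP}^\infty_{2n})$, we chase $h_0$- and $h_1$-multiplications from the $0$-line image. In the $n$ even case we also use \cref{lem: real truncation hits eta} to hit the $\eta$-class.

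\textbf{Left-hand part.} For the maps out of $\pi_{2n+1}(\bb{RP}^\infty_{2n-2})$ and out of $\pi_{2n+1}(\Sigma\bb{CP}^\infty_{n-1})$, we first fix bases. In the complex group, we take a generator of the $h_0$-tower of the bottom cell, together with the $\bb{Z}/2$ (an $\eta$- or $\nu$-multiple) when $n$ is odd. The complex truncation map is then forced in two ways. Modulo torsion it is the inclusion of Hurewicz images, which by \cref{thm: MosherHurewicz} has index $2$ when $n$ is even. By Adams filtration the torsion class maps to zero. This gives $\begin{psmallmatrix}1&0\end{psmallmatrix}$ or $(2)$.

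The realification $\pi_{2n+1}(\Sigma\bb{CP}^\infty_{n-1})\to\pi_{2n+1}(\bb{RP}^\infty_{2n-2})$ is injective on the $0$-line, by cohomology surjectivity. This determines each basis element up to higher Adams filtration. The remaining ambiguity, such as the entry $4$ in the case $n\equiv1\ (4)$ and the entry $2$ in the case $n\equiv 6\ (8)$, should be settled by the commutativity of the square with the already-determined maps to $\pi_{2n+1}(\bb{RP}^\infty_{2n})$. This may require re-choosing bases in the real groups to make the matrices normalized. The real truncation $\pi_{2n+1}(\bb{RP}^\infty_{2n-2})\to\pi_{2n+1}(\bb{RP}^\infty_{2n-1})$ is then obtained from the $0$-line injectivity, \cref{lem: real truncation hits eta}, and commutativity.

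\textbf{Main obstacle.} I expect the hardest step to be the extension problems and the precise entries in higher Adams filtration, in particular the $\bb{Z}/16\oplus\bb{Z}/4$ case for $n\equiv3\ (4)$ and the off-diagonal entries. The $E_2$ data only determine images modulo higher filtration. My first approach would be to pin these down by exploiting two things together: the torsion-free comparison through the Hurewicz map (\cref{thm: MosherHurewicz}, via the degree of the top-cell projection in \cref{cor: degreeOfTopCellProj}), and the requirement that all squares commute. If that is insufficient, I would fall back on the cell structure of $\bb{CP}^{n+1}_{n-1}$ from \cref{lem:MosherImplementation} and \cref{lem:Mosher} to compute the relevant composites directly.
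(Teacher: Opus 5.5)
Your treatment of the right-hand part of \cref{fig:odd sphere diagram 2} agrees with the paper. This covers the truncations into $\bb{RP}^\infty_{2n}$ and $\bb{RP}^\infty_{2n+1}$, the realification out of $\Sigma\bb{CP}^\infty_n$, and the complex truncation. For the last one, the index $2$ for $n$ even comes from \cref{cor: degreeOfTopCellProj} via $\Sigma\bb{CP}^n_{n-1}\simeq\Sigma^{2n-3}\bb{CP}^2$, not from \cref{thm: MosherHurewicz} for stunted spaces.

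\textbf{The gap.} The left-hand column is where the content of the lemma lies, and the mechanism you propose there cannot work, because the relevant squares carry no information.
\begin{itemize}
\item For $n$ even, both ways round the left square into $\pi_{2n+1}(\bb{RP}^\infty_{2n})$ vanish identically. The real truncation out of $\bb{RP}^\infty_{2n-2}$ is zero, and $\bb{Z}\xrightarrow{2}\bb{Z}\to(\bb{Z}/2)^2$ is zero. So nothing constrains $r_*\colon\bb{Z}\to\bb{Z}/4$. The $0$-line does not help either, since the source generator sits in positive Adams filtration. Yet the answer (onto, zero, or $\cdot 2$ for $n\equiv 0\ (4)$, $2\ (8)$, $6\ (8)$) is genuinely new information.
\item For $n\equiv 1\ (4)$, the image of the $\bb{Z}/2$ generator is either $4$ times the image of the infinite-order generator, or something else such as $4$ times $\nu$ on the bottom cell. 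Both candidates die in $\pi_{2n+1}(\bb{RP}^\infty_{2n})\cong\bb{Z}/4$. But the distinction is basis-independent: the image of $r_*$ is $\bb{Z}/8$ in one case and $\bb{Z}/8\oplus\bb{Z}/2$ in the other.
\item For $n\equiv 3\ (4)$, the surjections $\bb{Z}/16\oplus\bb{Z}/4\to\bb{Z}/8$ of the forms $\begin{psmallmatrix}1&2\end{psmallmatrix}$ and $\begin{psmallmatrix}1&0\end{psmallmatrix}$ are inequivalent, since the kernel is cyclic only in the first case. Commutativity does not separate them. Neither does \cref{lem: real truncation hits eta}, which needs an even bottom cell in the target.
\item The targets are finite, so the torsion-free Hurewicz comparison gives nothing.
\item Your fallback to $\bb{CP}^{n+1}_{n-1}$ looks at the wrong side. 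The complex groups here are easy; what is missing is the real cell structure and how realification sits inside it.
\end{itemize}

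\textbf{What is needed.} Two ideas, neither of which is in your proposal, are what the paper uses.

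For $n=2m+1$:
\begin{itemize}
\item Build $2$-local splittings $\Sigma\bb{CP}^{2m+1}_{2m}\simeq S^{4m+1}\oplus S^{4m+3}$ and $\bb{RP}^{4m+3}_{4m}\simeq S^{4m}\oplus S^{4m+1}/2\oplus S^{4m+3}$ that are compatible with $r$ (\cref{lem: splitting RP4m+3}). Then $r_*(j'\eta^2)=2\,jv_1$, with $v_1$ generating $\pi_2(S^0/2)\cong\bb{Z}/4$.
\item Determine the attaching map $f=a\,i\nu+b\,jv_1+c\,k$ of the $(4m+4)$-cell (\cref{lem: 4m+4-cell attaching map}). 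Here $c$ is twice a unit by the $\mathrm{Sq}^1$. $b$ is a unit because the ``sickle'' complex $C_{pf}$ has $\pi_{4m+3}(C_{pf})=0$. $a$ is a unit or zero according to the $\mathrm{Sq}^4$ and James periodicity.
\item Quotienting by $f$ explains the groups $(\bb{Z}/8)^2$ and $\bb{Z}/16\oplus\bb{Z}/4$. It also makes $2\,jv_1$ a unit multiple of $4k$ for $m$ even, and gives $jv_1=a\,i\nu-2uk$ for $m$ odd. These produce the entries $4$ and $2$ and the truncation $\begin{psmallmatrix}1&2\end{psmallmatrix}$.
\end{itemize}

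For $n=2m$:
\begin{itemize}
\item Map the cofibre sequence $\Sigma\bb{CP}^\infty_{2m-2}\to\Sigma\bb{CP}^\infty_{2m-1}\to S^{4m-2}$ to $\bb{RP}^\infty_{4m-4}\to\bb{RP}^\infty_{4m-2}\to S^{4m-3}\oplus S^{4m-2}$.
\item Use the codimension-$5$ vanishing $\pi_{4m+1}(\bb{RP}^\infty_{4m-4})=0$ to embed $\pi_{4m+1}(\bb{RP}^\infty_{4m-2})\cong\bb{Z}/4$ into $\bb{Z}/8$.
\item This reduces $r_*$ to the degree of the complex truncation $\Sigma\bb{CP}^\infty_{2m-2}\to\Sigma\bb{CP}^\infty_{2m-1}$ on $\pi_{4m+1}$. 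For $m$ even that degree is read off from Adams filtrations. For $m$ odd it comes from the differential of \cref{lem: complex ASS diff 1}, i.e.\ from Mosher's $\lambda$.
\end{itemize}
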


The proof is much more intricate that its even-dimensional counterpart. There are two subcases that require quite a lot of extra work. Here we find it clearest to write not in dependency order, so we give the proof here, feeling free to cite the relevant results from (slightly) later in the paper (all proven in the next two sub-subsections,~\cref{subsection: codim 3 odd,subsection: codim 3 even}). This way, the reader who does want to read these details will first see the use case, and the reader who doesn't can skip them.

\begin{proof} 
The groups are as claimed by the charts (\cref{fig:SCPASS1,fig:SCPASS2,fig:RPASS1,fig:RPASS2,fig:RPASS3,fig:RPASS4}), and the calculation of possible differentials (Lemmas \ref{lem: real ASS diffs}, \ref{lem: complex ASS diff 1}, and \ref{lem: complex ASS diff 2}). When a group has multiple summands, we will order them such that summands with generators in lower Adams filtration come first (since it happens that this is always possible in our range). For the maps, it suffices to show that the maps are as claimed `up to units', i.e.~that (bases can be chosen such that) the matrices are as shown in \cref{fig:odd sphere diagram 2} up to possible multiplication of each entry by some unit modulo 2. Starting in the top left of the picture, one can change basis in the target of each map to eliminate all of these units. We will work from the right to the left in \cref{fig:odd sphere diagram 2}, i.e.,~increasing the codimension. The rightmost truncation maps, which go into the stable range, are automatically zero, because their codomains are zero.

\textbf{Truncation} $\pi_{2n+1}(\bb{RP}^\infty_{2n}) \to \pi_{2n+1}(\bb{RP}^\infty_{2n+1})$. When $n$ is odd, $2n$ is of the form $8k+2$ or $8k+6$ in the mod-8 indexing for the figures. The two cases are identical, so we describe only $2n=8k+2$. The truncation map is then from \cref{fig:RPASS2}-left to \cref{fig:RPASS2}-right. We are interested in the component $\bb{Z}/4 \to \bb{Z}/2$ in dimension $8k+3$. Both summands begin on the 0-line, so since the map is a cohomology isomorphism in this degree, it must send the generator to the generator, as claimed. When $n$ is even, $2n = 8k$ or $8k+4$, so we consider the left-to-right map in \cref{fig:RPASS1,fig:RPASS3} respectively. Letting $i$ be the inclusion of the bottom cell, and letting $j$ be any choice of inclusion of the next cell (in the next part, we will make a specific choice), we see that $j$ and $i \circ \eta$ give a basis, that $s_\ast$ sends $j$ to the generator for cohomology reasons, and that $s_\ast(i \circ \eta)= s_\ast(i) \circ \eta = 0$, where $t$ is the truncation map (\cref{main theorem: calc}).

\textbf{Realification} $\pi_{2n+1}(\Sigma \bb{CP}^\infty_{n}) \to \pi_{2n+1}(\bb{RP}^\infty_{2n})$ is as claimed by cohomology considerations (using \cref{main theorem: calc} as normal), after possibly modifying the second generator $j$ by adding $i \circ \eta$.

\textbf{Truncation} $\pi_{2n+1}(\bb{RP}^\infty_{2n -1}) \to \pi_{2n+1}(\bb{RP}^\infty_{2n})$. For $n$ odd ($2n = 8k+2$ or $8k+6$, corresponding respectively to truncation \cref{fig:RPASS1}-right-to-\cref{fig:RPASS2}-left and \cref{fig:RPASS3}-right-to-\cref{fig:RPASS4}-left), knowing the map on cohomology again suffices. When $n$ is even ($2n = 8k$ or $8k+4$), \cref{lem: real truncation hits eta} shows that this map always hits $i \circ \eta$, where as usual $i : S^{2n} \to \bb{RP}^\infty_{2n}$ is the inclusion of the bottom cell, so the result follows.

\textbf{Truncation} $\pi_{2n+1}(\bb{RP}^\infty_{2n -2}) \to \pi_{2n+1}(\bb{RP}^\infty_{2n-1})$. When $n$ is even (left-to-right in  \cref{fig:RPASS2,fig:RPASS4}, dimensions $8k+5$ and $8k+9$) the generator is a $\nu$-multiple of a class that goes to zero for dimension reasons, so the map is zero. When $n$ is odd (\cref{fig:RPASS1,fig:RPASS3}), we proceed as follows. Writing $i : S^{2n-2} \to \bb{RP}^\infty_{2n-2}$ for the inclusion of the bottom cell, we have $t \circ i \simeq *$ for connectivity reasons where $t$ is the truncation map (\cref{main theorem: calc}), and so $s_\ast(i \circ \nu) \simeq *$. Cohomology shows that $s_\ast(x)$ is a generator of the target $\bb{Z}/8$, where $x \in \pi_{2n+1}(\bb{RP}^\infty_{2n -2})$ is any class detected on the 0-line. \cref{cor: codimension 3 bases} (with $n = 2m+1$) now gives bases in terms of the classes defined in \cref{lem: splitting RP4m+3}: in the case $n \equiv 1 \pmod{4}$, we have
\[
\pi_{2n+1}(\bb{RP}^\infty_{2n -2}) \cong \bb{Z}/8 \{k\} \oplus \bb{Z}/8 \{i \circ \nu\}
\]
and the above considerations imply that in this basis the truncation map $s_\ast$ is $\begin{pmatrix} 1 & 0 \end{pmatrix}$. In the case $n \equiv 3 \pmod{4}$, 
\[
\pi_{2n+1}(\bb{RP}^\infty_{2n -2}) \cong \bb{Z}/16 \{k\} \oplus \bb{Z}/4\{a\cdot(i\nu) - (2u) \cdot k\}
\]
for $a$ and $u$ units mod 2, so, up to units, the map is given by $\begin{pmatrix} 1 & 2 \end{pmatrix}$.

\textbf{Truncation} $\pi_{2n+1}(\Sigma \bb{CP}^\infty_{n-1}) \to \pi_{2n+1}(\Sigma\bb{CP}^\infty_{n})$. The target is torsion free, so the map must be zero on the torsion subgroup of the domain, and descends to a map $\bb{Z} \to \bb{Z}$ on the torsion-free quotient. 
When $n$ is odd (left-to-right in \cref{fig:SCPASS1,fig:SCPASS2}), this restriction is a surjection for cohomology reasons. When $n$ is even (right in each of these figures to left in the other), note first that cellular approximation identifies
$$\pi_{2n+1}(\Sigma \bb{CP}^{n}_{n-1}) \cong \pi_{2n+1}(\Sigma \bb{CP}^\infty_{n-1}) \longrightarrow \pi_{2n+1}(\Sigma\bb{CP}^{\infty}_{n}) \cong \pi_{2n+1}(\Sigma\bb{CP}^n_{n})$$
that $\Sigma \bb{CP}^{n}_{n-1} \simeq \Sigma^{2n-1} C_{\eta} \simeq \Sigma^{2n-1} \bb{CP}^2_1$ (\cref{lem:firstCellAttachment}), and that $\Sigma\bb{CP}^n_{n} \simeq S^{2n+1}$. In other words, our map is equivalently the top cell projection for $\bb{CP}^2$, which by \cref{cor: degreeOfTopCellProj} has degree 2.

\textbf{Realification} $\pi_{2n+1}(\Sigma \bb{CP}^\infty_{n-1}) \to \pi_{2n+1}(\bb{RP}^\infty_{2n-2})$. This map is the most difficult to identify. The case $n$ odd is established as \cref{prop: difficult map 1}. The case $n$ even is \cref{prop: difficult map 2}.
\end{proof}

\subsubsection{The case of codimension $3$ real bundles with $n$ odd}\label{subsection: codim 3 odd}

Knowing the effect of a map on Adams $E_2$-pages is in general not enough to deduce its effect on homotopy groups, even once one knows all of the differentials - we only learn the effect of the map `modulo classes of higher Adams filtration'. For the even homotopy groups, we got lucky and this didn't cause any problems. For the odd homotopy groups, we have to work harder. In this section we will resolve the realification map in codimension $3$ with $n$ odd (\cref{prop: difficult map 1}) and the truncation map out of codimension $3$ (\cref{cor: codimension 3 bases}) both of which essentially just provide preferred bases.

Consider realification in real codimension 3 (the leftmost vertical in \cref{fig:odd sphere diagram 2}): 
$$r_*: \pi_{2n+1}(\Sigma \bb{CP}^{\infty}_{n-1})_{(2)} \longrightarrow \pi_{2n+1}( \bb{RP}^{\infty}_{2n-2})_{(2)}.$$

In this section we will treat the case $n$ odd, (in the following section we will treat the case $n$ even), so in the remainder of this section we will set $n = 2m+1$ and consider
$$r_*: \pi_{4m+3}(\Sigma \bb{CP}^{\infty}_{2m})_{(2)}  \longrightarrow \pi_{4m+3}( \bb{RP}^{\infty}_{4m})_{(2)} .$$

The goal of this section is then to prove the following.  For definitions of the classes $i$, $j$, $k$, $j'$, and $k'$ see \cref{lem: splitting RP4m+3}, and for $v_1$ see \cref{lem: v1}.

\begin{prop} \label{prop: difficult map 1}
The map $$\bb{Z}_{(2)}\{k'\} \oplus \bb{Z}/2\{j'\eta^2\} \cong \pi_{2m+3}(\Sigma \bb{CP}^{\infty}_{2m})_{(2)} \xrightarrow{\ r_* \ } \pi_{2m+3}(\bb{RP}^{\infty}_{4m})_{(2)} \cong \begin{cases}
        \bb{Z}/8\{k\} \oplus \bb{Z}/8\{i\} & m \textrm{ even} \\
        \bb{Z}/16\{k\} \oplus \bb{Z}/4\{j v_1\} & m \textrm{ odd}
    \end{cases}$$ is given (up to units) by
$\begin{psmallmatrix}
    1 & 4 \\
    0 & 0
\end{psmallmatrix}$ when $m$ is even, and  $\begin{psmallmatrix}
    1 & 0 \\
    0 & 2
\end{psmallmatrix}$
when $m$ is odd.
\end{prop}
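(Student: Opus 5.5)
Our plan is to compute $r_*$ on each basis element separately, using the cell structures of the low skeleta of $\Sigma\bb{CP}^\infty_{2m}$ and $\bb{RP}^\infty_{4m}$. By cellular approximation we may replace these by $\Sigma\bb{CP}^{2m+1}_{2m}$ and $\bb{RP}^{4m+3}_{4m}$ (or the next skeleton). We use the splitting of \cref{lem: splitting RP4m+3}, which provides the classes $i,j,k$ in the real spectrum and $j',k'$ in the complex one. The defining property we rely on is that $k'$ and $k$ are detected on the $0$-line by the top cells, and $j'$ by the bottom cell of $\Sigma\bb{CP}^{2m+1}_{2m}$.

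\textbf{Step 1: the torsion-free generator.} Realification is a mod-$2$ cohomology surjection (\cref{main theorem: calc}), so it is an injection on $0$-lines. Hence $r_*(k')$ is detected on the $0$-line by the same class as $k$. Therefore $r_*(k') = u k + (\text{higher filtration})$ with $u$ a unit. The higher-filtration ambiguity lies in the span of $2k$ and $i\nu$, respectively $jv_1$. After a change of basis in the source we may absorb any multiple of $k$. The remaining ambiguity is a multiple of the second basis vector. We aim to show it can be absorbed by redefining that second generator, or is zero, so that the first column is $\begin{psmallmatrix}1\\0\end{psmallmatrix}$ up to units. This step should follow from \cref{cor: codimension 3 bases}, which is designed to give these preferred bases.

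\textbf{Step 2: the torsion class $j'\eta^2$.} Here $j'$ is the bottom cell, so naturality gives $r_*(j'\eta^2) = r_*(j')\eta^2$. By \cref{thm: realification vbs calc}(2), the composite of $r$ with truncation to $\bb{RP}^{4m+1}_{4m+1}$ is an equivalence on bottom cells. Hence $r_*(j')$ is the inclusion of a $(4m+1)$-cell lifted over $\bb{RP}^{4m+1}_{4m}\simeq S^{4m}\vee S^{4m+1}$, i.e.\ $j$ possibly plus $i\eta$ (\cref{lem:firstRealCellAttachment}, since $4m$ is even). Multiplying by $\eta^2$ gives
\[
r_*(j'\eta^2)=j\eta^2 + \epsilon\, i\eta^3 = j\eta^2 + 4\epsilon\, i\nu .
\]
The remaining task is to express $j\eta^2$ in the chosen basis.

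\textbf{Case $m$ even.} The cell above $j$ is attached so that $j\eta^2$ (indeed $j\eta$) dies, and the surviving contribution is $4\epsilon\, i\nu$. One then shows $\epsilon=1$, giving the entry $4$.

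\textbf{Case $m$ odd.} $j\eta^2$ should equal $2jv_1$, using $\eta^2$ and the relation defining $v_1$ from \cref{lem: v1}. Meanwhile $4i\nu$ lies in the span of $k$ of filtration $\geq 4$, i.e.\ $8k$. This term must be handled, presumably by the relation $a(i\nu)-2uk$ in \cref{cor: codimension 3 bases}, which gives $4i\nu\equiv 8k$. We expect this term to be absorbed or to vanish after adjusting $k'$ by a torsion multiple.

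\textbf{Main obstacle.} The hard part is Step 2: pinning down $\epsilon$ and the hidden extension $j\eta^2$ in terms of $i\nu$ and $jv_1$. This is information in higher Adams filtration that the $E_2$-pages do not see. To settle it we would use Toda brackets $\langle 2,\eta,\ldots\rangle$ on the three-cell complex $\bb{RP}^{4m+2}_{4m}$, together with $\eta^3=4\nu$. If that fails, we would compare with the known $J$-homomorphism image via $\delta_\bb{R}$ to fix the parity of the coefficient.
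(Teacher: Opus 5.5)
Your treatment of the case $m$ even is wrong. You assert that $j\eta^2$ (indeed $j\eta$) dies, and that the surviving contribution is $4\epsilon\, i\nu$. Both claims are false.

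The $(4m+2)$-cell is attached by degree $2$. That makes $j\iota$ have order $2$, but it does not kill its $\eta$-multiples. In the Moore spectrum $\iota\eta\neq 0$ and $\iota\eta^2=2v_1\neq 0$ (\cref{lem: v1}), so $j\iota\eta^2=2\,jv_1$ is already non-zero in $\pi_{4m+3}(\bb{RP}^{4m+3}_{4m})$. Changing the lift $j$ only alters $j\eta^2$ by a multiple of $i\eta^3=4i\nu$. As explained below, after the next cell is attached, $2\,jv_1$ becomes a unit multiple of $4k$, not zero.

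Moreover, the value you aim for puts the $4$ in the $i\nu$-row, giving $\begin{psmallmatrix}1&0\\0&4\end{psmallmatrix}$. Its image is $\bb{Z}/8\oplus\bb{Z}/2$, which is not cyclic. No change of basis turns this into $\begin{psmallmatrix}1&4\\0&0\end{psmallmatrix}$, which says exactly that $r_*(j'\eta^2)$ is a unit multiple of $4\,r_*(k')$. So your plan, carried out as written, contradicts the statement.

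The ambiguities you carry (the $\epsilon$, and the higher-filtration error in Step 1) are artefacts of not using how the splittings are built. In \cref{lem: splitting RP4m+2} and \cref{lem: splitting RP4m+3} the classes are constructed from $r$ itself: $j$ restricts to $u^{-1}rj'$ on the bottom cell, and $k=rk'v^{-1}$. Hence $r_*(k')=v\,k$ and $r_*(j'\eta^2)=u\,j\iota\eta^2=2\,jv_1$ hold on the nose in $\pi_{4m+3}(\bb{RP}^{4m+3}_{4m})$. Toda brackets and comparison with the $J$-homomorphism are not needed.

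What you never use is the $(4m+4)$-cell, and that is where the answer is decided. We have $\pi_{4m+3}(\bb{RP}^\infty_{4m})\cong\pi_{4m+3}(\bb{RP}^{4m+3}_{4m})/\langle f\rangle$, where $f=a\,(i\nu)+b\,(jv_1)+c\,k$ is the attaching map of that cell, written in the basis of \cref{cor: pi 4m+3}. \Cref{lem: 4m+4-cell attaching map} determines $f$:
\begin{itemize}
\item $c$ is twice a unit, because of the $\mathrm{Sq}^1$ between the top two cells;
\item $b$ is a unit mod $4$, because the cofibre $C_{pf}$ of the projection $pf=b\,v_1$ to $S^{4m+1}/2$ has $\pi_{4m+3}(C_{pf})=0$;
\item $a$ is a unit for $m$ odd, because of the $\mathrm{Sq}^4$ from the bottom cell, and $a=0$ for $m$ even, by James periodicity.
\end{itemize}

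For $m$ even, the relation $b\,jv_1=-c\,k$ multiplied by $2$ gives $2\,jv_1=(\text{unit})\cdot 4k$. Hence $r_*(j'\eta^2)=4\,r_*(k')$ up to a unit. This hidden extension, linking $\eta^2$ on the second cell to the top-cell tower, is the whole content of the top-right entry $4$.

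For $m$ odd, $a$ being a unit lets you eliminate $i\nu$. This leaves $\bb{Z}/16\{k\}\oplus\bb{Z}/4\{jv_1\}$, with $r_*(k')=v\,k$ and $r_*(j'\eta^2)=2\,jv_1\notin\langle k\rangle$. Your discussion of this case is essentially right, but only because \cref{cor: codimension 3 bases} already packages the relation $f=0$. You needed to apply that same relation in the even case.
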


In some sense this result can \emph{almost} be read off the charts, which leave two possibilities. For example, for $m$ even, either the mod-2 generator goes to 4 times where the infinite order generator goes, or it does not (in which case there is only one place it can go). The key thing will be to understand why the groups
 $$\pi_{4m+3}(\bb{RP}^{\infty}_{4m}) \cong \begin{cases}
    (\bb{Z}/8)^2 & m \textrm{ even } (4m = 8k), \\
    \bb{Z}/16 \oplus \bb{Z}/4 &  m \textrm{ odd } (4m = 8k+4).
\end{cases}$$
are what they are. We know these groups from \cref{fig:RPASS1,fig:RPASS3}, where there can be no differentials by \cref{lem: real ASS diffs}, but a geometric understanding of them may be obtained from the attaching map for the $(4m+4)$-cell: 
\begin{equation}\label{eq: attaching 4m+3}
  S^{4m+3} \xrightarrow{\ f \ } \bb{RP}^{4m+3}_{4m} \longrightarrow \bb{RP}^{4m+4}_{4m}.
\end{equation}
This attaching map is a class in $\pi_{4m+3}(\bb{RP}^{4m+3}_{4m})$, which we now study.

We know (\cref{lem:firstCellAttachment}) that we have a splitting
$j' \oplus k' : S^{4m+1} \oplus S^{4m+3} \xrightarrow{\simeq} \Sigma \bb{CP}^{2m+1}_{2m}$. Our first job is to establish a splitting of $\bb{RP}^{4m+3}_{4m}$ which is compatible with this one under realification. Note first that any such splitting $j' \oplus k'$ must restrict to an equivalence $j' :S^{4m+1} \xrightarrow{\simeq} \Sigma \bb{CP}^{2m}_{2m}$.

\begin{lem} \label{lem: splitting RP4m+2} Fix a $2$-local equivalence $j' : S^{4m+1}  \xrightarrow{\simeq} \Sigma \bb{CP}^{2m}_{2m}$. There is a $2$-local splitting $$i \oplus j: S^{4m} \oplus S^{4m+1}/2 \xrightarrow{\ \simeq \ } \bb{RP}^{4m+2}_{4m}$$
in which $i: S^{4m} \to \bb{RP}^{4m+2}_{4m}$ is the inclusion of the bottom cell and where $j$ corresponds to $j'$ up to units in the sense that $rj' \simeq u \cdot j \iota$, where $u$ is a 2-local unit and $\iota: S^{4m+1} \to S^{4m+1}/2$ is the inclusion of the bottom cell and generates $\pi_{4m+1}(S^{4m+1}/2) \cong \bb{Z}/2$.
\end{lem}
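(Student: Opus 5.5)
First I describe the cell structure of $\bb{RP}^{4m+2}_{4m}$ and split it; then I check compatibility with realification. By \cref{lem:firstRealCellAttachment}, the $(4m+1)$-cell attaches to the $4m$-cell by a null map, since $4m$ is even. So $\bb{RP}^{4m+1}_{4m}\simeq S^{4m}\oplus S^{4m+1}$. The $(4m+2)$-cell attaches via a class in $\pi_{4m+1}(S^{4m}\oplus S^{4m+1}) \cong \bb{Z}/2\{\eta\}\oplus \bb{Z}$. Its component on $S^{4m+1}$ is the degree-$2$ map, again by \cref{lem:firstRealCellAttachment} since $4m+1$ is odd. Its component on $S^{4m}$ is $\eta$ or $0$, and this is detected by $\mathrm{Sq}^2$ from degree $4m$ to $4m+2$. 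In $H^*(\bb{RP}^\infty)$ we have $\mathrm{Sq}^2(x^{4m})=\binom{4m}{2}x^{4m+2}=0$, since $4m\equiv 0 \pmod 4$. Hence the attaching map is $(0,2)$. The cofibre then splits as $S^{4m}\oplus S^{4m+1}/2$, with $i$ the bottom-cell inclusion.

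The splitting map on the Moore summand, call it $j_0$, is not unique. Writing $i$ for the bottom-cell inclusion, we may replace $j_0$ by $j_0+i\circ\phi$ for any $\phi\in[S^{4m+1}/2,S^{4m}]$. This group contains $\eta\circ p$, where $p$ is the top-cell projection, and also an extension of $\eta$ over the Moore space, since $2\eta=0$. I will use this freedom to arrange compatibility with realification.

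Next I compute $rj'\in\pi_{4m+1}(\bb{RP}^{4m+2}_{4m})$. By cellular approximation, $rj'$ is the restriction of $r$ to the bottom cell $\Sigma\bb{CP}^{2m}_{2m}\simeq S^{4m+1}$, landing in the $(4m+1)$-skeleton. Using the splitting, $\pi_{4m+1}(\bb{RP}^{4m+2}_{4m})\cong \pi_{4m+1}(S^{4m})\oplus\pi_{4m+1}(S^{4m+1}/2)\cong \bb{Z}/2\{i\eta\}\oplus\bb{Z}/2\{j_0\iota\}$, 2-locally. By \cref{thm: realification vbs calc}(2), the composite of $r$ with truncation to the $(4m+1)$-cell is an equivalence on bottom cells. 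Therefore the $j_0\iota$-component of $rj'$ is nonzero, so $rj'=j_0\iota+\epsilon\, i\eta$ with $\epsilon\in\{0,1\}$.

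If $\epsilon=1$, I choose $\phi\colon S^{4m+1}/2\to S^{4m}$ extending $\eta$, so that $\phi\iota=\eta$. Such a $\phi$ exists because $2\eta=0$ and the Moore cofibre sequence is exact. Set $j=j_0+i\phi$. Then $j\iota=j_0\iota+i\eta=rj'$, and $i\oplus j$ is still an equivalence, being a unipotent change of splitting. This gives $rj'\simeq u\cdot j\iota$ with $u=1$; 2-locally the only units that can appear act trivially on $\bb{Z}/2$.

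The main point needing care is the $\mathrm{Sq}^2$ computation that forces the $\eta$-component of the top attaching map to vanish. If it did not vanish, $\bb{RP}^{4m+2}_{4m}$ would not split at all.
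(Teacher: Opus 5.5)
Your proof is correct and is essentially the paper's. Both arguments rest on the same two inputs: the $\mathrm{Sq}^2$/$\mathrm{Sq}^1$ analysis showing the $(4m+2)$-cell attaches by $(0,2)$, and the bottom-cell property of $r$ from \cref{thm: realification vbs calc}. The only difference is the order of choices. The paper takes $rj'u^{-1}$ itself as the section of the top-cell projection of $\bb{RP}^{4m+1}_{4m}$, and since the attaching map is $(0,2)$ in any splitting, compatibility is automatic. You instead start from an arbitrary splitting and correct it by the shear $j_0\mapsto j_0+i\phi$ with $\phi\iota=\eta$.

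One harmless slip: the class in $[S^{4m+1}/2,S^{4m}]$ that factors through the top cell is $\eta^2\circ p$, not $\eta\circ p$. You never use it.
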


\begin{proof}
We work $2$-locally thoughout the proof. Consider the composite of $rj'$ with the top cell projection for $\bb{RP}^{4m+1}_{4m}$, as shown in the following diagram where the bottom row is a cofibre sequence:
\[\begin{tikzcd}
	& \Sigma \bb{CP}^{2m}_{2m} & S^{4m+1} \\
    S^{4m} & \bb{RP}^{4m+1}_{4m} & S^{4m+1}.
	 \arrow["r",swap,from=1-2, to=2-2]
     \arrow["j'", from=1-3, to=1-2]
     \arrow[swap, "\simeq", from=1-3, to=1-2]
     \arrow[from=2-1, to=2-2]
     \arrow[from=2-2, to=2-3]
     \arrow[dashed, swap, "u^{-1}", from=2-3, to=1-3]
\end{tikzcd}\]

Since $r$ is a mod-2 cohomology surjection (\cref{main theorem: calc}), the degree of this composite (which is a map between spheres) must be a 2-local unit $u$. The composite $r j'u^{-1}$ splits the top cell projection by construction, so the map $i \oplus (r j'u^{-1}) : S^{4m} \oplus S^{4m+1} \xrightarrow{\simeq} \bb{RP}^{4m+1}_{4m}$, where $i$ is the inclusion of the bottom cell is a homotopy equivalence. 

To obtain $j$, hence the desired equivalence $i \oplus j$, it suffices to argue that under any homotopy equivalence $ \bb{RP}^{4m+1}_{4m}  \simeq S^{4m} \oplus S^{4m+1}$ (so in particular our equivalence above), the attaching map $g$ forming $\bb{RP}^{4m+2}_{4m}$ as the cofibre
\[
S^{4m+1} \xrightarrow{\ g\ }\bb{RP}^{4m+1}_{4m} \longrightarrow \bb{RP}^{4m+2}_{4m}
\]
is identified with $2$ times the inclusion of $S^{4m+1}$ (up to a $2$-local unit). This will follow because the only non-trivial Steenrod operation on $H^*(\bb{RP}^{4m+2}_{4m}; \bb{F}_2)$ is a $\mathrm{Sq}^1$ connecting degrees $4m+1$ and $4m+2$.

Fix any homotopy equivalence $\bb{RP}^{4m+1}_{4m}  \simeq S^{4m} \oplus S^{4m+1}$. The projection of $g$ to $S^{4m}$ is a class in the first stable stem, so is either $\eta$ or null. The absence of a $\mathrm{Sq}^2$ on $H^*(\bb{RP}^{4m+2}_{4m}; \bb{F}_2)$ means it must be null. The presence of the $\mathrm{Sq}^1$ implies that the projection of $g$ to $S^{4m+1}$ must be $2$ (up to units), as required.
\end{proof}

Now we extend this description of realification over one more cell.

\begin{lem} \label{lem: splitting RP4m+3} 
There are $2$-local splittings
$$j' \oplus k' : S^{4m+1} \oplus S^{4m+3} \xrightarrow{\ \simeq\ } \Sigma \bb{CP}^{2m+1}_{2m}$$
and
$$i \oplus j \oplus k: S^{4m} \oplus S^{4m+1}/2 \oplus S^{4m+3} \xrightarrow{\ \simeq \ } \bb{RP}^{4m+3}_{4m}$$
such that $rj' \simeq u \cdot j \iota$ and $r k' = v \cdot k$, where $u$ and $v$ are 2-local units and $\iota$ generates $\pi_{4m+1}(S^{4m+1}/2) \cong \bb{Z}/2$.
\end{lem}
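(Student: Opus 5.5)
We extend the splitting of \cref{lem: splitting RP4m+2} over the top cell, working $2$-locally throughout and keeping the fixed $j'$ and the resulting $i \oplus j$. The complex side is immediate. By \cref{lem:firstCellAttachment} ($2m$ even) the attaching map of the top cell of $\Sigma\bb{CP}^{2m+1}_{2m}$ is null, so $\Sigma\bb{CP}^{2m+1}_{2m}$ splits. Choose any $k'_0 : S^{4m+3} \to \Sigma\bb{CP}^{2m+1}_{2m}$ splitting the top cell projection. Together with $j'$ this gives the equivalence $j' \oplus k'_0$, and its restriction to the bottom cell is $j'$, as required.

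On the real side, $\bb{RP}^{4m+3}_{4m}$ is the cofibre of an attaching map $h : S^{4m+2} \to \bb{RP}^{4m+2}_{4m} \simeq S^{4m} \oplus S^{4m+1}/2$. We show $h$ is null. Its component in $S^{4m}$ lies in $\pi_2^s \cong \bb{Z}/2\{\eta^2\}$. Its component in $S^{4m+1}/2$ lies in $\pi_{4m+2}(S^{4m+1}/2) \cong \bb{Z}/2$, generated by $\iota\eta$. The components are detected respectively by a $\mathrm{Sq}^2$ from degree $4m+1$ to $4m+3$ and by a non-trivial secondary or primary structure. Since $4m+2 \equiv 2 \pmod 4$, the class $x^{4m+2}$ satisfies $\mathrm{Sq}^1 x^{4m+2}=0$. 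Also $\mathrm{Sq}^2 x^{4m+1} = \binom{4m+1}{2}x^{4m+3} = 0$, because $\binom{4m+1}{2} = 2m(4m+1)$ is even. So the component $\iota\eta$ is excluded by the vanishing $\mathrm{Sq}^2 x^{4m+1}$, which is exactly what detects $\iota\eta$ on the Moore space's bottom cell.

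The $\eta^2$ component is the delicate part, since it is not detected by primary operations. To rule it out, use realification: the pulled-back cell structure must be compatible. More directly, the Adams chart \cref{fig:RPASS1}/\cref{fig:RPASS3} (with no differentials by \cref{lem: real ASS diffs}) shows that the bottom-cell class $i\eta^2$ survives to $\pi_{4m+2}(\bb{RP}^\infty_{4m})$, which is nonzero. If $h$ had an $\eta^2$ component, $i\eta^2$ would die in $\bb{RP}^{4m+3}_{4m}$, and then in $\bb{RP}^\infty_{4m}$ by cellular approximation. Hence $h$ is null, and we get a splitting $i \oplus j \oplus k_0$ with $k_0$ any section of the top cell projection.

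Now compare $rk'_0$ with $k_0$. Compose $rk'_0$ with the top cell projection of $\bb{RP}^{4m+3}_{4m}$. The composite $\Sigma\bb{CP}^{2m+1}_{2m} \to \bb{RP}^{4m+3}_{4m} \to \bb{RP}^{4m+3}_{4m+3}$ factors through the truncation to cells $4m+2, 4m+3$, and \cref{thm: realification vbs calc}(2) says the relevant top-cell composite is an integral equivalence. So this degree is a unit $v$. Hence
\[
rk'_0 = v k_0 + i\alpha + j\beta, \qquad \alpha \in \pi_3^s,\ \beta \in \pi_{4m+3}(S^{4m+1}/2).
\]
Set $k = k_0 + v^{-1}(i\alpha + j\beta)$. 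This is still a section of the top cell projection, so $i \oplus j \oplus k$ remains an equivalence, and now $rk'_0 = v k$. Take $k' = k'_0$. The relation $rj' \simeq u\, j\iota$ is inherited from \cref{lem: splitting RP4m+2}.

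The main obstacle is the nullity of the $\eta^2$ component of $h$, which primary Steenrod operations cannot see. I would first try the Adams-chart survival argument above, checking carefully in the figures that $i\eta^2$ lies in the unshaded region. If that fails, I would instead use the realification compatibility with the complex splitting.
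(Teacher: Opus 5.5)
Your proof is correct, but it takes a longer route than the paper's.

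The paper never analyses the attaching map $h$ of the $(4m+3)$-cell. It takes $k'$ to be any section of the top cell of $\Sigma\bb{CP}^{2m+1}_{2m}$, notes that the right-hand column composite $S^{4m+3}\to S^{4m+3}$ has degree a unit $v$, and simply defines $k := v^{-1} r k'$. Then $k$ is automatically a section of the top-cell projection $\bb{RP}^{4m+3}_{4m}\to S^{4m+3}$. So the cofibre sequence $\bb{RP}^{4m+2}_{4m}\to\bb{RP}^{4m+3}_{4m}\to S^{4m+3}$ splits, $i\oplus j\oplus k$ is an equivalence, and $rk'=vk$ holds by construction.

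Your final section is the same map. You set $k=k_0+v^{-1}(i\alpha+j\beta)$ with $rk'_0=vk_0+i\alpha+j\beta$, which gives $k=v^{-1}rk'_0$ exactly. Consequently your first half is logically redundant: the existence of the section $v^{-1}rk'_0$ already forces $h\simeq 0$. In particular, the non-vanishing of $i\eta^2$ in $\pi_{4m+2}(\bb{RP}^\infty_{4m})$ is a consequence of the lemma rather than a needed input.

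Your direct argument for $h\simeq 0$ is nevertheless sound. The $\iota\eta$-component is excluded by $\mathrm{Sq}^2x^{4m+1}=\binom{4m+1}{2}x^{4m+3}=0$. The $\eta^2$-component is excluded because $i\eta^2$ is detected by $h_1^2$ on the bottom cell and survives, using the charts and \cref{lem: real ASS diffs}. This also fits the paper's computation $\pi_{4m+2}(\bb{RP}^\infty_{4m})\cong(\bb{Z}/2)^2$, since a nonzero $\eta^2$-component would cut that group down to $\bb{Z}/2$.

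What your route buys is an independent consistency check. What it costs is a dependence on the computer-generated Adams charts for something the unit-degree computation gives for free.

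Two small wording slips. Your ``respectively'' is reversed: $\mathrm{Sq}^2$ detects the $\iota\eta$-component, not the $S^{4m}$-component. The remark about $\mathrm{Sq}^1x^{4m+2}$ plays no role. Your degree computation, via naturality of $r$ with truncation and \cref{thm: realification vbs calc}(2), even gives $v=\pm1$ integrally; the paper only needs a $2$-local unit from the mod-$2$ cohomology isomorphism.
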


\begin{proof}
As always, we work $2$-locally. Realification gives a map of cofibre sequences
\[\begin{tikzcd}
S^{4m+1} & & S^{4m+3} \\
	 \Sigma \bb{CP}^{2m}_{2m} & \Sigma \bb{CP}^{2m+1}_{2m} & \Sigma \bb{CP}^{2m+1}_{2m+1}\\
    \bb{RP}^{4m+2}_{4m} & \bb{RP}^{4m+3}_{4m} & \bb{RP}^{4m+3}_{4m+3}  \\
    S^{4m} \oplus S^{4m+1}/2  & & S^{4m+3}.
	\arrow[equal,from=1-1, to=2-1]
    \arrow[from=2-1,to=2-2]
    \arrow["j'", from=1-1,to=2-2]
    \arrow[from=2-2,to=2-3]
    \arrow[equal,from=1-3, to=2-3]
    \arrow["r",from=2-1, to=3-1]
    \arrow["r",from=2-2, to=3-2]
    \arrow["r",from=2-3, to=3-3]
    \arrow["i \oplus j",from=4-1,to=3-1]
    \arrow["\simeq",swap,from=4-1,to=3-1]
    \arrow[from=3-1,to=3-2]
    \arrow[from=3-2,to=3-3]
    \arrow[equal,from=3-3, to=4-3]
    \arrow[swap, dashed,"k'",from=1-3,to=2-2]
    \arrow[ dashed,"k",from=4-3,to=3-2]
\end{tikzcd}\]
By \cref{lem:firstCellAttachment}, we know that a section $k'$ of the top-right horizontal map exists, and we take the first splitting to be $j' \oplus k'$. \cref{lem: splitting RP4m+2} gives a compatible splitting in the bottom left: an equivalence
 $$i \oplus j: S^{4m} \oplus S^{4m+1}/2 \xrightarrow{\ \simeq \ } \bb{RP}^{4m+2}_{4m}$$
such that $rj' \simeq u \cdot j \iota$ for some 2-local unit $u$.

By \cref{main theorem: calc}, the composite $S^{4m+3} \to S^{4m+3}$ of the maps in the rightmost column is a mod-2 cohomology isomorphism, so it must have degree some 2-local unit $v$. Setting $k = r \circ k' \circ v^{-1}$ gives a splitting of the bottom right horizontal map, such that the resulting equivalence
$$i \oplus j \oplus k : S^{4m} \oplus S^{4m+1}/2 \oplus S^{4m+3} \xrightarrow{\ \simeq\ } \bb{RP}^{4m+3}_{4m}$$
satisfies $r k' \simeq v \cdot k$, as required.
\end{proof}

\cref{lem: splitting RP4m+3} allows us to view the attaching map of \eqref{eq: attaching 4m+3} is a class in $\pi_{4m+3}(S^{4m} \oplus S^{4m+1}/2 \oplus S^{4m+3})$, whose possible values we now analyse. We will use Adams $E_2$-pages for two auxiliary complexes (\cref{fig:mod-2-Moore and hook}), which we again obtained using Bruner's Ext software \cite{Bruner}. The first thing is to determine the group. 

\begin{figure}[ht]
    \begin{tikzpicture}[scale=0.7]

\tikzset{radius= 0.05}

\clip (-1.0,-1.0) rectangle (5.5,  5.5);
\draw[color=lightgray] (0,0) grid [step=2]  (16,8);

\foreach \n in {0,2,4,6}
{
\def\nn{\n-0};
\node at (\nn,-0.5) {$\n$};
}

\foreach \s in {0,2,4,6}
{
\def\ss{\s-0};
\node at (-0.5,\ss) {$\s$};
}


\draw [fill] ( 0.00, 0.00) circle;
\node [left] at  ( 0.00, 0.00) {\tiny{0}};

\draw [fill] ( 1.00, 1.00) circle;
\node [left] at  ( 1.00, 1.00) {\tiny{0}};

\draw [fill] ( 2.00, 1.00) circle;
\node [left] at  ( 2.00, 1.00) {\tiny{1}};

\draw [fill] ( 3.00, 1.00) circle;
\node [left] at  ( 3.00, 1.00) {\tiny{2}};

\draw [fill] ( 7.00, 1.00) circle;
\node [left] at  ( 7.00, 1.00) {\tiny{3}};

\draw [fill] (15.00, 1.00) circle;
\node [left] at  (15.00, 1.00) {\tiny{4}};

\draw [fill] ( 2.00, 2.00) circle;
\node [left] at  ( 2.00, 2.00) {\tiny{0}};

\draw [fill] ( 3.00, 2.00) circle;
\node [left] at  ( 3.00, 2.00) {\tiny{1}};

\draw [fill] ( 6.00, 2.00) circle;
\node [left] at  ( 6.00, 2.00) {\tiny{2}};

\draw [fill] ( 7.00, 2.00) circle;
\node [left] at  ( 7.00, 2.00) {\tiny{3}};

\draw [fill] ( 8.00, 2.00) circle;
\node [left] at  ( 8.00, 2.00) {\tiny{4}};

\draw [fill] ( 9.00, 2.00) circle;
\node [left] at  ( 9.00, 2.00) {\tiny{5}};

\draw [fill] (14.00, 2.00) circle;
\node [left] at  (14.00, 2.00) {\tiny{6}};

\draw [fill] (16.00, 2.00) circle;
\node [left] at  (16.00, 2.00) {\tiny{7}};

\draw [fill] ( 4.00, 3.00) circle;
\node [left] at  ( 4.00, 3.00) {\tiny{0}};

\draw [fill] ( 8.00, 3.00) circle;
\node [left] at  ( 8.00, 3.00) {\tiny{1}};

\draw [fill] ( 9.07, 2.93) circle;
\node [left] at  ( 9.07, 2.93) {\tiny{2}};

\draw [fill] ( 8.93, 3.07) circle;
\node [left] at  ( 8.93, 3.07) {\tiny{3}};

\draw [fill] (10.00, 3.00) circle;
\node [left] at  (10.00, 3.00) {\tiny{4}};

\draw [fill] (15.00, 3.00) circle;
\node [left] at  (15.00, 3.00) {\tiny{5}};

\draw [fill] ( 8.00, 4.00) circle;
\node [left] at  ( 8.00, 4.00) {\tiny{0}};

\draw [fill] ( 9.00, 4.00) circle;
\node [left] at  ( 9.00, 4.00) {\tiny{1}};

\draw [fill] (10.00, 4.00) circle;
\node [left] at  (10.00, 4.00) {\tiny{2}};

\draw [fill] (14.00, 4.00) circle;
\node [left] at  (14.00, 4.00) {\tiny{3}};

\draw [fill] ( 9.00, 5.00) circle;
\node [left] at  ( 9.00, 5.00) {\tiny{0}};

\draw [fill] (10.00, 5.00) circle;
\node [left] at  (10.00, 5.00) {\tiny{1}};

\draw [fill] (11.00, 5.00) circle;
\node [left] at  (11.00, 5.00) {\tiny{2}};

\draw [fill] (15.00, 5.00) circle;
\node [left] at  (15.00, 5.00) {\tiny{3}};

\draw [fill] (16.00, 5.00) circle;
\node [left] at  (16.00, 5.00) {\tiny{4}};

\draw [fill] (10.00, 6.00) circle;
\node [left] at  (10.00, 6.00) {\tiny{0}};

\draw [fill] (11.00, 6.00) circle;
\node [left] at  (11.00, 6.00) {\tiny{1}};

\draw [fill] (15.00, 6.00) circle;
\node [left] at  (15.00, 6.00) {\tiny{2}};

\draw [fill] (16.00, 6.00) circle;
\node [left] at  (16.00, 6.00) {\tiny{3}};

\draw [fill] (12.00, 7.00) circle;
\node [left] at  (12.00, 7.00) {\tiny{0}};

\draw [fill] (16.00, 7.00) circle;
\node [left] at  (16.00, 7.00) {\tiny{1}};

\draw [fill] (16.00, 8.00) circle;
\node [left] at  (16.00, 8.00) {\tiny{0}};


\draw ( 1.00, 1.00) --( 0.00, 0.00);

\draw [dashed]  ( 3.00, 1.00) --( 0.00, 0.00);

\draw ( 2.00, 2.00) --( 1.00, 1.00);

\draw ( 2.00, 2.00) --( 2.00, 1.00);

\draw ( 3.00, 2.00) --( 2.00, 1.00);

\draw [dashed]  ( 6.00, 2.00) --( 3.00, 1.00);

\draw ( 8.00, 2.00) --( 7.00, 1.00);

\draw (16.00, 2.00) --(15.00, 1.00);

\draw [dashed]  (18.00, 2.00) --(15.00, 1.00);

\draw ( 4.00, 3.00) --( 3.00, 2.00);

\draw ( 8.00, 3.00) --( 7.00, 2.00);

\draw [dashed]  ( 8.93, 3.07) --( 6.00, 2.00);

\draw ( 8.93, 3.07) --( 8.00, 2.00);

\draw ( 8.93, 3.07) --( 9.00, 2.00);

\draw [dashed]  (10.00, 3.00) --( 7.00, 2.00);

\draw (10.00, 3.00) --( 9.00, 2.00);

\draw (17.00, 3.00) --(16.00, 2.00);

\draw ( 9.00, 4.00) --( 8.00, 3.00);

\draw ( 9.00, 4.00) --( 9.07, 2.93);

\draw (10.00, 4.00) --( 9.07, 2.93);

\draw [dashed]  (18.00, 4.00) --(15.00, 3.00);

\draw ( 9.00, 5.00) --( 8.00, 4.00);

\draw [dashed]  (11.00, 5.00) --( 8.00, 4.00);

\draw (11.00, 5.00) --(10.00, 4.00);

\draw (15.00, 5.00) --(14.00, 4.00);

\draw (10.00, 6.00) --( 9.00, 5.00);

\draw (10.00, 6.00) --(10.00, 5.00);

\draw (11.00, 6.00) --(10.00, 5.00);

\draw (16.00, 6.00) --(15.00, 5.00);

\draw (16.00, 6.00) --(16.00, 5.00);

\draw (17.00, 6.00) --(16.00, 5.00);

\draw (12.00, 7.00) --(11.00, 6.00);

\draw (16.00, 7.00) --(15.00, 6.00);
\end{tikzpicture}
\quad
\begin{tikzpicture}[scale=0.7]

\tikzset{radius= 0.05}

\node at (0,-0.5) {$4m$};

\foreach \n in {2,4}
{
\def\nn{\n-0};
\node at (\nn,-0.5) {$4m+\n$};
}

\clip (-1.0,-1.0) rectangle ( 5.5,  5.5);
\draw[color=lightgray] (0,0) grid [step=2]  (16,8);

\foreach \s in {0,2,4,6}
{
\def\ss{\s-0};
\node at (-0.5,\ss) {$\s$};
}


\draw [fill] ( 1.00, 0.00) circle;
\node [left] at  ( 1.00, 0.00) {\tiny{0}};

\draw [fill] ( 2.00, 1.00) circle;
\node [left] at  ( 2.00, 1.00) {\tiny{0}};

\draw [fill] ( 4.00, 1.00) circle;
\node [left] at  ( 4.00, 1.00) {\tiny{1}};

\draw [fill] ( 7.00, 1.00) circle;
\node [left] at  ( 7.00, 1.00) {\tiny{2}};

\draw [fill] ( 8.00, 1.00) circle;
\node [left] at  ( 8.00, 1.00) {\tiny{3}};

\draw [fill] (16.00, 1.00) circle;
\node [left] at  (16.00, 1.00) {\tiny{4}};

\draw [fill] ( 4.00, 2.00) circle;
\node [left] at  ( 4.00, 2.00) {\tiny{0}};

\draw [fill] ( 7.07, 1.93) circle;
\node [left] at  ( 7.07, 1.93) {\tiny{1}};

\draw [fill] ( 6.93, 2.07) circle;
\node [left] at  ( 6.93, 2.07) {\tiny{2}};

\draw [fill] ( 8.00, 2.00) circle;
\node [left] at  ( 8.00, 2.00) {\tiny{3}};

\draw [fill] ( 9.00, 2.00) circle;
\node [left] at  ( 9.00, 2.00) {\tiny{4}};

\draw [fill] (10.00, 2.00) circle;
\node [left] at  (10.00, 2.00) {\tiny{5}};

\draw [fill] (15.00, 2.00) circle;
\node [left] at  (15.00, 2.00) {\tiny{6}};

\draw [fill] ( 4.00, 3.00) circle;
\node [left] at  ( 4.00, 3.00) {\tiny{0}};

\draw [fill] ( 7.00, 3.00) circle;
\node [left] at  ( 7.00, 3.00) {\tiny{1}};

\draw [fill] ( 9.00, 3.00) circle;
\node [left] at  ( 9.00, 3.00) {\tiny{2}};

\draw [fill] (10.00, 3.00) circle;
\node [left] at  (10.00, 3.00) {\tiny{3}};

\draw [fill] (11.00, 3.00) circle;
\node [left] at  (11.00, 3.00) {\tiny{4}};

\draw [fill] (13.00, 3.00) circle;
\node [left] at  (13.00, 3.00) {\tiny{5}};

\draw [fill] (16.00, 3.00) circle;
\node [left] at  (16.00, 3.00) {\tiny{6}};

\draw [fill] ( 4.00, 4.00) circle;
\node [left] at  ( 4.00, 4.00) {\tiny{0}};

\draw [fill] ( 9.00, 4.00) circle;
\node [left] at  ( 9.00, 4.00) {\tiny{1}};

\draw [fill] (10.00, 4.00) circle;
\node [left] at  (10.00, 4.00) {\tiny{2}};

\draw [fill] (11.00, 4.00) circle;
\node [left] at  (11.00, 4.00) {\tiny{3}};

\draw [fill] (15.00, 4.00) circle;
\node [left] at  (15.00, 4.00) {\tiny{4}};

\draw [fill] ( 4.00, 5.00) circle;
\node [left] at  ( 4.00, 5.00) {\tiny{0}};

\draw [fill] (10.00, 5.00) circle;
\node [left] at  (10.00, 5.00) {\tiny{1}};

\draw [fill] (11.00, 5.00) circle;
\node [left] at  (11.00, 5.00) {\tiny{2}};

\draw [fill] (15.00, 5.00) circle;
\node [left] at  (15.00, 5.00) {\tiny{3}};

\draw [fill] (16.00, 5.00) circle;
\node [left] at  (16.00, 5.00) {\tiny{4}};

\draw [fill] ( 4.00, 6.00) circle;
\node [left] at  ( 4.00, 6.00) {\tiny{0}};

\draw [fill] (11.00, 6.00) circle;
\node [left] at  (11.00, 6.00) {\tiny{1}};

\draw [fill] (15.00, 6.00) circle;
\node [left] at  (15.00, 6.00) {\tiny{2}};

\draw [fill] (16.00, 6.00) circle;
\node [left] at  (16.00, 6.00) {\tiny{3}};

\draw [fill] ( 4.00, 7.00) circle;
\node [left] at  ( 4.00, 7.00) {\tiny{0}};

\draw [fill] (15.00, 7.00) circle;
\node [left] at  (15.00, 7.00) {\tiny{1}};

\draw [fill] ( 4.00, 8.00) circle;
\node [left] at  ( 4.00, 8.00) {\tiny{0}};


\draw ( 2.00, 1.00) --( 1.00, 0.00);

\draw [dashed]  ( 4.00, 1.00) --( 1.00, 0.00);

\draw [dashed]  ( 7.07, 1.93) --( 4.00, 1.00);

\draw [dashed]  ( 6.93, 2.07) --( 4.00, 1.00);

\draw ( 6.93, 2.07) --( 7.00, 1.00);

\draw ( 8.00, 2.00) --( 7.00, 1.00);

\draw ( 9.00, 2.00) --( 8.00, 1.00);

\draw [dashed]  (10.00, 2.00) --( 7.00, 1.00);

\draw (17.00, 2.00) --(16.00, 1.00);

\draw [dashed]  (19.00, 2.00) --(16.00, 1.00);

\draw ( 4.00, 3.00) --( 4.00, 2.00);

\draw [dashed]  ( 7.00, 3.00) --( 4.00, 2.00);

\draw ( 7.00, 3.00) --( 7.07, 1.93);

\draw ( 7.00, 3.00) --( 6.93, 2.07);

\draw ( 9.00, 3.00) --( 8.00, 2.00);

\draw [dashed]  (10.00, 3.00) --( 7.07, 1.93);

\draw [dashed]  (10.00, 3.00) --( 6.93, 2.07);

\draw (10.00, 3.00) --(10.00, 2.00);

\draw [dashed]  (13.00, 3.00) --(10.00, 2.00);

\draw ( 4.00, 4.00) --( 4.00, 3.00);

\draw [dashed]  (10.00, 4.00) --( 7.00, 3.00);

\draw (10.00, 4.00) --( 9.00, 3.00);

\draw (10.00, 4.00) --(10.00, 3.00);

\draw (11.00, 4.00) --(11.00, 3.00);

\draw [dashed]  (19.07, 3.93) --(16.00, 3.00);

\draw ( 4.00, 5.00) --( 4.00, 4.00);

\draw (10.00, 5.00) --( 9.00, 4.00);

\draw (11.00, 5.00) --(11.00, 4.00);

\draw (16.00, 5.00) --(15.00, 4.00);

\draw ( 4.00, 6.00) --( 4.00, 5.00);

\draw (11.00, 6.00) --(10.00, 5.00);

\draw (11.00, 6.00) --(11.00, 5.00);

\draw (15.00, 6.00) --(15.00, 5.00);

\draw (16.00, 6.00) --(15.00, 5.00);

\draw [dashed]  (18.00, 6.00) --(15.00, 5.00);

\draw ( 4.00, 7.00) --( 4.00, 6.00);

\draw (15.00, 7.00) --(15.00, 6.00);

\draw (17.00, 7.00) --(16.00, 6.00);

\draw ( 4.00, 8.00) --( 4.00, 7.00);

\draw ( 4.00, 9.00) --( 4.00, 8.00);
\end{tikzpicture}
\quad
\begin{tikzpicture}[scale=0.7]

\tikzset{radius= 0.05}

\foreach \n in {0,2,4}
{
\def\nn{\n+7};
\node at (\nn,-0.5) {$\n$};
}

\foreach \s in {0,2,4}
{
\def\ss{\s-0};
\node at (6.5,\ss) {$\s$};
}

\clip (6.5,-1.0) rectangle ( 12.5,  5.5);
\begin{scope}[xshift=1cm]
\draw[color=lightgray] (5,0) grid [step=2]  (15,8);
\end{scope}


\draw [fill] ( 7.00, 0.00) circle;
\node [left] at  ( 7.00, 0.00) {\tiny{0}};

\draw [fill] ( 7.00, 1.00) circle;
\node [left] at  ( 7.00, 1.00) {\tiny{0}};

\draw [fill] ( 9.00, 1.00) circle;
\node [left] at  ( 9.00, 1.00) {\tiny{1}};

\draw [fill] (10.00, 1.00) circle;
\node [left] at  (10.00, 1.00) {\tiny{2}};

\draw [fill] (12.00, 1.00) circle;
\node [left] at  (12.00, 1.00) {\tiny{3}};

\draw [fill] (14.00, 1.00) circle;
\node [left] at  (14.00, 1.00) {\tiny{4}};

\draw [fill] ( 7.00, 2.00) circle;
\node [left] at  ( 7.00, 2.00) {\tiny{0}};

\draw [fill] ( 9.00, 2.00) circle;
\node [left] at  ( 9.00, 2.00) {\tiny{1}};

\draw [fill] (10.00, 2.00) circle;
\node [left] at  (10.00, 2.00) {\tiny{2}};

\draw [fill] (12.00, 2.00) circle;
\node [left] at  (12.00, 2.00) {\tiny{3}};

\draw [fill] (13.00, 2.00) circle;
\node [left] at  (13.00, 2.00) {\tiny{4}};

\draw [fill] (14.00, 2.00) circle;
\node [left] at  (14.00, 2.00) {\tiny{5}};

\draw [fill] (15.00, 2.00) circle;
\node [left] at  (15.00, 2.00) {\tiny{6}};

\draw [fill] (16.00, 2.00) circle;
\node [left] at  (16.00, 2.00) {\tiny{7}};

\draw [fill] ( 7.00, 3.00) circle;
\node [left] at  ( 7.00, 3.00) {\tiny{0}};

\draw [fill] ( 9.00, 3.00) circle;
\node [left] at  ( 9.00, 3.00) {\tiny{1}};

\draw [fill] (12.00, 3.00) circle;
\node [left] at  (12.00, 3.00) {\tiny{2}};

\draw [fill] (14.00, 3.00) circle;
\node [left] at  (14.00, 3.00) {\tiny{3}};

\draw [fill] (15.00, 3.00) circle;
\node [left] at  (15.00, 3.00) {\tiny{4}};

\draw [fill] (16.00, 3.00) circle;
\node [left] at  (16.00, 3.00) {\tiny{5}};

\draw [fill] ( 7.00, 4.00) circle;
\node [left] at  ( 7.00, 4.00) {\tiny{0}};

\draw [fill] ( 9.00, 4.00) circle;
\node [left] at  ( 9.00, 4.00) {\tiny{1}};

\draw [fill] (14.00, 4.00) circle;
\node [left] at  (14.00, 4.00) {\tiny{2}};

\draw [fill] (16.00, 4.00) circle;
\node [left] at  (16.00, 4.00) {\tiny{3}};

\draw [fill] ( 7.00, 5.00) circle;
\node [left] at  ( 7.00, 5.00) {\tiny{0}};

\draw [fill] ( 9.00, 5.00) circle;
\node [left] at  ( 9.00, 5.00) {\tiny{1}};

\draw [fill] (16.00, 5.00) circle;
\node [left] at  (16.00, 5.00) {\tiny{2}};

\draw [fill] ( 7.00, 6.00) circle;
\node [left] at  ( 7.00, 6.00) {\tiny{0}};

\draw [fill] ( 9.00, 6.00) circle;
\node [left] at  ( 9.00, 6.00) {\tiny{1}};

\draw [fill] ( 7.00, 7.00) circle;
\node [left] at  ( 7.00, 7.00) {\tiny{0}};

\draw [fill] ( 9.00, 7.00) circle;
\node [left] at  ( 9.00, 7.00) {\tiny{1}};

\draw [fill] ( 7.00, 8.00) circle;
\node [left] at  ( 7.00, 8.00) {\tiny{0}};

\draw [fill] ( 9.00, 8.00) circle;
\node [left] at  ( 9.00, 8.00) {\tiny{1}};


\draw ( 7.00, 1.00) --( 7.00, 0.00);

\draw [dashed]  (10.00, 1.00) --( 7.00, 0.00);

\draw ( 7.00, 2.00) --( 7.00, 1.00);

\draw ( 9.00, 2.00) --( 9.00, 1.00);

\draw [dashed]  (10.00, 2.00) --( 7.00, 1.00);

\draw (10.00, 2.00) --( 9.00, 1.00);

\draw (10.00, 2.00) --(10.00, 1.00);

\draw [dashed]  (12.00, 2.00) --( 9.00, 1.00);

\draw (12.00, 2.00) --(12.00, 1.00);

\draw [dashed]  (13.00, 2.00) --(10.00, 1.00);

\draw (13.00, 2.00) --(12.00, 1.00);

\draw (14.00, 2.00) --(14.00, 1.00);

\draw [dashed]  (15.00, 2.00) --(12.00, 1.00);

\draw ( 7.00, 3.00) --( 7.00, 2.00);

\draw ( 9.00, 3.00) --( 9.00, 2.00);

\draw [dashed]  (12.00, 3.00) --( 9.00, 2.00);

\draw (12.00, 3.00) --(12.00, 2.00);

\draw (14.00, 3.00) --(14.00, 2.00);

\draw [dashed]  (15.00, 3.00) --(12.00, 2.00);

\draw (15.00, 3.00) --(15.00, 2.00);

\draw (16.00, 3.00) --(16.00, 2.00);

\draw [dashed]  (18.00, 3.00) --(15.00, 2.00);

\draw ( 7.00, 4.00) --( 7.00, 3.00);

\draw ( 9.00, 4.00) --( 9.00, 3.00);

\draw (14.00, 4.00) --(14.00, 3.00);

\draw (16.00, 4.00) --(16.00, 3.00);

\draw ( 7.00, 5.00) --( 7.00, 4.00);

\draw ( 9.00, 5.00) --( 9.00, 4.00);

\draw (16.00, 5.00) --(16.00, 4.00);

\draw ( 7.00, 6.00) --( 7.00, 5.00);

\draw ( 9.00, 6.00) --( 9.00, 5.00);

\draw ( 7.00, 7.00) --( 7.00, 6.00);

\draw ( 9.00, 7.00) --( 9.00, 6.00);

\draw ( 7.00, 8.00) --( 7.00, 7.00);

\draw ( 9.00, 8.00) --( 9.00, 7.00);

\draw ( 7.00, 9.00) --( 7.00, 8.00);

\draw ( 9.00, 9.00) --( 9.00, 8.00);
\end{tikzpicture}

    \caption{Adams $E_2$-pages for the mod-2 Moore spectrum $S^0/2$ (left), the space $C_{pf}$ appearing in the proof of \cref{lem: 4m+4-cell attaching map} (middle), and the cofibre of $\eta$ (right).}
    \label{fig:mod-2-Moore and hook}
\end{figure}

The following is immediate from the $E_2$-page for the mod-2 Moore spectrum $S^0/2$, see \cref{fig:mod-2-Moore and hook}-left.

\begin{lem} \label{lem: v1} A basis for $\pi_2(S^0/2)$ is given by
    $$\pi_2(S^0/2)_{(2)} \cong \bb{Z}/4 \{v_1\}$$
    for a class $v_1$ such that $ 2v_1 \simeq \iota \eta^2$, where $\iota: S^0 \to S^0/2$ is the inclusion of the bottom cell. \qed
\end{lem}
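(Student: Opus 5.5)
The plan is to read the statement off the Adams spectral sequence of $S^0/2$, using the $E_2$-page in \cref{fig:mod-2-Moore and hook}-left, and then identify the extension with the long exact sequence of the cofibre sequence
\[
S^0 \xrightarrow{\ 2\ } S^0 \xrightarrow{\ \iota\ } S^0/2 \xrightarrow{\ p\ } S^1 .
\]

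First, compute the group from the long exact sequence. We have $\pi_2(S^0)=\bb{Z}/2\{\eta^2\}$ and $\pi_1(S^0)=\bb{Z}/2\{\eta\}$, and multiplication by $2$ is zero on both. The sequence therefore gives a short exact sequence
\[
0\to \bb{Z}/2\{\iota\eta^2\}\to \pi_2(S^0/2)\xrightarrow{\ p_*\ } \bb{Z}/2\{\eta\}\to 0,
\]
so $\pi_2(S^0/2)$ has order $4$.

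Second, decide the extension. I would use the Adams chart in stem $2$. It has a class in filtration $1$ (labelled $1$), mapping under $p$ to $h_1$ on the top cell, and a class in filtration $2$ (labelled $0$), namely $h_1^2$ on the bottom cell, which detects $\iota\eta^2$. In the chart these two classes are joined by a vertical $h_0$-line. The chart's dimensions in this range rule out any differentials, since there are no classes in stem $3$ at low filtration that could support or receive one. Hence multiplication by $2$ is nontrivial and carries a lift of $\eta$ to $\iota\eta^2$, modulo higher filtration. In stem $2$ there is nothing in filtration above $2$, so the relation is exact.

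Alternatively, one can cite the classical fact that $2\cdot\mathrm{id}_{S^0/2}=\iota\eta p$. Then for $v_1$ with $p_*v_1=\eta$ we get $2v_1=\iota\eta p v_1=\iota\eta^2$. This also shows the extension is nontrivial and gives $\pi_2(S^0/2)\cong\bb{Z}/4\{v_1\}$.

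The only delicate point is that the $h_0$-extension is genuine rather than an artifact of reading the chart. The identity $2=\iota\eta p$ settles it independently of the chart, so I would present that argument and cite the chart only as confirmation.
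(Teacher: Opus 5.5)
Your proposal is correct. Your chart paragraph is exactly the paper's proof. The paper simply reads $\pi_2(S^0/2)\cong\bb{Z}/4$ and $2v_1=\iota\eta^2$ off the $E_2$-page for $S^0/2$. There the vertical line from the filtration-$1$ class to $h_1^2$ on the bottom cell is an honest $h_0$-multiplication in Ext, not a hidden extension. By the module structure over the Adams spectral sequence of the sphere, that line detects multiplication by $2$, so your worry that it might be an ``artifact'' of reading the chart is unnecessary.

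The route you say you would actually present is genuinely different. You use the long exact sequence of $S^0\xrightarrow{2}S^0\xrightarrow{\iota}S^0/2\xrightarrow{p}S^1$ to get order $4$, then $2\cdot\mathrm{id}_{S^0/2}=\iota\eta p$ to get $2v_1=\iota\eta p v_1=\iota\eta^2\neq 0$, with non-vanishing from injectivity of $\iota_*$ on $\pi_2(S^0)/2$. This is chart-free and gives the relation on the nose rather than modulo higher filtration. The cost is importing the classical identity $2\cdot\mathrm{id}_{S^0/2}=\iota\eta p$. Its proof that $2\cdot\mathrm{id}_{S^0/2}\neq 0$ itself needs a cohomology-operation argument: the cofibre $S^0/2\wedge S^0/2$ has a non-zero $\mathrm{Sq}^2$ by the Cartan formula. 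So the two routes rest on comparable input; the paper's is shorter given the machine-computed chart it already has.

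One small correction to your differential check: stem $3$ does contain classes, in filtrations $1$ and $2$. What actually rules out differentials is this. Any $d_r$ ($r\geq 2$) from stem $3$ into stem $2$ would land in filtration $\geq 3$, and any $d_r$ out of stem $2$ would land in stem $1$ in filtration $\geq 3$. Both of those regions of the chart are empty.
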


Combining \cref{lem: v1} with \cref{lem: splitting RP4m+3} gives the following immediate corollary.

\begin{cor} \label{cor: pi 4m+3}  A basis for $\pi_{4m+3}(\bb{RP}^{4m+3}_{4m})_{(2)}$ is given by
\[
\pushQED{\qed} 
\pi_{4m+3}(\bb{RP}^{4m+3}_{4m})_{(2)} \cong \bb{Z}/8 \{i \nu\} \oplus \bb{Z}/4\{jv_1\} \oplus \bb{Z}_{(2)}\{k\}.\qedhere
\popQED
\] 
\end{cor}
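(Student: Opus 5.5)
The plan is to compute $\pi_{4m+3}$ of each wedge summand in the $2$-local splitting of \cref{lem: splitting RP4m+3},
\[
i \oplus j \oplus k: S^{4m} \oplus S^{4m+1}/2 \oplus S^{4m+3} \xrightarrow{\ \simeq\ } \bb{RP}^{4m+3}_{4m},
\]
and then add the answers. Stable homotopy takes finite wedges to direct sums, so
\[
\pi_{4m+3}(\bb{RP}^{4m+3}_{4m})_{(2)} \cong \pi_{4m+3}(S^{4m})_{(2)} \oplus \pi_{4m+3}(S^{4m+1}/2)_{(2)} \oplus \pi_{4m+3}(S^{4m+3})_{(2)},
\]
where each summand is included by postcomposition with $i$, $j$ and $k$ respectively.

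The summands are handled one at a time.

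\begin{itemize}
\item \emph{The sphere $S^{4m}$.} Here the group is the third stable stem, $\pi_3(S^0)_{(2)} \cong \bb{Z}/8\{\nu\}$. Its image under $i$ gives the summand $\bb{Z}/8\{i\nu\}$.
\item \emph{The Moore space $S^{4m+1}/2$.} Here the group is $\pi_2(S^0/2)_{(2)}$. By \cref{lem: v1} this is $\bb{Z}/4\{v_1\}$, and postcomposing with $j$ gives $\bb{Z}/4\{jv_1\}$.
\item \emph{The sphere $S^{4m+3}$.} Here the group is $\pi_0(S^0)_{(2)} \cong \bb{Z}_{(2)}$, generated by the identity. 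Its image is $\bb{Z}_{(2)}\{k\}$.
\end{itemize}

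Putting these together gives the stated basis.

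I expect essentially no obstacle, since each input is already recorded: the splitting is \cref{lem: splitting RP4m+3}, and the Moore space group is \cref{lem: v1}. The only point needing care is the claim $\pi_2(S^0/2)_{(2)} \cong \bb{Z}/4$ rather than $(\bb{Z}/2)^2$, which requires a nontrivial extension. I would take this directly from \cref{lem: v1}. It can be checked independently: the relation $2v_1 = \iota\eta^2$ forces the extension, and it follows from the Toda bracket computation showing that $2 \cdot \mathrm{id}_{S^0/2} = \iota\eta p$, where $p$ is the top cell projection.
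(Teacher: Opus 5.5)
Your proposal is correct and is the same argument as the paper, which presents this as an immediate corollary: the $2$-local splitting of \cref{lem: splitting RP4m+3} turns the group into $\pi_3(S^0)_{(2)} \oplus \pi_2(S^0/2)_{(2)} \oplus \pi_0(S^0)_{(2)}$, and \cref{lem: v1} supplies the middle summand. Your extra check of the nontrivial extension $\bb{Z}/4$, via $2 \cdot \mathrm{id}_{S^0/2} = \iota\eta p$, is a sound independent confirmation of \cref{lem: v1}.
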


The attaching map of \eqref{eq: attaching 4m+3} is an element of this group, which we now determine.

\begin{lem} \label{lem: 4m+4-cell attaching map}
Let $f \in \pi_{4m+3}(\bb{RP}^{4m+3}_{4m})_{(2)}$ denote the attaching map of \eqref{eq: attaching 4m+3} and (using \cref{cor: pi 4m+3}) write $f = a \cdot (i \nu) + b \cdot (j v_1) + c \cdot k$, where $a \in \bb{Z}/8$, $b \in \bb{Z}/4$, and $c \in \bb{Z}_{(2)}$. Then:
\begin{enumerate}
    \item $c = 2 u$ for some unit $u \in \bb{Z}_{(2)}$,
    \item $b$ is not divisible by 2 (i.e.~it generates $\bb{Z}/4$), and
    \item $a$ is a generator of $\bb{Z}/8$ when $m$ is odd, and is zero when $m$ is even.
\end{enumerate}
\end{lem}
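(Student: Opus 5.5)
The plan is to read off each coefficient of $f = a\cdot(i\nu) + b\cdot(jv_1) + c\cdot k$ by composing with suitable collapse maps out of $\bb{RP}^{4m+3}_{4m}$. We use the splitting of \cref{lem: splitting RP4m+3} to project onto the individual wedge summands. We then detect each projected component through cellular chains or Steenrod operations in $H^*(\bb{RP}^{4m+4}_{4m};\bb{F}_2)$. Work $2$-locally throughout.

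\textbf{Part (1).} Compose $f$ with the projection of $\bb{RP}^{4m+3}_{4m}\simeq S^{4m}\oplus S^{4m+1}/2\oplus S^{4m+3}$ onto $S^{4m+3}$. Up to the unit $v$ of \cref{lem: splitting RP4m+3}, this projection agrees on integral homology with the top cell collapse $\bb{RP}^{4m+3}_{4m}\to\bb{RP}^{4m+3}_{4m+3}$. The resulting self-map of $S^{4m+3}$ therefore has degree equal to the cellular boundary $H_{4m+4}\to H_{4m+3}$ in the cellular chain complex of $\bb{RP}^\infty$. This boundary is multiplication by $2$, since $4m+4$ is even (cf.~\cref{lem:firstRealCellAttachment}). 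Hence $c=2u$ with $u$ a unit.

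\textbf{Part (2).} Collapse the summands $S^{4m}$ and $S^{4m+3}$. This gives a map from the cofibre of $f$, namely $\bb{RP}^{4m+4}_{4m}$, to the cofibre $C_{pf}$ of the composite $pf=b\cdot v_1\colon S^{4m+3}\to S^{4m+1}/2$. This map is injective on mod-$2$ cohomology in degrees $4m+1$, $4m+2$ and $4m+4$, and compatible with Steenrod operations. In $\bb{RP}^{4m+4}_{4m}$ we have $\mathrm{Sq}^2x^{4m+2}=\binom{4m+2}{2}x^{4m+4}=(2m+1)(4m+1)x^{4m+4}\neq0$. Hence $\mathrm{Sq}^2$ from degree $4m+2$ to $4m+4$ is nonzero in $C_{pf}$. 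Now $v_1$ is detected by $h_1$ on the top cell of the Moore spectrum, so after projecting to that top cell it is $\eta$. By contrast $2v_1=\iota\eta^2$ projects to zero on the top cell, so it cannot produce this $\mathrm{Sq}^2$ (compare the $E_2$-page for $C_{pf}$ in \cref{fig:mod-2-Moore and hook}-middle). It follows that $b$ is odd.

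\textbf{Part (3).} Similarly collapse $S^{4m+1}/2\oplus S^{4m+3}$. This gives a map $\bb{RP}^{4m+4}_{4m}\to C_{a\nu}$ that is a cohomology isomorphism in degrees $4m$ and $4m+4$. Since $\mathrm{Sq}^4x^{4m}=\binom{4m}{4}x^{4m+4}$, and $\binom{4m}{4}\equiv m \pmod 2$, the operation $\mathrm{Sq}^4$ is nonzero exactly when $m$ is odd. Because $\nu$ has Hopf invariant one, $a$ is therefore odd, i.e.~a generator, precisely when $m$ is odd.

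\textbf{Main obstacle: showing $a=0$ when $m$ is even.} Here $\mathrm{Sq}^4$ only tells us that $a\in\{0,2,4,6\}$. I would first try a group-structure argument. By cellular approximation, $\pi_{4m+3}(\bb{RP}^\infty_{4m})$ is the quotient of $\bb{Z}/8\{i\nu\}\oplus\bb{Z}/4\{jv_1\}\oplus\bb{Z}_{(2)}\{k\}$ by the class $f$. Using (1) and (2), I would compute this cokernel for each candidate $a$ and compare it with $(\bb{Z}/8)^2$, which is known from \cref{fig:RPASS1} together with \cref{lem: real ASS diffs}. The case $a=0$ gives $(\bb{Z}/8)^2$. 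If some even $a\neq 0$ also gives an abstractly isomorphic group, I would use additional information to pin down $a$. The first option is the image of $k$, which is $r_*k'$, of infinite order in the complex source, and which must generate the summand detected on the $0$-line. The second option is an $h_0$-extension read off from the chart, where the $0$-line class must have order exactly $8$ with no hidden extension into $i\nu$. Finally, I would check whether the remaining freedom in the choice of $k$ (adding multiples of $i\nu$) allows one to absorb $a$ into the basis anyway.
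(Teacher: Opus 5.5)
Your parts (1), (2) and the $m$ odd half of (3) are sound, and (1) and the $\mathrm{Sq}^4$ argument are essentially the paper's. For (2) you take a different and more elementary route. You detect the $\mathrm{Sq}^2$ from degree $4m+2$ to $4m+4$ through the top-cell collapse $t$ of the Moore space, using $t\circ v_1=\eta$ and $t\circ 2v_1=0$. The paper instead reads $\pi_{4m+3}(C_{pf})=0$ off the Adams chart for $C_{pf}$ and concludes that $pf$ maps onto $\bb{Z}/4\{v_1\}$. Both work.

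The gap is the claim $a=0$ for $m$ even, and your group-structure test cannot detect it.
\begin{itemize}
\item Since $b$ is a unit, you can use $f=0$ to eliminate $jv_1$. The relation $4\,jv_1=0$ then becomes $4a\,i\nu+8u\,k=0$, which for every even $a$ is just $8k=0$.
\item So the quotient is $\bb{Z}/8\{i\nu\}\oplus\bb{Z}/8\{k\}$ for $a=0,2,4,6$ alike. In every case $k$ has order exactly $8$ and there is no extension, so your options (i) and (ii) see nothing.
\item What distinguishes $a\equiv 0$ from $a\equiv 2 \pmod 4$ is whether $2\,jv_1=u^{-1}r_*(j'\eta^2)$ is a unit times $4k$ or a unit times $4k+4i\nu$. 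The extra term has higher Adams filtration than $4k$, and deciding it is exactly what \cref{prop: difficult map 1} needs from this lemma, so it cannot be read off.
\item Option (iii) is not available. The class $k=rk'v^{-1}$ is fixed up to adding a unit multiple of $2jv_1$ (from re-choosing $k'$ by $j'\eta^2$), and that does not change $a$.
\item The only freedom in $j$ is re-extending $j\iota$ over the Moore space by $\alpha\circ t$ with $\alpha\in\{i\eta^2,\,j\iota\eta\}$. This moves $jv_1$ by $4i\nu$ or $2jv_1$, hence moves $a$ by $0$ or $4$.
\item Adding $i\nu$ to $k$ would break $rk'=vk$, which is the whole point of this basis.
\end{itemize}
So the invariant content is $a \bmod 4$, and establishing it needs genuinely new input.

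The paper's route is James periodicity: for $m$ even, $\bb{RP}^{4m+4}_{4m}\simeq\Sigma^{4m}\bb{RP}^4_+$, so the bottom cell splits off. Be aware that this alone only recovers parity. If $\rho$ retracts $\bb{RP}^{4m+4}_{4m}$ onto $S^{4m}$, then $0=\rho f=a\nu+b(\rho j)v_1+c(\rho k)$.
\begin{itemize}
\item $c(\rho k)$ is an even multiple of $\nu$ by (1).
\item If $\rho j\iota=0$, then $\rho j=\gamma t$ and $(\rho j)v_1=\gamma\eta\in\{0,4\nu\}$.
\item If $\rho j\iota=\eta$, then $(\rho j)v_1\in\langle\eta,2,\eta\rangle=\{2\nu,6\nu\}$.
\end{itemize}
Hence such a retraction exists if and only if $a$ is even, which is the same information $\mathrm{Sq}^4$ gives. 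More precisely, $a\equiv 2(\epsilon_1+\epsilon_2)\pmod 4$, where $\rho j\iota=\epsilon_1\eta$ and $\rho k\equiv\epsilon_2\nu \pmod{2\nu}$. To conclude, you would have to compute these cross-terms of the realification map against a James splitting, namely $\rho r j'$ and $\rho r k'$. That computation is the missing ingredient in your proposal, and the splitting argument as written in the paper does not supply it either.
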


\begin{proof} We work $2$-locally throughout the proof. The structure of $H^*(\bb{RP}^{4m+4}_{4m}; \bb{F}_2)$ as a module over the Steenrod algebra is:
\begin{center}
        \begin{tikzpicture}
        \draw[fill=black]  (0,0) circle [radius=0.05];
            \draw (0,-0.25)  node[below] {$\scriptstyle{4m}$};
        \foreach \x in {1,2,3,4}{
            \draw[fill=black]  (\x,0) circle [radius=0.05];
            \draw (\x,-0.25)  node[below] {$\scriptstyle{4m+\x}$};
        }
        \draw (1,0)--(2,0);
        \draw (3,0)--(4,0);
        \draw (2,0) to[in =135, out =45] (4,0);
        \draw[dashed] (0,0) to[in =135, out =45] (4,0);
    \end{tikzpicture}
    \end{center}
where the dashed $\mathrm{Sq}^4$ is present if and only if $m$ is odd. Recall the splitting of~\cref{lem: splitting RP4m+3}, and write $q : \bb{RP}^{4m+3}_{4m} \to S^{4m+3}$ for the projection, so that 
$q \circ k \simeq \mathrm{id}_{S^{4m+3}}$, $qi \simeq *,$ and $qj \simeq *$.
Then 
\[
qf \simeq q(c \cdot k) \simeq c (qk) \simeq c \cdot \mathrm{id}_{S^{4m+3}}
\]
and the cofibre of $qf$ is $\bb{RP}^{4m+4}_{4m+3} \simeq S^{4m+3}/2$, so (1) follows.

For (2), write $p : \bb{RP}^{4m+3}_{4m} \to S^{4m+1}/2$ for the projection, so that
$pj \simeq \mathrm{id}_{S^{4m+1}/2}$, $pk \simeq *$, and $pi \simeq *.$
Consider the composite $pf \simeq b \cdot (v_1)$ using the diagram
\[\begin{tikzcd}
S^{4m+3} & \bb{RP}^{4m+3}_{4m} & \bb{RP}^{4m+4}_{4m}\\
S^{4m+3} & S^{4m+1}/2 & C_{pf} 
	\arrow[equal,from=1-1, to=2-1]
    \arrow["f",from=1-1, to=1-2]
    \arrow[from=1-2, to=1-3]
    \arrow["p",from=1-2, to=2-2]
    \arrow[from=1-3, to=2-3]
    \arrow["pf",from=2-1, to=2-2]
    \arrow[from=2-2, to=2-3]
\end{tikzcd}\]
where the rows are cofibre sequences. As a module over the Steenrod algebra, $H^*(C_{pf}; \bb{F}_2)$ is the submodule of $H^*(\bb{RP}^{4m+4}_{4m}; \bb{F}_2)$ consisting of degrees $4m+1,4m+2,$ and $4m+4$ (a `sickle' shape). Consulting the Adams chart for $C_{pf}$ in \cref{fig:mod-2-Moore and hook}-middle, we see that $\pi_{4m+3}(C_{pf})=0$, so $pf \simeq b \cdot (v_1)$ must be a surjection onto $\pi_{4m+3}(S^{4m+1}/2) \cong \bb{Z}/4\{v_1\}$ (\cref{lem: v1}). This implies that $b$ must be a unit mod 4, proving (2).

For (3), proceed similarly, this time considering the two-cell complex $C$ obtained as the cofibre of the projection of $f$ to the bottom cell $S^{4m}$. We see that $a$ is a generator of $\bb{Z}/8$ if and only if the projection of $f$ is an odd multiple of $\nu$, if and only if $C \simeq \Sigma^{4m} C_{\nu}$, if and only if the two cells of $C$ are connected by a $\mathrm{Sq}^4$, which we saw above happens if and only if $m$ is odd.

If $m$ is even, then James periodicity (see e.g.,~\cite{MahowaldOnJames}) says that $\bb{RP}^{4m+4}_{4m} \cong \Sigma^{4m} \bb{RP}^4_+$, and in particular that the bottom cell splits off, so the attaching map $f$ must have first component $a=0$.
\end{proof}

As a corollary, we can in each case write down a basis for $\pi_{4m+3}(\bb{RP}^{\infty}_{4m})_{(2)}$, the group we actually care about. Note that the following description is precisely what we needed to resolve the truncation out of codimension $3$ in the proof of~\cref{lem: sphere diagrams 2 odd}.

\begin{cor} \label{cor: codimension 3 bases} A basis for the $2$-local stable homotopy group $\pi_{4m+3}(\bb{RP}^{\infty}_{4m})_{(2)}$ is given by
    $$\pi_{4m+3}(\bb{RP}^{\infty}_{4m})_{(2)} \cong \begin{cases}
        \bb{Z}/8\{k\} \oplus \bb{Z}/8\{i \nu \} & m \textrm{ even} \\
        \bb{Z}/16\{k\} \oplus \bb{Z}/4\{j v_1\} & m \textrm{ odd.}
    \end{cases}$$
Furthermore, in the second case $jv_1 = a\cdot(i\nu) - (2u) \cdot k$ where $u$ and $a$ are $2$-local units. \qed
\end{cor}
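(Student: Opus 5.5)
The plan is to compute $\pi_{4m+3}(\bb{RP}^\infty_{4m})_{(2)}$ by cellular approximation and then read it off from the attaching map. The spectrum $\bb{RP}^\infty_{4m}$ agrees with $\bb{RP}^{4m+4}_{4m}$ through dimension $4m+4$. Hence $\pi_{4m+3}(\bb{RP}^\infty_{4m}) \cong \pi_{4m+3}(\bb{RP}^{4m+4}_{4m})$, and the long exact sequence of the cofibration \eqref{eq: attaching 4m+3} identifies this group with the cokernel of $f_\ast : \pi_{4m+3}(S^{4m+3}) \to \pi_{4m+3}(\bb{RP}^{4m+3}_{4m})$. We use $\pi_{4m+4}(S^{4m+4}/\ldots)$ only through exactness at the next term; that map is onto $\pi_{4m+3}$ of the cofibre minus nothing, since $\pi_{4m+3}(S^{4m+4})=0$. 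So the group is
\[
\big(\bb{Z}/8\{i\nu\}\oplus\bb{Z}/4\{jv_1\}\oplus\bb{Z}_{(2)}\{k\}\big)/\langle f\rangle,
\]
using \cref{cor: pi 4m+3}, with $f = a\,(i\nu)+b\,(jv_1)+2u\,k$ as in \cref{lem: 4m+4-cell attaching map}. What remains is a Smith normal form computation over $\bb{Z}_{(2)}$.

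\textbf{Case $m$ even.} Here $a=0$ and $b$ is a unit mod $4$. The relation $b\,(jv_1) = -2u\,k$ expresses $jv_1$ in terms of $k$. Eliminating $jv_1$ leaves the generators $k$ and $i\nu$. The relation $4\,jv_1=0$ becomes $8u\,k=0$, so $k$ has order $8$, and $i\nu$ keeps order $8$. This gives $\bb{Z}/8\{k\}\oplus\bb{Z}/8\{i\nu\}$.

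\textbf{Case $m$ odd.} Here $a$ is a unit in $\bb{Z}/8$, so we may solve instead for $i\nu$: $i\nu = -a^{-1}(b\,jv_1+2u\,k)$. The relation $8\,i\nu=0$ becomes $8b\,jv_1+16u\,k=0$. Since $4\,jv_1=0$, this reduces to $16\,k=0$, so the quotient is generated by $k$ and $jv_1$ with orders dividing $16$ and $4$. To confirm these orders are exact and the sum is direct, compare orders: the total order must be $8\cdot4\cdot|\bb{Z}_{(2)}/\ldots|$, computed via the determinant of the relation matrix, giving $2\cdot 8\cdot 4=64$. This matches $\bb{Z}/16\oplus\bb{Z}/4$, which also agrees with the Adams charts (\cref{fig:RPASS3}, with no differentials by \cref{lem: real ASS diffs}). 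Rearranging the relation gives $jv_1 = a'\,(i\nu) - (2u')\,k$ for units $a',u'$, after absorbing $b^{-1}$ and signs into the units; this is the final claim.

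The only real subtlety is making sure the group order matches, so that no hidden extension is missed; the charts settle this. Everything else is routine linear algebra.
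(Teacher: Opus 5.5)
Your argument is correct and is essentially the paper's own (implicit) proof: you identify the group with $\pi_{4m+3}(\bb{RP}^{4m+3}_{4m})_{(2)}/\langle f\rangle$ (the same quotient description the paper uses in the proof of \cref{prop: difficult map 1}), feed in the basis from \cref{cor: pi 4m+3} and the form of $f$ from \cref{lem: 4m+4-cell attaching map}, and then eliminate the generator whose coefficient in $f$ is a unit. Your order comparison in the odd case is harmless but unnecessary: once $8b\,jv_1$ is absorbed using $4jv_1=0$, the relations $4jv_1=0$ and $16k=0$ already form a diagonal presentation.
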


We are now ready to resolve codimension $3$ realification for $n=2m+1$ odd.

\begin{proof}[Proof of \cref{prop: difficult map 1}]
Again, we work $2$-locally. We are considering the $(4m+3)$-rd homotopy groups, so we may restrict to $(4m+4)$-skeleta. Since $\Sigma \bb{CP}^\infty_{2m}$ has no even-dimensional cells, this restriction factors as
$$\Sigma \bb{CP}^{2m+1}_{2m} \xrightarrow{ \ r\ } \bb{RP}^{4m+3}_{4m} \hookrightarrow  \bb{RP}^{4m+4}_{4m}.$$
Consider the first map in this composite. \cref{lem: splitting RP4m+3} says that we have (2-local) splittings
$$j' \oplus k' : S^{4m+1} \oplus S^{4m+3} \xrightarrow{ \ \simeq \ } \Sigma \bb{CP}^{2m+1}_{2m},$$
and
$$i \oplus j \oplus k: S^{4m} \oplus S^{4m+1}/2 \oplus S^{4m+3} \xrightarrow{\ \simeq \  } \bb{RP}^{4m+3}_{4m},$$
such that $rj' \simeq u \cdot j \iota$ and $r k' = v \cdot k$, where $u$ and $v$ are units and $\iota$ generates $\pi_{4m+1}(S^{4m+1}/2) \cong \bb{Z}/2$. It follows (c.f.~\cref{cor: pi 4m+3}) that this first map $$\pi_{4m+3}(\bb{CP}^{2m+1}_{2m}) \cong \bb{Z}\{k'\}\oplus \bb{Z}/2\{j' \circ \eta^2\} \xrightarrow{\ r_* \ } \pi_{4m+3}(\bb{RP}^{4m+3}_{4m}) \cong \bb{Z}/8 \{i \nu\} \oplus \bb{Z}/4\{jv_1\} \oplus \bb{Z}\{k\}$$
acts on the first generator as $r_* k' = v \cdot k$, and on the second as $r_* (j'\eta^2) = r j'\eta^2 = u \cdot j \iota \eta^2 = 2 \cdot j v_1$, where the equation $ u \cdot \iota \eta^2 = 2 v_1$ holds by \cref{lem: v1}.

Now consider the second map. From the cofibre sequence \eqref{eq: attaching 4m+3} we see that
$$\pi_{4m+3}(\bb{RP}^{4m+4}_{4m}) \cong \pi_{4m+3}(\bb{RP}^{4m+3}_{4m})/\langle f\rangle \cong (\bb{Z}/8 \{i \nu\} \oplus \bb{Z}/4\{jv_1\} \oplus \bb{Z}\{k\}) / \langle f\rangle.$$
We computed $f$ in \cref{lem: 4m+4-cell attaching map}: $f = a \cdot (i \nu) + b \cdot (j v_1) + c \cdot k$, where $c$ is 2 times a generator of $\bb{Z}$, $b$ is a generator of $\bb{Z}/4$, and $a$ is either zero (when $m$ is even), or a generator of $\bb{Z}/8$ (when $m$ is odd). In the first case, $2f = 2 \cdot (j v_1) - 2c \cdot k$, so the quotient map identifies $2 j v_1 = r_*(j'\eta^2)$ with $4 r_*(k') = 4v \cdot k$. In the second case, the quotient does not introduce any linear dependence between these two classes, and the result is in any case immediate in this case from our choice of basis in \cref{cor: codimension 3 bases}.
\end{proof}

\subsubsection{The case of codimension $3$ real bundles with $n$ even}\label{subsection: codim 3 even}

In this section we will set $n = 2m$ and complete the proof of~\cref{lem: sphere diagrams 2 odd} by considering
$$r_*: \pi_{4m+1}(\Sigma \bb{CP}^{\infty}_{2m-1})_{(2)} \longrightarrow \pi_{4m+1}( \bb{RP}^{\infty}_{4m-2})_{(2)}.$$

When $m$ is odd, $4m-2$ is of the form $8k+2$, and the map is the degree $8k+5$ component \cref{fig:SCPASS1}-right to \cref{fig:RPASS2}-left. When $m$ is even, $4m-2$ is of the form $8k+6$, and the map is the degree $8k+8$ component \cref{fig:SCPASS2}-right to \cref{fig:RPASS4}-left. In either case, the domain is $\bb{Z}$ and the codomain is $\bb{Z}/4$. The result of this section is then as follows, and uses the following two additional Adams charts.

\begin{figure}[ht]

\begin{tikzpicture}[scale=0.7]

\tikzset{radius= 0.05}

\clip (-1.0,-1.0) rectangle ( 6.5,  5.5);
\draw[color=lightgray] (0,0) grid [step=2]  (16,8);

\node at (0,-0.5) {\small{$8k$}};
\node at (4,-0.5) {\small{$8k+4$}};

\foreach \s in {0,2,4,6}
{
\def\ss{\s-0};
\node at (-0.5,\ss) {$\s$};
}


\draw [fill] ( 0.00, 0.00) circle;
\node [left] at  ( 0.00, 0.00) {\tiny{0}};

\draw [fill] ( 1.00, 0.00) circle;
\node [left] at  ( 1.00, 0.00) {\tiny{1}};

\draw [fill] ( 3.00, 0.00) circle;
\node [left] at  ( 3.00, 0.00) {\tiny{2}};

\draw [fill] ( 7.00, 0.00) circle;
\node [left] at  ( 7.00, 0.00) {\tiny{3}};

\draw [fill] (15.00, 0.00) circle;
\node [left] at  (15.00, 0.00) {\tiny{4}};

\draw [fill] ( 0.00, 1.00) circle;
\node [left] at  ( 0.00, 1.00) {\tiny{0}};

\draw [fill] ( 1.00, 1.00) circle;
\node [left] at  ( 1.00, 1.00) {\tiny{1}};

\draw [fill] ( 2.00, 1.00) circle;
\node [left] at  ( 2.00, 1.00) {\tiny{2}};

\draw [fill] ( 3.07, 0.93) circle;
\node [left] at  ( 3.07, 0.93) {\tiny{3}};

\draw [fill] ( 2.93, 1.07) circle;
\node [left] at  ( 2.93, 1.07) {\tiny{4}};

\draw [fill] ( 4.00, 1.00) circle;
\node [left] at  ( 4.00, 1.00) {\tiny{5}};

\draw [fill] ( 6.00, 1.00) circle;
\node [left] at  ( 6.00, 1.00) {\tiny{6}};

\draw [fill] ( 7.07, 0.93) circle;
\node [left] at  ( 7.07, 0.93) {\tiny{7}};

\draw [fill] ( 6.93, 1.07) circle;
\node [left] at  ( 6.93, 1.07) {\tiny{8}};

\draw [fill] ( 8.07, 0.93) circle;
\node [left] at  ( 8.07, 0.93) {\tiny{9}};

\draw [fill] ( 7.93, 1.07) circle;
\node [left] at  ( 7.93, 1.07) {\tiny{10}};

\draw [fill] (10.00, 1.00) circle;
\node [left] at  (10.00, 1.00) {\tiny{11}};

\draw [fill] (14.00, 1.00) circle;
\node [left] at  (14.00, 1.00) {\tiny{12}};

\draw [fill] (15.07, 0.93) circle;
\node [left] at  (15.07, 0.93) {\tiny{13}};

\draw [fill] (14.93, 1.07) circle;
\node [left] at  (14.93, 1.07) {\tiny{14}};

\draw [fill] (16.07, 0.93) circle;
\node [left] at  (16.07, 0.93) {\tiny{15}};

\draw [fill] (15.93, 1.07) circle;
\node [left] at  (15.93, 1.07) {\tiny{16}};

\draw [fill] ( 0.00, 2.00) circle;
\node [left] at  ( 0.00, 2.00) {\tiny{0}};

\draw [fill] ( 2.00, 2.00) circle;
\node [left] at  ( 2.00, 2.00) {\tiny{1}};

\draw [fill] ( 3.07, 1.93) circle;
\node [left] at  ( 3.07, 1.93) {\tiny{2}};

\draw [fill] ( 2.93, 2.07) circle;
\node [left] at  ( 2.93, 2.07) {\tiny{3}};

\draw [fill] ( 6.00, 2.00) circle;
\node [left] at  ( 6.00, 2.00) {\tiny{4}};

\draw [fill] ( 7.14, 1.86) circle;
\node [left] at  ( 7.14, 1.86) {\tiny{5}};

\draw [fill] ( 7.00, 2.00) circle;
\node [left] at  ( 7.00, 2.00) {\tiny{6}};

\draw [fill] ( 6.86, 2.14) circle;
\node [left] at  ( 6.86, 2.14) {\tiny{7}};

\draw [fill] ( 8.07, 1.93) circle;
\node [left] at  ( 8.07, 1.93) {\tiny{8}};

\draw [fill] ( 7.93, 2.07) circle;
\node [left] at  ( 7.93, 2.07) {\tiny{9}};

\draw [fill] ( 9.07, 1.93) circle;
\node [left] at  ( 9.07, 1.93) {\tiny{10}};

\draw [fill] ( 8.93, 2.07) circle;
\node [left] at  ( 8.93, 2.07) {\tiny{11}};

\draw [fill] (10.00, 2.00) circle;
\node [left] at  (10.00, 2.00) {\tiny{12}};

\draw [fill] (14.07, 1.93) circle;
\node [left] at  (14.07, 1.93) {\tiny{13}};

\draw [fill] (13.93, 2.07) circle;
\node [left] at  (13.93, 2.07) {\tiny{14}};

\draw [fill] (15.14, 1.86) circle;
\node [left] at  (15.14, 1.86) {\tiny{15}};

\draw [fill] (15.00, 2.00) circle;
\node [left] at  (15.00, 2.00) {\tiny{16}};

\draw [fill] (14.86, 2.14) circle;
\node [left] at  (14.86, 2.14) {\tiny{17}};

\draw [fill] (16.00, 2.00) circle;
\node [left] at  (16.00, 2.00) {\tiny{18}};

\draw [fill] ( 0.00, 3.00) circle;
\node [left] at  ( 0.00, 3.00) {\tiny{0}};

\draw [fill] ( 3.00, 3.00) circle;
\node [left] at  ( 3.00, 3.00) {\tiny{1}};

\draw [fill] ( 7.07, 2.93) circle;
\node [left] at  ( 7.07, 2.93) {\tiny{2}};

\draw [fill] ( 6.93, 3.07) circle;
\node [left] at  ( 6.93, 3.07) {\tiny{3}};

\draw [fill] ( 8.00, 3.00) circle;
\node [left] at  ( 8.00, 3.00) {\tiny{4}};

\draw [fill] ( 9.07, 2.93) circle;
\node [left] at  ( 9.07, 2.93) {\tiny{5}};

\draw [fill] ( 8.93, 3.07) circle;
\node [left] at  ( 8.93, 3.07) {\tiny{6}};

\draw [fill] (10.00, 3.00) circle;
\node [left] at  (10.00, 3.00) {\tiny{7}};

\draw [fill] (14.07, 2.93) circle;
\node [left] at  (14.07, 2.93) {\tiny{8}};

\draw [fill] (13.93, 3.07) circle;
\node [left] at  (13.93, 3.07) {\tiny{9}};

\draw [fill] (15.07, 2.93) circle;
\node [left] at  (15.07, 2.93) {\tiny{10}};

\draw [fill] (14.93, 3.07) circle;
\node [left] at  (14.93, 3.07) {\tiny{11}};

\draw [fill] (16.00, 3.00) circle;
\node [left] at  (16.00, 3.00) {\tiny{12}};

\draw [fill] ( 0.00, 4.00) circle;
\node [left] at  ( 0.00, 4.00) {\tiny{0}};

\draw [fill] ( 7.00, 4.00) circle;
\node [left] at  ( 7.00, 4.00) {\tiny{1}};

\draw [fill] ( 9.07, 3.93) circle;
\node [left] at  ( 9.07, 3.93) {\tiny{2}};

\draw [fill] ( 8.93, 4.07) circle;
\node [left] at  ( 8.93, 4.07) {\tiny{3}};

\draw [fill] (11.00, 4.00) circle;
\node [left] at  (11.00, 4.00) {\tiny{4}};

\draw [fill] (14.07, 3.93) circle;
\node [left] at  (14.07, 3.93) {\tiny{5}};

\draw [fill] (13.93, 4.07) circle;
\node [left] at  (13.93, 4.07) {\tiny{6}};

\draw [fill] (15.14, 3.86) circle;
\node [left] at  (15.14, 3.86) {\tiny{7}};

\draw [fill] (15.00, 4.00) circle;
\node [left] at  (15.00, 4.00) {\tiny{8}};

\draw [fill] (14.86, 4.14) circle;
\node [left] at  (14.86, 4.14) {\tiny{9}};

\draw [fill] ( 0.00, 5.00) circle;
\node [left] at  ( 0.00, 5.00) {\tiny{0}};

\draw [fill] ( 9.00, 5.00) circle;
\node [left] at  ( 9.00, 5.00) {\tiny{1}};

\draw [fill] (10.00, 5.00) circle;
\node [left] at  (10.00, 5.00) {\tiny{2}};

\draw [fill] (11.07, 4.93) circle;
\node [left] at  (11.07, 4.93) {\tiny{3}};

\draw [fill] (10.93, 5.07) circle;
\node [left] at  (10.93, 5.07) {\tiny{4}};

\draw [fill] (14.07, 4.93) circle;
\node [left] at  (14.07, 4.93) {\tiny{5}};

\draw [fill] (13.93, 5.07) circle;
\node [left] at  (13.93, 5.07) {\tiny{6}};

\draw [fill] (15.14, 4.86) circle;
\node [left] at  (15.14, 4.86) {\tiny{7}};

\draw [fill] (15.00, 5.00) circle;
\node [left] at  (15.00, 5.00) {\tiny{8}};

\draw [fill] (14.86, 5.14) circle;
\node [left] at  (14.86, 5.14) {\tiny{9}};

\draw [fill] (16.00, 5.00) circle;
\node [left] at  (16.00, 5.00) {\tiny{10}};

\draw [fill] ( 0.00, 6.00) circle;
\node [left] at  ( 0.00, 6.00) {\tiny{0}};

\draw [fill] (10.00, 6.00) circle;
\node [left] at  (10.00, 6.00) {\tiny{1}};

\draw [fill] (11.07, 5.93) circle;
\node [left] at  (11.07, 5.93) {\tiny{2}};

\draw [fill] (10.93, 6.07) circle;
\node [left] at  (10.93, 6.07) {\tiny{3}};

\draw [fill] (14.00, 6.00) circle;
\node [left] at  (14.00, 6.00) {\tiny{4}};

\draw [fill] (15.07, 5.93) circle;
\node [left] at  (15.07, 5.93) {\tiny{5}};

\draw [fill] (14.93, 6.07) circle;
\node [left] at  (14.93, 6.07) {\tiny{6}};

\draw [fill] (16.07, 5.93) circle;
\node [left] at  (16.07, 5.93) {\tiny{7}};

\draw [fill] (15.93, 6.07) circle;
\node [left] at  (15.93, 6.07) {\tiny{8}};

\draw [fill] ( 0.00, 7.00) circle;
\node [left] at  ( 0.00, 7.00) {\tiny{0}};

\draw [fill] (11.00, 7.00) circle;
\node [left] at  (11.00, 7.00) {\tiny{1}};

\draw [fill] (15.07, 6.93) circle;
\node [left] at  (15.07, 6.93) {\tiny{2}};

\draw [fill] (14.93, 7.07) circle;
\node [left] at  (14.93, 7.07) {\tiny{3}};

\draw [fill] (16.00, 7.00) circle;
\node [left] at  (16.00, 7.00) {\tiny{4}};

\draw [fill] ( 0.00, 8.00) circle;
\node [left] at  ( 0.00, 8.00) {\tiny{0}};

\draw [fill] (15.00, 8.00) circle;
\node [left] at  (15.00, 8.00) {\tiny{1}};


\draw ( 0.00, 1.00) --( 0.00, 0.00);

\draw ( 1.00, 1.00) --( 0.00, 0.00);

\draw ( 2.00, 1.00) --( 1.00, 0.00);

\draw [dashed]  ( 3.07, 0.93) --( 0.00, 0.00);

\draw ( 2.93, 1.07) --( 3.00, 0.00);

\draw [dashed]  ( 4.00, 1.00) --( 1.00, 0.00);

\draw [dashed]  ( 6.00, 1.00) --( 3.00, 0.00);

\draw ( 6.93, 1.07) --( 7.00, 0.00);

\draw ( 7.93, 1.07) --( 7.00, 0.00);

\draw (14.93, 1.07) --(15.00, 0.00);

\draw (15.93, 1.07) --(15.00, 0.00);

\draw [dashed]  (17.93, 1.07) --(15.00, 0.00);

\draw ( 0.00, 2.00) --( 0.00, 1.00);

\draw ( 2.00, 2.00) --( 1.00, 1.00);

\draw [dashed]  ( 3.07, 1.93) --( 0.00, 1.00);

\draw ( 3.07, 1.93) --( 3.07, 0.93);

\draw ( 2.93, 2.07) --( 2.00, 1.00);

\draw ( 2.93, 2.07) --( 2.93, 1.07);

\draw [dashed]  ( 6.00, 2.00) --( 3.07, 0.93);

\draw [dashed]  ( 7.14, 1.86) --( 4.00, 1.00);

\draw ( 7.00, 2.00) --( 7.07, 0.93);

\draw [dashed]  ( 6.86, 2.14) --( 4.00, 1.00);

\draw ( 6.86, 2.14) --( 6.93, 1.07);

\draw ( 7.93, 2.07) --( 7.07, 0.93);

\draw ( 9.07, 1.93) --( 8.07, 0.93);

\draw [dashed]  ( 8.93, 2.07) --( 6.00, 1.00);

\draw ( 8.93, 2.07) --( 7.93, 1.07);

\draw (10.00, 2.00) --(10.00, 1.00);

\draw (13.93, 2.07) --(14.00, 1.00);

\draw (15.00, 2.00) --(15.07, 0.93);

\draw (14.86, 2.14) --(14.93, 1.07);

\draw (16.00, 2.00) --(15.07, 0.93);

\draw (17.00, 2.00) --(16.07, 0.93);

\draw (16.86, 2.14) --(15.93, 1.07);

\draw [dashed]  (18.14, 1.86) --(15.07, 0.93);

\draw [dashed]  (17.86, 2.14) --(14.93, 1.07);

\draw [dashed]  (18.93, 2.07) --(16.07, 0.93);

\draw ( 0.00, 3.00) --( 0.00, 2.00);

\draw [dashed]  ( 3.00, 3.00) --( 0.00, 2.00);

\draw ( 3.00, 3.00) --( 2.00, 2.00);

\draw ( 3.00, 3.00) --( 3.07, 1.93);

\draw ( 7.07, 2.93) --( 7.00, 2.00);

\draw ( 6.93, 3.07) --( 7.14, 1.86);

\draw ( 6.93, 3.07) --( 6.86, 2.14);

\draw ( 9.07, 2.93) --( 8.07, 1.93);

\draw [dashed]  ( 8.93, 3.07) --( 6.00, 2.00);

\draw ( 8.93, 3.07) --( 7.93, 2.07);

\draw [dashed]  (10.00, 3.00) --( 7.14, 1.86);

\draw (10.00, 3.00) --( 9.07, 1.93);

\draw (10.00, 3.00) --(10.00, 2.00);

\draw (13.93, 3.07) --(14.07, 1.93);

\draw (15.07, 2.93) --(15.00, 2.00);

\draw (14.93, 3.07) --(15.14, 1.86);

\draw (14.93, 3.07) --(14.86, 2.14);

\draw (16.93, 3.07) --(16.00, 2.00);

\draw [dashed]  (18.07, 2.93) --(15.00, 2.00);

\draw [dashed]  (17.79, 3.21) --(15.14, 1.86);

\draw [dashed]  (17.79, 3.21) --(14.86, 2.14);

\draw ( 0.00, 4.00) --( 0.00, 3.00);

\draw ( 7.00, 4.00) --( 7.07, 2.93);

\draw ( 8.93, 4.07) --( 8.00, 3.00);

\draw (13.93, 4.07) --(14.07, 2.93);

\draw (15.14, 3.86) --(14.07, 2.93);

\draw (15.00, 4.00) --(15.07, 2.93);

\draw (14.86, 4.14) --(14.93, 3.07);

\draw [dashed]  (16.93, 4.07) --(14.07, 2.93);

\draw [dashed]  (17.86, 4.14) --(15.07, 2.93);

\draw [dashed]  (19.00, 4.00) --(16.00, 3.00);

\draw ( 0.00, 5.00) --( 0.00, 4.00);

\draw (10.00, 5.00) --( 9.07, 3.93);

\draw (10.93, 5.07) --(11.00, 4.00);

\draw (14.07, 4.93) --(14.07, 3.93);

\draw [dashed]  (13.93, 5.07) --(11.00, 4.00);

\draw (13.93, 5.07) --(13.93, 4.07);

\draw (15.14, 4.86) --(14.07, 3.93);

\draw (15.00, 5.00) --(15.00, 4.00);

\draw (14.86, 5.14) --(14.86, 4.14);

\draw (16.00, 5.00) --(15.14, 3.86);

\draw [dashed]  (17.07, 4.93) --(14.07, 3.93);

\draw [dashed]  (16.93, 5.07) --(13.93, 4.07);

\draw ( 0.00, 6.00) --( 0.00, 5.00);

\draw (10.00, 6.00) --( 9.00, 5.00);

\draw (11.07, 5.93) --(11.07, 4.93);

\draw (10.93, 6.07) --(10.00, 5.00);

\draw (10.93, 6.07) --(10.93, 5.07);

\draw [dashed]  (14.00, 6.00) --(11.07, 4.93);

\draw (14.00, 6.00) --(14.07, 4.93);

\draw (15.07, 5.93) --(15.00, 5.00);

\draw (14.93, 6.07) --(14.86, 5.14);

\draw (15.93, 6.07) --(15.14, 4.86);

\draw [dashed]  (17.07, 5.93) --(14.07, 4.93);

\draw [dashed]  (16.93, 6.07) --(13.93, 5.07);

\draw (16.93, 6.07) --(16.00, 5.00);

\draw ( 0.00, 7.00) --( 0.00, 6.00);

\draw (11.00, 7.00) --(10.00, 6.00);

\draw (11.00, 7.00) --(11.07, 5.93);

\draw (15.07, 6.93) --(15.07, 5.93);

\draw (14.93, 7.07) --(14.93, 6.07);

\draw (17.07, 6.93) --(16.07, 5.93);

\draw [dashed]  (16.93, 7.07) --(14.00, 6.00);

\draw (16.93, 7.07) --(15.93, 6.07);

\draw ( 0.00, 8.00) --( 0.00, 7.00);

\draw (15.00, 8.00) --(15.07, 6.93);

\draw ( 0.00, 9.00) --( 0.00, 8.00);
\end{tikzpicture}
\quad \quad
\begin{tikzpicture}[scale=0.7]

\tikzset{radius= 0.05}

\clip (3,-1.0) rectangle ( 10.5,  6.5);
\draw[color=lightgray] (4,0) grid [step=2]  (16,8);

\node at (4,-0.5) {$8k+4$};
\node at (8,-0.5) {$8k+8$};

\foreach \s in {0,2,4,6}
{
\def\ss{\s-0};
\node at (3.5,\ss) {$\s$};
}


\draw [fill] ( 4.00, 0.00) circle;
\node [left] at  ( 4.00, 0.00) {\tiny{0}};

\draw [fill] ( 5.00, 0.00) circle;
\node [left] at  ( 5.00, 0.00) {\tiny{1}};

\draw [fill] ( 7.00, 0.00) circle;
\node [left] at  ( 7.00, 0.00) {\tiny{2}};

\draw [fill] ( 4.00, 1.00) circle;
\node [left] at  ( 4.00, 1.00) {\tiny{0}};

\draw [fill] ( 5.00, 1.00) circle;
\node [left] at  ( 5.00, 1.00) {\tiny{1}};

\draw [fill] ( 6.00, 1.00) circle;
\node [left] at  ( 6.00, 1.00) {\tiny{2}};

\draw [fill] ( 7.07, 0.93) circle;
\node [left] at  ( 7.07, 0.93) {\tiny{3}};

\draw [fill] ( 6.93, 1.07) circle;
\node [left] at  ( 6.93, 1.07) {\tiny{4}};

\draw [fill] ( 8.00, 1.00) circle;
\node [left] at  ( 8.00, 1.00) {\tiny{5}};

\draw [fill] (11.07, 0.93) circle;
\node [left] at  (11.07, 0.93) {\tiny{6}};

\draw [fill] (10.93, 1.07) circle;
\node [left] at  (10.93, 1.07) {\tiny{7}};

\draw [fill] (12.07, 0.93) circle;
\node [left] at  (12.07, 0.93) {\tiny{8}};

\draw [fill] (11.93, 1.07) circle;
\node [left] at  (11.93, 1.07) {\tiny{9}};

\draw [fill] (14.00, 1.00) circle;
\node [left] at  (14.00, 1.00) {\tiny{10}};

\draw [fill] ( 4.00, 2.00) circle;
\node [left] at  ( 4.00, 2.00) {\tiny{0}};

\draw [fill] ( 6.00, 2.00) circle;
\node [left] at  ( 6.00, 2.00) {\tiny{1}};

\draw [fill] ( 7.07, 1.93) circle;
\node [left] at  ( 7.07, 1.93) {\tiny{2}};

\draw [fill] ( 6.93, 2.07) circle;
\node [left] at  ( 6.93, 2.07) {\tiny{3}};

\draw [fill] (11.14, 1.86) circle;
\node [left] at  (11.14, 1.86) {\tiny{4}};

\draw [fill] (11.00, 2.00) circle;
\node [left] at  (11.00, 2.00) {\tiny{5}};

\draw [fill] (10.86, 2.14) circle;
\node [left] at  (10.86, 2.14) {\tiny{6}};

\draw [fill] (12.14, 1.86) circle;
\node [left] at  (12.14, 1.86) {\tiny{7}};

\draw [fill] (12.00, 2.00) circle;
\node [left] at  (12.00, 2.00) {\tiny{8}};

\draw [fill] (11.86, 2.14) circle;
\node [left] at  (11.86, 2.14) {\tiny{9}};

\draw [fill] (13.07, 1.93) circle;
\node [left] at  (13.07, 1.93) {\tiny{10}};

\draw [fill] (12.93, 2.07) circle;
\node [left] at  (12.93, 2.07) {\tiny{11}};

\draw [fill] (14.07, 1.93) circle;
\node [left] at  (14.07, 1.93) {\tiny{12}};

\draw [fill] (13.93, 2.07) circle;
\node [left] at  (13.93, 2.07) {\tiny{13}};

\draw [fill] ( 4.00, 3.00) circle;
\node [left] at  ( 4.00, 3.00) {\tiny{0}};

\draw [fill] ( 7.00, 3.00) circle;
\node [left] at  ( 7.00, 3.00) {\tiny{1}};

\draw [fill] (11.14, 2.86) circle;
\node [left] at  (11.14, 2.86) {\tiny{2}};

\draw [fill] (11.00, 3.00) circle;
\node [left] at  (11.00, 3.00) {\tiny{3}};

\draw [fill] (10.86, 3.14) circle;
\node [left] at  (10.86, 3.14) {\tiny{4}};

\draw [fill] (12.00, 3.00) circle;
\node [left] at  (12.00, 3.00) {\tiny{5}};

\draw [fill] (13.07, 2.93) circle;
\node [left] at  (13.07, 2.93) {\tiny{6}};

\draw [fill] (12.93, 3.07) circle;
\node [left] at  (12.93, 3.07) {\tiny{7}};

\draw [fill] (14.07, 2.93) circle;
\node [left] at  (14.07, 2.93) {\tiny{8}};

\draw [fill] (13.93, 3.07) circle;
\node [left] at  (13.93, 3.07) {\tiny{9}};

\draw [fill] ( 4.00, 4.00) circle;
\node [left] at  ( 4.00, 4.00) {\tiny{0}};

\draw [fill] (11.07, 3.93) circle;
\node [left] at  (11.07, 3.93) {\tiny{1}};

\draw [fill] (10.93, 4.07) circle;
\node [left] at  (10.93, 4.07) {\tiny{2}};

\draw [fill] (13.07, 3.93) circle;
\node [left] at  (13.07, 3.93) {\tiny{3}};

\draw [fill] (12.93, 4.07) circle;
\node [left] at  (12.93, 4.07) {\tiny{4}};

\draw [fill] (14.00, 4.00) circle;
\node [left] at  (14.00, 4.00) {\tiny{5}};

\draw [fill] (15.00, 4.00) circle;
\node [left] at  (15.00, 4.00) {\tiny{6}};

\draw [fill] ( 4.00, 5.00) circle;
\node [left] at  ( 4.00, 5.00) {\tiny{0}};

\draw [fill] (11.00, 5.00) circle;
\node [left] at  (11.00, 5.00) {\tiny{1}};

\draw [fill] (13.00, 5.00) circle;
\node [left] at  (13.00, 5.00) {\tiny{2}};

\draw [fill] (14.00, 5.00) circle;
\node [left] at  (14.00, 5.00) {\tiny{3}};

\draw [fill] (15.07, 4.93) circle;
\node [left] at  (15.07, 4.93) {\tiny{4}};

\draw [fill] (14.93, 5.07) circle;
\node [left] at  (14.93, 5.07) {\tiny{5}};

\draw [fill] ( 4.00, 6.00) circle;
\node [left] at  ( 4.00, 6.00) {\tiny{0}};

\draw [fill] (11.00, 6.00) circle;
\node [left] at  (11.00, 6.00) {\tiny{1}};

\draw [fill] (14.00, 6.00) circle;
\node [left] at  (14.00, 6.00) {\tiny{2}};

\draw [fill] (15.07, 5.93) circle;
\node [left] at  (15.07, 5.93) {\tiny{3}};

\draw [fill] (14.93, 6.07) circle;
\node [left] at  (14.93, 6.07) {\tiny{4}};

\draw [fill] ( 4.00, 7.00) circle;
\node [left] at  ( 4.00, 7.00) {\tiny{0}};

\draw [fill] (11.00, 7.00) circle;
\node [left] at  (11.00, 7.00) {\tiny{1}};

\draw [fill] (15.00, 7.00) circle;
\node [left] at  (15.00, 7.00) {\tiny{2}};

\draw [fill] ( 4.00, 8.00) circle;
\node [left] at  ( 4.00, 8.00) {\tiny{0}};

\draw [fill] (11.00, 8.00) circle;
\node [left] at  (11.00, 8.00) {\tiny{1}};


\draw ( 4.00, 1.00) --( 4.00, 0.00);

\draw ( 5.00, 1.00) --( 4.00, 0.00);

\draw ( 6.00, 1.00) --( 5.00, 0.00);

\draw [dashed]  ( 7.07, 0.93) --( 4.00, 0.00);

\draw [dashed]  ( 6.93, 1.07) --( 4.00, 0.00);

\draw ( 6.93, 1.07) --( 7.00, 0.00);

\draw [dashed]  ( 8.00, 1.00) --( 5.00, 0.00);

\draw ( 8.00, 1.00) --( 7.00, 0.00);

\draw ( 4.00, 2.00) --( 4.00, 1.00);

\draw ( 6.00, 2.00) --( 5.00, 1.00);

\draw [dashed]  ( 7.07, 1.93) --( 4.00, 1.00);

\draw ( 7.07, 1.93) --( 6.00, 1.00);

\draw ( 7.07, 1.93) --( 7.07, 0.93);

\draw [dashed]  ( 6.93, 2.07) --( 4.00, 1.00);

\draw ( 6.93, 2.07) --( 6.93, 1.07);

\draw (11.00, 2.00) --(11.07, 0.93);

\draw (10.86, 2.14) --(10.93, 1.07);

\draw (12.00, 2.00) --(11.07, 0.93);

\draw (11.86, 2.14) --(10.93, 1.07);

\draw (13.07, 1.93) --(12.07, 0.93);

\draw (12.93, 2.07) --(11.93, 1.07);

\draw [dashed]  (14.07, 1.93) --(10.93, 1.07);

\draw (13.93, 2.07) --(14.00, 1.00);

\draw ( 4.00, 3.00) --( 4.00, 2.00);

\draw [dashed]  ( 7.00, 3.00) --( 4.00, 2.00);

\draw ( 7.00, 3.00) --( 6.00, 2.00);

\draw ( 7.00, 3.00) --( 6.93, 2.07);

\draw (11.14, 2.86) --(11.14, 1.86);

\draw (11.00, 3.00) --(11.00, 2.00);

\draw (10.86, 3.14) --(10.86, 2.14);

\draw (12.00, 3.00) --(11.14, 1.86);

\draw (13.07, 2.93) --(12.14, 1.86);

\draw (12.93, 3.07) --(11.86, 2.14);

\draw [dashed]  (14.07, 2.93) --(11.14, 1.86);

\draw [dashed]  (13.93, 3.07) --(10.86, 2.14);

\draw (13.93, 3.07) --(12.93, 2.07);

\draw (13.93, 3.07) --(14.07, 1.93);

\draw ( 4.00, 4.00) --( 4.00, 3.00);

\draw (11.07, 3.93) --(11.00, 3.00);

\draw (10.93, 4.07) --(10.86, 3.14);

\draw (12.93, 4.07) --(12.00, 3.00);

\draw [dashed]  (14.00, 4.00) --(11.14, 2.86);

\draw (14.00, 4.00) --(13.07, 2.93);

\draw (14.00, 4.00) --(14.07, 2.93);

\draw ( 4.00, 5.00) --( 4.00, 4.00);

\draw (11.00, 5.00) --(10.93, 4.07);

\draw (14.00, 5.00) --(13.07, 3.93);

\draw (14.93, 5.07) --(15.00, 4.00);

\draw [dashed]  (17.93, 5.07) --(15.00, 4.00);

\draw ( 4.00, 6.00) --( 4.00, 5.00);

\draw (11.00, 6.00) --(11.00, 5.00);

\draw (14.00, 6.00) --(13.00, 5.00);

\draw (15.07, 5.93) --(14.00, 5.00);

\draw (15.07, 5.93) --(15.07, 4.93);

\draw (14.93, 6.07) --(14.00, 5.00);

\draw (14.93, 6.07) --(14.93, 5.07);

\draw ( 4.00, 7.00) --( 4.00, 6.00);

\draw (11.00, 7.00) --(11.00, 6.00);

\draw (15.00, 7.00) --(14.00, 6.00);

\draw (15.00, 7.00) --(15.07, 5.93);

\draw (15.00, 7.00) --(14.93, 6.07);

\draw ( 4.00, 8.00) --( 4.00, 7.00);

\draw (11.00, 8.00) --(11.00, 7.00);

\draw ( 4.00, 9.00) --( 4.00, 8.00);

\draw (11.00, 9.00) --(11.00, 8.00);
\end{tikzpicture}

\caption{Adams $E_2$-pages for $\bb{RP}^{\infty}_{8k}$ (left) and $\bb{RP}^{\infty}_{8k+4}$ (right). In both cases it is immediate that the codimension 5 homotopy group is zero.}
\label{fig: codimension 5 useful zero}
\end{figure}

\begin{prop} \label{prop: difficult map 2} The map
$$r_*: \bb{Z}_{(2)} \cong \pi_{4m+1}(\Sigma \bb{CP}^{\infty}_{2m-1})_{(2)} \longrightarrow \pi_{4m+1}( \bb{RP}^{\infty}_{4m-2})_{(2)}  \cong \bb{Z}/4$$
is surjective when $m$ is even, zero when $m \equiv 1\pmod{4}$, and multiplication by $2$ when $m \equiv 3 \pmod{4}$.
\end{prop}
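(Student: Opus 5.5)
The plan is to reduce the statement to a computation on low-dimensional skeleta, working $2$-locally throughout as in \cref{prop: difficult map 1}. Since we are computing $\pi_{4m+1}$, it suffices to restrict to $(4m+2)$-skeleta. The domain $\Sigma\bb{CP}^\infty_{2m-1}$ has only odd cells, so the restriction factors as
\[
\Sigma \bb{CP}^{2m}_{2m-1} \xrightarrow{\ r\ } \bb{RP}^{4m+1}_{4m-2} \hookrightarrow \bb{RP}^{4m+2}_{4m-2}.
\]
Because $2m-1$ is odd, \cref{lem:firstCellAttachment} gives $\Sigma\bb{CP}^{2m}_{2m-1}\simeq \Sigma^{4m-1}C_\eta$. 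The generator $g$ of $\pi_{4m+1}\cong\bb{Z}_{(2)}$ is then the class whose top-cell projection has degree $2$, by \cref{cor: degreeOfTopCellProj} with $n=2$. In the Adams chart (\cref{fig:mod-2-Moore and hook}, right) this class is $h_0$ times the top-cell class, so it sits in filtration $1$.

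First I would pin down the target. From the charts (\cref{fig:RPASS2}-left for $m$ odd and \cref{fig:RPASS4}-left for $m$ even), I would read off in which filtrations the generator of $\pi_{4m+1}(\bb{RP}^\infty_{4m-2})\cong\bb{Z}/4$ and its double are detected. In parallel with \cref{lem: splitting RP4m+2,lem: splitting RP4m+3}, I would build a splitting of $\bb{RP}^{4m+1}_{4m-2}$ compatible with realification. Here $4m-2\equiv 2\pmod 4$, so the bottom cell is attached to nothing and the next pair forms a Moore space. I would choose the splitting maps so that $r$ sends the two cells of $\Sigma^{4m-1}C_\eta$ to the corresponding cells up to $2$-local units; this uses only that $r$ is a cohomology surjection (\cref{main theorem: calc}). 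With this splitting, $\pi_{4m+1}(\bb{RP}^{4m+1}_{4m-2})$ acquires an explicit basis, in the same way as in \cref{cor: pi 4m+3}. I would then compute $r_*(g)$ in that basis from the known map $C_\eta\to$ (piece of the splitting), which involves only $\eta$, $\eta^2$ and degree-$2$ maps.

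The second step is to quotient by the attaching map $f\colon S^{4m+1}\to\bb{RP}^{4m+1}_{4m-2}$ of the $(4m+2)$-cell. This gives $\pi_{4m+1}(\bb{RP}^{4m+2}_{4m-2})=\pi_{4m+1}(\bb{RP}^{4m+1}_{4m-2})/\langle f\rangle$, which is also the group we want, since cells of dimension at least $4m+3$ do not affect $\pi_{4m+1}$. I would determine the components of $f$ exactly as in \cref{lem: 4m+4-cell attaching map}. The projection to the top cell is $2$ times a unit, from the $\mathrm{Sq}^1$. The component into the Moore-space piece is forced by the Steenrod operation connecting it to the top cell. The component to the bottom cell $S^{4m-2}$ is a multiple of $\nu$, and whether it is a generator is decided by whether $\mathrm{Sq}^4$ connects degrees $4m-2$ and $4m+2$; this in turn depends on the residue of $m$ modulo $4$. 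Where the bottom cell splits off, I would confirm this using James periodicity. The three cases of the statement should then correspond to the three possible relations that $f$ imposes between $r_*(g)$ and the generator.

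The main obstacle is that the Adams $E_2$-page only determines $r_*(g)$ modulo higher filtration. This is exactly where the difference between ``zero'' and ``multiplication by $2$'' lives in the cases $m\equiv 1,3\pmod 4$. If the splitting argument does not cleanly separate these cases, I would next bring in the $(4m+3)$-cell of $\Sigma\bb{CP}^\infty_{2m-1}$, which is attached by $\lambda\nu$ with $\lambda$ given by \cref{lem:Mosher} for $n=2m-1$. That table gives $\lambda=1$ for $m$ even, $\lambda=2$ for $m\equiv1\pmod 4$ and $\lambda=0$ for $m\equiv 3\pmod 4$, which matches the three answers. I would then compare via \cref{lem:MosherImplementation} and the realification map on $(4m+3)$-skeleta: the requirement that $r$ be compatible with the cell attachments on both sides constrains the hidden extension, and together with \cref{fig: codimension 5 useful zero} this should force the stated values.
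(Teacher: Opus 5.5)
There is a genuine gap, and it sits exactly at the step you flag as the main obstacle. Your proposed way around it cannot work.

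\textbf{The ambiguity.} The restriction of $r$ to $\Sigma\bb{CP}^{2m}_{2m-1}\simeq\Sigma^{4m-1}C_\eta$ is not determined by anything your plan uses: its effect on mod-$2$ cohomology, its bottom-cell class, and the cell structures. Let $p\colon\Sigma^{4m-1}C_\eta\to S^{4m+1}$ be the top-cell projection, and let $\phi=i\nu$ be a generator of $\pi_{4m+1}(\bb{RP}^\infty_{4m-2})\cong\bb{Z}/4$, with $i$ the bottom cell.
\begin{itemize}
\item The map $r+\phi p$ agrees with $r$ on the bottom cell, since $pj=0$. It also agrees on mod-$2$ cohomology, since $\nu^*=0$.
\item However $(r+\phi p)_*(g)=r_*(g)+2\phi$, because $p_*(g)=2$.
\item This is precisely the difference between \emph{zero} and \emph{multiplication by $2$}. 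So for $m$ odd no computation with splittings, bases and Steenrod operations can separate $m\equiv 1$ from $m\equiv 3 \pmod 4$. For $m$ even the answer happens to be insensitive to this ambiguity.
\item Bringing in the $(4m+3)$-cell does not remove the ambiguity. That cell is attached by $\lambda\, j\nu$, with $j$ the bottom cell of $\Sigma^{4m-1}C_\eta$. Since $pj=0$, the map $\phi p$ extends over it, and the extension is still invisible in cohomology.
\item The match between your three values of $\lambda$ (for $n=2m-1$) and the three answers therefore has no mechanism behind it.
\end{itemize}

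\textbf{The first step also fails.} The splitting you want does not exist. As $4m-2\equiv 2\pmod 4$, one has $\mathrm{Sq}^2x^{4m-2}=x^{4m}$ and $\mathrm{Sq}^2x^{4m-1}=x^{4m+1}$. So the $4m$-cell of $\bb{RP}^{4m+1}_{4m-2}$ is attached by $(\eta,2)$, and neither end cell splits off. This is unlike \cref{lem: splitting RP4m+2}, where the bottom degree is $\equiv 0 \pmod 4$. Also $\mathrm{Sq}^4x^{4m-2}\neq 0$ if and only if $m$ is even, so it only sees $m \bmod 2$, not $m \bmod 4$.

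\textbf{The missing input.} You need something that constrains the actual map $r$ rather than its cohomology. This is the compatibility of realification with truncation, as in the diagram in the proof of \cref{thm: realification vbs calc}.
\begin{itemize}
\item The paper maps the cofibre sequence $\Sigma\bb{CP}^\infty_{2m-2}\to\Sigma\bb{CP}^\infty_{2m-1}\to S^{4m-2}$ to $\bb{RP}^\infty_{4m-4}\to\bb{RP}^\infty_{4m-2}\to S^{4m-3}\vee S^{4m-2}$.
\item Because $\pi_{4m+1}(\bb{RP}^\infty_{4m-4})=0$, the group $\pi_{4m+1}(\bb{RP}^\infty_{4m-2})\cong\bb{Z}/4$ injects into $\pi_{4m+1}(S^{4m-2})\cong\bb{Z}/8$. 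This vanishing is the real role of \cref{fig: codimension 5 useful zero}.
\item On that $\bb{Z}/8$, $\Sigma r$ induces an isomorphism, by the cohomology surjection.
\item A diagram chase then identifies the image of $r_*$ with $\bb{Z}/a$. Here $a$ is the degree of the complex truncation $\pi_{4m+1}(\Sigma\bb{CP}^\infty_{2m-2})\to\pi_{4m+1}(\Sigma\bb{CP}^\infty_{2m-1})$.
\item For $m$ even, $4\mid a$ by Adams filtration: the $h_0$-towers start in filtrations $3$ and $1$.
\item For $m$ odd, $a=\pm1$ or $\pm2$ according to \cref{lem: complex ASS diff 1}.
\end{itemize}
So the mod-$4$ dependence comes from Mosher's $\lambda$ at $n=2m-2$, the cell \emph{below}, not from the $(4m+3)$-cell above.
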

\begin{proof}
We work $2$-locally. Consider the diagram
\[\begin{tikzcd}
	{\Sigma \bb{CP}^\infty_{2m-2}} & {\Sigma \bb{CP}^\infty_{2m-1}} & {\Sigma^2 \bb{CP}^{2m-2}_{2m-2} \simeq S^{4m-2}} \\
	{\bb{RP}^\infty_{4m-4}} & {\bb{RP}^\infty_{4m-2}} & {\Sigma \bb{RP}^{4m-3}_{4m-4} \simeq S^{4m-3} \oplus S^{4m-2}}
	\arrow[from=1-1, to=1-2]
	\arrow["r", from=1-1, to=2-1]
	\arrow[from=1-2, to=1-3]
	\arrow["r", from=1-2, to=2-2]
	\arrow["{\Sigma r}", from=1-3, to=2-3]
	\arrow[from=2-1, to=2-2]
	\arrow[from=2-2, to=2-3]
\end{tikzcd}\]
where the rows are cofibrations and the vertical maps are realification. By \cref{main theorem: calc}, realification is a mod-2 cohomology surjection, so the projection of $\Sigma r$ to the second factor has degree a 2-local unit, so is a 2-local homotopy equivalence.

Now apply $(4m+1)$-st homotopy groups. We claim that the group $\pi_{4m+1}(\bb{RP}^{\infty}_{4m-4})$ is zero. This follows immediately from the Adams charts of~\cref{fig: codimension 5 useful zero}. Using this fact in conjunction with the fourth stable stem being zero, and \cref{fig:SCPASS1,fig:SCPASS2}, the result of applying $(4m+1)$-st homotopy groups to the above diagram of cofibre sequences is
\[\begin{tikzcd}
\bb{Z} & \bb{Z} & \bb{Z}/8 \\
0 & \bb{Z}/4 & \bb{Z}/8.
    \arrow["r_*",from=1-1, to=2-1]
    \arrow["r_*",from=1-2, to=2-2]
    \arrow["(\Sigma r)_*",from=1-3, to=2-3]
    \arrow[swap, "\cong",from=1-3, to=2-3]
    \arrow[from=2-1, to=2-2]
    \arrow[hook, from=2-2, to=2-3]
    \arrow[from=1-1, to=1-2]
    \arrow[from=1-2, to=1-3]
\end{tikzcd}\]
By a diagram chase, it suffices to show that the top left horizontal map is $\bb{Z} \xrightarrow{\cdot a} \bb{Z}$ where $a$ is a multiple of 4 when $m$ is even, $\pm 2$ when $m \equiv 3 \pmod{4}$, and $\pm 1$ when $m \equiv 1 \pmod{4}$.

When $m$ is even ($2m-1 = 4k+3$), this map comes from degree $(8k+9)$ in the left-to-right truncation map in \cref{fig:SCPASS2}. The $h_0$-tower in the domain starts from Adams filtration 3, and the one in the codomain from Adams filtration 1. Since no differentials can affect this degree, the map on homotopy groups must be multiplication by a multiple of 4, hence have cokernel of size a multiple of 4, as required.

When $m$ is odd, our map is left-to-right in \cref{fig:SCPASS1}, in degree $8k+5$. Use the fact that the map is a cohomology injection and chase around $h_i$-multiplications to see that the map on $E_2$ pages sends one $h_0$-tower isomorphically to the other. The result then follows by using \cref{lem: complex ASS diff 1}: the differential in the source happens if $k$ is odd ($m \equiv 3 \pmod{4}$), and does not happen if $k$ is even ($m \equiv 1 \pmod{4}$). 
\end{proof}

\section{Calculations at odd primes} \label{section: odd primes}

In this section we study the situation after localisation at an odd prime $p$, which is much simpler. Combining this section with \cref{thm: sphere diagrams 2} will provide the reader with the integral version of \cref{thm:sphere diagrams} and with a little extra work also the integral statement of \cref{thm: CPell diagram}. We leave these deductions to the reader. 

Since 
$$\widetilde{H}_i(\bb{RP}^{\infty};\bb{Z}) \cong \begin{cases}
    \bb{Z}/2 & i \textrm{ odd} \\
    0 & i \textrm{ even}
\end{cases}
$$
it is immediate that
$$(\bb{RP}^\infty_{d})_{(p)} \simeq \begin{cases}
* & d \textrm{ odd}\\
    S^d_{(p)} & d \textrm{ even}
\end{cases}$$
so that in the metastable range, the set of stably trivial real vector bundles is identified with stable cohomotopy. The first few stable stems are given integrally by
$$\pi_{i+d}(S^d) \cong \begin{cases}
\bb{Z}/2\{\eta\} & i=1 \\
\bb{Z}/2\{\eta^2\} & i=2 \\
\bb{Z}/24\{\nu\} & i=3 \\
0 & i=4,5
\end{cases}$$
so we obtain the following calculation of the stable homotopy groups of $\bb{RP}^\infty_d$ in our dimensional range

\begin{lem} \label{lem: odd primary homotopy of RP} The group $\pi_m(\bb{RP}^\infty_d)_{(3)}$ is zero for $m-d = 1,2,4,5$. When $m-d =3$ we have:
$$\pi_{d+3}(\bb{RP}^\infty_d)_{(3)} \cong \begin{cases} 
\bb{Z}/3\{i\nu\} & d \textrm{ even} \\
0 & d \textrm{ odd} \\
\end{cases}$$ where $i$ is the inclusion of the bottom cell. If $p \geq 5$, then $\pi_m(\bb{RP}^\infty_d)_{(p)}$ is zero for $0<m-d \leq 5$. \qed 
\end{lem}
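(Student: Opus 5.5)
The plan is to reduce everything to stable stems of spheres, via the $p$-local splitting recorded immediately before the statement. First I will justify that splitting in the form needed. The integral homology of $\bb{RP}^\infty_d$ is computed from the cellular chain complex: one cell in each dimension $\geq d$, with boundary maps alternating between multiplication by $2$ and $0$ (\cref{lem:firstRealCellAttachment}). After inverting $2$, every degree-$2$ boundary becomes an isomorphism. Hence $\widetilde{H}_*(\bb{RP}^\infty_d;\bb{Z}_{(p)})$ is as follows:
\begin{itemize}
\item if $d$ is odd, it vanishes entirely;
\item if $d$ is even, it is $\bb{Z}_{(p)}$ concentrated in degree $d$, coming from the bottom cell, whose boundary is zero.
\end{itemize}
For bounded-below spectra, a $\bb{Z}_{(p)}$-homology isomorphism is a $p$-local equivalence. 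So for $d$ odd we get $(\bb{RP}^\infty_d)_{(p)}\simeq *$, and for $d$ even the bottom cell inclusion $i\colon S^d\to \bb{RP}^\infty_d$ is a $p$-local equivalence.

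Next I will read off the groups. For $d$ odd, every $p$-local homotopy group vanishes, which covers the "$d$ odd" case of the $m-d=3$ statement as well as all other degrees. For $d$ even we have $\pi_m(\bb{RP}^\infty_d)_{(p)}\cong \pi_{m-d}(S^0)_{(p)}$, with $i_*$ realising the isomorphism. Using the listed stems, the following hold for $j=m-d$:
\begin{itemize}
\item for $j=1,2$ the stems are $\bb{Z}/2$, so they vanish after localising at $p$ odd;
\item for $j=4,5$ the stems are zero;
\item for $j=3$ the stem is $\bb{Z}/24\{\nu\}$, whose $3$-localisation is $\bb{Z}/3\{\nu\}$ and whose $p$-localisation vanishes for $p\geq 5$.
\end{itemize}
Transporting along $i_*$, the $m-d=3$ group at $p=3$ is $\bb{Z}/3\{i\nu\}$. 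This gives all three claims.

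No step is a genuine obstacle. The only point needing care is the Whitehead-type step: a $p$-local homology equivalence of connective (suspension) spectra is a $p$-local equivalence. This applies here because all the spectra involved are bounded below.
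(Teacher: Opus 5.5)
Your proof is correct and follows the same route as the paper. The paper likewise deduces from the integral homology of real projective space that $\bb{RP}^\infty_d$ is $p$-locally contractible for $d$ odd and $p$-locally the bottom-cell sphere $S^d$ for $d$ even, and then reads off the first five stable stems. You have simply made explicit the cellular homology computation for the stunted space and the $p$-local Whitehead step for bounded-below spectra.
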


This tells us the odd-primary version of the bottom row of \cref{fig:even sphere diagram 2,fig:odd sphere diagram 2}. The top row was calculated by Hu \cite[Lemma 4.5]{Hu}, as follows.

\begin{lem} \label{lem: odd primary homotopy of CP} The group $\pi_m(\Sigma \bb{CP}^\infty_d)$ contains no 3-torsion for $m \leq 2d+3$. For $m=2d+4$ we have
$$\pi_{2d+4}(\Sigma \bb{CP}^\infty_d)_{(3)} \cong \begin{cases} 
\bb{Z}/3\{j' \nu\} & d \equiv 0 \pmod{3} \\
0 & \textrm{otherwise}
\end{cases}$$
where $j'$ is the inclusion of the bottom cell. If $p \geq 5$, then $\pi_m(\Sigma \bb{CP}^\infty_d)_{(p)}$ contains no $p$-torsion for $m \leq 2d+4$. \qed
\end{lem}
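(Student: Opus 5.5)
The plan is to compute $\pi_m(\Sigma\bb{CP}^\infty_d)_{(p)}$ with the Atiyah--Hirzebruch (cellular) spectral sequence, exactly as Mosher does 2-locally. Recall that $\Sigma\bb{CP}^\infty_d$ has one cell in each odd dimension $2k+1$ for $k\geq d$. So the $E_1$-page has columns $\pi_{m-2k-1}(S^0)_{(p)}$ for $k \geq d$.

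The first step is to record the relevant $p$-local stems. Locally at $p$, the first $p$-torsion in the stable stems is $\alpha_1$, in stem $2p-3$. Hence for $m\leq 2d+4$ only stems $0,1,2,3$ can contribute. Stems $1$ and $2$ have no odd torsion. Stem $3$ is $\bb{Z}/3\{\nu\}$ at $p=3$ and zero for $p\geq 5$.

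Next I will argue that torsion cannot arise from the free columns. The only free contributions are stem-$0$ columns, i.e.\ $\bb{Z}$'s sitting in odd total degrees $2k+1$. Since $H_*(\bb{CP}^\infty;\bb{Z})$ is torsion-free and all cells are odd-dimensional after the suspension, the cellular differentials between these $\bb{Z}$'s vanish. Any surviving subgroup of $\bb{Z}$ is torsion-free, and an extension of a torsion-free group by groups with no $p$-torsion has no $p$-torsion. Combining this with the first step gives the claims for $m\leq 2d+3$ at $p=3$, and for $m\leq 2d+4$ at $p\geq 5$: in those degrees the $E_1$-page contains no $p$-torsion at all.

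The remaining case is $p=3$ and $m=2d+4$. Here the only nonzero entry is $\bb{Z}/3\{j'\nu\}$ on the bottom cell $2d+1$. Nothing can support a differential out of it, since its possible targets lie in negative stems. The only possible differential into it is the $d^4$ from the $\bb{Z}$ on the cell $2d+5$. That $d^4$ is given by the $3$-primary component of the attaching map of the $(2d+5)$-cell onto the bottom cell, so it suffices to compute that component. The stable stems $1$ and $2$ have no $3$-torsion, so $3$-locally
\[
\Sigma\bb{CP}^{d+2}_d \simeq S^{2d+3}\vee \left(S^{2d+1}\cup_{a\alpha_1}e^{2d+5}\right).
\]
The class $\alpha_1$ is detected by the Steenrod power $P^1$. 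Therefore $a$ is a unit if and only if $P^1$ is nonzero from degree $2d+1$ to $2d+5$. Since $P^1(x^d)=d\,x^{d+2}$ in $H^*(\bb{CP}^\infty;\bb{F}_3)$, the differential is surjective onto $\bb{Z}/3$ exactly when $3\nmid d$, and is zero when $3\mid d$. This gives the stated answer.

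The step needing the most care is this last identification. One must check that the $d^4$ really is the $\alpha_1$-component of the attaching map, which uses the $3$-local splitting above, and that $P^1$ detects that component, which is the standard fact that $S^0\cup_{\alpha_1}e^4$ carries a nonzero $P^1$. Everything else is degree bookkeeping.
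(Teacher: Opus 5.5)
Your argument is correct. The paper, however, gives no proof of this lemma: it is quoted from Hu \cite[Lemma 4.5]{Hu} and closed with \qed. So what you have written is a self-contained replacement for that citation rather than a reconstruction of an argument in the paper.

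Your key step is that the only candidate $3$-torsion in $\pi_{2d+4}$ is $j'\nu$ on the bottom cell, and that it is killed exactly when $P^1(x^d)=d\,x^{d+2}$ is non-zero. This is the same mechanism the paper uses in the proof of \cref{lem: 3primaryMap}, so it fits naturally with the rest of Section~\ref{section: odd primes}.

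Two small remarks:
\begin{enumerate}
\item No differential leaves the class on the bottom cell because its possible targets lie in cellular filtration below $2d+1$, where there are no cells. They do not lie in negative stems, as you wrote; the conclusion is unaffected.
\item The spectral sequence is more machinery than you need. Since $\pi_m$ only sees the $(m+1)$-skeleton, for $m\le 2d+3$ you can read the answer directly off the odd-primary splitting $\Sigma\bb{CP}^{d+1}_d\simeq S^{2d+1}\vee S^{2d+3}$. For $m=2d+4$, the long exact sequence of the cofibration defining $S^{2d+1}\cup_{a\alpha_1}e^{2d+5}$ gives $\pi_{2d+4}\cong(\bb{Z}/3)/(a)$ directly. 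This route also removes the step you flagged as delicate, namely identifying the $d^4$ with the attaching map.
\end{enumerate}
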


This tells us the integral values of the \emph{groups} in \cref{fig:even sphere diagram 2,fig:odd sphere diagram 2}: we add a $\bb{Z}/3$-summand to the bottom left of \cref{fig:odd sphere diagram 2}, and perhaps (depending on the parity of $n \pmod{3}$) to the top left of \cref{fig:even sphere diagram 2}, and otherwise nothing changes. For the most part, these groups are surrounded by groups which are $p$-locally zero for all odd primes, so the maps in and out can have no $p$-primary components. The first exception is in the leftmost column of \cref{fig:odd sphere diagram 2}, where realification is 3-locally a map $\bb{Z}_{(3)} \to \bb{Z}/3$ (c.f.~\cref{lem: odd primary homotopy of RP}).

\begin{lem} \label{lem: 3primaryMap} The 3-local realification map 
$$\bb{Z}_{(3)} \cong \pi_{2n+1}(\Sigma \bb{CP}^\infty_{n-1})_{(3)} \xrightarrow{\ r_* \ } \pi_{2n+1}(\bb{RP}^{\infty}_{2n-2})_{(3)} \cong \bb{Z}/3$$ 
is surjective when $n \equiv 0,1 \pmod{3}$ and zero when $n \equiv 2 \pmod{3}$.
\end{lem}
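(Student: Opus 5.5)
Since $\bb{RP}^\infty_{2n-2}$ has $3$-locally trivial reduced homology above its bottom cell, its bottom cell inclusion $i: S^{2n-2} \to \bb{RP}^\infty_{2n-2}$ is a $3$-local equivalence. The plan is therefore to identify the $3$-local map $r: \Sigma\bb{CP}^\infty_{n-1} \to S^{2n-2}$ in the relevant range and read off its effect on $\pi_{2n+1}$. First restrict to skeleta. By cellular approximation, $\pi_{2n+1}(\Sigma\bb{CP}^\infty_{n-1})_{(3)} \cong \pi_{2n+1}(\Sigma \bb{CP}^{n}_{n-1})_{(3)}$. By \cref{lem:firstCellAttachment} the cell attachment in $\Sigma\bb{CP}^n_{n-1}$ is $\eta$ or null, hence $3$-locally null, so $\Sigma\bb{CP}^n_{n-1} \simeq_{(3)} S^{2n-1}\oplus S^{2n+1}$. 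The generator of $\pi_{2n+1} \cong \bb{Z}_{(3)}$ is the top sphere $k'$, up to a unit. The restriction of $r$ to the bottom sphere lies in $\pi_1^s \otimes \bb{Z}_{(3)} = 0$. Hence $r_*(k')$ is the component $\alpha \in \pi_{2n+1}(S^{2n-2})_{(3)} \cong \bb{Z}/3\{\nu\}$ of $r$ on the top cell, and the lemma reduces to showing that $\alpha \neq 0$ if and only if $n \not\equiv 2 \pmod 3$.

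To detect $\alpha$ I will use the mod-$3$ Steenrod power $P^1$, which raises degree by $4$ and detects $\alpha_1 = \nu$ at $p=3$. Form the cofibre $C_r$ of $r: \Sigma\bb{CP}^n_{n-1} \to S^{2n-2}$, $3$-locally. It has cells in dimensions $2n-2$, $2n$ and $2n+2$, and $\alpha \ne 0$ exactly when $P^1: H^{2n-2}(C_r;\bb{F}_3) \to H^{2n+2}(C_r;\bb{F}_3)$ is nonzero. Equivalently, this is the top cell of $\bb{CP}^n_{n-1}$ being attached to the $\bb{RP}$ bottom cell by $\nu$. So everything comes down to computing one $P^1$.

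For this I would realise $C_r$ geometrically, using \cref{prop: tower of o/u}. The fibre sequence $\sf{o/u} \to \Bu \to r^\ast\Bo$ shows that $\Sigma^{-1}C_r$ is the first layer $\bb{D}_1(\sf{o/u})(\bb{C}^{n-1})$. By that proposition this layer is a homotopy-orbit spectrum $\Map_\ast(U(1)/O(1)^\diamond, \Sigma \bb{S}^{V})_{hU(1)}$, i.e.~a (shifted) Thom spectrum over $BU(1) = \bb{CP}^\infty$, built from the bundle $(n-1)H$ twisted by the dual of the $U(1)/O(1)$-direction. Its mod-$3$ cohomology is then free over $H^*(\bb{CP}^\infty;\bb{F}_3)$ on a Thom class $U$. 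The Wu-type formula $P^1 U = p_1 \cdot U$ (mod $3$) together with $P^1 x^k = k x^{k+2}$ then computes $P^1$ on the bottom class as $(n-1) + c$ for a fixed constant $c$ coming from the twist. I expect this to reduce to $P^1 U = (n+1)x^2U$, which vanishes precisely when $n \equiv 2 \pmod 3$. As a sanity check, $c$ can also be calibrated against Hu's \cref{lem: odd primary homotopy of CP} (the $3$-primary $\nu$-attachments in $\bb{CP}^\infty$), via the truncation square of \cref{thm: truncation is stabilisation}.

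The main obstacle is the identification in the previous step: showing that $C_r$ is, $3$-locally, the claimed Thom spectrum, with the twist pinned down precisely enough to determine the constant $c$. The remaining steps are formal. If this identification proves awkward, my fallback is to compute the complex $e$-invariant of $\alpha$ instead, via the Chern character on $K^0(C_r)$ and Adams operations. The needed $K$-theory is available because realification is given on first layers by the $K$-theoretic data of \cref{thm: realification vbs calc}. This route leads to the same mod-$3$ congruence in $n$.
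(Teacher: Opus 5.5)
Your reduction is correct. You use $\bb{RP}^\infty_{2n-2}\simeq_{(3)}S^{2n-2}$, you note that the $(2n+2)$-skeleton $\Sigma\bb{CP}^n_{n-1}$ splits $3$-locally, and you reduce the lemma to whether the top-cell component $\alpha\in\pi_3^s\otimes\bb{Z}_{(3)}$ of $r$ is nonzero. That is exactly whether $P^1$ is nonzero on the bottom class of the cofibre of $r$.

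\textbf{The gap.} This $P^1$ carries the entire content of the lemma, and you never compute it. You write ``I expect this to reduce to $P^1U=(n+1)x^2U$'', so the constant $c$ is read off from the statement rather than derived. Knowing that $\bb{D}_1(\sf{o/u})(\bb{C}^{n-1})$ is \emph{some} twisted Thom spectrum over $\bb{CP}^\infty$ is not enough. The underlying spectrum of $\partial_1(\sf{o/u})$ is $\Sigma^{-1}\bb{S}$ in any case, and $c$ depends only on the $U(1)$-action. Your description of the twist (``the dual of the $U(1)/O(1)$-direction'') does not fix that action. For example, a trivial twist gives $c=0$, $P^1U=(n-1)x^2U$, and the wrong conclusion that $r_*$ vanishes exactly when $n\equiv 1\pmod 3$. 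A trivial twist is what the naive cross-effect computation in the first Remark after \cref{prop: tower of o/u} would produce, namely $\partial_1(\sf{o/u})\simeq\Sigma^{-1}\bb{S}$ with trivial action.

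Your $e$-invariant fallback has the same problem. \cref{thm: realification vbs calc} gives only mod-$2$ cohomological and bottom-cell information about $r$. It gives no $K$-theory with Adams operations, so you would still have to identify the cofibre.

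\textbf{How to close it.} In \cref{prop: tower of o/u}, $L_1^{\bb{C}/\bb{R}}$ is the cofibre of $(U(1)/O(1))_+\to S^0$. Since $U(1)$ acts on $U(1)/O(1)\cong S^1$ through $z\mapsto z^2$, this cofibre is $S^{\lambda^{\otimes 2}}$, the one-point compactification of the weight-$2$ character. Hence $\partial_1(\sf{o/u})\simeq S^{1-\lambda^{\otimes 2}}$ and $\bb{D}_1(\sf{o/u})(\bb{C}^{n-1})\simeq\Sigma\,\mathrm{Th}\bigl(\bb{CP}^\infty;(n-1)H-H^{\otimes 2}\bigr)$. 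The Cartan formula then gives $P^1U=\bigl((n-1)-4\bigr)x^2U\equiv(n+1)x^2U$. You need to write this out. It also makes the lemma depend on the digression: Arone's computation, incomplete universes, and the only sketched identification of Arone's map with $r$.

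\textbf{Comparison with the paper.} The paper never identifies the cofibre.
\begin{enumerate}
\item It maps the cofibre sequence $\Sigma\bb{CP}^\infty_{n-2}\to\Sigma\bb{CP}^\infty_{n-1}\xrightarrow{\delta_{\bb{C}}}\Sigma^2\bb{CP}^{n-2}_{n-2}$ to $\bb{RP}^\infty_{2n-3}\to\bb{RP}^\infty_{2n-2}\xrightarrow{\delta_{\bb{R}}}\Sigma\bb{RP}^{2n-3}_{2n-3}$.
\item It uses the integral bottom-cell equivalence of \cref{thm: realification vbs calc}(2) and $(\bb{RP}^\infty_{2n-3})_{(3)}\simeq\ast$. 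Together these identify $r$ $3$-locally with $\delta_{\bb{C}}$.
\item It reads off $P^1x^{n-2}=(n-2)x^n$ in $\bb{CP}^\infty$.
\end{enumerate}
In effect this proves $\mathrm{cofib}(r)\simeq_{(3)}\Sigma^2\bb{CP}^\infty_{n-2}$ using only the main line of the paper. That agrees with your Thom spectrum, since $n-5\equiv n-2\pmod 3$. The ``sanity check'' you mention, comparing with Hu's \cref{lem: odd primary homotopy of CP} through truncation, is essentially this argument once made precise. It makes the Thom-spectrum identification unnecessary.
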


Though it looks rather different, the proof of this lemma is inspired by Kachi's proof of the corresponding unstable statement (see especially the diagram after Equation 4.2 in \cite{Kachi}).

\begin{proof} Consider the map induced on cofibre sequences by realification (the leftmost `realification' map is obtained as the restriction to $(2n-3)$-skeleta of the one to its right):
\[\begin{tikzcd}
\Sigma \bb{CP}^{n-2}_{n-2} & \Sigma \bb{CP}^{\infty}_{n-2} & \Sigma \bb{CP}^{\infty}_{n-1} & \Sigma^2 \bb{CP}^{n-2}_{n-2} & \Sigma^2 \bb{CP}^{\infty}_{n-2} \\
\bb{RP}^{2n-3}_{2n-3} & \bb{RP}^{\infty}_{2n-3} & \bb{RP}^{\infty}_{2n-2} & \Sigma \bb{RP}^{2n-3}_{2n-3} &
	\arrow["r",from=1-1, to=2-1]
    \arrow["r",from=1-2, to=2-2]
    \arrow["r",from=1-3, to=2-3]
    \arrow["r",from=1-4, to=2-4]
    \arrow[swap,"\simeq",from=1-4, to=2-4]
    \arrow[from=1-1, to=1-2]
    \arrow[from=1-2, to=1-3]
    \arrow["\delta_{\bb{C}}",from=1-3, to=1-4]
    \arrow[from=1-4, to=1-5]
    \arrow[from=2-1, to=2-2]
    \arrow[from=2-2, to=2-3]
    \arrow["\delta_{\bb{R}}",from=2-3, to=2-4]
    \arrow[swap, "\simeq",from=2-3, to=2-4]
\end{tikzcd}\]
By \cref{thm: realification vbs calc}, the leftmost realification map is an integral homotopy equivalence, so the rightmost is too. Since $2n-3$ is odd, we have $(\bb{RP}^{\infty}_{2n-3})_{(3)} \simeq *$, so $\delta_{\bb{R}}$ is a 3-local homotopy equivalence. It follows that (the third-from-left) $r_*$ is a 3-local surjection on $\pi_{2n+1}$ if and only if $(\delta_{\bb{C}})_*$ is, if and only if $(\delta_{\bb{C}})_*$ hits the generator $\nu \in \pi_{2n+1}(\Sigma^2\bb{CP}^{n-2}_{n-2})_{(3)} \cong \pi_{2n+1}(S^{2n-2})_{(3)} \cong \bb{Z}/3$. This happens if and only if the generator of $H^{2n-2}(\Sigma^2 \bb{CP}^{\infty}_{n-2};\bb{Z}/3)$ supports a Steenrod power $P^1$, which happens if and only if $n-2$ is not 3-divisible.
\end{proof}

The second thing we have to compute (for each odd $p$) is the degree of the $p$-primary truncation map $\bb{Z}_{(p)} \cong \pi_{2n+1}(\Sigma \bb{CP}^{\infty}_{n-1})_{(p)} \to \pi_{2n+1} (\Sigma \bb{CP}^{\infty}_n)_{(p)} \cong \bb{Z}_{(p)}$. Because $\eta$ is $p$-locally null, we always have $(\Sigma \bb{CP}^{n}_{n-1})_{(p)} \simeq S^{2n-1}_{(p)} \oplus S^{2n+1}_{(p)}$, and under this identification, the truncation
\[
(\Sigma \bb{CP}^{n}_{n-1})_{(p)} \longrightarrow (\Sigma \bb{CP}^{n}_{n})_{(p)} \simeq S^{2n+1}_{(p)}
\]
must be projection to the second factor. The following lemma is now immediate.

\begin{lem} For any odd $p$, the truncation map
$\pi_{2n+1}(\Sigma \bb{CP}^{\infty}_{n-1})_{(p)} \to \pi_{2n+1} (\Sigma \bb{CP}^{\infty}_n)_{(p)}$
is surjective. \qed
\end{lem}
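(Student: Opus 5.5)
The plan is to work $p$-locally and use the splitting recorded just before the statement. By cellular approximation, $\pi_{2n+1}(\Sigma \bb{CP}^{\infty}_{n-1})_{(p)}$ is computed by a finite skeleton. Stably, adding cells of dimension $\geq 2n+3$ only changes $\pi_{2n+1}$ by a quotient. Adding the first such cell, of dimension $2n+3$, quotients by the image of its attaching map. That image lies in $\pi_{2n+1}$ of the skeleton, but the attaching map itself lives in $\pi_{2n+2}$, and the cells above it only contribute beyond degree $2n+1$.

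So we may identify
\[
\pi_{2n+1}(\Sigma \bb{CP}^{\infty}_{n-1})_{(p)} \cong \pi_{2n+1}(\Sigma \bb{CP}^{n}_{n-1})_{(p)},
\qquad
\pi_{2n+1}(\Sigma \bb{CP}^{\infty}_{n})_{(p)} \cong \pi_{2n+1}(\Sigma \bb{CP}^{n}_{n})_{(p)}.
\]
These identifications are compatible with truncation, since truncation is cellular and commutes with the skeletal inclusions. This is the same move used in the $2$-primary argument in the proof of \cref{lem: sphere diagrams 2 odd}.

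Next, by the splitting $(\Sigma \bb{CP}^{n}_{n-1})_{(p)} \simeq S^{2n-1}_{(p)} \oplus S^{2n+1}_{(p)}$, the truncation to $(\Sigma\bb{CP}^n_n)_{(p)} \simeq S^{2n+1}_{(p)}$ is projection onto the second summand. On $\pi_{2n+1}$ this projection is the map
\[
\pi_{2n+1}(S^{2n-1})_{(p)} \oplus \pi_{2n+1}(S^{2n+1})_{(p)} \longrightarrow \pi_{2n+1}(S^{2n+1})_{(p)}.
\]
It is surjective, since it restricts to the identity on the second summand. Note that $\pi_{2n+1}(S^{2n-1})_{(p)} = \pi_2^s \otimes \bb{Z}_{(p)} = 0$, so the map is in fact an isomorphism $\bb{Z}_{(p)} \to \bb{Z}_{(p)}$. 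This also confirms the groups named in the surrounding discussion.

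The only step requiring any care is checking that the projection really is projection onto the top summand and not, say, a $p$-local unit multiple of it. A unit multiple would not affect surjectivity anyway. It suffices to note that the top-cell map $\Sigma\bb{CP}^n_{n-1}\to S^{2n+1}$ is an isomorphism on $H_{2n+1}(-;\bb{Z})$, so its restriction to the $S^{2n+1}$ summand has degree $\pm1$. I do not expect a genuine obstacle here.
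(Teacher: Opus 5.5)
Your proposal is correct and is essentially the paper's argument: since $\eta$ is $p$-locally null, $\Sigma\bb{CP}^{n}_{n-1}$ splits $p$-locally as $S^{2n-1}\oplus S^{2n+1}$, truncation becomes projection onto the top sphere, and your reduction to the $(2n+2)$-skeleton just makes explicit the step the paper calls immediate. One small wording fix: attaching the $(2n+3)$-cell does not quotient $\pi_{2n+1}$ at all, because its attaching map lies in $\pi_{2n+2}$ and the long exact sequence gives an isomorphism on $\pi_{2n+1}$; the identification you draw is nonetheless right.
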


\section{Vector bundles over spheres} \label{section: B' implies B}

In this section we will deduce \cref{thm:sphere diagrams} from \cref{thm: sphere diagrams 2}. We will need the following additional ingredient. Bott periodicity~\cite{Bott} provides an identification
$$\pi_{n}(SO) \cong \begin{cases}
    \bb{Z} & n \equiv 3 \pmod{4}\\
    \bb{Z}/2 & n \equiv 0,1 \pmod{8} \\
    0 & \textrm{otherwise.}
\end{cases}$$

Davis and Mahowald calculate the `$SO(d)$ of origin' for these classes, which is to say the dimension of the summand that is left after splitting off the maximum possible number of trivial line bundles.

\begin{thm}[{\cite[Theorem 1.1]{DavisMahowaldSOnOfOrigin}}]\label{thm:SOnOfOrigin}
   Let $\alpha \neq 0 \in \pi_{n}(SO) $. The minimal $d$ for which $\alpha$ is in the image of the map $\pi_n(SO(d)) \to \pi_n(SO)$ is 
   \pushQED{\qed}
   \[d = \begin{cases}
       2 m+1 & \text{if} \  n = 4m - 1 > 15\\
       6 & \text{if} \ n \equiv 0, 1\pmod{8} \ \text{and} \  n > 1. 
   \end{cases} \qedhere \] 
   \popQED
\end{thm}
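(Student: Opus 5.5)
This is \cite[Theorem 1.1]{DavisMahowaldSOnOfOrigin}, which the paper quotes rather than reproves. If I had to prove it, I would follow the method of the rest of the paper. The first step is to translate ``$\alpha$ lifts to $\pi_n(SO(d))$'' into a stable statement. From the fibration $SO(d) \to SO \to SO/SO(d)$, the class $\alpha$ lifts if and only if its image under $\pi_n(SO) \to \pi_n(SO/SO(d))$ vanishes. Equivalently, after delooping, the bundle over $S^{n+1}$ classified by $\alpha$ splits off $d$ trivial summands exactly when a certain class in $\pi_{n+1}(O/O(d))$ is zero.

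\textbf{The case $n = 4m-1$.} This is in the metastable range of \cref{table:ranges}, since $n+1 \approx 2d-2$ for $d \approx 2m+1$. So I would apply \cref{thm: stably trivial stunted projective} and \cref{lem: tower of BAutX} with $X = S^{n+1}$. These identify the obstruction with the image of $\alpha$ under the connecting map $\delta_\R \colon \pi_{n}(SO) \to \pi_{n+1}(\bb{RP}^\infty_d)$. Up to the identifications of \cref{lem: tower of BAut}, this map is a suspension of the stable $J$-homomorphism followed by the transfer $S^0 \to \Sigma^{\infty}\bb{RP}^\infty_{d}$, in the spirit of Kahn--Priddy.

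The claim then splits into two statements:
\begin{enumerate}
  \item $\delta_\R(\alpha) = 0$ in $\pi_{4m}(\bb{RP}^\infty_{2m+1})$;
  \item $\delta_\R(\alpha) \neq 0$ in $\pi_{4m}(\bb{RP}^\infty_{2m})$.
\end{enumerate}
For (1), I would restrict to the relevant skeleton and use that the $e$-invariant of $J(\alpha)$ detects everything. The $KO$-theory of stunted projective spaces, with its Adams operations, is known explicitly, so vanishing can be checked by showing that the relevant $KO$-Adams-spectral-sequence class dies. For (2), I would show nonvanishing by mapping to the bottom cell, or to a small Moore-space quotient, and detecting the class by the $e$-invariant or by a nonzero Adams $E_2$ class. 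This is analogous to how \cref{lem: 4m+4-cell attaching map} uses the attaching maps in $\bb{RP}^{4m+4}_{4m}$. The hypothesis $n > 15$ is there to exclude the low cases where Hopf-invariant-one and vector-field exceptions intervene.

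\textbf{The case $n \equiv 0,1 \pmod 8$.} Here $\alpha$ maps under $J$ to the $\eta$-family elements $\mu_{8k+1}$ and $\eta\mu_{8k+1}$, and $n$ is far outside the metastable range. So I would instead give an explicit construction for the upper bound. The plan is to realise $\alpha$ via $SO(6) \cong SU(4)/\{\pm 1\}$, or via the composite $\pi_n(S^5) \to \pi_n(SO(6))$ of a clutching map with a class detected by $KO$-theory. I would then check with the $d$- and $e$-invariants that the stabilisation is the nontrivial element. For the lower bound I would show that $\pi_n(SO(5)) \to \pi_n(SO)$ misses $\alpha$. This needs an unstable argument, for example via $SO(5) \to S^4$ and the $v_1$-periodic homotopy of $SO(5)$.

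I expect this unstable lower bound for $d = 6$ to be the main obstacle, since none of the paper's stable machinery applies there. The first thing I would try is $v_1$-periodic homotopy, i.e.\ a Bousfield--Kuhn functor computation, which is exactly how Davis and Mahowald proceed.
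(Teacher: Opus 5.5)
The paper gives no proof of its own here. It imports the result from Davis--Mahowald, as you note, so the citation is its entire argument. The reconstruction you sketch, however, does not work as written.

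\textbf{The case $n=4m-1$.} First fix the set-up. Whether $\alpha\in\pi_n(SO)$ comes from $SO(d)$ is decided by its image under $\pi_n(SO)\to\pi_n(SO/SO(d))\to\pi^s_n(\bb{RP}^\infty_d)$. That is, it is decided by the \emph{kernel} of the paper's $\delta_\R$ for $X=S^n$, not by the cokernel description of stably trivial bundles on $S^{n+1}$ in \cref{thm: stably trivial stunted projective}. Your groups $\pi_{4m}(\bb{RP}^\infty_d)$ should be $\pi^s_{4m-1}(\bb{RP}^\infty_d)$.

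The map $BO(d)\to P_1\Bo(\R^d)$ is $(2d-1)$-connected (\cref{lem: tower of BAut} and its proof). So for $d=2m+1$, lifting is equivalent to $\delta_\R(\alpha)=0$. For $d=2m$ the first layer does not see the answer, and your step (2) cannot work. The group $\pi^s_{4m-1}(\bb{RP}^\infty_{2m})$ (like $\pi^s_{4m}(\bb{RP}^\infty_{2m})$) is finite, because rationally $\bb{RP}^\infty_{2m}$ is just $S^{2m}$. Hence $\delta_\R(N\alpha)=0$ as soon as $N$ is divisible by its exponent. Yet the theorem says that no nonzero multiple of the generator comes from $SO(2m)$.

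The lower bound is in fact elementary and unstable. Rationally $SO(2m)$ has no class in degree $4m-1$, so $\pi_{4m-1}(SO(2m))$ is finite and its image in $\pi_{4m-1}(SO)\cong\bb{Z}$ is zero.

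All the content is therefore in your step (1): the vanishing of a specific class $2m-2$ stems above the bottom cell of $\bb{RP}^\infty_{2m+1}$. This is far outside the bounded-codimension chart computations of this paper. A $KO$- or $e$-invariant computation can only prove such a vanishing if you already know that $\delta_\R(\alpha)$ lies in a subgroup on which the invariant is injective. The fact that the $e$-invariant detects $J(\alpha)$ in $\pi^s_*(S^0)$ says nothing about $\pi^s_*(\bb{RP}^\infty_{2m+1})$, which contains many $KO$-invisible classes.

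That injectivity is the missing idea, and it is genuinely delicate. For $n=7$ the generator does not even come from $SO(7)$: every rank-$8$ bundle on $S^8$ stably equivalent to the octonionic line bundle has odd Euler class.

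Your description of $\delta_\R$ as $J$ followed by a ``transfer'' $S^0\to\Sigma^\infty\bb{RP}^\infty_d$ is also wrong. Since $\pi^s_0(\bb{RP}^\infty_d)=0$ for $d\geq1$, any such composite vanishes, and the Kahn--Priddy transfer goes the other way.

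\textbf{The case $n=8k$ or $8k+1$.} First, $J(\alpha)$ is $\eta$ (resp.\ $\eta^2$) times the generator of $\mathrm{Im}\,J$ in stem $8k-1$. The classes $\mu_{8k+1}$ and $\eta\mu_{8k+1}$ lie one stem higher and are not in the image of $J$.

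More seriously, your upper-bound construction is doomed. For any map $c\colon S^5\to SO(6)$, the composite $S^5\to SO(6)\to SO$ is null because $\pi_5(SO)=0$. So every class $c\circ\beta$ with $\beta\in\pi_n(S^5)$ maps to $0$ in $\pi_n(SO)$. A lift of $\alpha$ must instead be a genuinely unstable class of $\pi_n(SO(6))\cong\pi_n(SU(4))$. Exhibiting one needs the same kind of unstable, $v_1$-periodic information about $SO(6)$ that the lower bound needs about $SO(5)$.

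Neither half is argued; naming $v_1$-periodic homotopy points to a method rather than supplying a proof. So apart from the citation itself, the proposal establishes neither case of the statement.
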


They also give answers for small values of $n$ not covered by the above, which are more complicated to state (we omit these for simplicity). For real vector bundles, the `$SO(d)$ of origin' implies that in a certain dimensional range, being stably trivial is a property, not a structure, so we obtain the following corollary.

\begin{cor}\label{cor: SOnOfOrigin}
For $n > 15$ and $d \geq \frac{n+3}{2}$, there is an isomorphism
\[
\mathrm{Vect}_{\bb{R},d}^{0}(S^n) \cong \pi_n(SO/SO(d))
\]
between the group of stably trivial rank $d$ real bundles over $S^n$ and the $n$-th homotopy group of $SO/SO(d)$.
\end{cor}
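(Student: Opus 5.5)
The plan is to apply the long exact sequence of homotopy groups of the fibration $SO/SO(d) \to BSO(d) \to BSO$ and to show that the connecting map $\partial\colon \pi_{n+1}(BSO)\cong\pi_n(SO) \to \pi_n(SO/SO(d))$ is zero in the stated range. Over $S^n$ with $n\geq 2$, every bundle is orientable, and rank $d$ bundles up to isomorphism correspond to $\pi_n(BSO(d))$ modulo the $\pi_1$-action. For $n\geq 2$ the action of $\pi_1(BO(d))=\bb{Z}/2$ on $\pi_n(BSO(d))$ identifies unoriented classes; in the metastable range this action will be absorbed because stably trivial classes come from $\pi_n(SO/SO(d))$, on which the reflection acts compatibly. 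I would treat this point briefly, or sidestep it by working with $\pi_n(BSO(d))$ directly, noting that a reflection fixes the stable class.

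The stably trivial classes are
\[
\ker\bigl(\pi_n(BSO(d))\to\pi_n(BSO)\bigr)=\operatorname{im}\bigl(\pi_n(SO/SO(d))\to\pi_n(BSO(d))\bigr)\cong \pi_n(SO/SO(d))/\operatorname{im}(\partial),
\]
where $\partial\colon\pi_{n+1}(BSO)\to\pi_n(SO/SO(d))$ is the connecting map. So it suffices to show $\partial=0$, or equivalently that $\pi_{n}(BSO(d))\to\pi_n(BSO)$, that is $\pi_{n-1}(SO(d))\to\pi_{n-1}(SO)$, is surjective. Note the index shift: bundles over $S^n$ are clutched by $\pi_{n-1}(SO(d))$.

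To get surjectivity I would apply \cref{thm:SOnOfOrigin} to each nonzero $\alpha\in\pi_{n-1}(SO)$. If $n-1=4m-1>15$, then $\alpha$ comes from $SO(2m+1)$, and $2m+1=\frac{n}{2}+1\leq\frac{n+3}{2}\leq d$. If $n-1\equiv 0,1 \pmod 8$ with $n-1>1$, then $\alpha$ comes from $SO(6)$, and $6\leq d$ since $n>15$. In every other case $\pi_{n-1}(SO)=0$. Compatibility of the maps $SO(d')\to SO(d)\to SO$ then gives surjectivity for all $d$ in the range. Hence $\partial=0$ and $\vect{\bb{R},d}^0(S^n)\cong\pi_n(SO/SO(d))$, and exactness identifies this bijection with the group structure.

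The main obstacle is the condition $n-1>15$ in the case $n-1=4m-1$: we need $n-1\geq 19$ in that congruence class, and $n>15$ alone allows $n-1=15$ (that is, $n=16$). For $n=16$ I would either invoke the small-case answers of Davis--Mahowald, checking that the origin of the generator of $\pi_{15}(SO)$ is at most $\frac{19}{2}$, i.e.\ at most $9$, or accept that $n$ should be read so that this case is excluded. A secondary issue is the $\pi_1$-action/orientation bookkeeping; in the metastable range $d\geq\frac{n+3}{2}$ the group $\pi_n(SO/SO(d))$ is stable-ish and the reflection acts by $\pm 1$ compatibly. I would verify that orbits do not collapse classes, or simply adopt the paper's convention that identifies $\vect{\bb{R},d}^0$ with homotopy classes as in \cref{thm: stably trivial stunted projective}.
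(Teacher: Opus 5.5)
Your overall strategy is the paper's: the long exact sequence of $SO/SO(d)\to BSO(d)\to BSO$, with the connecting map killed using Davis--Mahowald. However, your key reduction is off by one degree. From the exact segment
\[
\pi_{n+1}(BSO(d)) \longrightarrow \pi_{n+1}(BSO) \xrightarrow{\ \partial\ } \pi_n(SO/SO(d)),
\]
the map $\partial$ vanishes if and only if $\pi_{n+1}(BSO(d))\to\pi_{n+1}(BSO)$, i.e.\ $\pi_n(SO(d))\to\pi_n(SO)$, is surjective.

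Surjectivity of $\pi_n(BSO(d))\to\pi_n(BSO)$, i.e.\ of $\pi_{n-1}(SO(d))\to\pi_{n-1}(SO)$, is a different condition. It is equivalent to the vanishing of the \emph{next} connecting map $\pi_n(BSO)\to\pi_{n-1}(SO/SO(d))$. It says that every stable class over $S^n$ is realised in rank $d$, and says nothing about $\operatorname{im}(\partial)$. Your clutching remark is correct, but it describes $\pi_n(BSO(d))$, not the source of $\partial$. So applying \cref{thm:SOnOfOrigin} to classes in $\pi_{n-1}(SO)$, as you do, does not prove the corollary.

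The repair is to apply \cref{thm:SOnOfOrigin} to $\alpha\in\pi_n(SO)$, which is exactly the paper's argument:
\begin{itemize}
\item if $n=4m-1>15$, every nonzero class comes from $SO(2m+1)$, and $2m+1=\frac{n+3}{2}\le d$;
\item if $n\equiv 0,1\pmod 8$, the nonzero class comes from $SO(6)$, and $6\le d$;
\item otherwise $\pi_n(SO)=0$.
\end{itemize}
With the correct degree, the hypotheses $n>15$ and $d\ge\frac{n+3}{2}$ match Davis--Mahowald exactly. So the $n=16$ ``obstacle'' you raise is an artefact of the index shift, as is the half-unit of slack in your inequality $\frac n2+1\le\frac{n+3}{2}$. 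For $n=16$ one has $\pi_{16}(SO)\cong\bb{Z}/2$, with origin $6$.

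On orientations, the paper simply uses $\pi_n(O(d))\cong\pi_n(SO(d))$, i.e.\ based classes (equivalently, over $S^n$, oriented classes). You should adopt that convention rather than try to show that the reflection action never identifies classes, because in general it does. For $d=n$ even, orientation reversal negates the Euler class. Hence $TS^n$ and its reverse are distinct in $\pi_n(BSO(n))$ but isomorphic as unoriented bundles.
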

\begin{proof}
Since $n \geq 2$, we have $\pi_n(O(d)) \cong \pi_n(SO(d))$, hence an equality
\[
\mathrm{Vect}_{\bb{R},d}^{0}(S^n) = \coker(\pi_{n+1}(BSO) \longrightarrow \pi_n(SO/SO(d))).
\]
Now consider the long exact sequence on homotopy associated to the fibration
\[
SO/SO(d) \longrightarrow BSO(d) \longrightarrow BSO.
\]
The map $\pi_{n+1}(BSO) \to \pi_n(SO/SO(d))$ is zero since~\cref{thm:SOnOfOrigin} implies that $\pi_n(SO(d)) \to \pi_n(SO)$ is surjective for $d \geq \frac{n+3}{2}$, hence the result.
\end{proof}

In the complex case, Bott periodicity~\cite{Bott} says that 
$$\pi_n(U) \cong \begin{cases}
    \bb{Z} & n \textrm{ odd}\\
    0 & n \textrm{ even.}
\end{cases}$$
so over even-dimensional spheres $S^{2n}$, the set of stably trivial vector bundles is again identified with $\pi_{2n}(U/U(d))$. For odd-dimensional spheres, the situation is more complicated. 

\begin{thm}[{\cite{Yokota, Toda}}] \label{thm: YokotaToda} There exists a map $$f : \Sigma \bb{CP}^{\infty} \longrightarrow SU$$
which is a surjection on integral cohomology, and on unstable homotopy groups.
\end{thm}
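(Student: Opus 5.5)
The map will be built by hand from the circle action and the standard reflection construction, and then checked on integral cohomology; the statement about homotopy groups will follow from the cohomology statement combined with Bott periodicity and Mosher's Hurewicz computation (\cref{thm: MosherHurewicz}).

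\textbf{Construction.} Work first with $\bb{CP}^{n-1}$ and $U(n)$. A complex line $L \subset \bb{C}^n$ and a unit scalar $\lambda \in S^1$ determine the unitary map $f_n(\lambda, L)$ that multiplies $L$ by $\lambda$ and fixes $L^\perp$. When $\lambda = 1$ this is the identity, so $f_n$ descends to a map
\[
S^1 \times \bb{CP}^{n-1} / (\{1\}\times \bb{CP}^{n-1}) \longrightarrow U(n).
\]
To land in $SU(n)$ we compose with $g \mapsto g \cdot f_n(\lambda, L_0)^{-1}$ for a fixed line $L_0$; this keeps the basepoint condition. This produces a map $\Sigma \bb{CP}^{n-1}_+ \to SU(n)$, which restricts to a map $\Sigma \bb{CP}^{n-1} \to SU(n)$. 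These maps are compatible with the standard inclusions, so passing to the colimit gives $f : \Sigma\bb{CP}^\infty \to SU$.

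\textbf{Cohomology.} Recall that $H^*(SU;\bb{Z})$ is an exterior algebra on classes $x_{2k+1}$, $k\geq 1$, which are the suspensions of the Chern classes $c_{k+1}$ via $\Sigma SU \to BSU$. I will compute $f^*$ through the adjoint composite $\Sigma^2\bb{CP}^\infty \to \Sigma SU \to BSU$. This composite classifies the bundle obtained by clutching with $f$; up to sign it is the bundle $(H-1)\boxtimes (L-1)$ over $S^2\wedge \bb{CP}^\infty$, where $H$ is the Hopf bundle on $S^2$ and $L$ is the tautological bundle. Its Chern character is $\sigma\cdot(e^{t}-1)$, where $\sigma$ generates $H^2(S^2)$. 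Since products vanish after suspension, Newton's identities give
\[
c_{k+1} = \pm\, k!\cdot \mathrm{ch}_{k+1}.
\]
This equals $\pm \sigma t^{k}$. Hence $f^*x_{2k+1} = \pm \sigma t^{k}$ generates $H^{2k+1}(\Sigma\bb{CP}^\infty;\bb{Z})$, so $f^*$ is surjective. The main obstacle is this sign and factorial bookkeeping, in particular checking that the factorial cancels exactly rather than leaving a multiple. I would cross-check it against the classical computation that, for this reflection map, $\Sigma\bb{CP}^{n-1} \to SU(n)$ is surjective on $H^*$ (Yokota).

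\textbf{Homotopy.} The unstable groups $\pi_{2k+1}(\Sigma \bb{CP}^\infty)$ for large $k$ are stable in the relevant range. It therefore suffices to show that $f_*: \pi_{2k+1}(\Sigma\bb{CP}^\infty) \to \pi_{2k+1}(SU)\cong \bb{Z}$ is onto; the even groups of $SU$ vanish, so nothing else is needed there. Consider the Hurewicz square for $f$. A generator of $\pi_{2k+1}(SU)$ has Hurewicz image $\pm k!$ times a generator of $H_{2k+1}(SU)$ (this is Bott's divisibility result). On the source side, \cref{thm: MosherHurewicz} shows that the image of the stable Hurewicz map in $H_{2k}(\bb{CP}^\infty)$ is $k!\cdot\bb{Z}$. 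By the cohomology computation, $f_*$ is an isomorphism on $H_{2k+1}$ modulo decomposables, and primitives pair with $x_{2k+1}$. Hence $f_*$ carries a class of Hurewicz degree $k!$ to a class of Hurewicz degree $\pm k!$, that is, to a generator.

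For small $k$, where the range is unstable, I would argue directly on the bottom cells.
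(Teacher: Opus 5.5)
Your construction and your cohomology computation are correct, and they replace the citation of Yokota with a self-contained argument. The clutched class is $\pm(H-1)\boxtimes(L-1)$ and its Chern character is $\sigma(e^t-1)$. Since products vanish on $S^2\wedge\mathbb{CP}^\infty$, Newton's identity gives $c_{k+1}=\pm k!\,\mathrm{ch}_{k+1}=\pm\sigma t^k$. Hence $f^*\sigma^*(c_{k+1})$ generates $H^{2k+1}(\Sigma\mathbb{CP}^\infty;\mathbb{Z})$; for surjectivity you do not even need $\sigma^*(c_{k+1})$ to be an exterior generator.

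The homotopy step, however, has a genuine gap. $\Sigma\mathbb{CP}^\infty$ is only $2$-connected, so Freudenthal gives $\pi_i(\Sigma\mathbb{CP}^\infty)\cong\pi_i^s(\Sigma\mathbb{CP}^\infty)$ only for $i\le 4$. It is large $k$, not small $k$, that lies in the unstable range; you have the two ranges the wrong way round. \cref{thm: MosherHurewicz} is about the \emph{stable} Hurewicz map, so it gives a class of Hurewicz degree $k!$ only in $\pi^s_{2k+1}(\Sigma\mathbb{CP}^\infty)$. The unstable Hurewicz image of $\pi_{2k+1}(\Sigma\mathbb{CP}^\infty)$ is a priori just some subgroup of $k!\cdot\mathbb{Z}$, possibly proper. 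Your naturality argument therefore does not produce a preimage of a generator of $\pi_{2k+1}(SU)$.

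Here is exactly what is missing. The map $f$ intertwines the top-cell projections $\Sigma\mathbb{CP}^k\to S^{2k+1}$ and $SU(k+1)\to SU(k+1)/SU(k)=S^{2k+1}$, up to a degree $\pm1$ map. Also, $\pi_{2k+1}(SU(k+1))\cong\mathbb{Z}$ maps isomorphically onto $k!\cdot\mathbb{Z}\subset\pi_{2k+1}(S^{2k+1})$. Together these show that surjectivity of $f_*$ on $\pi_{2k+1}$ is equivalent to the existence of an \emph{unstable} class in $\pi_{2k+1}(\Sigma\mathbb{CP}^k)$ of top-cell degree $\pm k!$. Equivalently, the attaching map $\Sigma h_k\in\pi_{2k}(\Sigma\mathbb{CP}^{k-1})$ of the top cell must have order exactly $k!$. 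Bott's theorem only tells you that $k!$ divides this order, and Mosher gives equality only after stabilisation.

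Constructing these unstable classes (Toda's $\zeta_n$) is precisely the content of the result of Toda that the paper cites, and your proposal does not supply it. You could extend $f$ to an infinite loop map $Q\Sigma\mathbb{CP}^\infty\to SU$, and then a Hurewicz argument like yours would go through. But it would only prove surjectivity from the stable groups $\pi^s_{2k+1}(\Sigma\mathbb{CP}^\infty)$, which is weaker than the theorem's claim about unstable homotopy groups.
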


\begin{proof} The map is constructed by Yokota \cite[Section 4]{Yokota}, who shows that it is onto on cohomology. Toda \cite[Proposition 4.2]{Toda} shows that any such map must be a surjection on homotopy groups (he also explicitly constructs classes $\zeta_n$ such that $f_*(\zeta_n)$ generates $\pi_{2n+1}(SU)\cong \bb{Z}$).
\end{proof}

Ideally, one would prove a relationship between the map $f_d : \Sigma \bb{CP}^{\infty}_d \to SU/SU(d) \simeq U/U(d)$ induced on the quotient by Yokota's map, and the value at $\bb{C}^d$ of the natural transformation
\[
U/U(-) \longrightarrow P_1 U/U(-) \simeq D_1 BU(-)
\]
where the equivalence follows since $U/U(-)$ is reduced see e.g.,~\cite[Example 10.1]{Weiss}. It seems possible that this is related to the splitting of \cite[Example 10.2]{Weiss}. We will make do without understanding this relationship. We also need a tiny amount of cohomology information.

\begin{lem} \label{lem: CohomologyOfUn}
    The quotient map $U \to U/U(d)$ induces a surjection $H^{2d+1}(U/U(d); \bb{Z}) \longtwoheadrightarrow H^{2d+1}(U; \bb{Z})$.
\end{lem}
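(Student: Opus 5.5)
The plan is to compute both sides through the fibration $U(d) \to U \xrightarrow{\pi} U/U(d)$ and the classical description of Stiefel manifolds. First I will record the standard computations. By Borel, $H^*(U;\bb{Z}) \cong \Lambda(x_1, x_3, x_5, \dots)$ with $x_{2i-1}$ primitive. Writing $U/U(d) = \operatorname{colim}_N U(N)/U(d)$ as a colimit of complex Stiefel manifolds $W_{N,N-d}$, one has $H^*(U/U(d);\bb{Z}) \cong \Lambda(y_{2d+1}, y_{2d+3}, \dots)$; in particular $H^{2d+1}(U/U(d);\bb{Z}) \cong \bb{Z}\{y_{2d+1}\}$. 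This is also consistent with the connectivity facts already used in \cref{section: general calc}: $U/U(d)$ is $2d$-connected, and $y_{2d+1}$ is the Hurewicz dual of its bottom cell $S^{2d+1}$.

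The key step is the Serre spectral sequence of $U(d) \to U \to U/U(d)$, or more concretely of each finite stage $U(d) \to U(N) \to W_{N,N-d}$. The inclusion $U(d) \to U(N)$ is surjective on cohomology, sending $x_{2i-1} \mapsto x_{2i-1}$ for $i \leq d$ and $x_{2i-1} \mapsto 0$ for $i>d$. So the fibre is totally non-homologous to zero, the spectral sequence collapses at $E_2 = H^*(W_{N,N-d}) \otimes H^*(U(d))$, and there are no extension problems because everything is free. Hence $\pi^*$ is injective. Its image is the edge subring $\Lambda(x_{2d+1}, \dots, x_{2N-1})$, with $\pi^* y_{2i-1} = x_{2i-1}$ up to sign for each $i > d$. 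Passing to the colimit over $N$ gives $\pi^*(y_{2d+1}) = \pm x_{2d+1}$.

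It remains to pin down which classes in degree $2d+1$ are needed, and this is the delicate part. The group $H^{2d+1}(U;\bb{Z})$ contains, besides $\bb{Z}\{x_{2d+1}\}$, decomposable products such as $x_1 x_3 x_{2d-3}$ of lower generators. Those lie in the fibre part of the collapsed spectral sequence, not the base part, so $\pi^*$ detects exactly the primitive summand. For the application (comparing with Yokota's map $f$ of \cref{thm: YokotaToda}, whose cohomology surjectivity is also phrased in terms of the primitive generators $x_{2i+1}$ pulled back to $\Sigma\bb{CP}^\infty$), I will therefore prove the statement in the form that $\pi^*: H^{2d+1}(U/U(d);\bb{Z}) \to H^{2d+1}(U;\bb{Z})$ surjects onto the indecomposable, primitive part $\bb{Z}\{x_{2d+1}\} = PH^{2d+1}(U;\bb{Z})$. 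Equivalently, it is an isomorphism onto $QH^{2d+1}(U)$ modulo decomposables, which after restriction along $f$ (where all products of positive-degree classes vanish, as $\Sigma \bb{CP}^\infty$ is a suspension) is the full group.

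The main obstacle is the integral collapse and the identification $\pi^* y_{2d+1} = \pm x_{2d+1}$, rather than merely a rational statement. I would first try the cleanest route: use the cell $S^{2d+1} \to U/U(d)$ coming from the fibration $U(d+1)/U(d) = S^{2d+1}$. Then show by the long exact sequence in homotopy that $\pi_{2d+1}(U) \to \pi_{2d+1}(U/U(d))$ is onto $\bb{Z}$, since $\pi_{2d}(U(d)) \to \pi_{2d}(U)=0$, and that $x_{2d+1}$ evaluates to $\pm1$ on the Toda–Yokota generator $\zeta_d$. Combined with the fact that the Hurewicz map for $U/U(d)$ is an isomorphism in degree $2d+1$, this gives that $\pi^*y_{2d+1}$ pairs to $\pm 1$ with a spherical class. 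So it is $\pm x_{2d+1}$ plus decomposables, and decomposables vanish on spherical classes.
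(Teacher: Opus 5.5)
Your reformulation is the right call, and it is forced. For $d\ge 4$ the group $H^{2d+1}(U;\mathbb{Z})$ has rank at least $2$; for example $H^9(U;\mathbb{Z})=\mathbb{Z}\{x_9,\,x_1x_3x_5\}$. On the other hand $H^{2d+1}(U/U(d);\mathbb{Z})\cong\mathbb{Z}$. So the map cannot be surjective as literally stated, and the Serre spectral sequence argument (yours and the paper's) only yields surjectivity modulo decomposables. That weaker form is exactly what the later argument with Yokota's map needs, because $f^*$ kills all products on the suspension $\Sigma\mathbb{CP}^\infty$. Your second paragraph, the collapse/Leray--Hirsch argument for $U(d)\to U\to U/U(d)$, is the paper's route.

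The genuine gap is your last paragraph. That is where you propose to secure the identification $\pi^*y_{2d+1}\equiv\pm x_{2d+1}$, and both of its numerical inputs are false for $d\ge 2$.

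\begin{itemize}
\item The image of $\pi_*$ has index $d!$, not $1$. In the exact sequence $\pi_{2d+1}(U)\to\pi_{2d+1}(U/U(d))\xrightarrow{\partial}\pi_{2d}(U(d))\to\pi_{2d}(U)=0$, the vanishing of $\pi_{2d}(U)$ makes $\partial$ \emph{onto} $\pi_{2d}(U(d))\cong\mathbb{Z}/d!$. Hence $\pi_*$ has image $d!\,\mathbb{Z}$. This is precisely the $\mathbb{Z}/n!$ the paper finds for $\mathsf{Vect}^0_{\mathbb{C},n}(S^{2n+1})$.
\item The class $x_{2d+1}$ evaluates to $\pm d!$, not $\pm 1$, on a generator of $\pi_{2d+1}(U)$. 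This is Bott: $c_{d+1}$ of a generator of $\pi_{2d+2}(BU)$ is $\pm d!$.
\end{itemize}

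So every spherical class in $U$ pairs with $\pi^*y_{2d+1}$ in a multiple of $d!$, and the step as written fails. It could be rescued by using $\pm d!$ on both sides, but that imports Bott integrality for no reason.

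The fix is already in your second paragraph once it is stated precisely. Take $\Lambda(x_1,\dots,x_{2d-1})$ as the Leray--Hirsch section. Then
\[
H^{2d+1}(U)=\mathbb{Z}\{\pi^*y_{2d+1}\}\oplus\Lambda(x_1,\dots,x_{2d-1})^{2d+1},
\]
and the second summand is exactly the decomposables of degree $2d+1$. Hence $\pi^*y_{2d+1}\equiv\pm x_{2d+1}$ modulo decomposables.

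Your stronger wording, ``onto $PH^{2d+1}(U)$'', is not justified by your argument. It can be obtained as follows:
\begin{itemize}
\item Use $\pi(AB)=A\cdot\pi(B)$ together with $H^b(U/U(d))=0$ for $0<b<2d+1$.
\item By K\"unneth this forces $\mathrm{act}^*y=\pi^*y\otimes 1+1\otimes y$.
\item Hence $\mu^*\pi^*y=\pi^*y\otimes1+1\otimes\pi^*y$, so $\pi^*y$ is primitive and $\pi^*y_{2d+1}=\pm x_{2d+1}$ exactly.
\end{itemize}
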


\begin{proof} The cohomology of $U(d)$ is an exterior algebra
    $H^*(U(d)) \cong \Lambda[e_1, \dots, e_d],$
    where $|e_i| = 2i-1$, see e.g., \cite[Corollary 4D.3]{HatcherAT}. The result follows by (e.g.) a Serre spectral sequence argument.
\end{proof}

We now prove~\cref{thm:sphere diagrams}.

\begin{proof}[Proof of \cref{thm:sphere diagrams}] 
Recall (\cref{thm: stably trivial stunted projective}) that for $\bb{k}=\bb{R},\bb{C}$, and any finite cell complex $X$ of dimension at least $2\dim_\R(\bb{k})$ we have a natural isomorphism
\[
\vect{\bb{k}, d}^0(X) \cong \coker \left( [\Sigma X, \BAut] \xrightarrow{ \ \delta_\bb{k} \ } [\Sigma^\infty X, \Sigma^\infty  \Sigma^{\dim_\bb{R}(\bb{k})-1}\bb{kP}^\infty_d] \right)
\] in the metastable range (recall that $\BAut$ is our chosen notation for $\BO$ or $\BU$ as appropriate). For $X = S^{2n}$ or $X = S^{2n+1}$, all of the groups we consider are metastable when $n \geq 5$, which is implied by our assumption that $n \geq 8$.

To deduce \cref{thm:sphere diagrams} from \cref{thm: sphere diagrams 2}, we have to take \cref{fig:even sphere diagram 2,fig:odd sphere diagram 2}, and quotient each group by the image of the above map (setting $X =S^{2n}$ or $S^{2n+1}$, $\bb{k}=\bb{R},\bb{C}$, and $d \geq 2n-4$ as appropriate), to obtain \cref{fig:even sphere diagram,fig:odd sphere diagram}.

For the real bundles, we wish to use \cref{cor: SOnOfOrigin} to see that this image is trivial, so nothing changes when we quotient. To use this corollary for even spheres $S^{2n}$, we must verify its hypotheses: that $2n > 15$ and $d \geq \frac{2n+3}{2}$. The first hypothesis follows immediately because $n \geq 8$. For the second, note that we are considering bundles of rank $d \geq 2n-4$, and since $n \geq 8$, we have $n \geq 6$ and hence $d \geq 2n - 4 \geq n+2 > \frac{2n+3}{2}$. The check for odd spheres is similar.

For complex bundles on even spheres, $[\Sigma S^{2n}, BU] \cong \pi_{2n}(U)=0$ by Bott periodicity, so the top row of \cref{fig:even sphere diagram} is again the same as the top row of \cref{fig:even sphere diagram 2}. 

It remains to treat the complex cases on odd spheres. This time we will see interesting differences between the top rows of \cref{fig:odd sphere diagram} and \cref{fig:odd sphere diagram 2}.

We begin in codimension 1, where $\vect{\bb{C},n}^0(S^{2n+1})$ is the cokernel of the composite 
\[
\delta_{\bb{C}} : \pi_{2n+1}(U) \longrightarrow \pi_{2n+1}(U/U(n)) \xrightarrow{\ \cong\ } \pi_{2n+1}(\Omega^\infty\Sigma^{\infty + 1} \bb{CP}^{\infty}_{n})
\]
where the first map is the boundary map in the fibration $U/U(n) \to BU(n) \to BU$ and the second map is the value at $\bb{C}^n$ of the linear approximation
\[
U/\sf{u} \longrightarrow P_1(U/\sf{u}) \simeq D_1(U/\sf{u})
\]
which is an equivalence on $\pi_{2n+1}$ by (the proof of) \cref{lem: tower of BAut} since $U/\sf{u}$ is the reduced approximation to $\Bu$ and hence has the same convergence behaviour, i.e., the linear approximation map is $(2\dim_\R(\C^n)+1)$-connected. The Hurewicz map
\[
h: \pi_{2n+1}(\Omega^\infty\Sigma^{\infty+1} \bb{CP}^{\infty}_{n}) \longrightarrow H_{2n+1}(\Omega^\infty\Sigma^{\infty+1} \bb{CP}^{\infty}_{n}; \bb{Z}) \cong H_{2n+1}(\Sigma^{\infty+1} \bb{CP}^{\infty}_{n}; \bb{Z}) \cong \bb{Z}
\]
is an isomorphism, where the isomorphism $H_{2n+1}(\Omega^\infty\Sigma^{\infty+1} \bb{CP}^{\infty}_{n}; \bb{Z}) \cong H_{2n+1}(\Sigma^{\infty+1} \bb{CP}^{\infty}_{n}; \bb{Z})$ follows from the Freudenthal Suspension Theorem. Bott periodicity says that, $\pi_{2n+1}(U) \cong \bb{Z}$, hence we must show that
\[
(\delta_{\bb{C}})_*: \bb{Z} \cong \pi_{2n+1}(U) \longrightarrow \pi_{2n+1}(\Omega^\infty\Sigma^{\infty+1}\bb{CP}^\infty_n) \cong \bb{Z}
\]
has image $n! \cdot \bb{Z} \subset \bb{Z}$. Consider the following diagram, formed from the naturality squares for the Hurewicz map applied our map $\delta_{\bb{C}}$ and Yokota's map $f$ (\cref{thm: YokotaToda}):
\[\begin{tikzcd}
\pi_{2n+1}(\Sigma \bb{CP}^{\infty}) & \pi_{2n+1}(U) & \pi_{2n+1}(U/U(n)) & \pi_{2n+1}(\Omega^\infty\Sigma^{\infty+1} \bb{CP}^{\infty}_{n}) \\
H_{2n+1}(\Sigma \bb{CP}^{\infty}; \bb{Z}) & H_{2n+1}(U; \bb{Z}) & H_{2n+1}(U/U(n); \bb{Z}) & H_{2n+1}(\Sigma^{\infty+1} \bb{CP}^{\infty}_{n}; \bb{Z})
    \arrow["h",from=1-1, to=2-1]
    \arrow["h",from=1-2, to=2-2]
    \arrow["h",from=1-3, to=2-3]
    \arrow[swap, "\cong",from=1-3, to=2-3]
    \arrow["h",from=1-4, to=2-4]
    \arrow[swap, "\cong",from=1-4, to=2-4]
    \arrow["f_*",from=2-1, to=2-2]
    \arrow[from=2-2, to=2-3]
    \arrow["\cong",from=2-3, to=2-4]
    \arrow["f_*",from=1-1, to=1-2]
    \arrow[from=1-2, to=1-3]
    \arrow["\cong", from=1-3, to=1-4]
    \arrow[bend left = 15,  "(\delta_{\bb{C}})_*", from=1-2, to=1-4]
\end{tikzcd}\]
The rightmost Hurewicz map is an isomorphism. By \cref{thm: YokotaToda}, the top left map $f_*$ on homotopy is a surjection. It follows that the image of $\delta_{\bb{C}}$ in the top right group $\pi_{2n+1}(\Omega^\infty\Sigma^{\infty+1} \bb{CP}^{\infty}_{n}) \cong \bb{Z}$ is a subgroup of the same index as the image of the whole composite $h \circ \delta_{\bb{C}} \circ f_*$ in $H_{2n+1}(\Sigma^{\infty+1} \bb{CP}^{\infty}_{n}; \bb{Z})$. By commutativity of the diagram, this composite is the same as the composite through the bottom left corner of the diagram, which we must now show has image a subgroup of index $n!$. This follows because the image of the leftmost Hurewicz map is $n! \cdot \bb{Z} \subset \bb{Z} \cong H_{2n+1}(\Sigma \bb{CP}^{\infty})$, (\cref{thm: MosherHurewicz}), and by \cref{thm: YokotaToda,lem: CohomologyOfUn} and the universal coefficient theorem, the bottom composite is a surjection.

In codimension 2, $\vect{\bb{C},n-1}^0(S^{2n+1})$ is the cokernel of the composite $$\delta_{\bb{C}} : \bb{Z} \cong \pi_{2n+1}(U) \longrightarrow \pi_{2n+1}(U/U(n-1)) \xrightarrow{\ \cong\ } \pi_{2n+1}(\Omega^\infty\Sigma^{\infty+1} \bb{CP}^{\infty}_{n-1}) \cong \begin{cases}
    \bb{Z} \oplus \bb{Z}/2 & n \textrm{ odd} \\
    \bb{Z} & n\textrm{ even.}
\end{cases}$$

We compare with the codimension 1 case by considering the diagram
\[\begin{tikzcd}
\bb{Z} \cong \pi_{2n+1}(\Sigma \bb{CP}^{\infty}) & \pi_{2n+1}(U) & \pi_{2n+1}(U/U(n-1)) & \pi_{2n+1}(\Omega^\infty\Sigma^{\infty+1} \bb{CP}^{\infty}_{n-1}) \\
& \pi_{2n+1}(U) & \pi_{2n+1}(U/U(n)) & \pi_{2n+1}(\Omega^\infty\Sigma^{\infty+1} \bb{CP}^{\infty}_{n})
    \arrow[equal,from=1-2, to=2-2]
    \arrow[from=1-3, to=2-3]
    \arrow[from=1-4, to=2-4]
    \arrow[from=1-4, to=2-4]
    \arrow["f_*",two heads,from=1-1, to=1-2]
    \arrow[from=2-2, to=2-3]
    \arrow["\cong",from=2-3, to=2-4]
    \arrow[from=1-2, to=1-3]
    \arrow["\cong", from=1-3, to=1-4]
    \arrow[bend left = 15,  "(\delta_{\bb{C}})_*", from=1-2, to=1-4]
    \arrow[bend right = 15, swap, "(\delta_{\bb{C}})_*", from=2-2, to=2-4]
\end{tikzcd}\]

We know from the codimension 1 case that the composite top left to bottom right is $\bb{Z} \xrightarrow{\cdot n!} \bb{Z}$. The right hand vertical map is truncation, which by \cref{thm: sphere diagrams 2} (\cref{fig:odd sphere diagram 2}) is $\bb{Z} \xrightarrow{\cdot 2} \bb{Z}$ when $n$ is even, so in this case the desired conclusion follows: $(\delta_{\bb{C}})_*$ in the top row is $\bb{Z} \xrightarrow{\cdot (n!/2)} \bb{Z}$.

For $n$ odd, the image $(\delta_{\bb{C}})_*(1)$ of the generator under any choice of isomorphism $\pi_{2n+1}(\Omega^\infty\Sigma^{\infty+1} \bb{CP}^{\infty}_{n-1}) \cong \bb{Z} \oplus \bb{Z}/2 $ (c.f.~\cref{fig:odd sphere diagram 2}) must be of the form $(\pm n!,x)$ for some $x \in \bb{Z}/2$. Since $n \geq 2$, the value of $x \in \bb{Z}/2$ is independent of the choice of isomorphism: $x = 0$ if and only if $(\delta_{\bb{C}})_*(1)$ is 2-divisible.

Consider the naturality square for realification (\cref{main theorem: calc}), where the bottom map is an isomorphism by the real case:
\[\begin{tikzcd}
\bb{Z} \oplus \bb{Z}/2 &[-0.4cm]\pi_{2n+1}^{\mathrm{s}}(\Sigma^{\infty+1} \bb{CP}^{\infty}_{n-1}) & \vect{\bb{C},n-1}^0(S^{2n+1})\\
& \pi_{2n+1}^{\mathrm{s}}(\Sigma^{\infty} \bb{RP}^{\infty}_{2n-2}) & \vect{\bb{R},2n-2}^0(S^{2n+1})
\arrow["r_*",from=1-2, to=2-2]
\arrow["r_*",from=1-3, to=2-3]
\arrow[two heads,from=1-2, to=1-3]
\arrow["\cong",from=2-2, to=2-3]
\arrow["\cong",from=1-1, to=1-2]
\end{tikzcd}\]

The top map is precisely the quotient by $(\delta_{\bb{C}})_*(1)=(\pm n!,x)$; in particular this class maps to zero in the bottom right, so $r_*(\pm n! , x) = 0 \in \pi_{2n+1}^{\mathrm{s}}(\Sigma^{\infty} \bb{RP}^{\infty}_{2n-2})$. Consulting \cref{fig:odd sphere diagram 2}, we see that $r_*(0,1)$ has non-zero realification in $\pi_{2n+1}^{\mathrm{s}}(\Sigma^{\infty} \bb{RP}^{\infty}_{2n-2})$, and that $r_*(1,0)$ has order 8 (when $n \equiv 1\pmod{4}$), and order 16 (when $n \equiv 3\pmod{4}$). Since $n!$ is divisible by 8 for $n \geq 4$, and divisible by 16 for $n \geq 6$, it follows that $r_*(n!,0)=n! \cdot r_*(1,0) = 0$ for $n \geq 5$ odd, so $0 = r_*(\pm n!,x) = x \cdot r_*(0,1)$. Since $r_*(0,1)$ is non-zero, we must have $x=0 \in \bb{Z}/2$. So the top left corner of \cref{fig:odd sphere diagram} is as claimed, completing the derivation of this diagram, and hence the proof.
\end{proof}

\section{Vector bundles over complex projective space} \label{section: B' implies A}
In this section we use the computations of~\cref{thm: sphere diagrams 2} to prove~\cref{thm: CPell diagram}. Since the negative first reduced real and complex $K$-theory of complex projective spaces are zero, see e.g.,~\cite[Theorem 2]{Fujii}, \cref{main theorem: calc} tells us that stable triviality for a bundle over complex projective space is a property not a structure. That is, the groups of stably trivial vector bundles $\vect{\bb{k}, d}^0(\mathbb{CP}^\ell)$ are equivalently vector bundles over $\bb{CP}^\ell$ equipped with a stable trivialisation.

We will do this by studying the corresponding Atiyah--Hirzebruch spectral sequences. In the complex case, writing $[X,Y]$ for the group of stable maps between spaces $X$ and $Y$, the spectral sequence has signature
\begin{equation}\label{fig:E2AHSSComplex}
E^{p,q}_2 = H^p(\bb{CP}^\ell, \pi_{-q}^s(\Sigma\bb{CP}^\infty_{\ell-j})) \cong \begin{cases}
    \pi_{-q}^s(\Sigma\bb{CP}^\infty_{\ell-j}) & 0 \leq p \leq 2\ell, \ p\ \text{even} \\
    0 & \textrm{otherwise}
\end{cases} \Longrightarrow [\bb{CP}^\ell, \Sigma^{p+q+1}\bb{CP}^\infty_{\ell-j}]    
\end{equation}
and we will consider it for $0 \leq j \leq 2$. In the real case, this spectral sequence has signature
\begin{equation}\label{fig:E2AHSS}
E^{p,q}_2 = H^p(\bb{CP}^\ell, \pi_{-q}^s(\bb{RP}^\infty_{2\ell-i})) \cong \begin{cases}
    \pi_{-q}^s(\bb{RP}^\infty_{2\ell-i}) & 0 \leq p \leq 2\ell, \ p\ \text{even} \\
    0 & \textrm{otherwise}
\end{cases} \Longrightarrow [\bb{CP}^\ell, \Sigma^{p+q}\bb{RP}^\infty_{2\ell-i}]
\end{equation}
and we will consider it for $0 \leq i \leq 4$. The spectral sequences are of cohomological type, with $d_r$-differentials of bidegree $(r,1-r)$.

Roughly speaking, we already know the $E_2$-pages, since we have computed the homotopy groups in \cref{section: stunted projective}, and we know the cohomology of projective spaces, and our task will be to determine differentials and hence obtain the stable mapping sets, alias bundles equipped with stable trivialisation, which proves \cref{thm: CPell diagram}.

Note that the spectral sequence of~\cref{fig:E2AHSSComplex} has been computed by Hu~\cite{Hu} within the relevant range, and we will not redo these computations. The spectral sequence of~\cref{fig:E2AHSS} will let us deduce the corresponding groups of stably trivial real bundles, and the maps of spectral sequences induced by realification and stabilisation/truncation will allow us to deduce the effect of those operations.
 
These Atiyah--Hirzebruch spectral sequences are induced by the cellular filtration on the space $\bb{CP}^\ell$, so their differentials are governed by its cell structure. We recall now how these differentials are computed, see also~\cite[p.7793, p.7798]{Hu} but proceed with caution since our indexing choices differ.

\begin{rem}\label{rmk: differentials}The $d_2$ differentials are determined by the cell structure of $\bb{CP}^{n+1}_n$, and in particular it follows from the definitions that the $d_2$ differential $d_2: E_2^{2n, -m} \to E_2^{2n+2, -m-1}$ is zero if $n$ is even and is given by $\eta^*$ if $n$ is odd. Since both spectral sequences are concentrated in even columns ($\bb{CP}^{\ell}$ only having even-dimensional cells), there can be no $d_3$ differentials. Similarly, the differential $d_4: E_4^{2n, -m} \to E_4^{2n+4, -m-3}$ is given by pullback along $(\lambda\nu)^*$ where $\lambda$ is as in~\cref{lem:Mosher}. We will use both of these facts frequently in what follows.
\end{rem}

We now prove \cref{thm: CPell diagram}, which is to say that we deduce \cref{fig:CPell diagram}.

\begin{proof}[Proof of~\cref{thm: CPell diagram}]
We will proceed on a case-by-case basis, working from right-to-left along~\cref{fig:CPell diagram}. By \cref{main theorem: calc} (and the $K$-theory vanishing noted above), the mapping sets computed by the spectral sequences are the desired groups of stably trivial bundles, provided that we are in the metastable range. We are considering groups of complex codimension at most 2 and real codimension at most 4, and $\dim(\bb{CP}^\ell) = 2 \ell$, so metastability follows from our assumption that $\ell \geq 5$.

\textbf{Codimension zero complex bundles} $\vect{\bb{C},\ell}^0(\bb{CP}^\ell)$ is zero, because it is in the stable range.

\textbf{Codimension zero real bundles} (\cref{fig:E2AHSS}, $i=0$) $\vect{\bb{R}, 2\ell}^0(\bb{CP}^\ell)$. We consider the Atiyah--Hirzebruch spectral sequence of \cref{fig:E2AHSS} with $i=0$, where we wish to compute the groups $E_{\infty}^{p,q}$ for $p+q = 0$ (the associated graded of $[\bb{CP}^\ell,\bb{RP}_{2 \ell}^\infty] = \vect{\bb{R}, 2 \ell}^0(\mathbb{CP}^\ell)$). The $E_2$-page is concentrated in even columns in the half-infinite vertical band bounded by the inequalities $0 \leq p \leq 2 \ell$ and $q \leq - 2 \ell$. The top of this band takes the form
\[\begin{tikzcd}[ampersand replacement=\&,cramped,sep=tiny]
	\&\& {} \&\&\&\&\&\&\&\& \\
	\& {E_2} \&\& {2\ell-6} \& {2\ell -5} \& {2\ell -4} \& {2\ell -3} \& {2\ell -2} \& {2\ell -1} \& {2\ell } \\
	{} \&\&\&\&\&\&\&\&\&\& {} \\
	\& {\geq -(2\ell-1)} \&\& 0 \& 0 \& 0 \& 0 \& 0 \& 0 \& 0 \\
	\& {-2\ell} \&\& {\bb{Z}} \& 0 \& {\bb{Z}} \& 0 \& {\bb{Z}} \& 0 \& {\bb{Z}} \\
	\&\& {} \& {}
	\arrow[no head, from=1-3, to=6-3]
	\arrow[no head, from=3-1, to=3-11]
\end{tikzcd}\]
so the only non-zero group on the main diagonal $p+q=0$ is $\pi_{2\ell}^s(\bb{RP}^\infty_{2\ell}) \cong \bb{Z}$ (\cref{thm: sphere diagrams 2} c.f.~\cref{fig:even sphere diagram 2}). For degree reasons, this group can support no non-trivial differentials, so it must survive to $E^\infty$. It follows that $\vect{\bb{R}, 2 \ell}^0(\mathbb{CP}^\ell) \cong \bb{Z}$, as desired.

\textbf{Realification} $\vect{\bb{C}, \ell}^0(\bb{CP}^\ell) \to \vect{\bb{R}, 2\ell}^0(\bb{CP}^\ell)$. The domain is zero, so the map is necessarily zero.

\textbf{Codimension one real bundles} $\vect{\bb{R}, 2\ell-1}^0(\bb{CP}^\ell)$ (\cref{fig:E2AHSS}, $i=1$). When $\ell$ is even, the $E_2$-page has only zeroes on the diagonal, so $\vect{\bb{R}, 2\ell-1}^0(\bb{CP}^\ell) = 0$. When $\ell$ is odd, the $E_2$-page begins
\[\begin{tikzcd}[ampersand replacement=\&,cramped,sep=tiny]
	\&\& {} \&\&\&\&\&\&\&\& \\
	\& {E_2} \&\& {2\ell-6} \& {2\ell -5} \& {2\ell -4} \& {2\ell -3} \& {2\ell -2} \& {2\ell -1} \& {2\ell } \\
	{} \&\&\&\&\&\&\&\&\&\& {} \\
	\& {\geq -(2\ell-2)} \&\& 0 \& 0 \& 0 \& 0 \& 0 \& 0 \& 0 \\
	\& {-(2\ell-1)} \&\& {\bb{Z}/2} \& 0 \& {\bb{Z}/2} \& 0 \& {\bb{Z}/2} \& 0 \& {\bb{Z}/2} \\
	\& {-2\ell} \&\& {\bb{Z}/2} \& 0 \& {\bb{Z}/2} \& 0 \& {\bb{Z}/2} \& 0 \& {\bb{Z}/2} \\
	\&\& {} \& {}
	\arrow[no head, from=1-3, to=7-3]
	\arrow[no head, from=3-1, to=3-11]
	\arrow["{d_2}"{description}, from=5-8, to=6-10]
\end{tikzcd}\]
There is only one potential $d_2$-differential $d_2: \bb{Z}/2 \to \bb{Z}/2$ affecting the main diagonal. This differential is induced by the attaching map for the two-cell complex $\bb{CP}^\ell_{\ell-1}$. Since $\ell-1$ is even, this differential must be zero (c.f.~\cref{rmk: differentials}). The single $\bb{Z}/2$ on the diagonal survives, since there is no room for higher differentials, giving the result claimed in \cref{fig:CPell diagram}: $$\vect{\bb{R}, 2\ell-1}^0(\bb{CP}^\ell) \cong \begin{cases}
    \bb{Z}/2 & \ell \textrm{ odd,} \\
    0 & \ell \textrm{ even.}
\end{cases}$$

\textbf{Stabilisation} $\vect{\bb{R}, 2\ell-1}^0(\bb{CP}^\ell) \to \vect{\bb{R}, 2\ell}^0(\bb{CP}^\ell)$ is then automatically zero, since the codomain is $\bb{Z}$.

\textbf{Codimension two real bundles} $\vect{\bb{R}, 2\ell-2}^0(\bb{CP}^\ell)$ (\cref{fig:E2AHSS}, $i=2$). For $\ell$ odd the $E_2$-page begins
\[\begin{tikzcd}[ampersand replacement=\&,cramped,sep=tiny]
	\&\& {} \&\&\&\&\&\&\&\& \\
	\& {E_2} \&\& {2\ell-6} \& {2\ell -5} \& {2\ell -4} \& {2\ell -3} \& {2\ell -2} \& {2\ell -1} \& {2\ell } \\
	{} \&\&\&\&\&\&\&\&\&\& {} \\
	\& {\geq -(2\ell-3)} \&\& 0 \& 0 \& 0 \& 0 \& 0 \& 0 \& 0 \\
	\& {-(2\ell-2)} \&\& {\bb{Z}} \& 0 \& {\bb{Z}} \& 0 \& {\bb{Z}} \& 0 \& {\bb{Z}} \\
	\& {-(2\ell-1)} \&\& {(\bb{Z}/2)^2} \& 0 \& {(\bb{Z}/2)^2} \& 0 \& {(\bb{Z}/2)^2} \& 0 \& {(\bb{Z}/2)^2} \\
	\& {-2\ell} \&\& {(\bb{Z}/2)^2} \& 0 \& {(\bb{Z}/2)^2} \& 0 \& {(\bb{Z}/2)^2} \& 0 \& {(\bb{Z}/2)^2} \\
	\&\& {}
	\arrow[no head, from=1-3, to=8-3]
	\arrow[no head, from=3-1, to=3-11]
	\arrow["{d_2}"{description}, from=5-8, to=6-10]
	\arrow["{d_2}"{description}, from=6-8, to=7-10]
\end{tikzcd}\]
where there are two possible differentials affecting the main diagonal. Again, since $\ell$ is odd, these differentials are zero (c.f.~\cref{rmk: differentials}). There is no room for longer differentials, so the Atiyah--Hirzebruch spectral sequence collapses at the $E_2$-page. Here we get lucky with the direction the filtration is going: the $E_\infty$-page tells us that $\vect{\bb{R}, 2\ell-2}^0(\bb{CP}^\ell) \cong [\bb{CP}^{\ell},\bb{RP}^{\infty}_{2 \ell-2}]$ sits in a short exact sequence
\begin{equation}\label{eqn: real codim 2 splitting}
    0 \longrightarrow (\bb{Z}/2)^2 \longrightarrow [\bb{CP}^{\ell},\bb{RP}^{\infty}_{2 \ell-2}] \longrightarrow \bb{Z} \longrightarrow 0 
\end{equation}
which splits since $\bb{Z}$ is free. Choosing a splitting gives an isomorphism $\vect{\bb{R}, 2\ell-2}^0(\bb{CP}^\ell) \cong \bb{Z} \oplus (\bb{Z}/2)^2$. We will later fix a particular choice of splitting.

For $\ell$ even, the $E_2$-page takes the form
\[\begin{tikzcd}[ampersand replacement=\&,cramped,sep=tiny]
	\&\& {} \&\&\&\&\&\&\&\& \\
	\& {E_2} \&\& {2\ell-6} \& {2\ell -5} \& {2\ell -4} \& {2\ell -3} \& {2\ell -2} \& {2\ell -1} \& {2\ell } \\
	{} \&\&\&\&\&\&\&\&\&\& {} \\
	\& {\geq -(2\ell-3)} \&\& 0 \& 0 \& 0 \& 0 \& 0 \& 0 \& 0 \\
	\& {-(2\ell-2)} \&\& {\bb{Z}} \& 0 \& {\bb{Z}} \& 0 \& {\bb{Z}} \& 0 \& {\bb{Z}} \\
	\& {-(2\ell-1)} \&\& {\bb{Z}/4} \& 0 \& {\bb{Z}/4} \& 0 \& {\bb{Z}/4} \& 0 \& {\bb{Z}/4} \\
	\& {-2\ell} \&\& 0 \& 0 \& 0 \& 0 \& 0 \& 0 \& 0 \\
	\&\& {}
	\arrow[no head, from=1-3, to=8-3]
	\arrow[no head, from=3-1, to=3-11]
	\arrow["{d_2}"{description}, from=5-8, to=6-10]
\end{tikzcd}\]
where there is only a single possible differential affecting the main diagonal $p+q = 0$. Since $\ell$ is even, this differential is given by precomposition $\eta^*$ with $\eta$ (c.f.~\cref{rmk: differentials}). Consulting the Adams charts for $\bb{RP}^\infty_{2 \ell-2}$, (\cref{fig:RPASS2}-left and \cref{fig:RPASS4}-left) we see that there are $h_1$ multiplications connecting the first two columns, so $\eta^*$ induces the multiplication-by-$2$ map $\pi_{2 \ell - 2}(\bb{RP}^\infty_{2 \ell-2}) \cong \bb{Z} \to \bb{Z}/4 \cong \pi_{2 \ell - 1}(\bb{RP}^\infty_{2 \ell-2})$. The only non-zero group on the main diagonal of the $E_3$-page is the kernel of this map, which is $\bb{Z}$. There is no room for differentials on later pages, so the spectral sequence collapses at $E_3$, and the result follows.

\textbf{Stabilisation} $\vect{\bb{R}, 2\ell-2}^0(\bb{CP}^\ell) \to \vect{\bb{R}, 2\ell-1}^0(\bb{CP}^\ell)$. For $\ell$ even, the map is necessarily zero since the codomain is zero. For $\ell$ odd, we consider the map induced on Atiyah--Hirzebruch spectral sequences by truncation $\bb{RP}^\infty_{2 \ell - 2} \to \bb{RP}^\infty_{2 \ell - 1}$. We have seen the $E_2$-pages above, they look as follows.

\begin{adjustbox}{max width = \textwidth}
\begin{tikzcd}[ampersand replacement=\&,cramped,sep=tiny]
	\&\& {} \&\&\&\&\&\&\&\&\&\& {} \&\&\&\&\&\& \\
	\& {E_2} \&\& {2 \ell-4} \& {2\ell-3} \& {2\ell -2} \& {2\ell -1} \& {2\ell } \&\&\&\& {E_2} \&\& {2\ell-4} \& {2\ell-3} \& {2\ell -2} \& {2\ell -1} \& {2\ell } \\
	{} \&\&\&\&\&\&\&\& {} \&\& {} \&\&\&\&\&\&\&\& {} \\
	\& {\geq-(2\ell-3)} \&\& 0 \& 0 \& 0 \& 0 \& 0 \&\&\&\& {\geq -(2\ell-3)} \&\& 0 \& 0 \& 0 \& 0 \& 0 \\
    \& {-(2\ell-2)} \&\& \bb{Z} \& 0 \& \bb{Z} \& 0 \& \bb{Z} \&\&\&\& {-(2\ell-2)} \&\& 0 \& 0 \& 0 \& 0 \& 0 \\
	\& {-(2\ell-1)} \&\& {(\bb{Z}/2)^2} \& 0 \& {(\bb{Z}/2)^2} \& 0 \& {(\bb{Z}/2)^2} \&\&\&\& {-(2\ell-1)} \&\& {\bb{Z}/2} \& 0 \& {\bb{Z}/2} \& 0 \& {\bb{Z}/2} \\
	\& {-2\ell} \&\& {(\bb{Z}/2)^2} \& 0 \& {(\bb{Z}/2)^2} \& 0 \& {(\bb{Z}/2)^2} \&\&\&\& {-2\ell} \&\& {\bb{Z}/2} \& 0 \& {\bb{Z}/2} \& 0 \& {\bb{Z}/2} \\
	\&\& {} \&\&\&\&\&\&\&\&\&\& {}
	\arrow[no head, from=1-3, to=8-3]
	\arrow[no head, from=1-13, to=8-13]
	\arrow[no head, from=3-1, to=3-9]
	\arrow[no head, from=3-11, to=3-19]
	\arrow[curve={height=18pt}, dashed, from=7-8, to=7-18]
\end{tikzcd}
\end{adjustbox}
The arrow indicates the only potentially non-zero component of the map of $E_2$-pages on the main diagonal. This map is the map on $\pi_{2 \ell}$ induced by the truncation map, which we calculated in terms of our bases to be projection to the first factor (\cref{thm: sphere diagrams 2}, more precisely \cref{fig:even sphere diagram 2} with $n = \ell$). This tells us the map on $E_\infty$-pages. We may now fix the splitting of \cref{eqn: real codim 2 splitting} so that the map is zero on the $\bb{Z}$-summand.

\textbf{Codimension one complex bundles} $\vect{\bb{C},\ell-1}^0(\bb{CP}^\ell)$ were computed by Hu~\cite[Theorem 1.1]{Hu}.

\textbf{Realification} $\vect{\bb{C}, \ell-1}^0(\bb{CP}^\ell) \to \vect{\bb{R}, 2\ell-2}^0(\bb{CP}^\ell)$. To identify the map we need only consider the case where $\ell$ is odd, since for $\ell$ even, the domain is zero. For $\ell$ odd, Hu gives an $E_2$-page in the complex case \cite[p. 7795]{Hu}, and we computed the real one above. The map then takes the form:
    
\begin{adjustbox}{max width = \textwidth}
\begin{tikzcd}[ampersand replacement=\&,sep=tiny]
	\&\& {} \&\&\&\&\&\&\&\& {} \\
	\& \C \&\& {2\ell -2} \& {2\ell -1} \& {2\ell } \&\&\&\& \R \&\& {2\ell -2} \& {2\ell -1} \& {2\ell } \\
	{} \&\&\&\&\&\& {} \&\& {} \&\&\&\&\&\& {} \\
	\& {-(2\ell-2)} \&\& 0 \& 0 \& 0 \&\&\&\& {-(2\ell-2)} \&\& {\bb{Z}} \& 0 \& {\bb{Z}} \\
	\& {-(2\ell-1)} \&\& {\bb{Z}} \& 0 \& {\bb{Z}} \&\&\&\& {-(2\ell-1)} \&\& {(\bb{Z}/2)^2} \& 0 \& {(\bb{Z}/2)^2} \\
	\& {-2\ell} \&\& {\bb{Z}/2} \& 0 \& {\bb{Z}/2} \&\&\&\& {-2\ell} \&\& {(\bb{Z}/2)^2} \& 0 \& {(\bb{Z}/2)^2} \\
	\&\& {} \&\&\&\&\&\&\&\& {}
	\arrow[no head, from=1-3, to=7-3]
	\arrow[no head, from=1-11, to=7-11]
	\arrow[no head, from=3-1, to=3-7]
	\arrow[no head, from=3-9, to=3-15]
	\arrow[curve={height=-18pt}, dashed, from=4-4, to=4-12]
	\arrow[curve={height=18pt}, dashed, from=6-6, to=6-14]
\end{tikzcd}
\end{adjustbox}
where again dotted arrows indicate potentially non-zero components of the map affecting the main diagonal. The first of these is zero, because its domain is zero, and we computed the second (\cref{thm: sphere diagrams 2}, i.e.~\cref{fig:even sphere diagram 2}, first non-trivial realification) in terms of our basis. It is the inclusion of the first factor
\[
\pi_{2\ell}^s(\Sigma\bb{CP}^\infty_{\ell-1}) \cong \bb{Z}/2 \hookrightarrow (\bb{Z}/2)^2 \cong \pi_{2\ell-2}^s(\bb{RP}^\infty_{2\ell-2}).
\]
This tells us the map on the associated graded. Reassembling the target, we find that it follows for filtration reasons, or alternatively because a map from a torsion group to a free group must be zero, that whatever our choice of splitting in \cref{eqn: real codim 2 splitting} that the component of the map landing in the $\bb{Z}$-summand is zero.

\textbf{Codimension three real bundles} $\vect{\bb{R}, 2\ell-3}^0(\bb{CP}^\ell)$ (\cref{fig:E2AHSS}, $i=3$). For $\ell$ odd, the $E_2$-page looks like

\begin{adjustbox}{max width = \textwidth}
\begin{tikzcd}[ampersand replacement=\&,cramped,sep=tiny]
	\&\& {} \&\&\&\&\&\&\&\& \\
	\& {E_2} \&\& {2\ell-6} \& {2\ell -5} \& {2\ell -4} \& {2\ell -3} \& {2\ell -2} \& {2\ell -1} \& {2\ell } \\
	{} \&\&\&\&\&\&\&\&\&\& {} \\
	\& {\geq -(2\ell-4)} \&\& 0 \& 0 \& 0 \& 0 \& 0 \& 0 \& 0 \\
	\& {-(2\ell-3)} \&\& {\bb{Z}/2} \& 0 \& {\bb{Z}/2} \& 0 \& {\bb{Z}/2} \& 0 \& {\bb{Z}/2} \\
	\& {-(2\ell-2)} \&\& 0 \& 0 \& 0 \& 0 \& 0 \& 0 \& 0 \\
	\& {-(2\ell-1)} \&\& {\bb{Z}/2} \& 0 \& {\bb{Z}/2} \& 0 \& {\bb{Z}/2} \& 0 \& {\bb{Z}/2} \\
	\& {-2\ell} \&\& \begin{array}{c} \begin{cases} \bb{Z}/2 & \text{if $\ell \equiv 1(4)$} \\ (\bb{Z}/2)^2 & \text{if $\ell \equiv 3(4)$} \end{cases} \end{array} \& 0 \& \begin{array}{c} \begin{cases} \bb{Z}/2 & \text{if $\ell \equiv 1(4)$} \\ (\bb{Z}/2)^2 & \text{if $\ell \equiv 3(4)$} \end{cases} \end{array} \& 0 \& \begin{array}{c} \begin{cases} \bb{Z}/2 & \text{if $\ell \equiv 1(4)$} \\ (\bb{Z}/2)^2 & \text{if $\ell \equiv 3(4)$} \end{cases} \end{array} \& 0 \& \begin{array}{c} \begin{cases} \bb{Z}/2 & \text{if $\ell \equiv 1(4)$} \\ (\bb{Z}/2)^2 & \text{if $\ell \equiv 3(4)$} \end{cases} \end{array} \\
	\&\& {} \& {}
	\arrow[no head, from=1-3, to=9-3]
	\arrow[no head, from=3-1, to=3-11]
	\arrow["{d_2}"{description}, from=7-8, to=8-10]
\end{tikzcd}
\end{adjustbox}

where the single possible non-zero $d_2$ differential affecting the main diagonal is necessarily zero by~\cref{rmk: differentials}. As always, there can be be no $d_3$'s, but there is a possible $d_4$, shown below, where we write $\ast$ for irrelevant entries.
\[\begin{tikzcd}[ampersand replacement=\&,cramped,sep=tiny]
	\&\& {} \&\&\&\&\&\&\&\& \\
	\& {E_4} \&\& {2\ell-6} \& {2\ell -5} \& {2\ell -4} \& {2\ell -3} \& {2\ell -2} \& {2\ell -1} \& {2\ell } \\
	{} \&\&\&\&\&\&\&\&\&\& {} \\
	\& {\geq -(2\ell-4)} \&\& 0 \& 0 \& 0 \& 0 \& 0 \& 0 \& 0 \\
	\& {-(2\ell-3)} \&\& \ast \& 0 \& {\bb{Z}/2} \& 0 \& \ast \& 0 \& \ast \\
	\& {-(2\ell-2)} \&\& 0 \& 0 \& 0 \& 0 \& 0 \& 0 \& 0 \\
	\& {-(2\ell-1)} \&\& \ast \& 0 \& \ast \& 0 \& \ast \& 0 \& \ast \\
	\& {-2\ell} \&\& \ast \& 0 \& \ast \& 0 \& \ast \& 0 \& \begin{array}{c} \begin{cases} \bb{Z}/2 & \text{if $\ell \equiv 1(4)$} \\ (\bb{Z}/2)^2 & \text{if $\ell \equiv 3(4)$} \end{cases} \end{array} \\
	\&\& {} \& {}
	\arrow[no head, from=1-3, to=9-3]
	\arrow[no head, from=3-1, to=3-11]
	\arrow["{d_4}"{description}, from=5-6, to=8-10]
\end{tikzcd}\]
By~\cref{rmk: differentials}, this differential is governed by the cell structure of $\bb{CP}^\ell_{\ell-2}$, so is given by $\lambda \nu$, where $\lambda$ is as in \cref{lem:Mosher} with $n = \ell - 2$. Recalling that $\ell$ is odd, we read off that if $\ell \equiv 3 (8)$ then $\lambda = 2$, if $\ell \equiv 1,5 (8)$ then $\lambda = 1$, and if $\ell \equiv 7 (8)$ then $\lambda = 0$. Since the target of the proposed differential is annihilated by $\cdot 2$, the differential is zero unless $\ell \equiv 1,5 (8)$. We now need to examine the effect of $\nu^*$ on $\pi_*(\bb{RP}_{2 \ell - 3}^\infty)$, and $\ell \equiv 1,5 (8)$ implies $2 \ell - 3 \equiv 7(8)$, so we must look at \cref{fig:RPASS4}-right. Here the $h_2$ multiplication is zero, but it is still possible that we have a hidden $\nu$-multiplication connecting the dots in columns $8k+7$ and $8k+10$. One can resolve this problem as follows. The truncation map $\bb{RP}^\infty_{8k+6} \to \bb{RP}^\infty_{8k+7}$ is a surjection on $\pi_{8k+7}$ (either by \cref{fig:odd sphere diagram 2}, or by reading directly off the charts in \cref{fig:RPASS4}). For $\bb{RP}^\infty_{8k+6}$, the map $\nu^* : \pi_{8k+7}(\bb{RP}^\infty_{8k+6}) \to \pi_{8k+10}(\bb{RP}^\infty_{8k+6})$ is trivial, because the codomain is zero. It follows that $\nu^* : \pi_{8k+7}(\bb{RP}^\infty_{8k+7}) \to \pi_{8k+10}(\bb{RP}^\infty_{8k+7})$ , hence our $d_4$-differential, must also be trivial. The result follows.

For $\ell$ even the $E_2$-page has the form
\[\begin{tikzcd}[ampersand replacement=\&,cramped,sep=tiny]
	\&\& {} \&\&\&\&\&\&\&\& \\
	\& {E_2} \&\& {2\ell-6} \& {2\ell -5} \& {2\ell -4} \& {2\ell -3} \& {2\ell -2} \& {2\ell -1} \& {2\ell } \\
	{} \&\&\&\&\&\&\&\&\&\& {} \\
	\& {\geq-(2\ell-4)} \&\& 0 \& 0 \& 0 \& 0 \& 0 \& 0 \& 0 \\
	\& {-(2\ell-3)} \&\& {\bb{Z}/2} \& 0 \& {\bb{Z}/2} \& 0 \& {\bb{Z}/2} \& 0 \& {\bb{Z}/2} \\
	\& {-(2\ell-2)} \&\& {\bb{Z}/2} \& 0 \& {\bb{Z}/2} \& 0 \& {\bb{Z}/2} \& 0 \& {\bb{Z}/2} \\
	\& {-(2\ell-1)} \&\& {\bb{Z}/8} \& 0 \& {\bb{Z}/8} \& 0 \& {\bb{Z}/8} \& 0 \& {\bb{Z}/8} \\
	\& {-2\ell} \&\& {\bb{Z}/2} \& 0 \& {\bb{Z}/2} \& 0 \& {\bb{Z}/2} \& 0 \& {\bb{Z}/2} \\
	\&\& {} \& {}
	\arrow[no head, from=1-3, to=9-3]
	\arrow[no head, from=3-1, to=3-11]
	\arrow["{d_2}"{description}, from=6-8, to=7-10]
	\arrow["{d_2}"{description}, from=7-8, to=8-10]
\end{tikzcd}\]
where we have marked the only potentially non-zero differentials we need to worry about, the others being zero by~\cref{rmk: differentials}.  The differential $\bb{Z}/2 \to \bb{Z}/8$ is given by $\eta$, and is always injective. This can be seen from the Adams chart \cref{fig:RPASS3}-right if $\ell \equiv 0 (4)$ and the Adams chart \cref{fig:RPASS1}-right if $\ell \equiv 2 (4)$. These charts also tell us that the other differential, $\bb{Z}/8 \to \bb{Z}/2$, is surjective if $\ell \equiv 0 (4)$, and is zero if $\ell \equiv 2 (4)$. It follows that for $\ell \equiv 0 (4)$, the diagonal of the $E_3$-page is zero, so the target is too.

For $\ell \equiv 2 \pmod{4}$ there is a potential $d_4$ differential 
\[\begin{tikzcd}[ampersand replacement=\&,cramped,sep=tiny]
	\&\& {} \&\&\&\&\&\&\&\& \\
	\& {E_4} \&\& {2\ell-6} \& {2\ell -5} \& {2\ell -4} \& {2\ell -3} \& {2\ell -2} \& {2\ell -1} \& {2\ell } \\
	{} \&\&\&\&\&\&\&\&\&\& {} \\
	\& {\geq -(2\ell-4)} \&\& 0 \& 0 \& 0 \& 0 \& 0 \& 0 \& 0 \\
	\& {-(2\ell-3)} \&\& \ast \& 0 \& {\bb{Z}/2} \& 0 \& \ast \& 0 \& \ast \\
	\& {-(2\ell-2)} \&\& \ast \& 0 \& \ast \& 0 \& 0 \& 0 \& \ast \\
	\& {-(2\ell-1)} \&\& \ast \& 0 \& \ast \& 0 \& \ast \& 0 \& \ast \\
	\& {-2\ell} \&\& \ast \& 0 \& \ast \& 0 \& \ast \& 0 \& {\bb{Z}/2} \\
	\&\& {} \& {}
	\arrow[no head, from=1-3, to=9-3]
	\arrow[no head, from=3-1, to=3-11]
	\arrow["{d_4}"{description}, from=5-6, to=8-10]
\end{tikzcd}\]
which by~\cref{rmk: differentials} is given by $ \lambda \nu$ for $\lambda$ equal to 0 or 2, hence is zero, being a self-map of $\bb{Z}/2$. It follows that the $\bb{Z}/2$ on the diagonal survives the spectral sequence, hence the desired computation.

\textbf{Stabilisation} $\vect{\bb{R}, 2\ell-3}^0(\bb{CP}^\ell) \to \vect{\bb{R}, 2\ell-2}^0(\bb{CP}^\ell)$. If $\ell$ is even, the map is zero because the domain is torsion and the codomain is $\bb{Z}$. For $\ell$ odd, our spectral sequence calculations reduce the question to determining the map $\pi_{2 \ell}(\bb{RP}^\infty_{2 \ell-3}) \to \pi_{2 \ell}(\bb{RP}^\infty_{2 \ell-2})$, which was done in \cref{fig:even sphere diagram 2}.

\textbf{Codimension four real bundles} $\vect{\bb{R}, 2\ell-4}^0(\bb{CP}^\ell)$ (\cref{fig:E2AHSS}, $i=4$). We divide into four cases.

\textbf{Codimension $4$ real bundles for $\ell \equiv 0 \pmod{4}$.} The $E_2$-page (\cref{fig:E2AHSS}) is given by
 \[\noindent\resizebox{\linewidth}{!}{%
\begin{tikzcd}[ampersand replacement=\&,cramped,sep=tiny]
	\&\& {} \&\&\&\&\&\&\&\& \\
	\& {E_2} \&\& {2\ell-6} \& {2\ell -5} \& {2\ell -4} \& {2\ell -3} \& {2\ell -2} \& {2\ell -1} \& {2\ell } \\
	{} \&\&\&\&\&\&\&\&\&\& {} \\
	\& {\geq -(2\ell-5)} \&\& 0 \& 0 \& 0 \& 0 \& 0 \& 0 \& 0 \\
	\& {-(2\ell-4)} \&\& {\bb{Z}} \& 0 \& {\bb{Z}} \& 0 \& {\bb{Z}} \& 0 \& {\bb{Z}} \\
	\& {-(2\ell-3)} \&\& {(\bb{Z}/2)^2} \& 0 \& {(\bb{Z}/2)^2} \& 0 \& {(\bb{Z}/2)^2} \& 0 \& {(\bb{Z}/2)^2} \\
	\& {-(2\ell-2)} \&\& {(\bb{Z}/2)^2} \& 0 \& {(\bb{Z}/2)^2} \& 0 \& {(\bb{Z}/2)^2} \& 0 \& {(\bb{Z}/2)^2} \\
	\& {-(2\ell-1)} \&\& {\bb{Z}/16 \oplus \bb{Z}/4} \& 0 \& {\bb{Z}/16 \oplus \bb{Z}/4} \& 0 \& {\bb{Z}/16 \oplus \bb{Z}/4} \& 0 \& {\bb{Z}/16 \oplus \bb{Z}/4} \\
	\& {-2\ell} \&\& {\bb{Z}/2} \& 0 \& {\bb{Z}/2} \& 0 \& {\bb{Z}/2} \& 0 \& {\bb{Z}/2} \\
	\&\& {} \& {}
	\arrow[no head, from=1-3, to=10-3]
	\arrow[no head, from=3-1, to=3-11]
	\arrow["{d_2}"{description}, from=5-4, to=6-6]
	\arrow["{d_2}"{description}, from=7-8, to=8-10]
	\arrow["{d_2}"{description}, from=8-8, to=9-10]
\end{tikzcd}%
}\]
Potentially non-zero $d_2$-differentials affecting the diagonal $p+q=0$ are shown. Those from the $(2\ell-4)$-th to the to $(2\ell-2)$-nd column are zero, while those connecting the other two columns are given by $\eta^*$  (using \cref{rmk: differentials}). We have $2 \ell - 4 \equiv 4 (8)$, so we consult \cref{fig:RPASS3}-left, where we see that the top left $d_2$ is projection onto the second factor, the bottom-right $d_2$ is surjective, and the remaining $d_2$ (immediately above it) is injective. As usual, there can be no $d_3$ differentials, so the $E_4$-page is given by
\[\begin{tikzcd}[ampersand replacement=\&,cramped,sep=tiny]
	\&\& {} \&\&\&\&\&\&\&\& \\
	\& {E_4} \&\& {2\ell-6} \& {2\ell -5} \& {2\ell -4} \& {2\ell -3} \& {2\ell -2} \& {2\ell -1} \& {2\ell } \\
	{} \&\&\&\&\&\&\&\&\&\& {} \\
	\& {\geq -(2\ell-5)} \&\& 0 \& 0 \& 0 \& 0 \& 0 \& 0 \& 0 \\
	\& {-(2\ell-4)} \&\& \ast \& 0 \& {\bb{Z}} \& 0 \& \ast \& 0 \& \ast \\
	\& {-(2\ell-3)} \&\& \ast \& 0 \& \ast \& 0 \& \ast \& 0 \& \ast \\
	\& {-(2\ell-2)} \&\& \ast \& 0 \& \ast \& 0 \& 0 \& 0 \& \ast \\
	\& {-(2\ell-1)} \&\& \ast \& 0 \& \ast \& 0 \& \ast \& 0 \& {\bb{Z}/8 \oplus \bb{Z}/2} \\
	\& {-2\ell} \&\& \ast \& 0 \& \ast \& 0 \& \ast \& 0 \& 0 \\
	\&\& {}
	\arrow[no head, from=1-3, to=10-3]
	\arrow[no head, from=3-1, to=3-11]
	\arrow["{d_4}"{description}, from=5-6, to=8-10]
\end{tikzcd}\]
with only one potential non-zero differential affecting the diagonal, namely $d_4: \bb{Z} \to \bb{Z}/8 \oplus \bb{Z}/2$, which is given by $\nu^*$ since $\ell \equiv 0 (4)$ (\cref{rmk: differentials} c.f.~\cref{lem:Mosher}). Consulting \cref{fig:RPASS3}-left again, the $h_2$ multiplications coming out of the first $h_0$-tower tell us that this $d_4$ sends the generator of the domain, $1 \in \bb{Z}$, to a class of order 8. The kernel is $\bb{Z} \cong 8 \bb{Z} \subset \bb{Z}$, and the result follows.

\textbf{Codimension $4$ real bundles for $\ell \equiv 1 \pmod{4}$.}  The $E_2$-page (\cref{fig:E2AHSS}) is
\[\begin{tikzcd}[ampersand replacement=\&,cramped,sep=tiny]
	\&\& {} \\
	\& {E_2} \&\& {2\ell-6} \& {2\ell -5} \& {2\ell -4} \& {2\ell -3} \& {2\ell -2} \& {2\ell -1} \& {2\ell } \\
	{} \&\&\&\&\&\&\&\&\&\& {} \\
	\& {\geq -(2\ell-5)} \&\& 0 \& 0 \& 0 \& 0 \& 0 \& 0 \& 0 \\
	\& {-(2\ell-4)} \&\& {\bb{Z}} \& 0 \& {\bb{Z}} \& 0 \& {\bb{Z}} \& 0 \& {\bb{Z}} \\
	\& {-(2\ell-3)} \&\& {\bb{Z}/4} \& 0 \& {\bb{Z}/4} \& 0 \& {\bb{Z}/4} \& 0 \& {\bb{Z}/4} \\
	\& {-(2\ell-2)} \&\& 0 \& 0 \& 0 \& 0 \& 0 \& 0 \& 0 \\
	\& {-(2\ell-1)} \&\& {\bb{Z}/4} \& 0 \& {\bb{Z}/4} \& 0 \& {\bb{Z}/4} \& 0 \& {\bb{Z}/4} \\
	\& {-2\ell} \&\& 0 \& 0 \& 0 \& 0 \& 0 \& 0 \& 0 \\
	\&\& {} \& {}
	\arrow[no head, from=1-3, to=10-3]
	\arrow[no head, from=3-1, to=3-11]
	\arrow["\eta"{description}, from=5-6, to=6-8]
\end{tikzcd}\]
The only possible relevant $d_2$-differential $\bb{Z}\to \bb{Z}/4$ is $\eta^*$ by~\cref{rmk: differentials}. We consult \cref{fig:RPASS4}-left, since $2 \ell - 4 \equiv 6 (8)$, and find that $\eta^*$ acts by $\bb{Z} \xrightarrow{\cdot 2} \bb{Z}/4$. The $E_4$-page is
\[\begin{tikzcd}[ampersand replacement=\&,cramped,sep=tiny]
	\&\& {} \\
	\& {E_4} \&\& {2\ell-6} \& {2\ell -5} \& {2\ell -4} \& {2\ell -3} \& {2\ell -2} \& {2\ell -1} \& {2\ell } \\
	{} \&\&\&\&\&\&\&\&\&\& {} \\
	\& {\geq -(2\ell-5)} \&\& 0 \& 0 \& 0 \& 0 \& 0 \& 0 \& 0 \\
	\& {-(2\ell-4)} \&\& \ast \& 0 \& {2\bb{Z}} \& 0 \& \ast \& 0 \& \ast \\
	\& {-(2\ell-3)} \&\& \ast \& 0 \& \ast \& 0 \& {\bb{Z}/2} \& 0 \& \ast \\
	\& {-(2\ell-2)} \&\& 0 \& 0 \& 0 \& 0 \& 0 \& 0 \& 0 \\
	\& {-(2\ell-1)} \&\& \ast \& 0 \& \ast \& 0 \& \ast \& 0 \& {\bb{Z}/4} \\
	\& {-2\ell} \&\& 0 \& 0 \& 0 \& 0 \& 0 \& 0 \& 0 \\
	\&\& {} \& {}
	\arrow[no head, from=1-3, to=10-3]
	\arrow[no head, from=3-1, to=3-11]
	\arrow["\nu"{description}, from=5-6, to=8-10]
\end{tikzcd}\]
The only relevant $d_4$-differential is given by $\nu^*$ (\cref{rmk: differentials} c.f.~\cref{lem:Mosher}). Consulting the $h_2$ multiplications in \cref{fig:RPASS4}-left, we see that this map acts on homotopy groups by
$$\pi_{2 \ell - 4}(\bb{RP}^\infty_{2 \ell - 4}) \cong \bb{Z} \longtwoheadrightarrow \bb{Z}/4 \cong \pi_{2 \ell - 1}(\bb{RP}^\infty_{2 \ell - 4})$$
but in passing to $E_4$-pages, we passed to the subgroup $2 \bb{Z} \subset \bb{Z}$ in the domain, hence the $d_4$-differential has image given by the multiples of 2 inside $\bb{Z}/4$. The spectral sequence therefore collapses at the $E_5$-page, with remaining group the kernel, $4 \bb{Z} \cong \bb{Z}$.

\textbf{Codimension $4$ real bundles for $\ell \equiv 2 \pmod{4}$.} The $E_2$-page (\cref{fig:E2AHSS}) is
\[\begin{tikzcd}[ampersand replacement=\&,cramped,sep=tiny]
	\&\& {} \&\&\&\&\&\&\&\& \\
	\& {E_2} \&\& {2\ell-6} \& {2\ell -5} \& {2\ell -4} \& {2\ell -3} \& {2\ell -2} \& {2\ell -1} \& {2\ell } \\
	{} \&\&\&\&\&\&\&\&\&\& {} \\
	\& {\geq -(2\ell-5)} \&\& 0 \& 0 \& 0 \& 0 \& 0 \& 0 \& 0 \\
	\& {-(2\ell-4)} \&\& {\bb{Z}} \& 0 \& {\bb{Z}} \& 0 \& {\bb{Z}} \& 0 \& {\bb{Z}} \\
	\& {-(2\ell-3)} \&\& {(\bb{Z}/2)^2} \& 0 \& {(\bb{Z}/2)^2} \& 0 \& {(\bb{Z}/2)^2} \& 0 \& {(\bb{Z}/2)^2} \\
	\& {-(2\ell-2)} \&\& {(\bb{Z}/2)^2} \& 0 \& {(\bb{Z}/2)^2} \& 0 \& {(\bb{Z}/2)^2} \& 0 \& {(\bb{Z}/2)^2} \\
	\& {-(2\ell-1)} \&\& {(\bb{Z}/8)^2} \& 0 \& {(\bb{Z}/8)^2} \& 0 \& {(\bb{Z}/8)^2} \& 0 \& {(\bb{Z}/8)^2} \\
	\& {-2\ell} \&\& {\bb{Z}/2} \& 0 \& {\bb{Z}/2} \& 0 \& {\bb{Z}/2} \& 0 \& {\bb{Z}/2} \\
	\&\& {} \& {}
	\arrow[no head, from=1-3, to=10-3]
	\arrow[no head, from=3-1, to=3-11]
	\arrow["{d_2}"{description}, from=5-4, to=6-6]
	\arrow["{d_2}"{description}, from=7-8, to=8-10]
	\arrow["{d_2}"{description}, from=8-8, to=9-10]
\end{tikzcd}\]
where the relevant potentially non-zero differentials are deduced, as normal, by~\cref{rmk: differentials}. They are all given by $\eta^*$. We have $2 \ell - 4 \equiv 0 (8)$, so we consult \cref{fig:RPASS1}-left, where we see that (top-to-bottom), the three differentials are respectively: projection onto the second factor, injective, and zero. The $E_4$-page is
\[\begin{tikzcd}[ampersand replacement=\&,cramped,sep=tiny]
	\&\& {} \&\&\&\&\&\&\&\& \\
	\& {E_4} \&\& {2\ell-6} \& {2\ell -5} \& {2\ell -4} \& {2\ell -3} \& {2\ell -2} \& {2\ell -1} \& {2\ell } \\
	{} \&\&\&\&\&\&\&\&\&\& {} \\
	\& {\geq -(2\ell-5)} \&\& 0 \& 0 \& 0 \& 0 \& 0 \& 0 \& 0 \\
	\& {-(2\ell-4)} \&\& \ast \& 0 \& {\bb{Z}} \& 0 \& \ast \& 0 \& \ast \\
	\& {-(2\ell-3)} \&\& \ast \& 0 \& {\bb{Z}/2} \& 0 \& \ast \& 0 \& \ast \\
	\& {-(2\ell-2)} \&\& \ast \& 0 \& \ast \& 0 \& 0 \& 0 \& \ast \\
	\& {-(2\ell-1)} \&\& \ast \& 0 \& \ast \& 0 \& \ast \& 0 \& {(\bb{Z}/4)^2} \\
	\& {-2\ell} \&\& \ast \& 0 \& \ast \& 0 \& \ast \& 0 \& {\bb{Z}/2} \\
	\&\& {} \& {}
	\arrow[no head, from=1-3, to=10-3]
	\arrow[no head, from=3-1, to=3-11]
	\arrow["{d_4}"{description}, from=5-6, to=8-10]
	\arrow["{d_4}"{description}, from=6-6, to=9-10]
\end{tikzcd}\]
    where the relevant $d_4$ differentials (\cref{rmk: differentials}, c.f.~\cref{lem:Mosher}) are zero if $\ell \equiv 2 (8)$, and are given by $2 \nu^*$ if $\ell \equiv 6 (8)$. Since it is always 2-divisible, the lower differential is always zero. When $\ell \equiv 6 (8)$, the upper one might be non-zero: consulting the Adams chart (\cref{fig:RPASS1}-left) shows that it sends the generator $1 \in \bb{Z}$ to a class of order 4 (noting that this $(\bb{Z}/4)^2$ arose in the passage from $E_2$ as the quotient of $\pi_{2 \ell - 1}(\bb{RP}^\infty_{2 \ell - 4}) \cong (\bb{Z}/8)^2$ by the subgroup of elements annihilated by $\cdot 2$). It follows that the kernel is $4\bb{Z} \cong \bb{Z}$. As for the codimension 2 case (\cref{eqn: real codim 2 splitting}), we may now choose a splitting and conclude that
    $$\vect{\bb{R}, 2\ell-4}^0(\bb{CP}^\ell) \cong \bb{Z} \oplus \bb{Z}/2$$
    (though in light of the above the interpretation of this group is different when $\ell \equiv 2 (8)$ and when $\ell \equiv 6 (8)$).

\textbf{Codimension $4$ real bundles for $\ell \equiv 3 \pmod{4}$.} The $E_2$-page is
\[\begin{tikzcd}[ampersand replacement=\&,cramped,sep=tiny]
	\&\& {} \&\&\&\&\&\&\&\& \\
	\& {E_2} \&\& {2\ell-6} \& {2\ell -5} \& {2\ell -4} \& {2\ell -3} \& {2\ell -2} \& {2\ell -1} \& {2\ell } \\
	{} \&\&\&\&\&\&\&\&\&\& {} \\
	\& {\geq -(2\ell-5)} \&\& 0 \& 0 \& 0 \& 0 \& 0 \& 0 \& 0 \\
	\& {-(2\ell-4)} \&\& {\bb{Z}} \& 0 \& {\bb{Z}} \& 0 \& {\bb{Z}} \& 0 \& {\bb{Z}} \\
	\& {-(2\ell-3)} \&\& {\bb{Z}/4} \& 0 \& {\bb{Z}/4} \& 0 \& {\bb{Z}/4} \& 0 \& {\bb{Z}/4} \\
	\& {-(2\ell-2)} \&\& 0 \& 0 \& 0 \& 0 \& 0 \& 0 \& 0 \\
	\& {-(2\ell-1)} \&\& {\bb{Z}/4} \& 0 \& {\bb{Z}/4} \& 0 \& {\bb{Z}/4} \& 0 \& {\bb{Z}/4} \\
	\& {-2\ell} \&\& {\bb{Z}/2} \& 0 \& {\bb{Z}/2} \& 0 \& {\bb{Z}/2} \& 0 \& {\bb{Z}/2} \\
	\&\& {} \& {}
	\arrow[no head, from=1-3, to=10-3]
	\arrow[no head, from=3-1, to=3-11]
	\arrow["{0}"{description}, from=5-4, to=6-6]
	\arrow["\eta^*"{description}, from=5-6, to=6-8]
	\arrow["{0}"{description}, from=8-8, to=9-10]
\end{tikzcd}\]
 in which the relevant $d_2$ differentials are as shown by~\cref{rmk: differentials}. Consulting \cref{fig:RPASS2}-left, since $2\ell-4 \equiv 2 (8)$, the middle differential has image of order 2. The $E_4$-page is then
\[\begin{tikzcd}[ampersand replacement=\&,cramped,sep=tiny]
	\&\& {} \&\&\&\&\&\&\&\& \\
	\& {E_4} \&\& {2\ell-6} \& {2\ell -5} \& {2\ell -4} \& {2\ell -3} \& {2\ell -2} \& {2\ell -1} \& {2\ell } \\
	{} \&\&\&\&\&\&\&\&\&\& {} \\
	\& {\geq -(2\ell-5)} \&\& 0 \& 0 \& 0 \& 0 \& 0 \& 0 \& 0 \\
	\& {-(2\ell-4)} \&\& \ast \& 0 \& {2\bb{Z}} \& 0 \& \ast \& 0 \& \ast \\
	\& {-(2\ell-3)} \&\& \ast \& 0 \& {\bb{Z}/4} \& 0 \& \ast \& 0 \& \ast \\
	\& {-(2\ell-2)} \&\& 0 \& 0 \& 0 \& 0 \& 0 \& 0 \& 0 \\
	\& {-(2\ell-1)} \&\& \ast \& 0 \& \ast \& 0 \& \ast \& 0 \& {\bb{Z}/4} \\
	\& {-2\ell} \&\& \ast \& 0 \& \ast \& 0 \& \ast \& 0 \& {\bb{Z}/2} \\
	\&\& {} \& {}
	\arrow[no head, from=1-3, to=10-3]
	\arrow[no head, from=3-1, to=3-11]
	\arrow["{d_4}"{description}, from=5-6, to=8-10]
	\arrow["{d_4}"{description}, from=6-6, to=9-10]
\end{tikzcd}\]
in which the two relevant $d_4$-differentials are given by $2\nu^*$ if $\ell \equiv 3 (8)$ and are zero if $\ell \equiv 7 (8)$. Arguing as in the previous case, we deduce that the differentials are always zero, and obtain the result.

\textbf{Stabilisation} $\vect{\bb{R}, 2\ell-4}^0(\bb{CP}^\ell) \to \vect{\bb{R}, 2\ell-3}^0(\bb{CP}^\ell)$. For $\ell \equiv 0 \pmod{4}$, the map is necessarily zero on account of the codomain being zero. 

For $\ell \equiv 1 \pmod{4}$, we have a map $\bb{Z} \to \bb{Z}/2$. There are only two such maps, the zero map, and the other one. We will argue that it is the other one.

Our map is induced by pushforward along the (stable) truncation map $$\bb{Z} \cong \vect{\bb{R}, 2\ell-4}^0(\bb{CP}^\ell) \cong [\bb{CP}^\ell,\bb{RP}^\infty_{2 \ell - 4}] \xrightarrow{\ s_*\ } [\bb{CP}^\ell,\bb{RP}^\infty_{2 \ell - 3}] \cong \vect{\bb{R}, 2\ell-3}^0(\bb{CP}^\ell) \cong \bb{Z}/2.$$
Since the codomain of both mapping sets are $(2 \ell - 5)$-connected, and $\bb{CP}^{\ell - 3}$ is $(2 \ell - 6)$-dimensional, we can use the cofibre sequence $\bb{CP}^{\ell - 3} \to \bb{CP}^\ell \to \bb{CP}^\ell_{\ell -2}$ to see that our map is equivalent to 
$$\bb{Z} \cong [\bb{CP}^\ell_{\ell - 2},\bb{RP}^\infty_{2 \ell - 4}] \xrightarrow{\ s_*\ } [\bb{CP}^\ell_{\ell -2},\bb{RP}^\infty_{2 \ell - 3}] \cong \bb{Z}/2,$$
which we will now study using the cell structure of $\bb{CP}^\ell_{\ell - 2}$. Since $\ell \equiv 1 (4)$, an Adams chart for $\bb{CP}^\ell_{\ell - 2}$ is given (up to a shift) by \cref{fig:SCPASS2}-right. This Adams chart shows that the inclusion of the bottom cell in $\bb{CP}^\ell_{\ell - 2}$ is null after composition with $\eta$ or with $\nu$. We know that $\bb{CP}^\ell_{\ell - 2}$ is composed of three cells, and the only way to put three cells together to make $\eta$ and $\nu$ null on the bottom cell is if we actually have
$$\bb{CP}^\ell_{\ell - 2} \simeq \cofibre(S^{2 \ell - 3} \oplus S^{2 \ell - 1} \xrightarrow{(\eta, \nu)} S^{2 \ell - 4}).$$

Since we only want to show that $s_*: [\bb{CP}^\ell_{\ell - 2},\bb{RP}^\infty_{2 \ell - 4}] \to [\bb{CP}^\ell_{\ell -2},\bb{RP}^\infty_{2 \ell - 3}]$ is non-zero, it suffices to exhibit a map $f: \bb{CP}^\ell_{\ell - 2} \to \bb{RP}^\infty_{2 \ell - 4}$ such that $sf$ is not null. Since $\fibre(\bb{RP}^\infty_{2 \ell - 4} \xrightarrow{s} \bb{RP}^\infty_{2 \ell - 3}) \simeq S^{2 \ell - 4}$, we may equivalently show that $f$ does not factor over the inclusion of the bottom cell in $\bb{RP}^\infty_{2 \ell - 4}$.

To construct such an $f$, consider the diagram
\[\begin{tikzcd}
    S^{2 \ell - 3} \oplus S^{2 \ell -1}  & S^{2 \ell - 4} & \bb{CP}^\ell_{\ell - 2} \\
    & S^{2 \ell - 4} & \bb{RP}^{\infty}_{2 \ell - 4}, 
    \arrow["{(\eta,\nu)}",from=1-1, to=1-2]
    \arrow["\cdot 4",from=1-2, to=2-2]
    \arrow["f",dashed,from=1-3, to=2-3]
    \arrow["i",hook,from=1-2, to=1-3]
    \arrow["j",hook,from=2-2, to=2-3]
\end{tikzcd}\]
where the top row is the cofibre sequence defining $\bb{CP}^\ell_{\ell - 2}$ that we obtained earlier, we write $i$ and $j$ for the respective bottom cell inclusions and $\cdot 4$ for the degree 4 map. Since $\ell \equiv 1 (4)$, we have $2 \ell - 4 \equiv 6 (8)$, so (e.g.~by \cref{fig:RPASS4}-left), the homotopy groups $\pi_{2\ell - 1}(\bb{RP}^{\infty}_{2 \ell - 4})$ and $\pi_{2\ell - 3}(\bb{RP}^{\infty}_{2 \ell - 4})$ are both isomorphic to $\bb{Z}/4$. It follows that the composite $j \circ (\cdot 4) \circ (\eta,\nu)$ is null (because it is in particular a 4-multiple with domain $S^{2 \ell - 3} \oplus S^{2 \ell -1}$). Since the top row is a cofibration, we get an extension $f$ making the diagram commute. This $f$ cannot factor over the inclusion $j$ of the bottom cell, because if it did then by a diagram chase we would have $4 \nu \simeq *$, and we know that $\nu$ has order 8. This completes the argument for $\ell \equiv 1 \pmod{4}$.

For $\ell \equiv 2 \pmod{4}$, examining the spectral sequences shows that the map must be a isomorphism after restriction to torsion subgroups. It follows that without loss of generality we may change the splitting we chose to identify the domain as $\bb{Z} \oplus \bb{Z}/2$ so that the map is zero on the first summand.

For $\ell \equiv 3 \pmod{4}$ we must consider a map $\bb{Z} \oplus \bb{Z}/2 \to (\bb{Z}/2)^2$, which, as in the case $\ell \equiv 1 (4)$, we may identify with the map
$$[\bb{CP}^\ell_{\ell-2},\bb{RP}^{\infty}_{2 \ell - 4}] \xrightarrow{s_*} [\bb{CP}^\ell_{\ell-2},\bb{RP}^{\infty}_{2 \ell - 3}].$$

Write $C_{\eta}$ for the cofibre of $\eta$, and $j$ for the inclusion of its bottom cell. Note first that since $\ell \equiv 3(4)$, we know that we may describe the cell structure of $\bb{CP}^\ell_{\ell-2}$ as
$$\bb{CP}^\ell_{\ell-2} \simeq \cofibre(S^{2 \ell - 1} \xrightarrow{\lambda j \nu} \Sigma^{2 \ell - 4} C_{\eta}) \simeq \cofibre(S^{2 \ell - 1} \oplus S^{2 \ell - 3} \xrightarrow{(\lambda \nu, \eta)} S^{2 \ell - 4}),$$ where by \cref{lem:Mosher}, $\lambda$ is determined mod 4 (since $\nu$ becomes 4-torsion in $C_{\eta}$ c.f.~\cref{fig:mod-2-Moore and hook}-right), and is respectively $2$ or $0$ when $\ell$ is respectively $3$ or $7$ modulo 8.

Considering the long exact sequences obtained by applying the functors $[(-),\bb{RP}^\infty_{2\ell-4}]$ and $[(-),\bb{RP}^\infty_{2\ell-3}]$ to the cofibre sequence $$S^{2 \ell - 3} \xrightarrow{\eta} S^{2 \ell - 4} \to \Sigma^{2 \ell - 4}C_{\eta}$$ (and reading the effect of $\eta^*$ off the two charts in \cref{fig:RPASS2}) tells us that $$[C_{\eta}, \bb{RP}^\infty_{2\ell-3}] = 0, \textrm{ and } [C_{\eta}, \bb{RP}^\infty_{2\ell-4}] = \bb{Z}\{\widetilde{2i}\},$$
where $\widetilde{2i}$ is determined by the property that its restriction to the bottom cell is twice the generator $i$ of $\pi_{2 \ell - 4}(\bb{RP}^\infty_{2\ell-4})$.

When $\lambda = 0$ ($\ell \equiv 7(8)$), the top cell of $\bb{CP}^\ell_{\ell-2}$ splits off, so we may write $$[\bb{CP}^\ell_{\ell-2},(-)] \cong [\Sigma^{2 \ell - 4} C_\eta,(-)] \oplus \pi_{2\ell}(-),$$
and deduce from the above analysis that $s_*$ is zero on the infinite order summand, and by \cref{thm: sphere diagrams 2} c.f.~\cref{fig:even sphere diagram 2} is injective on the torsion summand, as required.

When $\lambda = 2$ ($\ell \equiv 3(8)$), things get messy, so we make a simplification by reducing from $\bb{CP}^\ell_{\ell-2}$ (which has three cells) to $\Sigma^{2 \ell - 4} C_{2 \nu}$ (which has two cells). To do this, note that the above cofibre description gives us a pushout
\[\begin{tikzcd}
    S^{2 \ell - 4} & \Sigma^{2 \ell - 4} C_{2 \nu} \\
    \Sigma^{2 \ell - 4} C_{\eta} & \bb{CP}^{\ell}_{\ell -2},
    \arrow[from=1-1, to=1-2]
    \arrow[from=1-1, to=2-1]
    \arrow[from=1-2, to=2-2]
    \arrow[from=2-1, to=2-2]
\end{tikzcd}\]
and hence a cofibre sequence
$$S^{2 \ell - 4} \to \Sigma^{2 \ell - 4} C_{\eta} \oplus \Sigma^{2 \ell - 4} C_{2 \nu}  \to \bb{CP}^{\ell}_{\ell - 2} \to S^{2 \ell - 3} \to \Sigma^{2 \ell - 3} C_{\eta} \oplus \Sigma^{2 \ell - 3} C_{2 \nu}$$

Applying either $[(-),\bb{RP}_{2 \ell - 4}^{\infty}]$ or $[(-),\bb{RP}_{2 \ell - 3}^{\infty}]$ to this pushout gives (contravariantly) a Mayer--Vietoris long exact sequence, and $s:\bb{RP}_{2 \ell - 4}^{\infty} \to \bb{RP}_{2 \ell - 3}^{\infty}$ gives a map between these long exact sequences, which looks as follows (where summands preserve the order of the summands in the cofibre sequence above).
\[\begin{tikzcd}
    \bb{Z}\{i\} & \bb{Z} \oplus (\bb{Z} \oplus \bb{Z}/2) & {[\bb{CP}^\ell_{\ell - 2},\bb{RP}^\infty_{2 \ell - 4}]} & \pi_{2 \ell - 3}(\bb{RP}^\infty_{2 \ell - 4}) & \cdots \\
    0 & 0 \oplus (\bb{Z}/2)^2 & {[\bb{CP}^\ell_{\ell - 2},\bb{RP}^\infty_{2 \ell - 3}]} & \pi_{2 \ell - 3}(\bb{RP}^\infty_{2 \ell - 3}) & \cdots
    \arrow["\alpha",swap,from=1-2, to=1-1]
    \arrow[from=1-3, to=1-2]
    \arrow[from=1-4, to=1-3]
    \arrow["\beta",swap,from=1-5, to=1-4]
    \arrow[from=2-2, to=2-1]
    \arrow[from=2-3, to=2-2]
    \arrow[from=2-4, to=2-3]
    \arrow["\gamma",swap,from=2-5, to=2-4]
    \arrow["s_*",from=1-1, to=2-1]
    \arrow["s_*",from=1-2, to=2-2]
    \arrow["s_*",from=1-3, to=2-3]
    \arrow["s_*",from=1-4, to=2-4]
\end{tikzcd}\]
We leave it to the reader to verify that the groups are as claimed, using the long exact sequences obtained from the cofibrations building the 2-cell complexes $\Sigma^{2 \ell - 4} C_{\eta}$ and $\Sigma^{2 \ell - 4} C_{2 \nu}$, and this calculation will also show that $\alpha$ sends the generator of each $\bb{Z}$-summand to twice a generator of the target. Note in particular that the projection $p : \Sigma^{2 \ell - 4}C_{2 \nu} \to S^{2 \ell}$ to the top cell is an isomorphism on $\pi_{2 \ell}$. Next, $\beta$ and $\gamma$ are both surjections, because for $X = \bb{RP}^{\infty}_{2 \ell - 4},\bb{RP}^{\infty}_{2 \ell - 3}$, any map $S^{2 \ell - 3} \xrightarrow{f} X$ has $f \eta = 0$, and hence extends over $\Sigma^{2 \ell - 3} C_{\eta}$ (since $\pi_{2 \ell - 2}(X) = 0$). It follows that for $X = \bb{RP}^{\infty}_{2 \ell - 4},\bb{RP}^{\infty}_{2 \ell - 3}$, the maps
\[
[\Sigma^{2 \ell - 4}C_{2 \nu},X] \longleftarrow [\bb{CP}^{\ell}_{\ell-2},X]
\]
induced by the inclusion $\Sigma^{2 \ell - 4}C_{2 \nu} \hookrightarrow \bb{CP}^{\ell}_{\ell-2}$ are isomorphisms, and we may equivalently evaluate the map 
$$[\Sigma^{2 \ell - 4}C_{2 \nu},\bb{RP}^\infty_{2 \ell - 4}] \xrightarrow{s_*} [\Sigma^{2 \ell - 4}C_{2 \nu},\bb{RP}^\infty_{2 \ell - 3}].$$
This completes our restriction from a 3-cell complex to a 2-cell complex. We will now argue that this map $s_*$ is surjective.

By the cofibration $\bb{RP}^\infty_{2 \ell - 4} \xrightarrow{s} \bb{RP}^{\infty}_{2 \ell - 3} \xrightarrow{\partial} S^{2 \ell - 3}$, we may equivalently argue that the connecting map $$\partial_* : [\Sigma^{2 \ell - 4}C_{2 \nu},\bb{RP}^\infty_{2 \ell - 3}] \to [\Sigma^{2 \ell - 4}C_{2 \nu},S^{2 \ell - 3}] $$ is zero. Writing $p : \Sigma^{2 \ell - 4}C_{2 \nu} \to S^{2 \ell}$ for the projection to the top cell (we noted above that this map is a $\pi_{2 \ell}$-isomorphism) we obtain a diagram
\[\begin{tikzcd}
    & {[\Sigma^{2 \ell - 4}C_{2 \nu},\bb{RP}^\infty_{2 \ell - 3}]} & {[\Sigma^{2 \ell - 4}C_{2 \nu},S^{2 \ell - 3}]} \\
     & \pi_{2 \ell}(\bb{RP}^\infty_{2 \ell - 3}) & \pi_{2 \ell}(S^{2 \ell - 3}) \\ 
     & (\bb{Z}/2)^2 & \bb{Z}/8\{\nu\}.
    \arrow["\partial_*",from=2-2, to=2-3]
    \arrow["\partial_*",from=1-2, to=1-3]
    \arrow["p^*",from=2-2, to=1-2]
    \arrow["\cong",swap,from=2-2, to=1-2]
    \arrow["p^*",from=2-3, to=1-3]
    \arrow[equal,from=2-3, to=3-3]
    \arrow[equal,from=2-2, to=3-2]
\end{tikzcd}\]
The bottom map $\partial_*$ can only hit elements which are annihilated by multiplication by 2. The map $p$ fits into a cofibration $\Sigma^{2 \ell - 4}C_{2 \nu} \xrightarrow{p} S^{2 \ell} \xrightarrow{2 \nu} S^{2 \ell - 3}$, and so the right hand column of the diagram may be extended to an exact sequence witnessing that $p^*(2 \nu) = 0$, so in particular the right hand $p^*$ is zero on elements annihilated by 2. It follows that the top $\partial_*$ is zero, hence that $s_*$ is surjective, and hence, by changing basis if necessary, that the map is as claimed.

\textbf{Codimension two complex bundles} $\vect{\bb{C},\ell-2}^0(\bb{CP}^\ell)$. The groups have been computed in~\cite[Theorem 1.2]{Hu}, but we only extract the $2$-local information.

\textbf{Realification} $\vect{\bb{C}, \ell-2}^0(\bb{CP}^\ell) \to \vect{\bb{R}, 2\ell-4}^0(\bb{CP}^\ell)$. For $\ell \equiv 0 ,1 \pmod{4}$, the map is zero because the domain is zero. For $\ell \equiv 2 ,3 \pmod{4}$, the domain is torsion-free, so the image of this map is contained in the torsion subgroup. This is important: although we potentially changed basis in the target of this map when we computed the stabilisation map $\vect{\bb{R}, 2\ell-4}^0(\bb{CP}^\ell) \to \vect{\bb{R}, 2\ell-3}^0(\bb{CP}^\ell)$, we only changed our choice of the infinite order generator. Such a change does not affect the description in terms of the basis of a map whose image is contained in the torsion subgroup, so we do not need to worry about propagating that change to this step, and it suffices to calculate the map on torsion subgroups. This map is a map from a cyclic group of even order to $\bb{Z}/2$. Such a map is either surjective or zero. To see which, note that it follows from our spectral sequence calculations that this map on torsion subgroups is identified with the map on $\pi_{2 \ell}$, and we have seen (\cref{fig:even sphere diagram 2}, leftmost column) that this latter map is always surjective.

\textbf{Stabilisation} $\vect{\bb{C}, \ell-2}^0(\bb{CP}^\ell) \to \vect{\bb{C}, \ell - 1}^0(\bb{CP}^\ell)$ is the only map in the leftmost square of \cref{fig:CPell diagram} that we have not yet computed, and it follows by a diagram chase that this map must be zero.
\end{proof}
\printbibliography
\end{document}